\newcommand{\gtilecontinued}[1]{
\begin{tikzpicture}
            \node (i) at (0,0) {$\D$};
            \node (j) at (0,2) {$\A$};
            \node (jj) at (2,2) {$\B$};
            \node (Rr) at (2,0) {$\C$};
            \draw (i)--(j);
            \ifnum \I>0 {\draw[ultra thick](j)--node [above] {$\H$}(jj);}\else{\draw[](j)--(jj);}\fi
            
            \ifnum #1>0{
            \draw [ultra thick] (i)--node [below] {$\E$} (Rr);}\else{
            \draw (i)--node [below] {} (Rr);}\fi

            \draw (Rr)--node [right] {$\F$}(jj);
            \draw [dotted] (j)-- node [midway, fill=white] {$\G$}(Rr);
        \end{tikzpicture}
}
\newcommand{\tilecontinued}[2]{
\begin{tikzpicture}
            \node (i) at (0,0) {$\D$};
            \node (j) at (0,2) {$\A$};
            \node (jj) at (2,2) {$\B$};
            \node (Rr) at (2,0) {$\C$};

            \ifnum #1>0 {\draw[ultra thick](j)--node [above] {$\E$}(jj);}\else{\draw[](j)--(jj);}\fi
            
             \draw (Rr)--node [right] {$\F$}(jj);
            
           \ifnum #2>0{\draw [ultra thick] (i)--node [below] {$\G$} (Rr);}\else{
            \draw (i)--node [below] {} (Rr);}\fi
                 
            \draw (i)-- node [left] {$\H$} (j);
            
            \draw [dotted] (j)-- node [midway, fill=white] {$\I$}(Rr);
\end{tikzpicture}
}
\DeclareMathOperator{\wt}{wt}
\renewcommand{\phi}{\varphi}
\renewcommand{\bar}[1]{\overline{#1}}
\newcommand{\calC}{\mathcal{C}}
\newcommand{\C}{\mathbb{C}}
\tikzstyle{tikzfig}=[baseline=-0.25em,scale=0.5]
\tikzstyle{none}=[inner sep=0mm]
\tikzstyle{Vert}=[fill=black, draw=black, shape=circle, minimum size=0.4em, inner sep=0.4pt, scale=0.6]
\tikzstyle{label-red}=[fill=white, draw=black, shape=circle, scale=0.4, color=red]
\tikzstyle{label small}=[fill=none, draw=none, shape=circle, scale=0.7, inner sep = 0.1]
\tikzstyle{bullet}=[fill=black, draw=black, shape=circle, scale=0.5]
\tikzstyle{G-vert}=[fill=white, draw=black, shape=circle, inner sep=0.7pt, minimum size=0.4em, scale=0.6]
\tikzstyle{label}=[inner sep=0.15mm, font={\footnotesize}]
\tikzstyle{vert}=[fill=black, draw=white, shape=circle, line width=0.25mm, tikzit shape=circle, inner sep=0.35mm]
\tikzstyle{nodes}=[fill=white, draw=none, shape=circle, inner sep=0.7pt, font={\scriptsize}, minimum size=11pt]
\tikzstyle{label-s}=[inner sep=0.1mm, font={\scriptsize}]
\tikzstyle{label-w}=[fill=white, draw=white, shape=circle, inner sep=0.07mm, font={\scriptsize}]
\tikzstyle{dashed}=[-, densely dotted]
\tikzstyle{virtual}=[-, double]
\tikzstyle{RED}=[-, draw=red]
\tikzstyle{CYAN}=[-, draw=cyan]
\tikzstyle{BLUE}=[-, draw=blue]
\tikzstyle{LGREEN}=[-, draw=green]
\tikzstyle{DGREEN}=[-, draw={rgb,255: red,0; green,128; blue,128}]
\tikzstyle{PURPLE}=[-, draw={rgb,255: red,128; green,0; blue,128}]
\tikzstyle{ORANGE}=[-, draw={rgb,255: red,255; green,128; blue,0}]
\tikzstyle{MAGENTA}=[-, draw=magenta]
\tikzstyle{BLUE(dashed)}=[-, draw=blue, densely dotted]
\tikzstyle{GREEN}=[-, draw={rgb,255: red,0; green,208; blue,6}]
\tikzstyle{MAGENTA(dashed)}=[-, draw=magenta, densely dotted]
\tikzstyle{ORANGE(dashed)}=[-, draw={rgb,255: red,255; green,128; blue,0}, densely dotted]
\tikzstyle{RED(dashed)}=[-, draw={rgb,255: red,191; green,0; blue,64}, densely dotted]
\tikzstyle{PURPLE(dashed)}=[-, draw={rgb,255: red,128; green,0; blue,128}, densely dotted]
\tikzstyle{GREEN(dashed)}=[-, draw={rgb,255: red,0; green,208; blue,6}, densely dotted]
\tikzstyle{shade}=[-, opacity=0.4, draw={rgb,255: red,128; green,128; blue,128}, line width=6.5, fill=none, line cap=round,rounded corners]
\tikzstyle{hyper}=[-, fill={rgb,255: red,48; green,255; blue,214}, fill opacity=0.6, draw={rgb,255: red,18; green,229; blue,85}, tikzit fill={rgb,255: red,48; green,255; blue,214}]
\tikzstyle{hyper1}=[-, fill={rgb,255: red,230; green,138; blue,9}, draw={rgb,255: red,255; green,128; blue,0}, fill opacity=0.5]
\tikzstyle{hyper2}=[-, fill={rgb,255: red,128; green,179; blue,255}, draw={rgb,255: red,46; green,87; blue,115}, fill opacity=0.55]
\tikzstyle{new edge style 0}=[-, fill={rgb,255: red,245; green,255; blue,39}, draw={rgb,255: red,168; green,170; blue,22}, fill opacity=0.6]
\tikzstyle{blue}=[-, draw={rgb,255: red,49; green,149; blue,255}, line width=1.5pt]
\tikzstyle{red}=[-, draw=red, line width=1.5pt]
\tikzstyle{blue thick}=[-, tikzit draw=blue, draw=blue, line width=1.7pt]
\tikzstyle{thick}=[-, ultra thick]
\tikzstyle{dotted}=[-, densely dotted]
\tikzstyle{e1}=[-, color=1, line width=1]
\tikzstyle{e2}=[-, color=2, line width=1]
\tikzstyle{e3}=[-, color=3, line width=1]
\tikzstyle{e4}=[-, color=4, line width=1]
\tikzstyle{e5}=[-, color=5, line width=1]
\tikzstyle{e6}=[-, color=6, line width=1]
\tikzstyle{e7}=[-, color=7, line width=1]
\tikzstyle{e8}=[-, color=8, line width=1]
\tikzstyle{e9}=[-, color=9, line width=1]
\tikzstyle{e0}=[-, color=0, line width=1]
\tikzstyle{d1}=[-, color=1, line width=1.2, loosely dashed]
\tikzstyle{d2}=[-, color=2, line width=1.2, loosely dashed]
\tikzstyle{d3}=[-, color=3, line width=1.2, loosely dashed]
\tikzstyle{d4}=[-, color=4, line width=1.2, loosely dashed]
\tikzstyle{d5}=[-, color=5, line width=1.2, loosely dashed]
\tikzstyle{d6}=[-, color=6, line width=1.2, loosely dashed]
\tikzstyle{d7}=[-, color=7, line width=1.2, loosely dashed]
\tikzstyle{d8}=[-, color=8, line width=1.2, loosely dashed]
\tikzstyle{d9}=[-, color=9, line width=1.2, loosely dashed]
\tikzstyle{d0}=[-, color=0, line width=1.2, loosely dashed]
\tikzstyle{m1}=[-, line width=2.5]
\tikzset{%
every path/.append style={line width = 0.8 pt}
}
\definecolor{2}{rgb}{0.91, 0.33, 0.5}
\definecolor{1}{rgb}{0.0, 1.0, 1.0} 
\definecolor{8}{rgb}{1.0, 0.49, 0.0}
\definecolor{3}{rgb}{0.71, 0.49, 0.86}
\definecolor{6}{rgb}{0.5, 0, 0}
\definecolor{5}{rgb}{0.0, 0.5, 1.0}
\definecolor{0}{rgb}{0.55, 0.71, 0.0}
\definecolor{7}{rgb}{1.0, 0.75, 0.0}
\definecolor{4}{rgb}{0.2, 0.2, 0.6}
\definecolor{9}{rgb}{0.24, 0.17, 0.12}
\newcolumntype{Q}{>{\centering}p{\friezelen}<{}}
\tikzset {->-/.style={decoration={markings, mark=at position .5 with {\arrow{latex}}}, postaction={decorate}}}
\def\@tocline#1#2#3#4#5#6#7{\relax
  \ifnum #1>\c@tocdepth 
  \else
    \par \addpenalty\@secpenalty\addvspace{#2}%
    \begingroup \hyphenpenalty\@M
    \@ifempty{#4}{%
      \@tempdima\csname r@tocindent\number#1\endcsname\relax
    }{%
      \@tempdima#4\relax
    }%
    \parindent\z@ \leftskip#3\relax \advance\leftskip\@tempdima\relax
    \rightskip\@pnumwidth plus4em \parfillskip-\@pnumwidth
    #5\leavevmode\hskip-\@tempdima
      \ifcase #1
       \or\or \hskip 1em \or \hskip 2em \else \hskip 3em \fi%
      #6\nobreak\relax
    \hfill\hbox to\@pnumwidth{\@tocpagenum{#7}}\par
    \nobreak
    \endgroup
  \fi}
\patchcmd{\@settitle}{\uppercasenonmath\@title}{}{}{}
\patchcmd{\@setauthors}{\MakeUppercase}{}{}{}
\patchcmd{\section}{\scshape}{}{}{}
\@date \else {\vskip2ex 
  \centering\footnotesize\@date\par\vskip1ex}\fi
\else \@footnotetext{\@setdate}\fi}
\theoremstyle{plain}
\newtheorem{thm}{Theorem}[section]
\newtheorem{lemma}[thm]{Lemma}
\newtheorem{prop}[thm]{Proposition}
\newtheorem{cor}[thm]{Corollary}
\theoremstyle{definition}
\newtheorem{remark}[thm]{Remark}
\newtheorem{ex}[thm]{Example}
\newtheorem{defn}[thm]{Definition}
\newcommand{\opt}[0]{\textup{opt}}
\newcommand{\fix}[0]{\textup{fix}}
{\newcommand{\Jm}{J_{\mathbf{m}}}

\newcommand{\cal}[1]{\mathcal{#1}}

\setcounter{MaxMatrixCols}{11}

\setlist[enumerate]{leftmargin=*}

\usepackage[backend=bibtex,maxbibnames=99]{biblatex}
\addbibresource{main.bib}

\title{Snake Graphs for Graph LP Algebras}

\author[Esther Banaian]{Esther Banaian$^{\clubsuit}$ }
\author[Sunita Chepuri]{Sunita Chepuri$^{\diamondsuit}$}
\author[Elizabeth Kelley]{Elizabeth Kelley$^{\heartsuit}$}
\author[Sylvester Zhang]{Sylvester W. Zhang$^{\spadesuit}$}
\thanks{$^{\clubsuit}$Aarhus University. \href{mailto:banaian@math.au.dk}{banaian@math.au.dk}}
\thanks{$^{\diamondsuit}$University of Illinois  Urbana Champaign. \href{mailto:kelleye@illinois.edu}{kelleye@illinois.edu}}
\thanks{$^{\heartsuit}$University of Puget Sound. 
\href{mailto:chepu003@umn.edu}{schepuri@pugetsound.edu}}
\thanks{$^{\spadesuit}$University of Minnesota. \href{mailto:swzhang@umn.edu}{swzhang@umn.edu}}
\date{\today}

\begin{document}

\maketitle
\begin{abstract}
Graph LP algebras are a generalization of cluster algebras introduced by Lam and Pylyavskyy.  We provide a combinatorial proof of positivity for certain cluster variables in these algebras. This proof uses a hypergraph generalization of snake graphs, a class of planar graphs which were used by Musiker, Schiffler, and Williams to prove positivity for cluster algebras from surfaces. These results extend those given in our previous paper, where we used a related combinatorial object known as a $T$-path. 
\end{abstract}

\tableofcontents

\section{Introduction}

Cluster algebras, introduced by Fomin and Zelvinsky \cite{FZ1} in their study of total positivity and dual canonical bases, are a particular type of commutative ring that possess additional combinatorial structure. Loosely speaking, a cluster algebra is a commutative ring with a distinguished set of generators called \emph{cluster variables}, which are grouped into subsets of equal cardinality called \emph{clusters}. Two clusters which differ at only one entry are connected by a \emph{mutation}
\[\{x_1,\cdots,x_i,\cdots,x_n\}\leftrightarrow\{x_1,\cdots,x_i',\cdots,x_n\}\]
with exchange relation $x_i'x_i = F(x_1,\cdots,x_n)$ where $F$ is a binomial in $x_1, \dots, x_n$.

The two most important features of cluster algebras are:
\begin{enumerate}[(1)]
    \item (\emph{Laurent phenomenon}) For any choice of cluster $\calC=(x_1,\dots,x_n)$, every cluster variable can be written as a Laurent polynomial in $x_1,\dots,x_n$.
    \item (\emph{Positivity}) The Laurent polynomial in (1) has positive coefficients.
\end{enumerate}

In \cite{LP-12}, Lam and Pylyavskyy generalized cluster algebras to a broader family of commutative algebras which enjoy the Laurent phenomenon, hence called Laurent Phenomenon (LP) algebras. In an LP algebra, the exchange relations are relaxed to be arbitrary Laurent polynomials satisfying certain irreducibility conditions. In \cite{LP-12}, Lam and Pylyavskyy proved that LP algebras satisfy the Laurent Phenomenon and conjectured that positivity holds as well.

Graph LP algebras \cite{lam2016linear} are a special class of LP algebras whose structure is encoded by a graph. In particular, cluster variables are in bijection with connected subsets of the vertex set and clusters are in bijection with \emph{maximal nested collections} on the vertex set (Theorem \ref{thm:ClustersInGraphLPAlgebra}). The exchange relations can also be phrased in terms of the graph (Lemmas 4.7 and 4.11 from \cite{lam2016linear}). In \cite{rooted}, the current authors proved that in a graph LP algebra from a tree graph, positivity holds when we express a cluster variable in terms of a special kind of cluster called a \emph{rooted cluster}. The proof in \cite{rooted} is based on a combinatorial expansion formula using \emph{hyper $T$-paths} that is in spirit of Schiffler's $T$-path formula for type $A$ cluster algebras \cite{schiffler2006cluster}. 

In cluster combinatorics, $T$-paths are known to be in bijection with perfect matchings on certain planar bipartite graphs called \emph{snake graphs} \cite{ms10}. Here, we provide a graph-theoretic analogue to our hyper $T$-paths by building hypergraphs.  By considering this alternate framework, we are also able to significantly generalize our positivity result from \cite{rooted}.

When $\Gamma$ is a path, a large subalgebra of $\mathcal{A}_\Gamma$ coincides with a type $A$ cluster algebra (Corollary 6.2 \cite{lam2016linear}), in which case our construction specializes to that of Musiker-Schiffler \cite{ms10}. For a general tree $\Gamma$, our graphs are built up from Musiker--Schiffler snake graphs, which we can obtain by considering path graphs which sit as subgraphs of $\Gamma$. In order to capture the combinatorics of several Musiker--Schiffler snake graphs glued together, we introduce \emph{admissible matchings}, which will be mixed-dimer covers of these graphs with additional requirements. When we prove our main result for cluster variables associated to sets of size 1, we also provide results which compares our admissible matchings with perfect matchings of Musiker--Schiffler snake graphs (Lemmas \ref{lem:r1admissible} and \ref{lem:HowDoGvRestrictToHi}).

Our main result can be summarized as follows. These statements are made more precise in Theorems \ref{thm:main} and \ref{thm:off-in-cluster}.

\textbf{Main Result:} Let $\Gamma$ be a tree and $\cal A_\Gamma$ the graph LP algebra associated to $\Gamma$. Let $ \{Y_{S_1},\cdots,Y_{S_n}\}$ be a cluster of $\cal A_\Gamma$, that is, let $\mathcal I:=\{S_1,\cdots,S_n\}$ be a maximal nested collection on the vertices of $\Gamma$. Then, given a weakly rooted set $S$ with respect to $\mathcal I$, $Y_S$ can be written as a generating function of admissible matchings of a certain graph $\mathcal{G}_S$. As a result, we write $Y_S$ as a Laurent polynomial with denominator consisting of variables from the given cluster. With certain conditions on $S$, the numerator of this expression also consists entirely of monomials in the cluster.

The plan of the paper is as follows. In \Cref{sec:Background}, we review background on graph LP algebras and snake graphs for Type $A$ cluster algebras. In \Cref{sec:construction} we describe the construction of the hypergraph $\cal G_S$ and discuss some of its basic properties. \Cref{sec:main_formula} is devoted to our main result, which is the expansion formula for $Y_S$ using admissible matchings of $\cal G_S$. Unlike the classical case, our combinatorial formula does not imply positivity for all cluster variables $Y_S$, so we give a classification in \Cref{sec:positivity}. In Section \ref{sec:TPath}, we show how our expansion formula vian admissible matchings of graphs can be rephrased in terms of hyper $T$-paths, as in our previous work \cite{rooted}. The next two sections are devoted to the proof of our expansion formula (Theorem \ref{thm:main}). We prove our formula for singleton sets in Section \ref{sec:SingeltonProof} and then prove the formula for general weakly rooted in Section \ref{sec:ProofGeneral}. Finally, in Section \ref{sec:Counting} we provide a determinantal formula (Theorem \ref{thm:Counting}) which gives the number of admissible matchings of the graph $\mathcal{G}_S$ for any weakly rooted set $S$.

\section{Background}\label{sec:Background}

\subsection{Graph LP Algebras}

A \emph{cluster algebra} is defined by a \emph{seed} consisting of a  collection of \emph{cluster variables} $\{x_1,\ldots,x_n\}$, called a \emph{cluster}, along with exchange polynomials $F_i(x_1,\ldots,x_n)$ associated to each cluster variable that satisfy certain properties. In particular, each $F_i$ is a binomial. When we \emph{mutate} a seed at some cluster variable, that cluster variable changes according the the exchange polynomials and all of the exchange polynomials also mutate.  This forms a new seed for the cluster algebra. See \cite{FZ1} for more details. An LP algebra has this same structure, but the conditions on the exchange polynomials $F_i$ are weakened. For example, the $F_i$ no longer are required to be binomial. We direct an interested reader to \cite{LP-12} for more details about general LP algebras.

In this paper, we will focus on graph LP algebras. Here, we review their definition and also introduce some notation which will be useful throughout the article. The original definition of graph LP algebras in \cite{lam2016linear} was in the more general setting of directed graphs, but we work in the context of undirected graphs.

\begin{defn}\label{def:GraphLPAlgebra}
Let $\Gamma = (V,E)$ be an undirected graph on $[n] := \{ 1, \dots, n \}$ and let $R = \mathbb{Z}[A_1, \dots, A_n]$ for algebraically independent variables $A_1,\ldots,A_n$. The \emph{graph LP algebra} $\mathcal{A}_{\Gamma}$ is the LP algebra generated by the initial seed with cluster $\{ X_1,\ldots,X_n \}$ and exchange polynomial $F_i = A_i + \sum_{(i,j) \in E} X_j$ associated to $X_i$ for each $1 \leq i \leq n$.
\end{defn}

For each $S\subseteq V(\Gamma)$, we can define a variable $Y_S$ as an expression in the variables $X_1,\dots,X_n$ (see Section 1.3 of~\cite{lam2016linear}). In particular, $Y_\emptyset=1$ and if $S$  has connected components $S_1,\dots,S_k$ then $Y_S=Y_{S_1}\dots Y_{S_k}$. The seeds of a graph LP algebra can then be described in terms of certain sets of vertex subsets called \emph{nested collections}.

\begin{defn}\label{def:nested}
Let $\Gamma$ be an undirected graph on $[n]$. A family of connected subsets $\mathcal{I} = \{ S_1, \dots, S_k \}$ of $[n]$ is called a \emph{nested collection} if
\begin{enumerate}
    \item for any $i,j \leq k$, either $S_i \subseteq S_j$, $S_j \subseteq S_i$, or $S_i \cap S_j = \emptyset$, and
    \item if $S_{i_1}, \dots, S_{i_\ell}$ are pairwise disjoint, then the connected components of $\cup_{j=1}^{\ell} S_{i_j}$ are exactly $S_{i_1}, \dots, S_{i_{\ell}}$.
\end{enumerate} 

We say that $\mathcal{I}$ is a \emph{maximal nested collection on $S \subseteq [n]$} if $\cup_{i=1}^k S_i = S$ and there is no $S' \subseteq S$ such that $\{ S_1, \dots, S_k, S' \}$ is a nested collection.  We will call maximal nested collections on $[n]$ simply \emph{maximal nested collections}.  See Figures~\ref{fig:nested} and~\ref{fig:cluster} for both examples and non-examples. 
\end{defn}

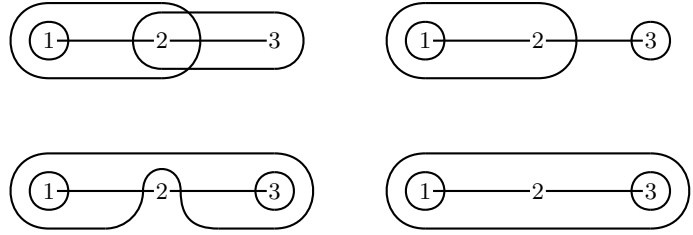
\begin{figure}
\centering
\begin{tikzpicture}[scale = 0.5]
	\begin{pgfonlayer}{nodelayer}
		\node [style=label-s] (0) at (-8, 2) {$1$};
		\node [style=label-s] (1) at (-5, 2) {$2$};
		\node [style=label-s] (2) at (-2, 2) {$3$};
		\node [style=label-s] (3) at (2, 2) {$1$};
		\node [style=label-s] (4) at (5, 2) {$2$};
		\node [style=label-s] (5) at (8, 2) {$3$};
		\node [style=label-s] (6) at (-8, -2) {$1$};
		\node [style=label-s] (7) at (-5, -2) {$2$};
		\node [style=label-s] (8) at (-2, -2) {$3$};
		\node [style=label-s] (9) at (2, -2) {$1$};
		\node [style=label-s] (10) at (5, -2) {$2$};
		\node [style=label-s] (11) at (8, -2) {$3$};
		\node [style=none] (12) at (-8, 2.5) {};
		\node [style=none] (13) at (-8, 1.5) {};
		\node [style=none] (14) at (-8, 3) {};
		\node [style=none] (15) at (-8, 1) {};
		\node [style=none] (16) at (-5, 3) {};
		\node [style=none] (17) at (-5, 1) {};
		\node [style=none] (18) at (-8, -1) {};
		\node [style=none] (19) at (-8, -3) {};
		\node [style=none] (20) at (-2, -1) {};
		\node [style=none] (21) at (-2, -3) {};
		\node [style=none] (22) at (-5, 2.75) {};
		\node [style=none] (23) at (-5, 1.25) {};
		\node [style=none] (24) at (-2, 2.75) {};
		\node [style=none] (25) at (-2, 1.25) {};
		\node [style=none] (26) at (2, 2.5) {};
		\node [style=none] (27) at (2, 1.5) {};
		\node [style=none] (28) at (8, 2.5) {};
		\node [style=none] (29) at (8, 1.5) {};
		\node [style=none] (30) at (2, -1.5) {};
		\node [style=none] (31) at (2, -2.5) {};
		\node [style=none] (32) at (8, -1.5) {};
		\node [style=none] (33) at (8, -2.5) {};
		\node [style=none] (34) at (-8, -1.5) {};
		\node [style=none] (35) at (-8, -2.5) {};
		\node [style=none] (36) at (-2, -1.5) {};
		\node [style=none] (37) at (-2, -2.5) {};
		\node [style=none] (38) at (2, 3) {};
		\node [style=none] (39) at (2, 1) {};
		\node [style=none] (40) at (5, 3) {};
		\node [style=none] (41) at (5, 1) {};
		\node [style=none] (42) at (2, -1) {};
		\node [style=none] (43) at (2, -3) {};
		\node [style=none] (44) at (8, -1) {};
		\node [style=none] (45) at (8, -3) {};
		\node [style=none] (46) at (-6.5, -3) {};
		\node [style=none] (47) at (-3.5, -3) {};
		\node [style=none] (48) at (-5.5, -2) {};
		\node [style=none] (49) at (-4.5, -2) {};
	\end{pgfonlayer}
	\begin{pgfonlayer}{edgelayer}
		\draw (0) to (1);
		\draw (1) to (2);
		\draw (3) to (4);
		\draw (4) to (5);
		\draw (6) to (7);
		\draw (7) to (8);
		\draw (9) to (10);
		\draw (10) to (11);
		\draw [bend left=90, looseness=1.75] (12.center) to (13.center);
		\draw [bend right=90, looseness=1.75] (12.center) to (13.center);
		\draw (14.center) to (16.center);
		\draw [bend left=90, looseness=1.75] (16.center) to (17.center);
		\draw (17.center) to (15.center);
		\draw [bend right=270, looseness=1.75] (15.center) to (14.center);
		\draw (18.center) to (20.center);
		\draw [bend left=90, looseness=1.75] (20.center) to (21.center);
		\draw [bend right=270, looseness=1.75] (19.center) to (18.center);
		\draw (22.center) to (24.center);
		\draw [bend left=90, looseness=1.75] (24.center) to (25.center);
		\draw (25.center) to (23.center);
		\draw [bend right=270, looseness=1.75] (23.center) to (22.center);
		\draw [bend left=90, looseness=1.75] (26.center) to (27.center);
		\draw [bend right=90, looseness=1.75] (26.center) to (27.center);
		\draw [bend left=90, looseness=1.75] (28.center) to (29.center);
		\draw [bend right=90, looseness=1.75] (28.center) to (29.center);
		\draw [bend left=90, looseness=1.75] (30.center) to (31.center);
		\draw [bend right=90, looseness=1.75] (30.center) to (31.center);
		\draw [bend left=90, looseness=1.75] (32.center) to (33.center);
		\draw [bend right=90, looseness=1.75] (32.center) to (33.center);
		\draw [bend left=90, looseness=1.75] (34.center) to (35.center);
		\draw [bend right=90, looseness=1.75] (34.center) to (35.center);
		\draw [bend left=90, looseness=1.75] (36.center) to (37.center);
		\draw [bend right=90, looseness=1.75] (36.center) to (37.center);
		\draw (38.center) to (40.center);
		\draw [bend left=90, looseness=1.75] (40.center) to (41.center);
		\draw (41.center) to (39.center);
		\draw [bend right=270, looseness=1.75] (39.center) to (38.center);
		\draw (42.center) to (44.center);
		\draw [bend left=90, looseness=1.75] (44.center) to (45.center);
		\draw (45.center) to (43.center);
		\draw [bend right=270, looseness=1.75] (43.center) to (42.center);
		\draw (19.center) to (46.center);
		\draw (47.center) to (21.center);
		\draw [bend left=90, looseness=2.00] (48.center) to (49.center);
		\draw [bend right=45] (46.center) to (48.center);
		\draw [bend right=45, looseness=1.25] (49.center) to (47.center);
	\end{pgfonlayer}
\end{tikzpicture}
\caption{The upper left collection is not a nested collection because the set $\{1,2\}$ and the set $\{2,3\}$ have nonempty intersection, yet neither is contained in the other. The upper right collection is not a nested collection because the set $\{1,2\}$ and the set $\{3\}$ are disjoint but $\{1,2\}\cup\{3\}=\{1,2,3\}$ which has only one connected component.  The lower left collection is not a nested collection because the set $\{1,3\}$ is not connected.  The lower right collection is a nested collection.}
\label{fig:nested}
\end{figure}

\begin{figure}
    \centering
    \begin{tikzpicture}[scale=0.7]
	\begin{pgfonlayer}{nodelayer}
		\node [style=label] (0) at (-9, 0) {$1$};
		\node [style=label] (1) at (-7, 0) {$2$};
		\node [style=label] (2) at (-5, 0) {$3$};
		\node [style=label] (3) at (-3, 0) {$4$};
		\node [style=label] (4) at (-1, 0) {$5$};
		\node [style=label] (5) at (1, 0) {$6$};
		\node [style=label] (6) at (3, 0) {$7$};
		\node [style=label] (7) at (-5, -1.75) {$8$};
		\node [style=label] (8) at (-3, -3) {$9$};
		\node [style=label] (9) at (-1, -1.75) {$0$};
		\node [style=none] (12) at (-5, -1.25) {};
		\node [style=none] (13) at (-5, -2.5) {};
		\node [style=none] (14) at (-7, 0.5) {};
		\node [style=none] (15) at (-7, -0.5) {};
		\node [style=none] (16) at (1, 0.5) {};
		\node [style=none] (17) at (1, -0.5) {};
		\node [style=none] (18) at (-8, 1.75) {};
		\node [style=none] (19) at (-8, -3.5) {};
		\node [style=none] (20) at (2.25, 1.75) {};
		\node [style=none] (21) at (2.25, -3.5) {};
		\node [style=none] (22) at (-1, 0.75) {};
		\node [style=none] (23) at (-1, -0.75) {};
		\node [style=none] (24) at (1, 0.75) {};
		\node [style=none] (25) at (1, -0.75) {};
		\node [style=none] (26) at (-9, 0.75) {};
		\node [style=none] (27) at (-9, -0.75) {};
		\node [style=none] (28) at (-7, 0.75) {};
		\node [style=none] (29) at (-7, -0.75) {};
		\node [style=none] (30) at (-1, 1.25) {};
		\node [style=none] (31) at (-1, -2.5) {};
		\node [style=none] (32) at (2.25, 1.25) {};
		\node [style=none] (33) at (2.25, -1.5) {};
		\node [style=none] (34) at (-0.25, -2) {};
		\node [style=none] (35) at (0.25, -1.5) {};
		\node [style=none] (36) at (-1.75, 1.5) {};
		\node [style=none] (37) at (-1.75, -2.75) {};
		\node [style=none] (38) at (2.25, 1.5) {};
		\node [style=none] (39) at (2.25, -1.75) {};
		\node [style=none] (40) at (0.25, -2) {};
		\node [style=none] (41) at (0.5, -1.75) {};
		\node [style=none] (42) at (-8, 2) {};
		\node [style=none] (43) at (-8, -3.75) {};
		\node [style=none] (44) at (2.25, 2) {};
		\node [style=none] (45) at (2.25, -3.75) {};
		\node [style=none] (46) at (-3.5, -3) {};
		\node [style=none] (47) at (-2.5, -3) {};
		\node [style=none] (48) at (-4, -3.5) {};
		\node [style=none] (49) at (-2, -3.5) {};
		\node [style=none] (50) at (-2, -1.5) {};
		\node [style=none] (51) at (-2, 0) {};
		\node [style=none] (52) at (-1, 1) {};
		\node [style=none] (53) at (1.5, 1) {};
		\node [style=none] (54) at (-0.5, -1.5) {};
		\node [style=none] (55) at (1.5, -1) {};
		\node [style=none] (56) at (-0.5, -1.5) {};
		\node [style=none] (57) at (0, -1) {};
	\end{pgfonlayer}
	\begin{pgfonlayer}{edgelayer}
		\draw (1) to (0);
		\draw (1) to (2);
		\draw (2) to (3);
		\draw (3) to (4);
		\draw (4) to (5);
		\draw (5) to (6);
		\draw (2) to (7);
		\draw (3) to (8);
		\draw (4) to (9);
		\draw [style=e8, bend left=90, looseness=1.75] (12.center) to (13.center);
		\draw [style=e8, bend right=90, looseness=1.75] (12.center) to (13.center);
		\draw [style=e2, bend left=90, looseness=1.75] (14.center) to (15.center);
		\draw [style=e2, bend right=90, looseness=1.75] (14.center) to (15.center);
		\draw [style=e6, bend left=90, looseness=1.75] (16.center) to (17.center);
		\draw [style=e6, bend right=90, looseness=1.75] (16.center) to (17.center);
		\draw [style=e3] (49.center)
			 to (21.center)
			 to [bend right=90, looseness=1.75] (20.center)
			 to (18.center)
			 to [bend right=90, looseness=1.75] (19.center)
			 to (48.center)
			 to [in=-90, out=0, looseness=1.25] (46.center)
			 to [bend left=90, looseness=1.50] (47.center)
			 to [in=-180, out=-90, looseness=1.25] cycle;
		\draw [style=e5] (22.center)
			 to [bend right=90, looseness=1.75] (23.center)
			 to (25.center)
			 to [bend right=90, looseness=1.75] (24.center)
			 to cycle;
		\draw [style=e1] (26.center) to (28.center);
		\draw [style=e1] (27.center) to (29.center);
		\draw [style=e1, bend right=90, looseness=1.75] (29.center) to (28.center);
		\draw [style=e1, bend right=90, looseness=1.75] (26.center) to (27.center);
		\draw [style=e7] (34.center)
			 to [in=0, out=-90, looseness=1.25] (31.center)
			 to [in=-180, out=-180, looseness=1.50] (30.center)
			 to (32.center)
			 to [bend left=90, looseness=1.75] (33.center)
			 to (35.center)
			 to [bend right=45, looseness=1.25] cycle;
		\draw [style=e4] (40.center)
			 to [in=0, out=-90, looseness=1.25] (37.center)
			 to [in=-180, out=-180, looseness=1.50] (36.center)
			 to (38.center)
			 to [bend left=90, looseness=1.75] (39.center)
			 to (41.center)
			 to [bend right=45, looseness=1.25] cycle;
		\draw [style=e9] (43.center)
			 to (45.center)
			 to [bend right=90, looseness=1.75] (44.center)
			 to (42.center)
			 to [bend right=90, looseness=1.75] cycle;
		\draw [style=e0] (52.center)
			 to [bend right=45, looseness=1.25] (51.center)
			 to (50.center)
			 to [bend right=90, looseness=1.75] (54.center)
			 to (56.center)
			 to [bend left=45, looseness=1.50] (57.center)
			 to (55.center)
			 to [bend right=90, looseness=1.50] (53.center)
			 to cycle;
	\end{pgfonlayer}
\end{tikzpicture}
    \caption{A graph $\Gamma$ with a maximal nested collection $\mathcal I$.  This graph and nested collection will be used as a running example throughout the paper. }
    \label{fig:cluster}
\end{figure}
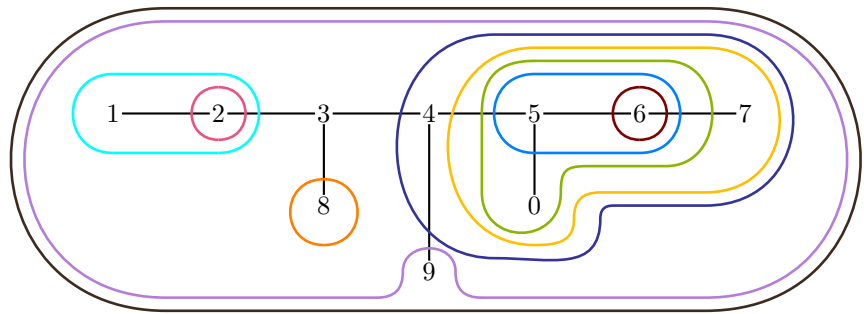

Lam and Pylyavskyy prove that the clusters of a graph LP algebra have the following structure. 

\begin{thm}[Theorem 1.1 \cite{lam2016linear}]\label{thm:ClustersInGraphLPAlgebra}
Let $\Gamma$ be an undirected graph on $[n]$, and let $\mathcal{A}_{\Gamma}$ be the associated graph LP algebra. Then, the clusters of $\mathcal{A}_\Gamma$ are of the form $\{X_{i_1}, \dots, X_{i_k} \} \cup \{ Y_S : S \in \mathcal{I} \}$ where $\mathcal{I}$ is a maximal nested collection on $[n] \backslash \{ i_1, \dots, i_k \}$.
\end{thm}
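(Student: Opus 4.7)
The strategy is to induct on the number of mutations applied to the initial seed $\{X_1,\dots,X_n\}$ of $\mathcal A_\Gamma$, which already fits the claimed form with $\{i_1,\dots,i_k\}=[n]$ and $\mathcal I=\emptyset$. There are two statements to verify: (a) mutation preserves the claimed form of a cluster, and (b) every cluster of the claimed form is reachable from the initial seed by some sequence of mutations. Part (a), combined with the base case, will show that reachable clusters have the stated form, while part (b) will show there are no spurious ``extra'' clusters of the stated form.

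For (a), suppose the current cluster is $\mathcal C=\{X_{i_1},\dots,X_{i_k}\}\cup\{Y_S:S\in\mathcal I\}$. The relevant exchange polynomials can be computed using the general LP mutation machinery of \cite{LP-12} together with the explicit graph-theoretic formulas for products $Y_S\cdot Y_{S'}$ of the form given in \cite{lam2016linear}. Mutating at an $X_{i_j}$ should yield $Y_{\{i_j\}}$; one then checks directly from Definition~\ref{def:nested} that $\mathcal I\cup\{\{i_j\}\}$ is a maximal nested collection on $[n]\setminus(\{i_1,\dots,i_k\}\setminus\{i_j\})$, since a singleton is automatically connected and trivially comparable under inclusion with every other connected set. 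Mutating at some $Y_S$ with $S\in\mathcal I$ should produce $Y_{S'}$, where $S'$ is the unique connected subset of $[n]\setminus\{i_1,\dots,i_k\}$ making $(\mathcal I\setminus\{S\})\cup\{S'\}$ a maximal nested collection on the same ground set --- the analogue of flipping a diagonal of a triangulation.

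For (b), I would induct on $|\mathcal I|$. Starting from the initial $X$-cluster, first apply mutations at $X$-variables to introduce singletons corresponding to the minimal elements of the target $\mathcal I$, and then use flip-type mutations (as in Case~2 of part (a)) to successively grow those singletons into the larger sets of $\mathcal I$. The chief obstacle is the Case~2 verification in part (a): one must simultaneously identify $S'$ combinatorially inside the nested complex of $\Gamma$ and match it against the algebraic output of the LP exchange polynomial. This requires a careful local analysis near $S$ --- in particular, distinguishing whether $S$ is a maximal element of $\mathcal I$ or sits strictly inside some larger $T\in\mathcal I$, and tracking how $S$ decomposes into its direct children in $\mathcal I$ --- in order to show that the ``hole'' created by removing $S$ admits exactly one admissible replacement connected subset.
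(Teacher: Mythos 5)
The paper does not prove this statement; it simply recalls Theorem 1.1 of Lam--Pylyavskyy \cite{lam2016linear} without argument, so there is no internal proof for your sketch to be compared against. Evaluating your sketch on its own terms, the overall two-part strategy (show mutation preserves the claimed form of a seed; show every claimed seed is reachable) is indeed the right framework and is essentially what Lam and Pylyavskyy carry out. However, your Case (a) for mutating at an $X$-variable is incorrect as stated.

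The issue is twofold. First, $\mathcal I\cup\{\{i_j\}\}$ is \emph{not} automatically a nested collection: condition (2) of Definition~\ref{def:nested} fails whenever $i_j$ is adjacent in $\Gamma$ to some $S\in\mathcal I$, because then $\{i_j\}\cup S$ is connected and is not the disjoint union of its two ``components'' $\{i_j\}$ and $S$. Comparability under inclusion is not the only requirement; you have elided the union condition, which is exactly where the graph structure enters. Second, and consequently, mutating at $X_{i_j}$ does \emph{not} in general produce $Y_{\{i_j\}}$. For a concrete counterexample take $\Gamma$ the path $1-2$: starting from $\{X_1,X_2\}$ and mutating at $X_1$ gives $\{Y_{\{1\}},X_2\}$, and then mutating at $X_2$ produces $(A_1A_2+A_1X_1+A_2X_2)/(X_1X_2)$, which is $Y_{\{1,2\}}$, not $Y_{\{2\}}=(A_2+X_1)/X_2$. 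In general, mutation at $X_{i_j}$ produces $Y_S$ where $S$ is the (unique) connected set containing $i_j$ that completes $\mathcal I$ to a maximal nested collection on the new ground set $([n]\setminus\{i_1,\dots,i_k\})\cup\{i_j\}$; this $S$ can be strictly larger than $\{i_j\}$. Identifying $S$ and verifying the LP exchange relation produces exactly $Y_S$ requires the same kind of local combinatorial analysis and tracking of exchange polynomials that you (correctly) flagged as the hard step in Case~2, so your Case~1 is not actually the easy case you present it as --- it is a special instance of the same difficulty.
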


If two subsets, $I,J$, of $ V(\Gamma)$ are such that $\{I,J\}$ is a nested collection, we say they are \emph{compatible}. Otherwise, they are \emph{incompatible}.  In other words, sets $I$ and $J$ are compatible if and only if $Y_I$ and $Y_J$ appear together in a cluster of $\mathcal{A}_\Gamma$.

We will focus our work on clusters of the form $\{ Y_S : S \in \mathcal{I} \}$ where $\mathcal{I}$ is a maximal nested collection.  Therefore, our work will be focused on understanding variables of the form $Y_S$.
To simplify notation, we  often write $Y_{s_1 \cdots s_r}$ as shorthand for $Y_{\{ s_1, \dots, s_r \}}$ and $Ij$ for $I\cup\{j\}$ with $I\subseteq V(\Gamma)$, $j\in V(\Gamma)$ and $j\not\in I$.  We also refer to vertex subsets of size one as \emph{singletons}.

Following \cite{lam2016linear}, we define a partial order $ <_{\mathcal{I}}$ on $V(\Gamma)$ where $i <_{\mathcal{I}} j$ whenever $i \in I_j$. As an example, the maximal nested collection $\mathcal{I}$ in Figure \ref{fig:cluster}, we have $8 <_{\mathcal{I}} 3$.
Every maximal nested collection $\mathcal{I}$ on $\Gamma$ induces a bijection between the sets in $\mathcal{I}$ and $V(\Gamma)$ given by sending $v \in V(\Gamma)$ to $I_v$, the smallest set in $\mathcal{I}$ containing $v$. Going the other direction, let $m_{\mathcal{I}}(S)$ denote the largest element of a connected set $S$ with respect to $\mathcal{I}$. That is, $m_{\mathcal{I}}(I_v) = v$.   For example, $I_3 = \{2,3,4,5,7,8\}$, and $m_{\mathcal{I}}(\{2,5\}) = 2$ for the maximal nested collection $\mathcal{I}$ in Figure \ref{fig:cluster}. The bijection these induce between $V(\Gamma)$ and a maximal nested collection $\mathcal{I}$ is also described in Proposition 7.6 in \cite{postnikov2009permutohedra} and Lemma 2.4 in \cite{barnard2021lattices}.

For the remainder of this article, we will restrict our attention to the case where $\Gamma$ is a tree. Given a tree $\Gamma$, we build a larger tree $\Gamma'$ as follows. For each leaf $\ell \in \Gamma$, we add a vertex $\ell'$ and an edge $(\ell',\ell)$. Thus, every vertex from the original graph $\Gamma$ has degree at least two and we retain the same number of leaves. We will use $\deg_\Gamma$ to denote degree in $\Gamma$, and $\deg_{\Gamma'}$ to refer to degree in $\Gamma'$.  We extend the partial order from $\mathcal{I}$ on $V(\Gamma)$ to one on $V(\Gamma')$ by putting $m_{\mathcal I}(V(\Gamma)) \lessdot_{\mathcal{I}} v'$ for all leaves $v$ of $\Gamma$.

\begin{ex}
Consider the graph $\Gamma$ with maximal nested collection shown in \Cref{fig:cluster}.  The graph $\Gamma'$ is as follows.
\begin{center}
    \begin{tikzpicture}[scale=0.6]
		\node (0) at (-9, 0) {$1$};
		\node (1) at (-7, 0) {$2$};
		\node (2) at (-5, 0) {$3$};
		\node (3) at (-3, 0) {$4$};
		\node (4) at (-1, 0) {$5$};
		\node (5) at (1, 0) {$6$};
		\node (6) at (3, 0) {$7$};
		\node (7) at (-5, -1.75) {$8$};
		\node (8) at (-3, -1.75) {$9$};
		\node (9) at (-1, -1.75) {$0$};
		\node (10) at (-11, 0) {$1'$};
		\node  (11) at (5, 0) {$7'$};
		\node (12) at (-5, -3.5) {$8'$};
		\node (13) at (-3, -3.5) {$9'$};
		\node (14) at (-1, -3.5) {$0'$};
		\draw (1) to (0);
		\draw (1) to (2);
		\draw (2) to (3);
		\draw (3) to (4);
		\draw (4) to (5);
		\draw (5) to (6);
		\draw (2) to (7);
		\draw (3) to (8);
		\draw (4) to (9);
		\draw (10) to (0);
		\draw (7) to (12);
		\draw (13) to (8);
		\draw (14) to (9);
		\draw (6) to (11);
\end{tikzpicture}
\end{center}

The partial order on vertices of $\Gamma$, is given by the Hasse diagram below on the left.  The extension of this partial order to the vertices of $\Gamma'$ is given by the Hasse diagram on the right.
\begin{center}
\begin{tikzpicture}
\node (1) at (-1,4) {$1$};
\node (2) at (-1,3) {$2$};
\node (3) at (0,5) {$3$};
\node (4) at (0,4) {$4$};
\node (5) at (0,1) {$5$};
\node (6) at (0,0) {$6$};
\node (7) at (0,3) {$7$};
\node (8) at (1,4) {$8$};
\node (9) at (0,6) {$9$};
\node (0) at (0,2) {$0$};
\draw (9) to (3);
\draw (3) to (1);
\draw (3) to (8);
\draw (3) to (4);
\draw (1) to (2);
\draw (4) to (7);
\draw (7) to (0);
\draw (0) to (5);
\draw (5) to (6);
\end{tikzpicture}
\hspace{1in}
\begin{tikzpicture}
\node (1) at (-1,4) {$1$};
\node (2) at (-1,3) {$2$};
\node (3) at (0,5) {$3$};
\node (4) at (0,4) {$4$};
\node (5) at (0,1) {$5$};
\node (6) at (0,0) {$6$};
\node (7) at (0,3) {$7$};
\node (8) at (1,4) {$8$};
\node (9) at (0,6) {$9$};
\node (0) at (0,2) {$0$};
\node (1') at (-2,7) {$1'$};
\node (8') at (0,7) {$8'$};
\node (9') at (1,7) {$9'$};
\node (0') at (2,7) {$0'$};
\node (7') at (-1,7) {$7'$};
\draw (9) to (3);
\draw (3) to (1);
\draw (3) to (8);
\draw (3) to (4);
\draw (1) to (2);
\draw (4) to (7);
\draw (7) to (0);
\draw (0) to (5);
\draw (5) to (6);
\draw (9) to (1');
\draw (9) to (8');
\draw (9) to (9');
\draw (9) to (0');
\draw (9) to (7');
\end{tikzpicture}
\end{center}
\end{ex}

We will use the following notation throughout the rest of the paper.
\begin{itemize}
\item Define $\Gamma^{\mathcal{I}}_{>v}$ as the subset of vertices of $V(\Gamma)$ that are larger than $v$ and define $\Gamma^{\mathcal{I}}_{< v}$ as the subset of vertices of $V(\Gamma)$ that are smaller than $v$. We also write $(\Gamma')^{\mathcal{I}}_{*}$ to denote these concepts for the partial order extended to $\Gamma'$.
\item Define $\mathcal{C}_v$ as the subset of vertices of $V(\Gamma)$ that are covered by $v$.
\item Define $N_\Gamma(v) := \{w \in \Gamma: (v,w) \in E(\Gamma)\}$ and define $N_{\Gamma'}(v)$ similarly on $\Gamma'$. If $S \subseteq V(\Gamma)$, we define $N_\Gamma(S) := (\cup_{v \in S} N_{\Gamma}(v)) \backslash S$.
\item Given $x,y \in V(\Gamma')$, let $[x,y]$ be the set of vertices on the unique path between $x$ and $y$, including $x$ and $y$. This path exists and is unique since $\Gamma'$ is a tree. 
\end{itemize}

\begin{ex}
Continuing with the graph and nested collection from \Cref{fig:cluster}, we have:
\begin{itemize}
\item $\Gamma^{\mathcal{I}}_{>3}=\{9\}$, $(\Gamma')^{\mathcal{I}}_{>3}=\{9,1',7',8',9',0'\}$ and $\Gamma^{\mathcal{I}}_{<3}=(\Gamma')^{\mathcal{I}}_{<3}=\{1,4,8,2,7,0,5,6\}$
\item $\mathcal{C}_3=\{1,4,8\}$
\item $N_{\Gamma}(3)=\{2,4,8\}$ and $N_{\Gamma}(\{3,4\})=\{2,5,8,9\}$
\item $[3,9]=\{3,4,9\}$ and $[3,9']=\{3,4,9,9'\}$
\end{itemize}
\end{ex}

\subsection{Snake Graphs for Polygon Cluster Algebras}\label{subsec:snake-graphs}

We now review the construction of snake graphs for surface cluster algebras from a marked disk (i.e. a polygon), i.e. type-$A$ cluster algebras.  In this case, each arc of the polygon corresponds to a cluster variable, and each triangulation corresponds to a cluster.

To each arc in a triangulated polygon, we will associate a particular snake graph whose edges are weighted by certain cluster variables. 

\begin{defn} \label{def:snake}
Let $T$ be a triangulation of an inscribed polygon and $\gamma$ an arc between vertices $a$ and $b$ of the polygon such that $a$ and $b$ are not neighboring vertices and $\gamma$ is not already in $T$. Orient $\gamma$ to travel from $a$ to $b$.
Let $\tau_1,\tau_2,\cdots,\tau_k$ denote, in order, the set of arcs in $T$ crossed by $\gamma$. For each $\tau_i$, we draw a square \emph{tile} $S_{\tau_i}$, which we refer to as the tile labeled by $\tau_i$, that corresponds to gluing the two triangles in $T$ which are bordered by $\tau_i$. We include a dashed edge labeled $\tau_i$ in the tile $S_{\tau_i}$  which we call a \emph{diagonal}.
Each tile has two possible planar embeddings. For odd $i$, we choose the embedding of $S_{\tau_i}$ with orientation that matches $T$. For even $i$, we choose the embedding of $S_{\tau_i}$ with orientation opposite to $T$. We draw these tiles so the diagonals are always between northwest and southeast corners of the tiles.  For each pair of adjacent arcs $\tau_i$ and $\tau_{i+1}$, the corresponding tiles $S_{\tau_i},S_{\tau_{i+1}}$ share a common edge. 
We \emph{glue} $S_{\tau_i}$ and $S_{\tau_{i+1}}$ by identifying this common edge. To produce the \emph{snake graph} $\mathcal{G}_{\gamma}$ of $\gamma$, we repeat this gluing operation for all consecutive pairs of arcs crossed by $\gamma$.
\end{defn}

\begin {ex}
Figure \ref{fig:snake} shows an example of the snake graph associated to an arc in an octagon. The vertices of the snake graph are labeled by the corresponding vertices in the polygon. Although edge labels are not shown in the figure, assume each edge is weighted with a cluster variable $x_{ij}$.
\begin{figure}[ht]
    \centering
   \begin {center}
    \begin {tikzpicture}[scale=1.6]
        \coordinate (v7) at (1,0);
        \coordinate (v8) at ({cos(1*2*pi/8 r)}, {sin(1*2*pi/8 r)});
        \coordinate (v1) at ({cos(2*2*pi/8 r)}, {sin(2*2*pi/8 r)});
        \coordinate (v2) at ({cos(3*2*pi/8 r)}, {sin(3*2*pi/8 r)});
        \coordinate (v3) at ({cos(4*2*pi/8 r)}, {sin(4*2*pi/8 r)});
        \coordinate (v4) at ({cos(5*2*pi/8 r)}, {sin(5*2*pi/8 r)});
        \coordinate (v5) at ({cos(6*2*pi/8 r)}, {sin(6*2*pi/8 r)});
        \coordinate (v6) at ({cos(7*2*pi/8 r)}, {sin(7*2*pi/8 r)});
        
        \node  at (v1) [above]  {$1$};
        \node  at (v2) [above left]  {$2$};
        \node  at (v3) [left]  {$3$};
        \node  at (v4) [below left]  {$4$};
        \node  at (v5) [below]  {$5$};
        \node  at (v6) [below right]  {$6$};
        \node  at (v7) [right]  {$7$};
        \node  at (v8) [above right]  {$8$};

        \draw (v1) -- (v7);       
        \draw (v2) -- (v7);
        \draw (v7) -- (v3);
        \draw (v7) -- (v4);
        \draw (v4) -- (v6);
        \draw[dashed] (0,-1) -- (v8);

        \draw (v1) -- (v2) -- (v3) -- (v4) -- (v5) -- (v6) -- (v7) -- (v8) -- cycle;

        \begin {scope}[shift={(2.5,-0.5)}, scale=0.6]
                \draw [thick] (0,0) node [below left] {$5$} -- (3,0) node [below right] {$7$} -- (3,1) node [below right] {$2$} -- (4,1) node [below right] {$7$} -- (4,2) node [above right] {$8$} -- (2,2) node [above left] {$7 $} -- (2,1) node [above left] {$3$} -- (0,1) node [above left] {$4$} -- cycle;
                \draw [thick](1,0) -- (1,1);
                \draw [thick](2,0) -- (2,1) -- (3,1);
                \draw [thick](3,1) -- (3,2);

                \draw (1,0) node[below] {6};
                \draw (1,1) node[above] {7};
                \draw (2,0) node[below] {4};
                \draw (3,2) node[above] {1};
\foreach \i in {0,1,2}{
                \draw [dashed] (\i+1,0) -- (\i,1);}
\foreach \i in {2,3}{
                \draw [dashed] (\i+1,1) -- (\i,2);}

        \end {scope}

    \end {tikzpicture}
    \end {center}

    \caption{The snake graph $\mathcal{G}_\gamma$ for $\gamma=(5,8)$.
    }
    \label{fig:snake}
\end{figure}
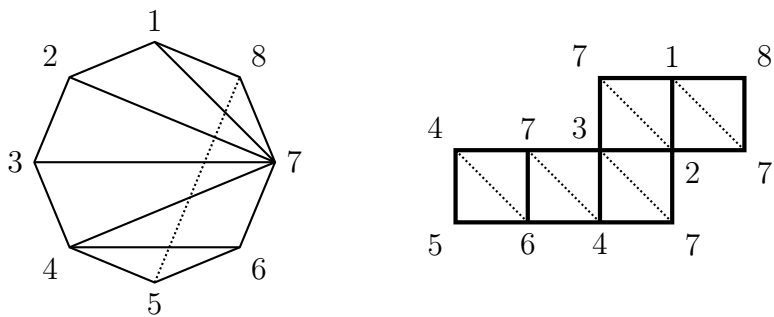
\end {ex}

    A \emph{perfect matching} of $\mathcal{G}$, also called a \emph{dimer cover}, is a subset $M$ of the edges of $\mathcal{G}$, such that each vertex is incident to exactly one edge of $M$.  For a perfect matching $M$, $\mathrm{wt}(M)$ is defined as the product of the weights of the edges in $M$. Note that we do not consider the diagonals in our snake graph to be edges of the graph, so they cannot be used in matchings. 

    \begin {thm}[\cite{ms10} Theorem 3.1] \label{thm:dimer}
    Let $T$ be a triangulation of a polygon, and let $\gamma$ be an arc not in $T$. Then the cluster variable expansion $x_\gamma$ is given by
    \[ x_\gamma = \frac{1}{x_{i_1}x_{i_2} \cdots x_{i_k}} \sum_{M \in \mathcal{P}_\gamma} \mathrm{wt}(M) \]
    where $\mathcal{P}_\gamma$ is the set of perfect matchings of $\mathcal{G}_\gamma$ and $\tau_{i_1},\ldots,\tau_{i_k}$ are the arcs in T which label diagonals in $\mathcal{G}_{\gamma}$.
\end {thm}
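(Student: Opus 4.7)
The plan is to prove this by induction on $k$, the number of arcs of $T$ crossed by $\gamma$. The two key ingredients are the Ptolemy exchange relation for type-$A$ cluster variables and the compatibility of the snake graph construction with a flip of the triangulation.

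For the base case $k=1$, the arc $\gamma$ together with $\tau_{i_1}$ forms the two diagonals of a quadrilateral whose sides are labeled, say, $a, b, c, d$ with $a,c$ opposite and $b,d$ opposite. The snake graph $\mathcal{G}_\gamma$ is a single square tile whose four boundary edges carry weights $x_a, x_b, x_c, x_d$; this tile admits exactly two perfect matchings, with weights $x_a x_c$ and $x_b x_d$. Combining these with the Ptolemy relation $x_\gamma x_{\tau_{i_1}} = x_a x_c + x_b x_d$ yields the desired formula after dividing by $x_{\tau_{i_1}}$.

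For the inductive step, let $\tau_{i_1}$ be the first arc crossed by $\gamma$. Flipping $\tau_{i_1}$ in $T$ produces a new triangulation $T'$ in which $\gamma$ can be resolved, via Ptolemy, as a linear combination $x_\gamma = x_{\tau_{i_1}}^{-1}\bigl(x_a x_{\gamma'} + x_b x_{\gamma''}\bigr)$ where $\gamma'$ and $\gamma''$ cross strictly fewer arcs of $T$, and $a,b$ are sides of the quadrilateral around $\tau_{i_1}$. By the inductive hypothesis each of $x_{\gamma'}$ and $x_{\gamma''}$ is a weighted sum over perfect matchings of its snake graph, divided by the product of the cluster variables on its diagonals.

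The heart of the argument is then a weight-preserving bijection between $\mathcal{P}_\gamma$ and the disjoint union $\mathcal{P}_{\gamma'} \sqcup \mathcal{P}_{\gamma''}$, up to the extra boundary factors $x_a$ and $x_b$. Concretely, the first tile $S_{\tau_{i_1}}$ of $\mathcal{G}_\gamma$ is attached to the rest of the snake graph along a single edge; the perfect matchings of $\mathcal{G}_\gamma$ split into two classes according to which of the two unglued non-opposite edges of $S_{\tau_{i_1}}$ they contain, and in each class the restriction to $\mathcal{G}_\gamma \setminus S_{\tau_{i_1}}$ is a perfect matching of the snake graph of $\gamma'$ (respectively $\gamma''$), while the chosen edge in $S_{\tau_{i_1}}$ contributes exactly the boundary weight $x_a$ (respectively $x_b$).

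The main obstacle will be the combinatorial bookkeeping in this bijection: because Definition \ref{def:snake} alternates the planar embedding of odd- and even-indexed tiles, one must verify that removing $S_{\tau_{i_1}}$ shifts the parity of the remaining tiles in a way consistent with the orientation rule, and that the glued edge of $S_{\tau_{i_1}}$ receives the correct weight relative to the flip $\tau_{i_1} \mapsto \tau_{i_1}^{\ast}$. Once this compatibility is checked, multiplying by $x_{\tau_{i_1}}^{-1}$ and summing the two cases reassembles the expression on the right-hand side, giving the claimed Laurent expansion with denominator $x_{\tau_{i_1}} \cdots x_{\tau_{i_k}}$.
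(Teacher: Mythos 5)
This theorem is cited from Musiker and Schiffler \cite{ms10} and is not reproved in the present paper, so I assess your argument on its own merits. Your strategy --- induction on the number $k$ of crossings, resolving $\gamma$ at the first crossed arc $\tau_{i_1}$ via the skein relation $x_\gamma x_{\tau_{i_1}} = x_a x_{\gamma'} + x_b x_{\gamma''}$, and matching that against a two-way partition of the perfect matchings of $\mathcal{G}_\gamma$ --- is the right one and is essentially the route the original proof takes. The base case is fine, the claim that $\gamma'$ and $\gamma''$ both cross strictly fewer arcs of $T$ is correct, and the flip of $\tau_{i_1}$ is an unnecessary detour, since the skein relation applies directly in $T$.

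The description of the matching partition, however, contains a real error. Restricted to the first tile $S_{\tau_{i_1}}$, a perfect matching of $\mathcal{G}_\gamma$ must cover the corner of $S_{\tau_{i_1}}$ diagonally opposite the glued edge exactly once, so it contains \emph{either} the edge opposite the glued edge \emph{or both} of the two unglued non-opposite edges, never exactly one of the two non-opposite edges (the two non-opposite edges share that corner). Thus the phrase ``according to which of the two unglued non-opposite edges they contain'' does not define a partition; the correct dichotomy is ``opposite edge in $M$'' versus ``both non-opposite edges in $M$,'' and the two classes behave asymmetrically. In the first class the restriction to $\mathcal{G}_\gamma \setminus S_{\tau_{i_1}}$ is indeed a perfect matching of $\mathcal{G}_{\gamma'}$, which has exactly $k-1$ tiles. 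In the second class it is not: the endpoints of the glued edge are already covered inside $S_{\tau_{i_1}}$, so after deleting them --- and, if the snake graph turns at $S_{\tau_{i_2}}$, a cascade of further vertices forced by degree-one constraints --- one lands in the strictly smaller snake graph $\mathcal{G}_{\gamma''}$, which can have far fewer than $k-1$ tiles. Correspondingly the weight picked up from the first tile and the cascade is a product that includes $x_{\tau_{i_2}}$ (and possibly further $x_{\tau_{i_j}}$'s), not the single factor $x_b$ you claim; those extra factors are precisely what cancel the missing terms in the shorter inductive denominator for $\gamma''$. With the split and bookkeeping stated correctly the recursion does close against the skein relation, but the bijection lemma as you have written it is false and must be replaced before the induction is valid.
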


The following definition from snake graphs for polygon cluster algebras will be useful later.  We say a snake graph $\mathcal{G}_\gamma$ has a \emph{zig-zag} shape if there are no consecutive tiles $S_{\tau_{i-1}},S_{\tau_i},S_{\tau_{i+1}}$ such that $S_{\tau_{i-1}}$ and $S_{\tau_{i+1}}$ share parallel edges of $S_{\tau_i}$. For example, the snake graph in Figure \ref{fig:snake} does not have a zig-zag shape, but it would if we removed the leftmost tile. 

\section{Construction}\label{sec:construction}

This section outlines the construction of snake graphs for a class of sets we call \emph{weakly rooted} with respect to a maximal nested collection $\mathcal I$ on a tree $\Gamma$.  The motivation of our construction comes from the simplest type of graph LP algebra: the case where $\Gamma$ is a path.  In this case, the subalgebra of graph LP algebra spanned by the variables $\{Y_S\}_{S\subseteq V(\Gamma)}$ is in fact a type A cluster algebra.  We can recover the snake graph construction discussed in Section~\ref{subsec:snake-graphs} for a graph LP algebra from a path graph $\Gamma$ on $[n]$ by:
\begin{enumerate}
\item Constructing $\Gamma'$
\item Creating a polygon from $\Gamma'$ by adding an edge weighted $Y_{[n]}$ between the two leaves
\item Adding an internal arc weighted $Y_I$  in the polygon between the two elements of $N_{\Gamma'}(I)$ for each $I \in \mathcal{I}$
\item Proceeding to make snake graphs as usual from this triangulation, where the snake graph corresponding to the arc between vertices $x$ and $y$ in the polygon corresponds to the set $[x,y]\setminus\{x,y\}$ in $\Gamma'$.  (Note that even though we construct this set in $\Gamma'$ it will only contain elements that are vertices of $\Gamma$.)
\end{enumerate}

It follows from the correspondence between the subalgebra of the graph LP algebra and a type $A$ cluster algebra that the above process will give a snake graph which encodes the expansion of $Y_{[x,y]\setminus\{x,y\}}$.

For a graph LP algebra from a tree $\Gamma$, we can draw inspiration from the above method.  If we delete everything in $\Gamma$ except for the path between two leaves, we can make a snake graph for a set of vertices on this path in a similar way.  This is less straightforward than in the previous setting because there may be sets in $\mathcal I$ that differ only on the part of $\Gamma$ that has been deleted, leading to multiple internal arcs in our triangulation between the same two vertices.  We are able to handle this in the case where we are making the snake graph for a singleton set $\{v\}$ using the following heuristic: if $v$ is not a neighbor of any such set, we only consider the arc corresponding to the smallest set, and otherwise we make copies of the vertex shared by the two arcs to allow each internal arc to be between different pairs of vertices.  Once we have made all the snake graphs for a singleton set arising from different paths in $\Gamma$, we can identify edges and vertices that the different graphs have in common to get a single snake graph for the set.  For a larger weakly rooted set $S$, we get a snake graph for $Y_S$ by combining snake graphs for the individual vertices and then possibly deleting a portion of the graph.

For the rest of this paper, let $\Gamma$ be a tree with maximal nested collection $\mathcal I$ and $\Gamma'$ be the extended graph described Section \ref{sec:Background}. We will use the same symbols to denote a vertex in $\Gamma'$ and in a snake graph as it will be clear which graph contains the vertex. 

\subsection{Component Snake Graphs for Singleton Sets}
\label{subsec:SingletonConstruction1}

In \Cref{subsec:SingletonConstruction2}, we will describe how to construct a snake graph for a singleton set $\{v\}$ for some $v\in V(\Gamma)$.  However, in the case where $\{v\}\not\in \mathcal I$ we first need some building blocks that we call \emph{component snake graphs}.  In this section we describe the construction of the component snake graphs.  We will have two running examples, both using the graph and maximal nested collection from \Cref{fig:cluster}.  Our first running example will be when $v=4$ and the second will be when $v=3$.

Let $N_{\Gamma'}(v) = \{a_1,\ldots,a_p\}$. Note that $p = \deg_{\Gamma'}(v) \geq 2$ because every vertex from $\Gamma$ has degree at least 2 in $\Gamma'$. Assume that the elements of $N_{\Gamma'}(v)$ are labeled such that $\{a_1,\ldots,a_r\} = N_{\Gamma'}(v) \cap (\Gamma')^{\mathcal{I}}_{<v}$ (and necessarily, $\{a_{r+1},\ldots,a_p\} =  N_{\Gamma'}(v) \cap (\Gamma')^{\mathcal{I}}_{>v}$). Since $\{v\}\notin\mathcal I$, it must be the case that $N_{\Gamma'}(v) \cap (\Gamma')^{\mathcal{I}}_{<v}$ is nonempty, and therefore $r\geq 1$.  For $1\leq i\leq p$, let $c_i = \vert \{I \in \mathcal{I} : a_i\in I, v\not\in I\} \vert$. Index the sets counted by $c_i$ as $I^i_j$ such that $I_{a_i} = I^i_{c_i} \subseteq I^i_{c_i-1} \subseteq \cdots \subseteq I^i_1$. Note that $c_i = 0$ for all $r+1 \leq i \leq p$. For each $1 \leq i \leq r, 1 \leq j \leq c_i$, let $a_j^i = m_{\mathcal{I}}(I_j^i)$. When $v$ is ambiguous, we will write $a_i(v),\ c_i(v),\ I_j^i(v)$, and $a_j^i(v)$.

\begin{ex}
If $v = 4$, we have $p=3$ and $r=1$.  Since 5 is the only neighbor of 4 less than 4, $a_1$ must be 5.  We can have $a_2$ and $a_3$ as 3 and 9 in either order; let's choose $a_2=3$, and $a_3=9$.
Since there are 3 sets that contain 5 but not 4, $c_1=3$.  These sets are $\{5,6\}\subseteq \{5,6,0\}\subseteq \{5,6,7,0\}$, so we have $I^1_3=\{5,6\}$, $I^1_2=\{5,6,0\}$, and $I^1_1=\{5,6,7,0\}$.  We also have $a^1_3= m_{\mathcal{I}}(\{5,6\})=5$, $a^1_2=m_{\mathcal{I}}(\{5,6,0\})=0$, and $a^1_1 = m_{\mathcal{I}}(\{5,6,7,0\})=7$.
Since there are no sets that contain 3 but not 4 or contain 9 but not 4, $c_2=c_3=0$.
\end{ex}

\begin{ex}
If $v=3$, we have $p=3$ and $r=3$.  Since 8, 2, and 4 are all neighbors of 3 that are less than 3, we can choose which to denote as $a_1, a_2$, and $a_3$.  Let's choose $a_1=8$, $a_2=2$, and $a_3=4$.
Since there is 1 set that contains 8 but not 3, $c_1=1$.  This set is $\{8\}=I_8$, so we have $I^1_1=\{8\}$ and $a^1_1=8$.  Since there are 2 sets that contain 2 but not 3, $c_1=2$.  These sets are $\{2\}\subseteq \{1,2\}$, so we have $I^2_2=\{2\}$ and $I^2_1=\{1,2\}$.  We also get that $a^2_2=2$ as $\{2\}=I_2$ and $a^2_1=1$ as $\{1,2\}=I_1$.
Since there is 1 set that contain 4 but not 3, $c_3=1$.  This set is $\{4,5,6,7,0\}=I_4$, so we have $I^3_1=\{4,5,6,7,0\}$ and $a^3_1=4$.
\end{ex}

Let $A = \{\ell \in \Gamma' : \deg_{\Gamma'}(\ell) = 1 \text{ and } a_1 \in [v,\ell]\}$ be the set of leaves of $\Gamma'$ such that the path between each leaf in $A$ and $v$ contains $a_1$. Let $B = \{\ell \in \Gamma' : \deg_{\Gamma'}(\ell) = 1 \text{ and } a_1 \notin [v,\ell]\}$ be the complement of $A$ in the set of leaves of $\Gamma'$.  
Fix $\ell \in A, k \in B$, and let $a_i$ be the unique vertex in $N_{\Gamma'}(v) \cap [v,k]$.
Create a $(c_1+c_i+3)$-gon $Q_{\ell,k}$. 
For $1 \leq j \leq c_1$, let $b_{1,j}^{\ell,k}$ denote the vertex other than $v$ in $N_{\Gamma'}(I_j^1) \cap [\ell,k]$, and for $1 \leq j \leq c_i$, let $b_{i,j}^{\ell,k}$ denote the vertex other than $v$ in  $N_{\Gamma'}(I_j^i) \cap [\ell,k]$. 
Label the vertices of $Q_{\ell,k}$, in cyclic order, as $b_{i,1}^{\ell,k},\cdots,b_{i,c_i-1}^{\ell,k},b_{i,c_i}^{\ell,k},a_i,v, a_1, b_{1,c_1}, b_{1,c_1-1}^{\ell,k},\cdots,b_{1,1}^{\ell,k}$.

\begin{ex}
For $v=4$, we have $A=\{7',0'\}$ and $B=\{1',8',9'\}$.  The following are the polygons $Q_{\ell,k}$ for each pair $(\ell,k)\in A\times B$.
\begin{center}
\begin{tikzpicture}[scale = 0.6]
	\begin{pgfonlayer}{nodelayer}
		\node [style=label] (0) at (-7.5, 4) {$7$};
		\node [style=label] (1) at (-9.25, 3) {$7'$};
		\node [style=label] (2) at (-9.25, 1) {$3$};
		\node [style=label] (3) at (-7.5, 0) {$4$};
		\node [style=label] (4) at (-5.75, 1) {$5$};
		\node [style=label] (5) at (-5.75, 3) {$7$};
		\node [style=label] (6) at (-2.5, 4) {$7$};
		\node [style=label] (7) at (-4.25, 3) {$7'$};
		\node [style=label] (8) at (-4.25, 1) {$9$};
		\node [style=label] (9) at (-2.5, 0) {$4$};
		\node [style=label] (10) at (-0.75, 1) {$5$};
		\node [style=label] (11) at (-0.75, 3) {$7$};
		\node [style=label] (12) at (2.5, 4) {$0'$};
		\node [style=label] (13) at (0.75, 3) {$0'$};
		\node [style=label] (14) at (0.75, 1) {$3$};
		\node [style=label] (15) at (2.5, 0) {$4$};
		\node [style=label] (16) at (4.25, 1) {$5$};
		\node [style=label] (17) at (4.25, 3) {$0$};
		\node [style=label] (18) at (7.5, 4) {$0'$};
		\node [style=label] (19) at (5.75, 3) {$0'$};
		\node [style=label] (20) at (5.75, 1) {$9$};
		\node [style=label] (21) at (7.5, 0) {$4$};
		\node [style=label] (22) at (9.25, 1) {$5$};
		\node [style=label] (23) at (9.25, 3) {$0$};
		\node [style=none] (24) at (-7.5, 5) {$Q_{7',1'}=Q_{7',8'}$};
		\node [style=none] (25) at (-2.5, 5) {$Q_{7',9'}$};
		\node [style=none] (26) at (2.5, 5) {$Q_{0',1'}=Q_{0',8'}$};
		\node [style=none] (27) at (7.5, 5) {$Q_{0',9'}$};
	\end{pgfonlayer}
	\begin{pgfonlayer}{edgelayer}
		\draw (1) to (0);
		\draw (0) to (5);
		\draw (5) to (4);
		\draw (4) to (3);
		\draw (3) to (2);
		\draw (1) to (2);
		\draw (7) to (6);
		\draw (6) to (11);
		\draw (11) to (10);
		\draw (10) to (9);
		\draw (9) to (8);
		\draw (7) to (8);
		\draw (13) to (12);
		\draw (12) to (17);
		\draw (17) to (16);
		\draw (16) to (15);
		\draw (15) to (14);
		\draw (13) to (14);
		\draw (19) to (18);
		\draw (18) to (23);
		\draw (23) to (22);
		\draw (22) to (21);
		\draw (21) to (20);
		\draw (19) to (20);
	\end{pgfonlayer}
\end{tikzpicture}
\end{center}
\end{ex}

\begin{ex}
For $v=3$ with our choice of $a_1=8$, we have $A=\{8'\}$ and $B=\{1',7',9',0'\}$.  The following are the polygons $Q_{\ell,k}$ for each pair $(\ell,k)\in A\times B$.
\begin{center}
\begin{tikzpicture}[scale=0.6]
	\begin{pgfonlayer}{nodelayer}
		\node [style=label] (0) at (-7.5, 4) {$1'$};
		\node [style=label] (1) at (-9.25, 3) {$1$};
		\node [style=label] (2) at (-9.25, 1) {$2$};
		\node [style=label] (3) at (-7.5, 0) {$3$};
		\node [style=label] (4) at (-5.75, 1) {$8$};
		\node [style=label] (5) at (-5.75, 3) {$8'$};
		\node [style=label] (7) at (-3.75, 3.75) {$7'$};
		\node [style=label] (8) at (-4.25, 1.75) {$4$};
		\node [style=label] (9) at (-2.5, 0.25) {$3$};
		\node [style=label] (10) at (-0.75, 1.75) {$8$};
		\node [style=label] (11) at (-1.25, 3.75) {$8'$};
		\node [style=none] (24) at (-7.5, 5) {$Q_{8',1'}$};
		\node [style=none] (25) at (-2.5, 5) {$Q_{8',7'}$};
		\node [style=none] (26) at (2.5, 5) {$Q_{8',9'}$};
		\node [style=none] (27) at (7.5, 5) {$Q_{8',0'}$};
		\node [style=label] (28) at (1.25, 3.75) {$9$};
		\node [style=label] (29) at (0.75, 1.75) {$4$};
		\node [style=label] (30) at (2.5, 0.25) {$3$};
		\node [style=label] (31) at (4.25, 1.75) {$8$};
		\node [style=label] (32) at (3.75, 3.75) {$8'$};
		\node [style=label] (33) at (6.25, 3.75) {$0'$};
		\node [style=label] (34) at (5.75, 1.75) {$4$};
		\node [style=label] (35) at (7.5, 0.25) {$3$};
		\node [style=label] (36) at (9.25, 1.75) {$8$};
		\node [style=label] (37) at (8.75, 3.75) {$8'$};
	\end{pgfonlayer}
	\begin{pgfonlayer}{edgelayer}
		\draw (1) to (0);
		\draw (0) to (5);
		\draw (5) to (4);
		\draw (4) to (3);
		\draw (3) to (2);
		\draw (1) to (2);
		\draw (11) to (10);
		\draw (10) to (9);
		\draw (9) to (8);
		\draw (7) to (8);
		\draw (7) to (11);
		\draw (32) to (31);
		\draw (31) to (30);
		\draw (30) to (29);
		\draw (28) to (29);
		\draw (28) to (32);
		\draw (37) to (36);
		\draw (36) to (35);
		\draw (35) to (34);
		\draw (33) to (34);
		\draw (33) to (37);
	\end{pgfonlayer}
\end{tikzpicture}
\end{center}
\end{ex}

We place a \emph{weighted} triangulation $T_{\ell,k}$ on $Q_{\ell,k}$; that is, every arc in $T$ is weighted with an element of $\mathcal{A}_\Gamma$ . For the purposes of this construction, we consider $T_{\ell,k}$ to include both a maximal set of pairwise non-crossing internal arcs and the complete set of boundary arcs. The internal arcs of $T_{\ell,k}$ are all the arcs of the form  $(v,b^{\ell,k}_{1,j})$ or $(v,b^{\ell,k}_{i,j})$. The arc $(v,b^{\ell,k}_{1,j})$ has weight $Y_{I_j^1}$ and the arc $(v,b^{\ell,k}_{i,j})$ has weight $Y_{I_{j}^i}$.

We also need to describe the weights of the boundary arcs.  Generally, if $(x,y) \in E(\Gamma')$ then we will weight this edge with 1, if $(x,y) \not\in E(\Gamma')$ and $x\neq y$ we will weight the edge with $Y_I$ for some $I\in\mathcal I$ where $x,y \in N_{\Gamma'}(I)$, and if $(x,y) \not\in E(\Gamma')$ and $x=y$ we will weight the edge with $Y_I^2$ for some $I$ that may or may not be in $\mathcal I$ with $x,y \in N_{\Gamma'}(I)$.  More explicitly, we have the following cases:
\begin{enumerate}
\item We weight $(a_i,v)$ and $(a_1,v)$ with 1.
\item If $(a_i,b_{i,c_i}^{\ell,k}) \in E(\Gamma')$, we again weight this edge with 1. Otherwise, $b_{i,c_i}^{\ell,k} \in N_\Gamma(I_z)$ for some unique $z \in \mathcal{C}_{a_i}$, and we weight this edge with $Y_{I_z}$.  We use the same process to weight $(a_1,b_{1,c_1}^{\ell,k})$.  Note that the edge $(a_i,b_{i,c_i}^{\ell,k})$ is a boundary edge only if $i\leq r$.
\item If $i \leq r$, we weight the edge $(b_{1,1}^{\ell,k}, b_{i,1}^{\ell,k})$ with $I_v$, and if $i > r$, we weight the edge $(b_{1,1}^{\ell,k}, a_i)$ with $I_v$. 
\item There are three possibilities for a boundary arc of the form $(b_{i,j}^{\ell,k}, b_{i,j+1}^{\ell,k})$ with $j < c_i$: $b_{i,j}^{\ell,k} \in N_{\Gamma'}(a_j^i)$,  $b_{i,j}^{\ell,k} \in N_{\Gamma'}(I_z)$ for $z \in \mathcal{C}_{a_j^i} \backslash \{a_{j+1}^i\}$, or $b_{i,j}^{\ell,k} \in N_{\Gamma'}(I_{j+1}^i)$.
\begin{enumerate}
\item If $b_{i,j}^{\ell,k} \in N_{\Gamma'}(a_j^i)$, we know that $b_{i,j+1}^{\ell,k} = a_j^i$ since $a_j^i \in N_{\Gamma'}(I_{j+1})$. Since $(b_{i,j}^{\ell,k}, b_{i,j+1}^{\ell,k}) \in E(\Gamma')$, we weight this edge with 1.
\item If $b_{i,j}^{\ell,k} \in N_{\Gamma'}(I_z)$ for $z \in \mathcal{C}_{a_j^i} \backslash \{a_{j+1}^i\}$, we also know that $b_{i,j+1}^{\ell,k} = a_j^i$, and we weight the edge $(b_{i,j}^{\ell,k}, b_{i,j+1}^{\ell,k})$ with $Y_{I_z}$.
\item If $b_{i,j}^{\ell,k} \in N_{\Gamma'}(I_{j+1}^i)$, then $b_{i,j}^{\ell,k} = b_{i,j+1}^{\ell,k}$. If $b_{i,j}^{\ell,k}$ is adjacent to a vertex on the path $[v,a_j^i]$, we weight this edge with 1. Otherwise, we weight this edge with $Y^2_{C_{i,j}^{\ell,k}}$ where  $C_{i,j}^{\ell,k}$ is the connected component of $I_{j+1}^i \backslash[v,a_j^i]$ which contains a vertex adjacent to $b_{i,j}^{\ell,k}$.
\end{enumerate}
The process detailed in step (4) is identical, up to changing indices, for weighting an edge $(b_{1,j}^{\ell,k},b_{1,j+1}^{\ell,k})$. Note that the sets $C_{i,j}^{\ell,k},C_{1,j}^{\ell,k}$ are not necessarily in $\mathcal{I}$.
\end{enumerate}

\newpage
\begin{ex}
For $v=4$, the following are the triangulations $T_{\ell,k}$ for each pair $(\ell,k)\in A\times B$.
\begin{center}
\begin{tikzpicture}[scale=0.65]
	\begin{pgfonlayer}{nodelayer}
		\node [style=label] (0) at (-8.25, 4) {$7$};
		\node [style=label] (1) at (-10, 3) {$7'$};
		\node [style=label] (2) at (-10, 1) {$3$};
		\node [style=label] (3) at (-8.25, 0) {$4$};
		\node [style=label] (4) at (-6.5, 1) {$5$};
		\node [style=label] (5) at (-6.5, 3) {$7$};
		\node [style=label] (6) at (-2.75, 4) {$7$};
		\node [style=label] (7) at (-4.5, 3) {$7'$};
		\node [style=label] (8) at (-4.5, 1) {$9$};
		\node [style=label] (9) at (-2.75, 0) {$4$};
		\node [style=label] (10) at (-1, 1) {$5$};
		\node [style=label] (11) at (-1, 3) {$7$};
		\node [style=label] (12) at (2.75, 4) {$0'$};
		\node [style=label] (13) at (1, 3) {$0'$};
		\node [style=label] (14) at (1, 1) {$3$};
		\node [style=label] (15) at (2.75, 0) {$4$};
		\node [style=label] (16) at (4.5, 1) {$5$};
		\node [style=label] (17) at (4.5, 3) {$0$};
		\node [style=label] (18) at (8.25, 4) {$0'$};
		\node [style=label] (19) at (6.5, 3) {$0'$};
		\node [style=label] (20) at (6.5, 1) {$9$};
		\node [style=label] (21) at (8.25, 0) {$4$};
		\node [style=label] (22) at (10, 1) {$5$};
		\node [style=label] (23) at (10, 3) {$0$};
		\node [style=label] (24) at (-8.25, 5) {$T_{7',1'}=T_{7',8'}$};
		\node [style=label] (25) at (-2.75, 5) {$T_{7',9'}$};
		\node [style=label] (26) at (2.75, 5) {$T_{0',1'}=T_{0',8'}$};
		\node [style=label] (27) at (8.25, 5) {$T_{0',9'}$};
		\node [style=label-s] (28) at (-9.25, 3.75) {$1$};
		\node [style=label-s, xshift = 2, yshift = 2] (29) at (-7.25, 3.75) {$Y_6^2$};
		\node [style=label-s] (30) at (-10.5, 2) {$Y_{I_4}$};
		\node [style=label-s] (31) at (-9.25, 0.25) {$1$};
		\node [style=label-s] (32) at (-7.25, 0.25) {$1$};
		\node [style=label-s] (33) at (-6, 2) {$Y_{I_6}$};
		\node [style=label-s] (34) at (-8, 3) {$I_0$};
		\node [style=label-s] (35) at (-9.5, 1.5) {$I_7$};
		\node [style=label-s] (36) at (-7.75, 1.5) {$I_5$};
		\node [style=label-s] (37) at (-3.75, 3.75) {$1$};
		\node [style=label-s, xshift = 2, yshift = 2] (38) at (-1.75, 3.75) {$Y_6^2$};
		\node [style=label-s] (39) at (-5, 2) {$Y_{I_4}$};
		\node [style=label-s] (40) at (-3.75, 0.25) {$1$};
		\node [style=label-s] (41) at (-1.75, 0.25) {$1$};
		\node [style=label-s] (42) at (-0.5, 2) {$Y_{I_6}$};
		\node [style=label-s] (43) at (-2.5, 3) {$I_0$};
		\node [style=label-s] (44) at (-4, 1.5) {$I_7$};
		\node [style=label-s] (45) at (-2.25, 1.5) {$I_5$};
		\node [style=label-s] (46) at (1.5, 3.75) {$Y_{0}^2$};
		\node [style=label-s] (47) at (3.75, 3.75) {$1$};
		\node [style=label-s] (48) at (0.5, 2) {$Y_{I_4}$};
		\node [style=label-s] (49) at (1.75, 0.25) {$1$};
		\node [style=label-s] (50) at (3.75, 0.25) {$1$};
		\node [style=label-s] (51) at (4.75, 2) {$1$};
		\node [style=label-s] (52) at (3, 3) {$I_0$};
		\node [style=label-s] (53) at (1.5, 1.5) {$I_7$};
		\node [style=label-s] (54) at (3.25, 1.5) {$I_5$};
		\node [style=label-s] (55) at (7, 3.75) {$Y_{0}^2$};
		\node [style=label-s] (56) at (9.25, 3.75) {$1$};
		\node [style=label-s] (57) at (6, 2) {$Y_{I_4}$};
		\node [style=label-s] (58) at (7.25, 0.25) {$1$};
		\node [style=label-s] (59) at (9.25, 0.25) {$1$};
		\node [style=label-s] (60) at (10.25, 2) {$1$};
		\node [style=label-s] (61) at (8.5, 3) {$I_0$};
		\node [style=label-s] (62) at (7, 1.5) {$I_7$};
		\node [style=label-s] (63) at (8.75, 1.5) {$I_5$};
	\end{pgfonlayer}
	\begin{pgfonlayer}{edgelayer}
		\draw (1) to (0);
		\draw (0) to (5);
		\draw [style=e6] (5) to (4);
		\draw (4) to (3);
		\draw (3) to (2);
		\draw [style=e4] (1) to (2);
		\draw (7) to (6);
		\draw (6) to (11);
		\draw [style=e6] (11) to (10);
		\draw (10) to (9);
		\draw (9) to (8);
		\draw [style=e4] (7) to (8);
		\draw (13) to (12);
		\draw (12) to (17);
		\draw (17) to (16);
		\draw (16) to (15);
		\draw (15) to (14);
		\draw [style=e4] (13) to (14);
		\draw (19) to (18);
		\draw (18) to (23);
		\draw (23) to (22);
		\draw (22) to (21);
		\draw (21) to (20);
		\draw [style=e4] (19) to (20);
		\draw [style=d7] (1) to (3);
		\draw [style=d7] (7) to (9);
		\draw [style=d7] (13) to (15);
		\draw [style=d7] (19) to (21);
		\draw [style=d0] (0) to (3);
		\draw [style=d0] (6) to (9);
		\draw [style=d0] (12) to (15);
		\draw [style=d0] (18) to (21);
		\draw [style=d5] (23) to (21);
		\draw [style=d5] (17) to (15);
		\draw [style=d5] (11) to (9);
		\draw [style=d5] (5) to (3);
	\end{pgfonlayer}
\end{tikzpicture}
\end{center}
\end{ex}

\begin{ex}
For $v=3$, the following are the triangulations $T_{\ell,k}$ for each pair $(\ell,k)\in A\times B$.

\begin{center}
\begin{tikzpicture}[scale=0.65]
	\begin{pgfonlayer}{nodelayer}
		\node [style=label] (0) at (-8.25, 4) {$1'$};
		\node [style=label] (1) at (-10, 3) {$1$};
		\node [style=label] (2) at (-10, 1) {$2$};
		\node [style=label] (3) at (-8.25, 0) {$3$};
		\node [style=label] (4) at (-6.5, 1) {$8$};
		\node [style=label] (5) at (-6.5, 3) {$8'$};
		\node [style=label] (7) at (-4, 3.75) {$7'$};
		\node [style=label] (8) at (-4.5, 1.75) {$4$};
		\node [style=label] (9) at (-2.75, 0.25) {$3$};
		\node [style=label] (10) at (-1, 1.75) {$8$};
		\node [style=label] (11) at (-1.5, 3.75) {$8'$};
		\node [style=none] (24) at (-8.25, 5) {$T_{8',1'}$};
		\node [style=none] (25) at (-2.75, 5) {$T_{8',7'}$};
		\node [style=none] (26) at (2.75, 5) {$T_{8',9'}$};
		\node [style=none] (27) at (8.25, 5) {$T_{8',0'}$};
		\node [style=label] (28) at (1.5, 3.75) {$9$};
		\node [style=label] (29) at (1, 1.75) {$4$};
		\node [style=label] (30) at (2.75, 0.25) {$3$};
		\node [style=label] (31) at (4.5, 1.75) {$8$};
		\node [style=label] (32) at (4, 3.75) {$8'$};
		\node [style=label] (33) at (7, 3.75) {$0'$};
		\node [style=label] (34) at (6.5, 1.75) {$4$};
		\node [style=label] (35) at (8.25, 0.25) {$3$};
		\node [style=label] (36) at (10, 1.75) {$8$};
		\node [style=label] (37) at (9.5, 3.75) {$8'$};
		\node [style=label-s] (38) at (-9.25, 3.75) {$1$};
		\node [style=label-s, xshift = 2, yshift = 0] (39) at (-7.25, 3.75) {$Y_{I_3}$};
		\node [style=label-s] (40) at (-10.25, 2) {$1$};
		\node [style=label-s] (41) at (-9.25, 1.25) {$I_3$};
		\node [style=label-s, xshift=-1] (42) at (-8.5, 2.75) {$I_1$};
		\node [style=label-s] (43) at (-7.5, 2) {$I_8$};
		\node [style=label-s] (44) at (-9.25, 0.25) {$1$};
		\node [style=label-s] (45) at (-7.25, 0.25) {$1$};
		\node [style=label-s] (46) at (-6.25, 2) {$1$};
		\node [style=label-s] (47) at (-4.75, 2.75) {$Y_{I_7}$};
		\node [style=label-s, yshift = 2] (48) at (-2.75, 4) {$Y_{I_3}$};
		\node [style=label-s] (49) at (-1, 2.75) {$1$};
		\node [style=label-s] (50) at (-3.25, 2.75) {$I_4$};
		\node [style=label-s] (51) at (-2.25, 2.75) {$I_8$};
		\node [style=label-s] (52) at (-3.75, 0.75) {$1$};
		\node [style=label-s] (53) at (-1.75, 0.75) {$1$};
		\node [style=label-s] (54) at (1, 2.75) {$1$};
		\node [style=label-s, yshift = 2] (55) at (2.75, 4) {$Y_{I_3}$};
		\node [style=label-s] (56) at (4.5, 2.75) {$1$};
		\node [style=label-s] (57) at (2.25, 2.75) {$I_4$};
		\node [style=label-s] (58) at (3.25, 2.75) {$I_8$};
		\node [style=label-s] (59) at (1.75, 0.75) {$1$};
		\node [style=label-s] (60) at (3.75, 0.75) {$1$};
		\node [style=label-s] (61) at (6.25, 2.75) {$Y_{I_7}$};
		\node [style=label-s, yshift = 2] (62) at (8.25, 4) {$Y_{I_3}$};
		\node [style=label-s] (63) at (10, 2.75) {$1$};
		\node [style=label-s] (64) at (7.75, 2.75) {$I_4$};
		\node [style=label-s] (65) at (8.75, 2.75) {$I_8$};
		\node [style=label-s] (66) at (7.25, 0.75) {$1$};
		\node [style=label-s] (67) at (9.25, 0.75) {$1$};
	\end{pgfonlayer}
	\begin{pgfonlayer}{edgelayer}
		\draw (1) to (0);
		\draw [style=e3] (0) to (5);
		\draw (5) to (4);
		\draw (4) to (3);
		\draw (3) to (2);
		\draw (1) to (2);
		\draw (11) to (10);
		\draw (10) to (9);
		\draw (9) to (8);
		\draw [style=e7] (7) to (8);
		\draw [style=e3] (7) to (11);
		\draw (32) to (31);
		\draw (31) to (30);
		\draw (30) to (29);
		\draw (28) to (29);
		\draw [style=e3] (28) to (32);
		\draw (37) to (36);
		\draw (36) to (35);
		\draw (35) to (34);
		\draw [style=e7] (33) to (34);
		\draw [style=e3] (33) to (37);
		\draw [style=d3] (1) to (3);
		\draw [style=d1] (0) to (3);
		\draw [style=d8] (3) to (5);
		\draw [style=d4] (7) to (9);
		\draw [style=d8] (9) to (11);
		\draw [style=d4] (28) to (30);
		\draw [style=d8] (30) to (32);
		\draw [style=d4] (33) to (35);
		\draw [style=d8] (35) to (37);
	\end{pgfonlayer}
\end{tikzpicture}
\end{center}
\end{ex}

Let $\gamma_{\ell,k}$ be the arc in $Q_{\ell,k}$ between the vertices $a_1$ and $a_i$. Denote the snake graph constructed from $\gamma_{\ell,k}$ with respect to the triangulation $T_{\ell,k}$ as $H_{\ell,k}$. We call $H_{\ell,k}$ a \emph{component snake graph}.

We weight the edges of $H_{\ell,k}$ by the weight of the corresponding arc in $T_{\ell,k}$.  The diagonals of $H_{\ell,k}$ correspond to internal arcs of $T_{\ell,k}$.  We \emph{label} the diagonal associated to the arc weighted $Y_{I^i_j}$ by the set $I^i_j$.

\begin{ex}
For $v=4$, the following are the component snake graphs $H_{\ell,k}$ for each pair $(\ell,k)\in A\times B$. From this point forward, for readability, we will suppress the label for all edges with weight 1.
\begin{center}
\begin{tikzpicture}[scale=0.75]
	\begin{pgfonlayer}{nodelayer}
		\node [style=label] (0) at (0.75, 4) {$4$};
		\node [style=label] (1) at (2.75, 4) {$7'$};
		\node [style=label] (2) at (2.75, 2) {$7$};
		\node [style=label] (3) at (0.75, 2) {$7$};
		\node [style=label] (4) at (0.75, 0) {$5$};
		\node [style=label] (5) at (2.75, 0) {$4$};
		\node [style=label] (6) at (4.75, 2) {$4$};
		\node [style=label] (7) at (4.75, 4) {$3$};
		\node [style=label] (8) at (6.25, 4) {$4$};
		\node [style=label] (9) at (8.25, 4) {$7'$};
		\node [style=label] (10) at (8.25, 2) {$7$};
		\node [style=label] (11) at (6.25, 2) {$7$};
		\node [style=label] (12) at (6.25, 0) {$5$};
		\node [style=label] (13) at (8.25, 0) {$4$};
		\node [style=label] (14) at (10.25, 2) {$4$};
		\node [style=label] (15) at (10.25, 4) {$3$};
		\node [style=label] (16) at (11.75, 4) {$4$};
		\node [style=label] (17) at (13.75, 4) {$0'$};
		\node [style=label] (18) at (13.75, 2) {$0'$};
		\node [style=label] (19) at (11.75, 2) {$0$};
		\node [style=label] (20) at (11.75, 0) {$5$};
		\node [style=label] (21) at (13.75, 0) {$4$};
		\node [style=label] (22) at (15.75, 2) {$4$};
		\node [style=label] (23) at (15.75, 4) {$3$};
		\node [style=label] (24) at (17.25, 4) {$4$};
		\node [style=label] (25) at (19.25, 4) {$0'$};
		\node [style=label] (26) at (19.25, 2) {$0'$};
		\node [style=label] (27) at (17.25, 2) {$0$};
		\node [style=label] (28) at (17.25, 0) {$5$};
		\node [style=label] (29) at (19.25, 0) {$4$};
		\node [style=label] (30) at (21.25, 2) {$4$};
		\node [style=label] (31) at (21.25, 4) {$9$};
		\node [style=label-s, yshift=-3] (32) at (1.75, 4.5) {$Y_{I_7}$};
		\node [style=label-s, yshift=-3] (33) at (3.75, 4.5) {$Y_{I_4}$};
		\node [style=label-s] (34) at (4, 3.25) {${I_7}$};
		\node [style=label-s] (35) at (2, 3.25) {${I_0}$};
		\node [style=label-s, xshift=2] (36) at (0.25, 3) {$Y_{I_5}$};
		\node [style=label-s] (37) at (2, 1.25) {${I_5}$};
		\node [style=label-s, xshift=2] (38) at (0.25, 1) {$Y_{I_6}$};
		\node [style=label-s] (39) at (3.25, 0.75) {$Y_{I_0}$};
		\node [style=label-s,yshift=2] (40) at (4, 1.5) {$Y_{I_0}$};
		\node [style=label-s, yshift=-3] (41) at (7.25, 4.5) {$Y_{I_7}$};
		\node [style=label-s, yshift=-3] (42) at (9.25, 4.5) {$Y_{I_4}$};
		\node [style=label-s] (43) at (9.5, 3.25) {${I_7}$};
		\node [style=label-s] (44) at (7.5, 3.25) {${I_0}$};
		\node [style=label-s, xshift=2] (45) at (5.75, 3) {$Y_{I_5}$};
		\node [style=label-s] (46) at (7.5, 1.25) {${I_5}$};
		\node [style=label-s] (47) at (5.75, 1) {$Y_{I_6}$};
		\node [style=label-s] (48) at (8.75, 0.75) {$Y_{I_0}$};
		\node [style=label-s,yshift=2] (49) at (9.5, 1.5) {$Y_{I_0}$};
		\node [style=label-s, yshift=-3] (50) at (12.75, 4.5) {$Y_{I_7}$};
		\node [style=label-s, yshift=-3] (51) at (14.75, 4.5) {$Y_{I_4}$};
		\node [style=label-s] (52) at (15, 3.25) {${I_7}$};
		\node [style=label-s] (53) at (13, 3.25) {${I_0}$};
		\node [style=label-s,xshift=2] (54) at (11.25, 3) {$Y_{I_5}$};
		\node [style=label-s] (55) at (13, 1.25) {${I_5}$};
		\node [style=label-s] (56) at (14.25, 0.75) {$Y_{I_0}$};
		\node [style=label-s,yshift=2] (57) at (15, 1.5) {$Y_{I_0}$};
		\node [style=label-s, yshift=-3] (58) at (18.25, 4.5) {$Y_{I_7}$};
		\node [style=label-s, yshift=-3] (59) at (20.25, 4.5) {$Y_{I_4}$};
		\node [style=label-s] (60) at (20.5, 3.25) {${I_7}$};
		\node [style=label-s] (61) at (18.5, 3.25) {${I_0}$};
		\node [style=label-s,xshift=2] (62) at (16.75, 3) {$Y_{I_5}$};
		\node [style=label-s] (63) at (18.5, 1.25) {${I_5}$};
		\node [style=label-s] (64) at (19.75, 0.75) {$Y_{I_0}$};
		\node [style=label-s,yshift=2] (65) at (20.5, 1.5) {$Y_{I_0}$};
		\node [style=label] (66) at (2.75, 5) {$H_{7',1'}=H_{7',8'}$};
		\node [style=label] (67) at (8.25, 5) {$H_{7',9'}$};
		\node [style=label] (68) at (13.75, 5) {$H_{0',1'}=H_{0',8'}$};
		\node [style=label] (69) at (19.25, 5) {$H_{0',9'}$};
		\node [style=label-s, yshift=-2.5] (70) at (1.75, 2.5) {$Y_6^2$};
		\node [style=label-s, yshift=-2.5] (71) at (7.25, 2.5) {$Y_6^2$};
	\end{pgfonlayer}
	\begin{pgfonlayer}{edgelayer}
		\draw [style=e7] (0) to (1);
		\draw [style=e4] (1) to (7);
		\draw (7) to (6);
		\draw [style=e0] (6) to (2);
		\draw (2) to (1);
		\draw [style=e5] (3) to (0);
		\draw (3) to (2);
		\draw [style=e0] (2) to (5);
		\draw (5) to (4);
		\draw [style=e6] (4) to (3);
		\draw [style=e7] (8) to (9);
		\draw [style=e4] (9) to (15);
		\draw (15) to (14);
		\draw [style=e0] (14) to (10);
		\draw (10) to (9);
		\draw [style=e5] (11) to (8);
		\draw (11) to (10);
		\draw [style=e0] (10) to (13);
		\draw (13) to (12);
		\draw [style=e6] (12) to (11);
		\draw [style=e7] (16) to (17);
		\draw [style=e4] (17) to (23);
		\draw (23) to (22);
		\draw [style=e0] (22) to (18);
		\draw (18) to (17);
		\draw [style=e5] (19) to (16);
		\draw (19) to (18);
		\draw [style=e0] (18) to (21);
		\draw (21) to (20);
		\draw (20) to (19);
		\draw [style=e7] (24) to (25);
		\draw [style=e4] (25) to (31);
		\draw (31) to (30);
		\draw [style=e0] (30) to (26);
		\draw (26) to (25);
		\draw [style=e5] (27) to (24);
		\draw (27) to (26);
		\draw [style=e0] (26) to (29);
		\draw (29) to (28);
		\draw (28) to (27);
		\draw [style=d0] (0) to (2);
		\draw [style=d5] (3) to (5);
		\draw [style=d7] (1) to (6);
		\draw [style=d8] (13) to (11);
		\draw [style=d0] (8) to (10);
		\draw [style=d7] (9) to (14);
		\draw [style=d8] (19) to (21);
		\draw [style=d0] (16) to (18);
		\draw [style=d7] (17) to (22);
		\draw [style=d8] (27) to (29);
		\draw [style=d0] (24) to (26);
		\draw [style=d7] (25) to (30);
	\end{pgfonlayer}
\end{tikzpicture}
\end{center}
\end{ex}

\newpage
\begin{ex}
For $v=3$, the following are the component snake graphs $H_{\ell,k}$ for each pair $(\ell,k)\in A\times B$.
\begin{center}
\begin{tikzpicture}[scale=0.8]
	\begin{pgfonlayer}{nodelayer}
		\node [style=label] (0) at (-8.25, 2) {$3$};
		\node [style=label] (1) at (-8.25, 0) {$8'$};
		\node [style=label] (2) at (-6.25, 0) {$1'$};
		\node [style=label] (3) at (-6.25, 2) {$1$};
		\node [style=label] (4) at (-4.25, 2) {$2$};
		\node [style=label] (5) at (-4.25, 0) {$3$};
		\node [style=label] (6) at (-6.25, -2) {$3$};
		\node [style=label] (7) at (-8.25, -2) {$8$};
		\node [style=label-s] (8) at (-7.25, 2.3) {$Y_{I_2}$};
		\node [style=label-s, xshift = -2] (9) at (-8.5, 1) {$Y_{I_8}$};
		\node [style=label-s] (10) at (-7, 1.25) {$I_1$};
		\node [style=label-s] (11) at (-5, 1.25) {$I_2$};
		\node [style=label-s, yshift=-2] (12) at (-5.25, -0.25) {$Y_{I_1}$};
		\node [style=label-s,xshift=3] (13) at (-6, -1) {$Y_{I_1}$};
		\node [style=label-s] (14) at (-7.25, 0.25) {$Y_{I_3}$};
		\node [style=label-s] (15) at (-7, -0.75) {$I_8$};
		\node [style=label] (16) at (-2, 2) {$3$};
		\node [style=label] (17) at (0, 2) {$4$};
		\node [style=label] (18) at (0, -2) {$3$};
		\node [style=label] (19) at (-2, -2) {$8$};
		\node [style=label] (20) at (-2, 0) {$8'$};
		\node [style=label] (21) at (0, 0) {$7'$};
		\node [style=label] (22) at (2.25, 2) {$3$};
		\node [style=label] (23) at (4.25, 2) {$4$};
		\node [style=label] (24) at (4.25, -2) {$3$};
		\node [style=label] (25) at (2.25, -2) {$8$};
		\node [style=label] (26) at (2.25, 0) {$8'$};
		\node [style=label] (27) at (4.25, 0) {$9$};
		\node [style=label] (28) at (6.5, 2) {$3$};
		\node [style=label] (29) at (8.5, 2) {$4$};
		\node [style=label] (30) at (8.5, -2) {$3$};
		\node [style=label] (31) at (6.5, -2) {$8$};
		\node [style=label] (32) at (6.5, 0) {$8'$};
		\node [style=label] (33) at (8.5, 0) {$0'$};
		\node [style=label-s, xshift = -2] (34) at (-2.25, 1) {$Y_{I_8}$};
		\node [style=label-s] (35) at (-0.75, 1.25) {$I_4$};
		\node [style=label-s, xshift=3] (36) at (0.25, 1) {$Y_{I_7}$};
		\node [style=label-s] (37) at (-1, 0.25) {$Y_{I_3}$};
		\node [style=label-s] (38) at (-0.75, -0.75) {$I_8$};
		\node [style=label-s, xshift = 2] (39) at (0.25, -1) {$Y_{I_4}$};
		\node [style=label-s, xshift = -2] (40) at (2, 1) {$Y_{I_8}$};
		\node [style=label-s] (41) at (3.5, 1.25) {$I_4$};
		\node [style=label-s] (43) at (3.25, 0.25) {$Y_{I_3}$};
		\node [style=label-s] (44) at (3.5, -0.75) {$I_8$};
		\node [style=label-s, xshift = 2] (45) at (4.5, -1) {$Y_{I_4}$};
		\node [style=label-s, xshift = -2] (46) at (6.25, 1) {$Y_{I_8}$};
		\node [style=label-s] (47) at (7.75, 1.25) {$I_4$};
		\node [style=label-s, xshift=3] (48) at (8.75, 1) {$Y_{I_7}$};
		\node [style=label-s] (49) at (7.5, 0.25) {$Y_{I_3}$};
		\node [style=label-s] (50) at (7.75, -0.75) {$I_8$};
		\node [style=label-s, xshift = 2] (51) at (8.75, -1) {$Y_{I_4}$};
		\node [style=none] (52) at (-6.25, 3) {$H_{8',1'}$};
		\node [style=none] (53) at (-1, 3) {$H_{8',7'}$};
		\node [style=none] (54) at (3.25, 3) {$H_{8',9'}$};
		\node [style=none] (55) at (7.5, 3) {$H_{8',0'}$};
	\end{pgfonlayer}
	\begin{pgfonlayer}{edgelayer}
		\draw [style=e2] (0) to (3);
		\draw (3) to (4);
		\draw (4) to (5);
		\draw [style=e1] (5) to (2);
		\draw (2) to (3);
		\draw [style=e8] (0) to (1);
		\draw [style=e3] (1) to (2);
		\draw [style=e1] (2) to (6);
		\draw (6) to (7);
		\draw (7) to (1);
		\draw [style=d8] (1) to (6);
		\draw [style=d1] (2) to (0);
		\draw [style=d2] (3) to (5);
		\draw (16) to (17);
		\draw [style=e7] (21) to (17);
		\draw [style=e3] (21) to (20);
		\draw [style=e8] (20) to (16);
		\draw (20) to (19);
		\draw [in=180, out=0] (19) to (18);
		\draw (18) to (21);
		\draw (22) to (23);
		\draw (27) to (23);
		\draw [style=e3] (27) to (26);
		\draw [style=e8] (26) to (22);
		\draw (26) to (25);
		\draw [in=180, out=0] (25) to (24);
		\draw (24) to (27);
		\draw (28) to (29);
		\draw [style=e7] (33) to (29);
		\draw [style=e3] (33) to (32);
		\draw [style=e8] (32) to (28);
		\draw (32) to (31);
		\draw [in=180, out=0] (31) to (30);
		\draw (30) to (33);
		\draw [style=d4] (16) to (21);
		\draw [style=d8] (20) to (18);
		\draw [style=d4] (22) to (27);
		\draw [style=d4] (28) to (33);
		\draw [style=d8] (26) to (24);
		\draw [style=d8] (32) to (30);
	\end{pgfonlayer}
\end{tikzpicture}
\end{center}
\end{ex}

\subsection{Snake Graphs for Singleton Sets}
\label{subsec:SingletonConstruction2}

We are now ready to build the snake graph $\mathcal{G}_v:= \mathcal{G}_{\{v\}}$.  If $\{v\} \in \mathcal{I}$, then the snake graph consists of a single hyperedge (an edge with possibly more than two endpoints) weighted $Y_S$ that connects all vertices in $N_{\Gamma'}(v)$.  If $\{v\} \not\in \mathcal{I}$, first build the component snake graphs $H_{\ell,k}$ for all $(\ell,k)\in A\times B$.  We will glue these together to make $\mathcal G_v$.

To describe the gluing process, we need to better understand the structure of the component snake graphs.

\begin{prop}\label{prop:ConsistentSize}
Given a pair of leaves $\ell, k$ of $\Gamma$ such that $a_1 \in [\ell,v]$ and $a_i \in [k,v]$ for $1<i\leq p$, the component snake graph $H_{\ell,k}$ satisfies the following properties.
\begin{enumerate}
    \item $H_{\ell,k}$ has a zig-zag shape.
    \item The tiles of $H_{\ell,k}$ have diagonals labeled by $I_{c_i}^i,I_{c_i-1}^i,\ldots,I_1^i, I_1^1,\ldots,I_{c_1}^1$, ordered such that consecutive sets in this list correspond to tiles that share an edge. 
\end{enumerate}
\end{prop}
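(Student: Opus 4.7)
The proof rests on the observation that $T_{\ell,k}$ is a \emph{fan triangulation at $v$}: by construction, every internal arc has $v$ as one endpoint. Both parts of the proposition then follow from general facts about snake graphs built from fan triangulations, which I would establish as follows.

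For part (2), starting from $a_i$ and traversing $\gamma_{\ell,k}$ toward $a_1$, the internal arcs crossed appear in the cyclic order given by the labeling of $Q_{\ell,k}$. Going from $a_i$ toward $a_1$ around the side not containing $v$, the outer endpoints of the internal arcs are encountered in the order $b_{i,c_i}^{\ell,k}, b_{i,c_i-1}^{\ell,k}, \ldots, b_{i,1}^{\ell,k}, b_{1,1}^{\ell,k}, b_{1,2}^{\ell,k}, \ldots, b_{1,c_1}^{\ell,k}$. Since $(v, b_{i,j}^{\ell,k})$ carries weight $Y_{I_j^i}$ and $(v, b_{1,j}^{\ell,k})$ carries weight $Y_{I_j^1}$, the corresponding diagonal labels appear in the order $I_{c_i}^i, I_{c_i-1}^i, \ldots, I_1^i, I_1^1, I_2^1, \ldots, I_{c_1}^1$, as claimed. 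When $i > r$ (so $c_i = 0$), the initial block of this list is empty, and the statement remains correct.

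For part (1), let three consecutive crossed arcs be $\tau_{j-1}, \tau_j, \tau_{j+1}$, and write $\tau_j = (v, x_j)$, $\tau_{j-1} = (v, x_{j-1})$, and $\tau_{j+1} = (v, x_{j+1})$. Because $T_{\ell,k}$ is a fan at $v$, the triangle shared by $\tau_{j-1}$ and $\tau_j$ is $(v, x_{j-1}, x_j)$, and the triangle shared by $\tau_j$ and $\tau_{j+1}$ is $(v, x_j, x_{j+1})$. By \Cref{def:snake}, consecutive tiles are glued along the unique non-diagonal edge of their common triangle, so $S_{\tau_{j-1}}$ and $S_{\tau_j}$ share the edge $(x_{j-1}, x_j)$, and $S_{\tau_j}$ and $S_{\tau_{j+1}}$ share the edge $(x_j, x_{j+1})$. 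In the square $S_{\tau_j}$, both of these edges are incident to the corner $x_j$; hence they are adjacent sides of the square, not parallel. This is precisely the zig-zag condition.

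The main point to verify carefully is the transition between the $a_i$-side and the $a_1$-side of the fan, where $\tau_j = (v, b_{i,1}^{\ell,k})$ and $\tau_{j+1} = (v, b_{1,1}^{\ell,k})$. The argument for part (1) requires the triangle $(v, b_{i,1}^{\ell,k}, b_{1,1}^{\ell,k})$ to lie in $T_{\ell,k}$, which is immediate because $(b_{i,1}^{\ell,k}, b_{1,1}^{\ell,k})$ is a boundary edge of $Q_{\ell,k}$ (the one weighted $Y_{I_v}$, which is present precisely when $i \le r$; when $i > r$ the transition does not occur and the same reasoning applies with $a_i$ in place of $b_{i,1}^{\ell,k}$). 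With this checked, the argument for part (1) is uniform across all indices $j$, and I expect this transitional verification to be the only subtle spot in an otherwise direct translation of the fan structure of $T_{\ell,k}$ into the combinatorics of $H_{\ell,k}$.
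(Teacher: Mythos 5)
Your proof is correct and takes essentially the same approach as the paper's: both rest on the observation that $T_{\ell,k}$ is a fan triangulation at $v$ (every internal arc has $v$ as an endpoint), from which the ordering of crossed diagonals gives part (2) and the adjacency of the glued edges of consecutive tiles at the shared corner gives part (1). You unpack the geometry of the squares and shared edges in more detail than the paper's brief argument, but the underlying reasoning is the same.
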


\begin{proof}
It follows from our construction that $\gamma_{\ell,k}$ crosses an edge with weight $Y_{I_j^i}$ for all $1 \leq j \leq c_i$ and an edge with weight $Y_{I_j^1}$ for all $1 \leq j \leq c_1$. Moreover, $\gamma_{\ell,k}$ crosses these in the given order based on the way we order the vertices of the polygon $Q_{\ell,k}$.
This proves statement (2).
Furthermore, the fact that $v$ is an endpoint of every internal arc in $T_{\ell,k}$ ensures that $H_{\ell,k}$ has a zig-zag shape, which proves statement (1).
\end{proof}

Since each component snake graph is a surface snake graph, we can also make use of the following terminology and well-known fact.

\begin{defn}\label{def:BoundarySurfaceSnakeGraph}
An edge in a surface snake graph that borders exactly one tile is called a \emph{boundary edge}. An edge which borders two tiles is called an \emph{internal edge}.   
\end{defn}

The following comes immediately from the process of constructing a surface snake graph. 

\begin{lemma}\label{lem:LabelBoundaryArcSurfaceSnake}
Let $e$ be a boundary edge in a surface snake graph which is incident to two diagonals labeled $d_1$ and $d_2$. If $e$ lies on the tile with the diagonal labeled $d_1$, then $e$ has weight $Y_{d_2}$.
\end{lemma}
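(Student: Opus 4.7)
The plan is to unpack \Cref{def:snake} carefully and identify, using the labeling coming from the triangulation, exactly which side of the tile $S_{d_1}$ the edge $e$ must be. Since $e$ is boundary, it lies on a unique tile, which must be $S_{d_1}$ (the tile whose diagonal is $d_1$). In a surface snake graph, two distinct tiles share a vertex only when they are consecutive in the gluing sequence, so the second diagonal $d_2$ incident to $e$ must come from a tile $S_{d_2}$ glued to $S_{d_1}$.

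Next I would analyze the triangle $\Delta$ in the triangulation $T$ whose sides are $d_1$, $d_2$, and a third arc $\sigma$. By the construction in \Cref{def:snake}, $S_{d_1}$ and $S_{d_2}$ are glued along the side labeled $\sigma$, so the two corners shared by the two tiles correspond to the two endpoints of $\sigma$ in the polygon. Letting $R$ denote the vertex of $\Delta$ opposite $d_1$, that is, the common endpoint of $d_2$ and $\sigma$, the diagonal $d_2$ has one endpoint labeled $R$, and this endpoint is a shared corner of $S_{d_1}$ and $S_{d_2}$. The other endpoint of $d_2$ is a corner of $S_{d_2}$ not lying on $\sigma$, and so is not identified with any corner of $S_{d_1}$; hence $R$ is the unique corner through which $e$ can be incident to $d_2$.

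Finally, the two sides of $S_{d_1}$ incident to the corner labeled $R$ are exactly the two sides of $\Delta$ other than $d_1$, namely the sides labeled $d_2$ and $\sigma$. Since $\sigma$ is the internal (glued) edge between $S_{d_1}$ and $S_{d_2}$ while $e$ is a boundary edge, $e$ must be the side of $S_{d_1}$ labeled $d_2$. The edge-weighting rule for surface snake graphs from \Cref{def:snake} then immediately gives $\wt(e) = Y_{d_2}$, as claimed.

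The only subtlety worth checking is that no non-adjacent tile in the snake graph can share a vertex with $e$, which follows at once from the linear gluing structure of snake graphs; beyond that, the statement is essentially a bookkeeping exercise with the labels inherited from the triangulation, which is presumably why the authors describe it as immediate.
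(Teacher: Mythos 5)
Your proof is essentially correct and fills in the detail that the paper leaves implicit (the paper just asserts the lemma "comes immediately" from the construction), but one intermediate claim is stated too strongly. You assert that "in a surface snake graph, two distinct tiles share a vertex only when they are consecutive in the gluing sequence." This is false at a turn: if the snake turns at $G_{i+1}$ (say $G_{i+1}$ lies east of $G_i$ and $G_{i+2}$ lies north of $G_{i+1}$), then $G_i$ and $G_{i+2}$ share the inner corner of the turn, even though they are not consecutive.

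What is actually true, and what your argument needs, is the more refined statement that a \emph{diagonal endpoint} of a tile $G_j$ (its NW or SE corner) is a vertex of another tile only if that tile is consecutive with $G_j$; the corner shared at a turn between $G_i$ and $G_{i+2}$ is always the NE or SW corner of each and hence never lies on either diagonal. With that replacement, your deduction that $S_{d_2}$ must be glued to $S_{d_1}$ goes through. The remainder of your argument — identifying $R$ as the vertex of $\Delta$ opposite $d_1$, observing that the other endpoint of $d_2$ is not a vertex of $S_{d_1}$, and then noting that the two edges of $S_{d_1}$ at $R$ are the internal glued edge $\sigma$ and the $d_2$-side of $\Delta$, forcing $e$ to be the latter — is clean and correct, and gives $\wt(e) = Y_{d_2}$ as claimed.
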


The general form of component snake graphs, as given by Proposition~\ref{prop:ConsistentSize} and Lemma~\ref{lem:LabelBoundaryArcSurfaceSnake}, is shown in Figures~\ref{fig:singleton-zigzag} and~\ref{fig:singleton-zigzag-partial}. The former shows $H_{\ell,k}$ where the neighbor of $v$ which lies on $[v,k]$ is $a_i$ for some $1<i \leq r$ (that is, $a_i<_{\mathcal{I}}v$) and the latter shows in $H_{\ell,k}$ where the neighbor of $v$ which lies on $[v,k]$ is $a_i$ for some $r<i \leq p$ (that is, $a_i>_{\mathcal{I}}v$). 

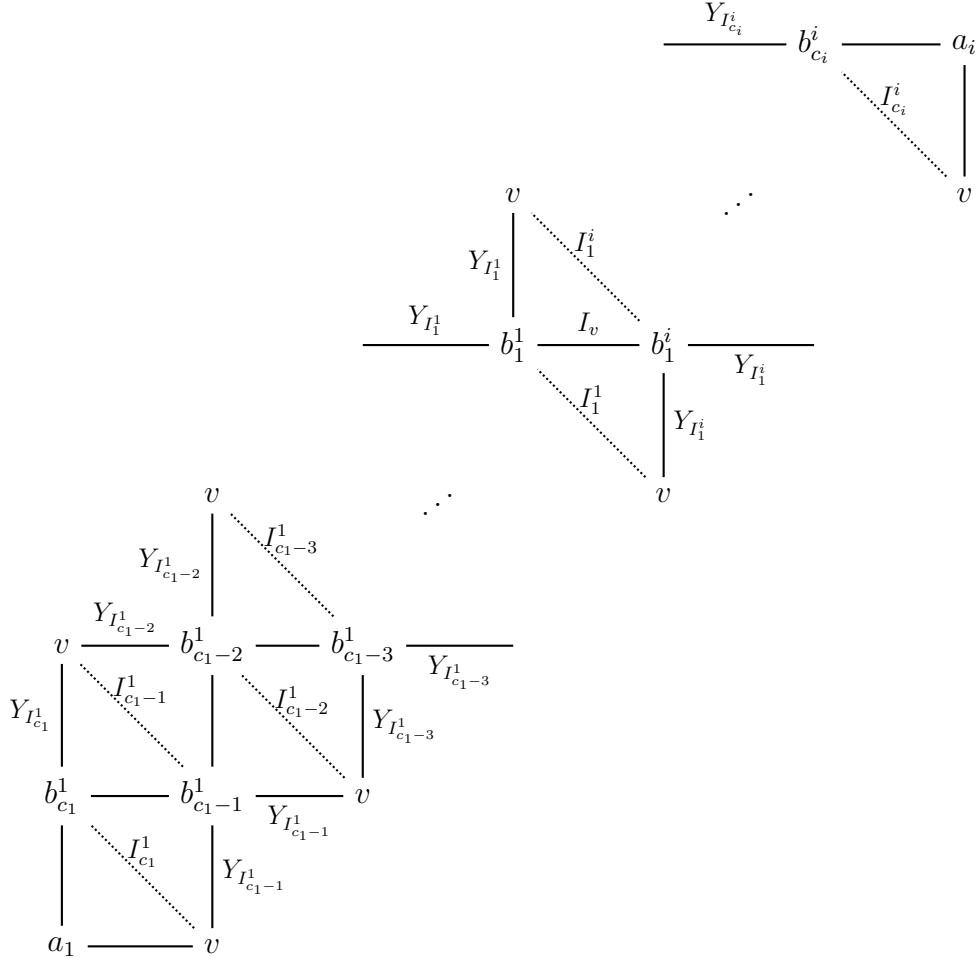
\begin{figure}[ht]
\begin{tikzpicture}[scale=2]
\node (11) at (1,0) {$v$};
\node (12) at (0,2) {$v$};
\node (13) at (2,1) {$v$};
\node (14) at (1,3) {$v$};
\node (15) at (4,3) {$v$};
\node (16) at (3,5) {$v$};
\node (17) at (6,5) {$v$};
\node (2) at (0,0) {$a_1$};
\node (a) at (0,1) {$b_{c_1}^1$};
\node (b) at (1,1) {$b_{c_1-1}^1$};
\node (c) at (1,2) {$b_{c_1-2}^1$};
\node (d) at (2,2) {$b_{c_1-3}^1$};
\node (i) at (3,4) {$b_1^1$};
\node (j) at (4,4) {$b_1^i$};
\node (r) at (5,6) {$b_{c_i}^i$};
\node (s) at (6,6) {$a_i$};
\node at (2.5,3) {$\reflectbox{$\ddots$}$};
\node at (4.5,5) {$\reflectbox{$\ddots$}$};
\draw (11) to (2);
\draw (2) to (a);
\draw (a) to (b);
\draw (b) to (c);
\draw (c) to (d);
\draw (i) to node[scale=0.85,above]{$I_v$} (j);
\draw (r) to (s);
\draw (s) to (17);
\draw (a) to node[scale=0.85,left]{$Y_{I_{c_1}^1}$} (12);
\draw (b) to node[scale=0.85,right]{$Y_{I_{c_1-1}^1}$} (11);
\draw (b) to node[scale=0.85,below]{$Y_{I_{c_1-1}^1}$} (13);
\draw (c) to node[scale=0.85,above]{$Y_{I_{c_1-2}^1}$} (12);
\draw (c) to node[scale=0.85,left]{$Y_{I_{c_1-2}^1}$} (14);
\draw (d) to node[scale=0.85,right]{$Y_{I_{c_1-3}^1}$} (13);
\draw (d) to node[scale=0.85,below]{$Y_{I_{c_1-3}^1}$} (3,2);
\draw (i) to node[scale=0.85,above]{$Y_{I_1^1}$} (2,4);
\draw (i) to node[scale=0.85,left]{$Y_{I_1^1}$} (16);
\draw (j) to node[scale=0.85,right]{$Y_{I_1^i}$} (15);
\draw (j) to node[scale=0.85,below]{$Y_{I_1^i}$} (5,4);
\draw (r) to node[scale=0.85,above]{$Y_{I_{c_i}^i}$} (4,6);
\draw[dashed] (11) to node[scale=0.85,above]{$I_{c_1}^1$} (a);
\draw[dashed] (b) to node[scale=0.85,above]{$\ \ I_{c_1-1}^1$} (12);
\draw[dashed] (13) to node[scale=0.85,above]{$\ \ I_{c_1-2}^1$} (c);
\draw[dashed] (d) to node[scale=0.85,above]{$\ \ I_{c_1-3}^1$} (14);
\draw[dashed] (15) to node[scale=0.85,above]{$I_1^1$} (i);
\draw[dashed] (j) to node[scale=0.85,above]{$I_1^i$} (16);
\draw[dashed] (17) to node[scale=0.85,above]{$I_{c_i}^i$} (r);
\end{tikzpicture}
\caption{A component snake graph $H_{\ell,k}$ where $a_i\in [v,k]\cap N_{\Gamma'}(v)$ is less than $v$. For shorthand, let $b_j^1$ denote $b_{1,j}^{\ell,k}$ and similarly for $b_j^i$.} 
\label{fig:singleton-zigzag}
\end{figure}

\begin{figure}[ht]
\begin{tikzpicture}[scale=2]
\node (11) at (1,0) {$v$};
\node (12) at (0,2) {$v$};
\node (13) at (2,1) {$v$};
\node (14) at (1,3) {$v$};
\node (15) at (4,3) {$v$};
\node (2) at (0,0) {$a_1$};
\node (a) at (0,1) {$b_{c_1}^1$};
\node (b) at (1,1) {$b_{c_1-1}^1$};
\node (c) at (1,2) {$b_{c_1-2}^1$};
\node (d) at (2,2) {$b_{c_1-3}^1$};
\node (i) at (3,4) {$b_1^1$};
\node (j) at (4,4) {$a_i$};
\node at (2.5,3) {\reflectbox{$\ddots$}};
\draw (2) to (a);
\draw (a) to (b);
\draw (b) to (c);
\draw (c) to (d);
\draw (i) to node[scale=0.85,above]{$Y_{I_v}$} (j);
\draw (2) to (11);
\draw (a) to node[scale=0.85,left]{$Y_{I_{c_1}^1}$} (12);
\draw (b) to node[scale=0.85,right]{$Y_{I_{c_1-1}^1}$} (11);
\draw (b) to node[scale=0.85,below]{$Y_{I_{c_1-1}^1}$} (13);
\draw (c) to node[scale=0.85,above]{$Y_{I_{c_1-2}^1}$} (12);
\draw (c) to node[scale=0.85,left]{$Y_{I_{c_1-2}^1}$} (14);
\draw (d) to node[scale=0.85,right]{$Y_{I_{c_1-3}^1}$} (13);
\draw (d) to node[scale=0.85,below]{$Y_{I_{c_1-3}^1}$} (3,2);
\draw (i) to node[scale=0.85,above]{$Y_{I_1^1}$} (2,4);
\draw (j) to node[scale=0.85,right]{} (15);
\draw[dashed] (11) to node[scale=0.85,above]{$I_{c_1}^1$} (a);
\draw[dashed] (b) to node[scale=0.85,above]{$\ \ I_{c_1-1}^1$} (12);
\draw[dashed] (13) to node[scale=0.85,above]{$\ \ I_{c_1-2}^1$} (c);
\draw[dashed] (d) to node[scale=0.85,above]{$\ \ I_{c_1-3}^1$} (14);
\draw[dashed] (15) to node[scale=0.85,above]{$I_1^1$} (i);
\end{tikzpicture}
\caption{A component snake graph $H_{\ell,k}$ where $a_i\in [v,k]\cap N_{\Gamma'}(v)$ is greater than $v$. For shorthand, let $b_j^1$ denote $b_{1,j}^{\ell,k}$ and similarly for $b_j^i$.} 
\label{fig:singleton-zigzag-partial}
\end{figure} 

We build the snake graph $\mathcal{G}_v$ with $\{v\}\not\in\mathcal I$ by gluing together the component snake graphs. Assuming that all of the component snake graphs are oriented as in Figures~\ref{fig:singleton-zigzag} and~\ref{fig:singleton-zigzag-partial} (i.e., such that the tile with diagonal labeled by $I_{c_1}^1$ is in the southwest corner and the vertex of that tile labeled $v$ is in the southeast corner of the tile), we start by identifying all vertices across component snake graphs that have the same label and are in the same location on tiles with the same label.  For example, every component snake graph has a vertex $v$ in the southeast corner of a tile labeled $I_{c_1}^1$, so all of these vertices will be identified.  Notice we now have a graph with only one connected component.  We also identify the vertex $v$ on the diagonals $I_1^i$ from all component snake graphs $H_{\ell,k}$ where $a_i\in [v,k]\cap N_{\Gamma'}(v)$ for some $k \in B$ is less than $v$.  We now combine all diagonals with the same label, possibly resulting in diagonals with multiple endpoints.  Finally, we combine any edges that have the same weight, share at least one endpoint, and are adjacent to the same diagonals.  This process may create hyperedges.
At the end of this process, every edge with weight $Y_S$ is incident to exactly one vertex $w$ for each $w \in N_{\Gamma'}(S)$ and similarly for diagonals labeled $S$, and every internal edge with weight $Y_C^2$, which is on tiles labeled $I_j^i$ and $I_{j+1}^i$ is incident to two vertices $w$ for each $w \in N_{\Gamma'}(C) \backslash [v,a_j^i]$.  The resulting graph is $\mathcal{G}_v$.

We give special names to two of the vertices in $\mathcal{G}_v$ for $\{v\}\not\in \mathcal I$.
\begin{itemize}
    \item Every component snake graph has a tile labeled $I_1^1$.  They also all have a vertex $v$ in the same location on this tile.  These vertices were all identified in the process of creating $\mathcal{G}_v$. We denote the resulting vertex in $\mathcal{G}_v$ as $v_\opt$.
    \item If $r>1$, then there is some collection of component snake graphs $H_{\ell,k}$ where $a_i\in [v,k]\cap N_{\Gamma'}(v)$ is less than $v$.  Then the vertex $v$ on the diagonals $I_1^i$ from all of these graphs got identified in the process of creating $\mathcal{G}_v$; we denote this vertex in $\mathcal{G}_v$ as $v_\fix$.  Notice that while other vertices are incident to 0 or 1 diagonal, $v_\fix$ is incident to $r-1$ diagonals.
\end{itemize}  

Each vertex in $\mathcal{G}_v$ is assigned a \emph{valence} $a \oplus b$.  If $\{v\}\in\mathcal{I}$, all vertices of $\mathcal{G}_v$ are assigned valence $1 \oplus 0$.  Otherwise, for each vertex $x\neq v_\fix$ in $\mathcal{G}_v$, we look at a component snake graph that has a vertex $x$ incident to the same diagonals.  If our vertex $x$ in $\mathcal{G}_v$ is incident to $b$ more edges than our vertex $x$ in the component snake graph, then the valence of vertex $x$ in $\mathcal{G}_v$ is $1\oplus b$.  Note that this is well-defined by \Cref{prop:ConsistentSize}.  The vertex $v_\fix$, if it exists in $\mathcal{G}_v$, is assigned valence $(r-1) \oplus 0$.

We denote the valence of a vertex $x$ as $\textrm{val}(x)$.  In our figures, we will include the $\textrm{val}(x)$ next to vertex $x$ whenever $\textrm{val}(x)$ is not $1\oplus0$.  The valence of $x$ determines the minimum and maximum number of edges that can be incident to $x$ in the set of matchings of $\mathcal{G}_v$ that we will consider in our expansion formula. See \Cref{sec:main_formula} for details.

\begin{ex}\label{ex:G4}
The following is the snake graph $\mathcal G_4$. 
\begin{center}
\begin{tikzpicture}[scale=0.65]
	\begin{pgfonlayer}{nodelayer}
		\node [style=label] (0) at (-7, 2) {$4$};
		\node [style=label] (1) at (-4, 2) {$7'$};
		\node [style=label] (2) at (-3, 1.5) {$0'$};
		\node [style=label] (3) at (-7, -1) {$7$};
		\node [style=label] (4) at (-6, -1.5) {$0$};
		\node [style=label] (5) at (-4, -1) {$7$};
		\node [style=label] (6) at (-3, -1.5) {$0'$};
		\node [style=label] (7) at (-7, -4) {$5$};
		\node [style=label] (8) at (-4, -4) {$4$};
		\node [style=label] (9) at (-1, -1) {$4$};
		\node [style=label] (10) at (-1, 2) {$3$};
		\node [style=label] (11) at (0, 1.5) {$9$};
		\node [style=none] (12) at (-6, 2) {};
		\node [style=none] (13) at (-2, 1.75) {};
		\node [style=none] (14) at (-1.75, -1) {};
		\node [style=none] (15) at (-7, 0.75) {};
		\node [style=none] (16) at (-4, -3) {};
		\node [style=label-s] (17) at (-5.25, -2.75) {$I_5$};
		\node [style=label-s] (18) at (-5.5, 0.75) {$I_0$};
		\node [style=label-s] (19) at (-2, 0) {$I_7$};
		\node [style=label-s] (20) at (-5.5, 2.35) {$Y_{I_7}$};
		\node [style=label-s] (21) at (-2.25, 2.25) {$Y_{I_4}$};
		\node [style=label-s] (22) at (-3, 0) {$Y_0^2$};
		\node [style=label-s] (23) at (-0.25, -1.5) {$1\oplus 1$};
		\node [style=label-s] (24) at (-2, -1.5) {$Y_{I_0}$};
		\node [style=label-s] (25) at (-3.5, -2.75) {$Y_{I_0}$};
		\node [style=label-s] (27) at (-7.5, 0.75) {$Y_{I_5}$};
		\node [style=label-s] (28) at (-7.5, -2.5) {$Y_{I_6}$};
		\node [style=label-s] (29) at (-7, -4.5) {$1\oplus 1$};
		\node [style=label-s] (30) at (-5.5, -0.6) {$Y_6^2$};
	\end{pgfonlayer}
	\begin{pgfonlayer}{edgelayer}
		\draw [style=e4, in=165, out=0, looseness=1.25] (1) to (13.center);
		\draw [style=e4, in=180, out=-15, looseness=1.25] (13.center) to (11);
		\draw [style=e4] (2) to (13.center);
		\draw [style=e4] (13.center) to (10);
		\draw [style=e0] (5) to (14.center);
		\draw [style=e0] (14.center) to (9);
		\draw [style=e0, in=45, out=180] (14.center) to (6);
		\draw [style=e7] (0) to (12.center);
		\draw [style=e7, in=180, out=0, looseness=1.25] (12.center) to (2);
		\draw [style=e7] (12.center) to (1);
		\draw (3) to (5);
		\draw (4) to (6);
		\draw [style=e5] (0) to (15.center);
		\draw [style=e5, in=105, out=-90, looseness=0.75] (15.center) to (4);
		\draw [style=e5] (15.center) to (3);
		\draw (7) to (8);
		\draw [style=e0] (8) to (16.center);
		\draw [style=e0] (16.center) to (5);
		\draw [style=e0, in=-150, out=90, looseness=0.75] (16.center) to (6);
		\draw (1) to (5);
		\draw (10) to (9);
		\draw (9) to (11);
		\draw [style=e6] (3) to (7);
		\draw (7) to (4);
		\draw [style=d5] (3) to (17);
		\draw [style=d5, in=-75, out=135, looseness=0.75] (17) to (4);
		\draw [style=d5] (17) to (8);
		\draw [style=d7] (1) to (19);
		\draw [style=d7, in=300, out=135] (19) to (2);
		\draw [style=d7] (19) to (9);
		\draw [style=d0] (0) to (18);
		\draw [style=d0, in=120, out=-45, looseness=0.75] (18) to (5);
		\draw [style=d0] (18) to (6);
		\draw (2) to (22);
		\draw (22) to (6);
	\end{pgfonlayer}
\end{tikzpicture}
\end{center}
\end{ex}

\begin{ex}\label{ex:G3}
The following is the snake graph $\mathcal G_3$. 
\begin{center}
\begin{tikzpicture}[scale=0.56]
	\begin{pgfonlayer}{nodelayer}
		\node [style=label] (0) at (-7.25, 4.5) {$3$};
		\node [style=label] (1) at (-2.25, 4.5) {$4$};
		\node [style=label] (2) at (-7.25, 0.5) {$8'$};
		\node [style=label] (3) at (-2.25, 1.25) {$9$};
		\node [style=label] (4) at (-3, 0.5) {$7'$};
		\node [style=label] (5) at (-3.75, -0.25) {$0'$};
		\node [style=label] (6) at (-1.5, 2) {$1'$};
		\node [style=label] (7) at (-1.5, 5.75) {$1$};
		\node [style=label] (8) at (2.25, 5.75) {$2$};
		\node [style=label] (9) at (2.25, 2) {$3$};
		\node [style=label] (10) at (-7.25, -3.5) {$8$};
		\node [style=label] (11) at (-3.75, -3.5) {$3$};
		\node [style=none] (12) at (0.25, 4) {};
		\node [style=label-s] (13) at (-5.25, 2.5) {$I_4$};
		\node [style=label-s] (14) at (-3, 2) {$Y_{I_7}$};
		\node [style=none] (15) at (-3.75, -1.75) {};
		\node [style=label-s] (16) at (-5.75, 0.5) {$Y_{I_3}$};
		\node [style=label-s] (18) at (-5.5, -1.5) {$I_8$};
		\node [style=label-s] (21) at (-3.75, -2.25) {$Y_{I_4}$};
		\node [style=label-s] (22) at (-2.25, -1) {$Y_{I_1}$};
		\node [style=label-s] (23) at (0, 1.65) {$Y_{I_1}$};
		\node [style=label-s] (24) at (-5.5, 5.35) {$Y_{I_2}$};
		\node [style=label-s] (25) at (-7.75, 2.25) {$Y_{I_8}$};
		\node [style=label] (26) at (-2.75, -3.5) {$=v_{{opt}}$};
		\node [style=label-s] (27) at (-2.75, 5) {$1\oplus 1$};
		\node [style=label-s] (28) at (-5.25, 3.25) {$I_1$};
		\node [style=label-s] (29) at (0.75, 3.75) {$I_2$};
		\node [style=label-s] (30) at (-3.75, -4) {$1\oplus 1$};
		\node [style=label-s] (31) at (-7.5, 5) {$2\oplus 0$};
	\end{pgfonlayer}
	\begin{pgfonlayer}{edgelayer}
		\draw (0) to (1);
		\draw (1) to (3);
		\draw [style=e8] (0) to (2);
		\draw (7) to (8);
		\draw (8) to (9);
		\draw [style=e1] (9) to (6);
		\draw (6) to (7);
		\draw [style=e2] (0) to (7);
		\draw (2) to (10);
		\draw (10) to (11);
		\draw [style=e1] (11) to (6);
		\draw [style=e3] (2) to (16);
		\draw [style=e3, in=-150, out=0, looseness=0.75] (16) to (3);
		\draw [style=e3, in=-180, out=0] (16) to (4);
		\draw [style=e3, in=120, out=0] (16) to (5);
		\draw [style=e3, in=210, out=0, looseness=0.75] (16) to (6);
		\draw [style=e4, in=-120, out=90, looseness=0.50] (15.center) to (4);
		\draw [style=e4] (15.center) to (5);
		\draw [style=e7] (14) to (4);
		\draw [style=e7] (14) to (5);
		\draw [style=d4] (0) to (13);
		\draw [style=d4, in=-180, out=-45, looseness=0.75] (13) to (3);
		\draw [style=d4] (13) to (4);
		\draw [style=d4, in=90, out=-45, looseness=0.75] (13) to (5);
		\draw [style=d8] (2) to (18);
		\draw [style=d8] (18) to (11);
		\draw [style=d2, in=0, out=135, looseness=0.75] (12.center) to (1);
		\draw [style=e4, in=-120, out=90, looseness=0.50] (15.center) to (3);
		\draw (21) to (15.center);
		\draw [style=e4] (21) to (11);
		\draw (21) to (15.center);
		\draw [style=d1] (0) to (6);
		\draw [style=e7] (14) to (1);
		\draw [style=d2] (7) to (12.center);
		\draw [style=d2] (12.center) to (9);
	\end{pgfonlayer}
\end{tikzpicture}
\end{center}
\end{ex}

\begin{remark}
For a vertex $v$ with $r>1$, our construction of $\mathcal{G}_v$ is not unique.  In particular, it depends on the choice of $a_1$.  However, this ambiguity will turn out to be irrelevant to us as our main theorem will hold for any snake graph $\mathcal{G}_v$ obtained by following the process detailed above.
\end{remark}

\subsection{Snake Graphs for Weakly Rooted Sets}\label{sec:Gluing}

Next, we describe how to construct snake graphs for a special class of larger vertex subsets called weakly rooted sets.

\begin{defn}\label{def:RootedAndWeaklyRooted}
Let $S$ be a connected vertex subset of a graph $\Gamma$, $\mathcal{I}$ be a maximal nested collection on $\Gamma$ and define $\overline{S}:=\{ v\in S|I_v\not \subseteq S\}.$
\begin{enumerate}
    \item $S$ is \emph{rooted} with respect to $\mathcal{I}$ if for all pairs $i,j \in S$, $i <_{\mathcal{I}} j$ if and only if $j \in [i,m_{\mathcal{I}}(S)]$.
    \item $S$ is \emph{weakly rooted} with respect to $\mathcal{I}$ if $\overline{S}$ is a connected rooted set.
\end{enumerate}
 When $S$ is weakly rooted, we refer to $\overline{S}$ as the \emph{rooted portion} of $S$.
\end{defn}

\begin{ex}\label{ex:weakly-rooted-set}
Given the maximal nested collection $\mathcal I$ shown in \Cref{fig:cluster}, the set $S_1 = \{3,4,5,6,8\}$ is rooted with respect to $\mathcal I$ because starting at any vertex in the set and the walking towards $3=m_{\mathcal I}(S_1)$, we go through bigger and bigger vertices with respect to $<_{\mathcal I}$.  The set $S_2 = \{3,4,5,6,8,0\}$ is not rooted because the path $[0,3] = [0,m_{\mathcal{I}}(S_2)]$ contains $5$ and $5<_{\mathcal I}0$.  However, $\{3,4,5,6,8,0\}$ is weakly rooted.  Since $\{5,6,0\},\{8\}\in\mathcal I$, the rooted portion of $\{3,4,5,6,8,0\}$ is $\{3,4\}$, which is rooted.
\end{ex}

Note that any element of $\mathcal{I}$ and any set of size 1 or 2 is weakly rooted with respect to $\mathcal{I}$.   In addition, if $\mathcal{I}$ is a rooted cluster, as in \cite{rooted}, then all connected vertex subsets are weakly rooted with respect to $\mathcal{I}$. Note that in this latter case, $c_i(v)=1$ for all $v\in\overline{S}$ and for all $1\leq i\leq r$. 

In order to build $\mathcal{G}_S$ we will build singleton snake graphs for each vertex in $\bar{S}$ and then remove certain subgraphs and adjust the valences. In order to justify that this process is well-defined, we first provide a few results concerning properties of singleton snake graphs.

\begin{prop}\label{prop:ij-edge}
Given any $a_i \in N_{\Gamma'}(v)$, there is a unique edge of the form $(v,a_i)$ in  the snake graph $\mathcal{G}_v$ where the vertex $a_i$ does not lie on any diagonals. If $a_i <_{\mathcal{I}} v$, then the vertex labeled $v$ lies on the diagonal labeled $I_{a_i} = I_{c_i}^i$. If $a_i >_{\mathcal{I}} v$, then this edge is $(v_\opt,a_i)$. 
\end{prop}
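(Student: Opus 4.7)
The plan is to reduce the claim to a local analysis of each component snake graph $H_{\ell,k}$ at the vertex $a_i$, and then track what is identified when we glue the components into $\mathcal{G}_v$. If $\{v\}\in\mathcal{I}$ then $\mathcal{G}_v$ has no diagonals and the statement is either vacuous or immediate, so I assume $\{v\}\not\in\mathcal{I}$ throughout.

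First I would analyze where $a_i$ appears in a single $H_{\ell,k}$. Viewed as a vertex of the polygon $Q_{\ell,k}$, $a_i$ is incident to exactly two boundary edges of $Q_{\ell,k}$ and to no internal arc of $T_{\ell,k}$: every internal arc has $v$ as one endpoint and a $b$-type vertex as the other, and since $\Gamma'$ is a tree, the neighbors of $v$ lie in distinct components of $\Gamma'\setminus\{v\}$, which forces $a_i\ne b_{i',j}^{\ell,k}$ for every admissible $(i',j)$. Hence $a_i$ belongs to exactly two triangles of $T_{\ell,k}$ sharing exactly one arc, so it occupies exactly one corner of exactly one tile of $H_{\ell,k}$. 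Consulting \Cref{prop:ConsistentSize} and the templates in \Cref{fig:singleton-zigzag,fig:singleton-zigzag-partial}, this tile is $I_{c_i}^i$ when $a_i<_{\mathcal{I}}v$ and is $I_1^1$ when $a_i>_{\mathcal{I}}v$ (since $c_i=0$ makes $I_1^1$ the final tile). In each case $a_i$ sits at the NE corner, so it is not on the NW--SE diagonal. The two sides of the tile incident to $a_i$ are the east side $(v,a_i)$ of weight $1$, whose other endpoint is the $v$ at the SE corner (a diagonal endpoint), and a north side to a $b$-type vertex of nontrivial weight.

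Next I would pass to $\mathcal{G}_v$ using the gluing rules of \Cref{subsec:SingletonConstruction2}. All copies of $a_i$ across components occupy the same tile-corner on identically labeled tiles, so they merge to a single vertex $a_i\in\mathcal{G}_v$, still not incident to any diagonal. The east-side edges $(v,a_i)$ all have weight $1$, share both endpoints after the vertex identifications, and border the same diagonal, so they merge into a single edge of $\mathcal{G}_v$. When $a_i<_{\mathcal{I}}v$, its $v$-endpoint is the SE corner of the tile $I_{c_i}^i$, which by construction lies on the diagonal labeled $I_{c_i}^i=I_{a_i}$. When $a_i>_{\mathcal{I}}v$, its $v$-endpoint is the SE corner of the $I_1^1$-tile, which is by definition the vertex $v_\opt$.

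Finally, for uniqueness, the only other edge of $\mathcal{G}_v$ incident to $a_i$ comes from the north side of its tile; after gluing, the non-$a_i$ endpoint of that edge is labeled by a $b$-type vertex (namely $b_{i,c_i}$ for $i\le r$ or $b_{1,1}$ for $i>r$), never by $v$. The only subtle point in the whole argument is verifying that $a_i$ does not reappear somewhere else in some component --- for instance as a $b_{i',j}^{\ell',k'}$ --- and this is precisely what the tree structure of $\Gamma'$ rules out, as indicated above.
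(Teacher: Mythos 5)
Your argument is correct and follows essentially the same route as the paper: analyze the local configuration of $a_i$ in each component $H_{\ell,k}$ (using the tree structure of $\Gamma'$ to rule out $a_i$ coinciding with any $b$-type vertex, so $a_i$ lies on no internal arc and occupies one corner of one tile), then observe that the resulting boundary edge $(v,a_i)$ is consistent across all relevant components and so survives gluing as a single edge of $\mathcal{G}_v$, with the identification of the diagonal and of $v_{\opt}$ read off the templates. One cosmetic slip — the north-side edge at $a_i$ does not always have ``nontrivial weight'' (it has weight $1$ when $(a_i,b_{i,c_i}^{\ell,k})\in E(\Gamma')$), and after gluing there may be several such edges rather than one — but this does not affect the argument, since for uniqueness you only need that none of their non-$a_i$ endpoints is labeled $v$, which you correctly establish.
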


\begin{proof}
If $a_i \in N_{\Gamma'}(v)$, then it follows from the construction that in any component snake graph $H_{\ell,k}$ which $a_i$ appears in, there is a boundary edge $(v,a_i)$ such that the vertex $v$ lies on the diagonal $I_{a_i}$. Since this edge is consistent throughout the relevant component snake graphs, we have the same edge when we glue these together.
If $a_i >_{\mathcal{I}} v$, then the edge $(v,a_i)$ shares the vertex $v$ with the diagonal labeled $I_1^1$, so this is $v_\opt$ by definition. 

The fact that this edge is unique in each case follows from the fact that, in a tree, it is impossible for $a_i \in N_\Gamma(I)$ for some $I \in \mathcal{I}$ which is incompatible with $\{v\}$. 
\end{proof}

The second result will be necessary to guarantee that our process of trimming portions of a snake graph $\mathcal{G}_S$ for sets $I \subset S, I \in \mathcal{I}$ is well-defined. 

\begin{prop}\label{prop:APlaceToCut}
For every diagonal labeled $I_j^i$ in a snake graph $\mathcal{G}_v$, there is a vertex labeled $a_j^i$ which is not on this diagonal but such that every edge incident to this vertex is also incident to the diagonal $I_j^i$, $I_{j+1}^i$, or $I_{j+2}^i$.
\end{prop}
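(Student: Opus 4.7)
The plan is to identify, for each diagonal $I_j^i$, an explicit vertex labeled $a_j^i$ in $\mathcal{G}_v$ and inspect its local neighborhood, splitting on whether $j = c_i$ or $j < c_i$.

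The boundary case $j = c_i$ is easiest: here $a_{c_i}^i = a_i$ is a polygon vertex that appears in every relevant component snake graph as a non-diagonal corner of the tile with diagonal $I_{c_i}^i$, so every edge at $a_i$ in any such $H_{\ell,k}$ is a side of that tile and hence is incident to $I_{c_i}^i$. By Proposition \ref{prop:ij-edge}, the unique additional edge $(v,a_i)$ in $\mathcal{G}_v$ has its $v$-endpoint on the diagonal $I_{a_i} = I_{c_i}^i$, so it too is incident to $I_{c_i}^i$.

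For the main case $j < c_i$, the key structural input is that $a_j^i$ is adjacent in $\Gamma$ to exactly one vertex of $I_{j+1}^i$. Since $I_{j+1}^i \lessdot I_j^i$ is a cover in $\mathcal{I}$, the set $I_j^i \setminus \{a_j^i\}$ decomposes as a disjoint union of the maximal proper subsets of $I_j^i$ in $\mathcal{I}$, one of which is $I_{j+1}^i$; thus $I_{j+1}^i$ is a connected component of $I_j^i \setminus \{a_j^i\}$, and because $\Gamma$ is a tree, $a_j^i$ has exactly one neighbor inside $I_{j+1}^i$. I then select a leaf $\kappa$ of $\Gamma'$ in the subtree beyond $a_j^i$ ($\kappa \in A$ if $i=1$, $\kappa \in B$ otherwise) and pair it with an appropriate complementary leaf to form a component snake graph $H_{\ell,k}$. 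The path $[\ell,k]$ passes through $a_j^i$ and exits $I_{j+1}^i$ at the unique neighbor of $a_j^i$ therein, so $b_{i,j+1}^{\ell,k} = a_j^i$.

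By Proposition \ref{prop:ConsistentSize} and the zig-zag layouts of Figures \ref{fig:singleton-zigzag} and \ref{fig:singleton-zigzag-partial}, the corner labeled $b_{i,j+1}^{\ell,k}$ lies on the diagonal $I_{j+1}^i$ and is the shared non-diagonal corner of the (at most three) consecutive zig-zag tiles with diagonals $I_j^i$, $I_{j+1}^i$, and $I_{j+2}^i$ (the last being absent when $j+1 = c_i$). Hence every edge at this corner in $H_{\ell,k}$ is a side of one of these tiles, and so is incident to $I_j^i$, $I_{j+1}^i$, or $I_{j+2}^i$. Since the gluing procedure producing $\mathcal{G}_v$ only identifies vertices at structurally analogous positions and combines parallel diagonals and edges, any further edges at this vertex in $\mathcal{G}_v$ come from other $H_{\ell',k'}$ in which $a_j^i = b_{i,j+1}^{\ell',k'}$ sits at the same local configuration, and are again incident only to $I_j^i$, $I_{j+1}^i$, or $I_{j+2}^i$. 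The main obstacle is tracking the zig-zag incidence carefully, handling the boundary behavior at $j = c_i - 1$, and confirming that the gluing does not import an edge from a tile whose diagonal lies outside $\{I_j^i, I_{j+1}^i, I_{j+2}^i\}$; that is, the occurrence of $a_j^i$ at the $b_{i,j+1}$ position must be a distinct vertex of $\mathcal{G}_v$ from any other occurrence of the label $a_j^i$ sitting near a different diagonal.
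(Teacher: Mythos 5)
Your proposal takes the same route as the paper: split on $j = c_i$ versus $j < c_i$, and in the main case use $a_j^i \in N_{\Gamma'}(I_{j+1}^i)$ to find a pair $(\ell,k)$ with $b_{i,j+1}^{\ell,k} = a_j^i$, then read off the local zigzag structure. The paper's proof is terser — it simply asserts that such a pair exists and that all edges at that corner lie on tiles $I_j^i$, $I_{j+1}^i$, or $I_{j+2}^i$ — while you make the leaf selection explicit and flag the gluing subtlety as something still to verify. That subtlety is real (the label $a_j^i$ can occur at more than one position, e.g.\ the vertex $7$ appears twice in $\mathcal{G}_4$), but it resolves immediately from the gluing rule: vertices are identified only when they have the same label \emph{and} lie at the same position on a tile with the same diagonal label, so the $b_{i,j+1}$-occurrence of $a_j^i$ stays a separate vertex of $\mathcal{G}_v$ and only accrues edges from other component snake graphs in which $a_j^i$ again sits at $b_{i,j+1}$. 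One minor quibble in your $j = c_i$ case: citing Proposition \ref{prop:ij-edge} for the edge $(v,a_i)$ is unnecessary, since that edge is already a side of the tile $I_{c_i}^i$ in every $H_{\ell,k}$ containing $a_i$; the only "extra" edges $a_i$ gains in $\mathcal{G}_v$ are those of the form $(a_i,b_{i,c_i}^{\ell,k})$, all of which are sides of the same tile.
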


\begin{proof}
Recall from \Cref{subsec:SingletonConstruction1} that  $a_{j}^{i} = m_{\mathcal{I}}(I_{j}^{i})$ and that $b_{i,j}^{\ell,k}$ is the unique vertex of $\Gamma$ besides $v$ in $[\ell,k] \cap N_{\Gamma'}(I_j^i)$ for $1\leq i\leq r$ and $1\leq j\leq c_i$.

First consider the case where $j<c_i$. Since $a_j^i \in N_{\Gamma'}(I_{j+1}^i)$, there must be at least one pair $\ell,k$ such that $b_{i,j+1}^{\ell,k} = a_j^i$. In any component snake graph $H_{\ell,k}$  for such a pair, there will be a vertex labeled $a_j^i = b_{i,j+1}^{\ell,k}$.  All edges incident to this vertex will be on a tile $I_j^i$, $I_{j+1}^i$, or $I_{j+2}^i$ if it exists, so the claim for the full snake graph $\mathcal{G}_v$ will follow.

Now consider the case where $j=c_i$.  In this case $a_j^i=a_i$.  In any component snake graph $H_{\ell,k}$ where $a_i\in [\ell,k]$, there is a vertex labeled $a_i$.  This vertex is not on any diagonals and all edges incident to this vertex is incident to the diagonal $I_{c_i}^i$.  Thus, the claim for $\mathcal{G}_v$ follows. 
\end{proof}

We are now ready to construct the snake graph $\mathcal G_S$ for a weakly rooted set $S$ of arbitrary size.  If $S\in\mathcal I$, then $\mathcal G_S$ is a single hyperedge weighted $Y_S$ that connects all vertices in $N_{\Gamma'}(S)$. Otherwise, we construct the snake graph $\mathcal G_S$ via the following procedure.

First, for each $v\in\overline{S}$ we do the following:
\begin{enumerate}[(1)]
\item Construct $\mathcal G_v$.
\item Let $\{a_{i_1},\dots,a_{i_k}\}=\{a_1(v),\dots,a_r(v)\}\,\cap\,(S\setminus \overline{S})$ for $r = \vert N_{\Gamma}(v)\,\cap\,\Gamma^{\mathcal{I}}_{<v} \vert$. That is, $\{a_{i_1},\dots,a_{i_k}\}$ is the set of neighbors of $v$ that are less than $v$ and in $S\setminus\overline{S}$. For $1\leq \ell\leq k$, let $m_\ell$ be minimal such that $1\leq m_\ell\leq c_{i_\ell}(v)$ and $I^{i_\ell}_{m_\ell}(v)\subsetneq S$.  Note that such a $m_\ell$ always exists because $S$ must contain $I_{a_{i_\ell}}=I^{i_\ell}_{c_{i_\ell}}(v)$ for $a_{i_\ell}$ to be in $S\setminus\overline{S}$ by the definition of weakly rooted.
\item For $1\leq\ell\leq k$, $\mathcal{G}_v$ contains a diagonal labeled $I^{i_\ell}_{m_\ell}$. We delete any edge in $\mathcal{G}_v$ which is incident to a diagonal $I$ such that $I \subsetneq I^{i_\ell}_{m_\ell}$. If no such $I$ exists, then we delete all edges which are incident to the vertex $a^{i_\ell}_{m_\ell}$; note that this vertex is guaranteed to exist by Proposition \ref{prop:APlaceToCut}. Next, we delete all vertices that are not incident to any edges. Then, we set the valence of the vertex $v$ which is incident to the diagonal $I^{i_\ell}_{m_\ell}$ to be one less in the first coordinate than it was previously. Finally, we delete all diagonals $I \subseteq I^{i_\ell}_{m_\ell}$ and we delete all vertices with valence $0 \oplus 0$ and their incident edges.

We have now constructed $\mathcal{G}_{\{v\}\cup I^{i_1}_{m_1}\cup\cdots\cup I^{i_k}_{m_k}}$.  We will call this graph $\mathcal G_v^S$ for the rest of the steps in this construction.
\end{enumerate}
If $\overline{S}$ is a singleton, we are now done.  Otherwise, we need to combine the  graphs we made in steps (1) - (3).  Note that if $v,w\in\overline{S}$ are adjacent in $\Gamma$ then the above process does not remove the edge $(v,w)$ in $\mathcal G_v$ or $\mathcal G_w$ described in \Cref{prop:ij-edge}. Thus $\mathcal G_v^S$ and $\mathcal G_w^S$ both contain an edge $(v,w)$.  Let $L=v_1,v_2,\dots,v_{|\overline{S}|}$ be a list of the elements in $\overline{S}$ such that $\{v_1,\dots,v_i\}$ is connected for all $1\leq i\leq|\overline{S}|$.  We will finish constructing $\mathcal G_S$ with the following steps.
\begin{enumerate}
\item[(4)] Start with $\mathcal G_{v_1}^S$ and $\mathcal G_{v_2}^S$.  Since $v_1$ and $v_2$ are adjacent in $\overline{S}$, there is an edge $(v_1,v_2)$ in each.  Form $\mathcal{G}_{v_1,v_2}^S$ by identifying these edges. For the vertices $v_1$ and $v_2$ that were identified, set their valences in $\mathcal{G}_{v_1,v_2}^S$ to be the coordinate-wise sum of their valences in $\mathcal{G}_{v_1}^S$ and $\mathcal{G}_{v_2}^S$ with 1 subtracted from the first coordinate.
\item[(5)] Since $\{v_1,v_2,v_3\}$ is  connected, $v_3$ must be adjacent to $v_1$ or $v_2$ in $\overline{S}$.  Without loss of generality, assume it's adjacent to $v_1$.  Then $\mathcal{G}_{v_1,v_2}^S$ has an edge $(v_1,v_3)$ inherited from $\mathcal{G}_{v_1}^S$ and $\mathcal{G}_{v_3}^S$ also has an edge $(v_1,v_3)$.  To obtain $\mathcal{G}_{v_1,v_2,v_3}^S$, glue these graphs together along these edges as we glued $\mathcal{G}_{v_1}^S$ and $\mathcal{G}_{v_2}^S$ along $(v_1,v_2)$.
\item[(6)] Continue the process in step (5) for $v_4,v_5$, and so on through $v_{|\overline{S}|}$.  We define $\mathcal G_S:=\mathcal{G}_{v_1,v_2,\dots,v_{|\overline{S}|}}^S$.
\end{enumerate}

\begin{ex}\label{ex:weakly-rooted-graph}
Consider the maximal nested collection $\mathcal I$ from \Cref{fig:cluster} and the weakly rooted set $S=\{3,4,5,6,8,0\}$.  From \Cref{ex:weakly-rooted-set}, we know that $\overline{S}=\{3,4\}$.

\newpage
We'll start by following steps (1) - (3) for $v=4$.  We constructed $\mathcal{G}_4$ in \Cref{ex:G4}: 
\begin{center}

\begin{tikzpicture}[scale=0.65]
	\begin{pgfonlayer}{nodelayer}
		\node [style=label] (0) at (-7, 2) {$4$};
		\node [style=label] (1) at (-4, 2) {$7'$};
		\node [style=label] (2) at (-3, 1.5) {$0'$};
		\node [style=label] (3) at (-7, -1) {$7$};
		\node [style=label] (4) at (-6, -1.5) {$0$};
		\node [style=label] (5) at (-4, -1) {$7$};
		\node [style=label] (6) at (-3, -1.5) {$0'$};
		\node [style=label] (7) at (-7, -4) {$5$};
		\node [style=label] (8) at (-4, -4) {$4$};
		\node [style=label] (9) at (-1, -1) {$4$};
		\node [style=label] (10) at (-1, 2) {$3$};
		\node [style=label] (11) at (0, 1.5) {$9$};
		\node [style=none] (12) at (-6, 2) {};
		\node [style=none] (13) at (-2, 1.75) {};
		\node [style=none] (14) at (-1.75, -1) {};
		\node [style=none] (15) at (-7, 0.75) {};
		\node [style=none] (16) at (-4, -3) {};
		\node [style=label-s] (17) at (-5.25, -2.75) {$I_5$};
		\node [style=label-s] (18) at (-5.5, 0.75) {$I_0$};
		\node [style=label-s] (19) at (-2, 0) {$I_7$};
		\node [style=label-s] (20) at (-5.5, 2.35) {$Y_{I_7}$};
		\node [style=label-s] (21) at (-2.25, 2.25) {$Y_{I_4}$};
		\node [style=label-s] (22) at (-3, 0) {$Y_0^2$};
		\node [style=label-s] (23) at (-0.25, -1.5) {$1\oplus 1$};
		\node [style=label-s] (24) at (-2, -1.5) {$Y_{I_0}$};
		\node [style=label-s] (25) at (-3.5, -2.75) {$Y_{I_0}$};
		\node [style=label-s] (27) at (-7.5, 0.75) {$Y_{I_5}$};
		\node [style=label-s] (28) at (-7.5, -2.5) {$Y_{I_6}$};
		\node [style=label-s] (29) at (-7, -4.5) {$1\oplus 1$};
		\node [style=label-s] (30) at (-5.5, -0.6) {$Y_6^2$};
	\end{pgfonlayer}
	\begin{pgfonlayer}{edgelayer}
		\draw [style=e4, in=165, out=0, looseness=1.25] (1) to (13.center);
		\draw [style=e4, in=180, out=-15, looseness=1.25] (13.center) to (11);
		\draw [style=e4] (2) to (13.center);
		\draw [style=e4] (13.center) to (10);
		\draw [style=e0] (5) to (14.center);
		\draw [style=e0] (14.center) to (9);
		\draw [style=e0, in=45, out=180] (14.center) to (6);
		\draw [style=e7] (0) to (12.center);
		\draw [style=e7, in=180, out=0, looseness=1.25] (12.center) to (2);
		\draw [style=e7] (12.center) to (1);
		\draw (3) to (5);
		\draw (4) to (6);
		\draw [style=e5] (0) to (15.center);
		\draw [style=e5, in=105, out=-90, looseness=0.75] (15.center) to (4);
		\draw [style=e5] (15.center) to (3);
		\draw (7) to (8);
		\draw [style=e0] (8) to (16.center);
		\draw [style=e0] (16.center) to (5);
		\draw [style=e0, in=-150, out=90, looseness=0.75] (16.center) to (6);
		\draw (1) to (5);
		\draw (10) to (9);
		\draw (9) to (11);
		\draw [style=e6] (3) to (7);
		\draw (7) to (4);
		\draw [style=d5] (3) to (17);
		\draw [style=d5, in=-75, out=135, looseness=0.75] (17) to (4);
		\draw [style=d5] (17) to (8);
		\draw [style=d7] (1) to (19);
		\draw [style=d7, in=300, out=135] (19) to (2);
		\draw [style=d7] (19) to (9);
		\draw [style=d0] (0) to (18);
		\draw [style=d0, in=120, out=-45, looseness=0.75] (18) to (5);
		\draw [style=d0] (18) to (6);
		\draw (2) to (22);
		\draw (22) to (6);
	\end{pgfonlayer}
\end{tikzpicture}
\end{center}
The only neighbor of 4 that is in $S\setminus\overline{S}$ is 5, which we labeled as $a_1$ when creating $\mathcal{G}_4$.  This means $i_1=1$ in this case.  Because $I^1_2=\{5,6,0\}=I_0$ is contained in $S$ but $I^1_1=\{5,6,7,0\}=I_7$ is not contained in $S$, we have $m_1=2$.  In step 3, we delete everything incident to the diagonal $I_5$ since $I_5\subseteq I_0$. 
\begin{center}

\begin{tikzpicture}[scale=0.6]
	\begin{pgfonlayer}{nodelayer}
		\node [style=label] (0) at (-7, 2) {$4$};
		\node [style=label] (1) at (-4, 2) {$7'$};
		\node [style=label] (2) at (-3, 1.5) {$0'$};
		\node [style=label] (5) at (-4, -1) {$7$};
		\node [style=label] (6) at (-3, -1.5) {$0'$};
		\node [style=label] (9) at (-1, -1) {$4$};
		\node [style=label] (10) at (-1, 2) {$3$};
		\node [style=label] (11) at (0, 1.5) {$9$};
		\node [style=none] (12) at (-6, 2) {};
		\node [style=none] (13) at (-2, 1.75) {};
		\node [style=none] (14) at (-1.75, -1) {};
		\node [style=label-s] (18) at (-5, 0.5) {$I_0$};
		\node [style=label-s] (19) at (-2.25, 0) {$I_7$};
		\node [style=label-s] (20) at (-5.5, 2.35) {$Y_{I_7}$};
		\node [style=label-s] (21) at (-2.25, 2.25) {$Y_{I_4}$};
		\node [style=label-s] (22) at (-3, 0) {$Y_0^2$};
		\node [style=label-s] (23) at (-0.5, -1.5) {$1\oplus 1$};
		\node [style=label-s] (24) at (-2, -1.5) {$Y_{I_0}$};
	\end{pgfonlayer}
	\begin{pgfonlayer}{edgelayer}
		\draw [style=e4, in=165, out=0, looseness=1.25] (1) to (13.center);
		\draw [style=e4, in=180, out=-15, looseness=1.25] (13.center) to (11);
		\draw [style=e4] (2) to (13.center);
		\draw [style=e4] (13.center) to (10);
		\draw [style=e0] (5) to (14.center);
		\draw [style=e0] (14.center) to (9);
		\draw [style=e0, in=45, out=180] (14.center) to (6);
		\draw [style=e7] (0) to (12.center);
		\draw [style=e7, in=180, out=0, looseness=1.25] (12.center) to (2);
		\draw [style=e7] (12.center) to (1);
		\draw (1) to (5);
		\draw (10) to (9);
		\draw (9) to (11);
		\draw [style=d7] (1) to (19);
		\draw [style=d7, in=300, out=135] (19) to (2);
		\draw [style=d7] (19) to (9);
		\draw [style=d0] (0) to (18);
		\draw [style=d0, in=105, out=-45] (18) to (5);
		\draw [style=d0, in=120, out=-30, looseness=0.75] (18) to (6);
		\draw (2) to (22);
		\draw (22) to (6);
	\end{pgfonlayer}
\end{tikzpicture}

\end{center}
We then reduce the valence of vertex 4 adjacent to diagonal $I_0$.
\begin{center}
\begin{tikzpicture}[scale=0.6]
	\begin{pgfonlayer}{nodelayer}
		\node [style=label] (0) at (-7, 2) {$4$};
		\node [style=label] (1) at (-4, 2) {$7'$};
		\node [style=label] (2) at (-3, 1.5) {$0'$};
		\node [style=label] (5) at (-4, -1) {$7$};
		\node [style=label] (6) at (-3, -1.5) {$0'$};
		\node [style=label] (9) at (-1, -1) {$4$};
		\node [style=label] (10) at (-1, 2) {$3$};
		\node [style=label] (11) at (0, 1.5) {$9$};
		\node [style=none] (12) at (-6, 2) {};
		\node [style=none] (13) at (-2, 1.75) {};
		\node [style=none] (14) at (-1.75, -1) {};
		\node [style=label-s] (18) at (-5, 0.5) {$I_0$};
		\node [style=label-s] (19) at (-2.25, 0) {$I_7$};
		\node [style=label-s] (20) at (-5.5, 2.35) {$Y_{I_7}$};
		\node [style=label-s] (21) at (-2.25, 2.25) {$Y_{I_4}$};
		\node [style=label-s] (22) at (-3, 0) {$Y_0^2$};
		\node [style=label-s] (23) at (-0.5, -1.5) {$1\oplus 1$};
		\node [style=label-s] (24) at (-2, -1.5) {$Y_{I_0}$};
		\node [style=label-s] (25) at (-7.5, 2.5) {$0\oplus 0$};
	\end{pgfonlayer}
	\begin{pgfonlayer}{edgelayer}
		\draw [style=e4, in=165, out=0, looseness=1.25] (1) to (13.center);
		\draw [style=e4, in=180, out=-15, looseness=1.25] (13.center) to (11);
		\draw [style=e4] (2) to (13.center);
		\draw [style=e4] (13.center) to (10);
		\draw [style=e0] (5) to (14.center);
		\draw [style=e0] (14.center) to (9);
		\draw [style=e0, in=45, out=180] (14.center) to (6);
		\draw [style=e7] (0) to (12.center);
		\draw [style=e7, in=180, out=0, looseness=1.25] (12.center) to (2);
		\draw [style=e7] (12.center) to (1);
		\draw (1) to (5);
		\draw (10) to (9);
		\draw (9) to (11);
		\draw [style=d7] (1) to (19);
		\draw [style=d7, in=300, out=135] (19) to (2);
		\draw [style=d7] (19) to (9);
		\draw [style=d0] (0) to (18);
		\draw [style=d0, in=105, out=-45] (18) to (5);
		\draw [style=d0, in=120, out=-30, looseness=0.75] (18) to (6);
		\draw (2) to (22);
		\draw (22) to (6);
	\end{pgfonlayer}
\end{tikzpicture}

\end{center}
Lastly, we delete the diagonal $I_0$, the vertex with valence $0\oplus0$, and all edges incident to the vertex with valence $0\oplus0$.
\begin{center}

\begin{tikzpicture}[scale = 0.6]
	\begin{pgfonlayer}{nodelayer}
		\node [style=label] (1) at (-4, 2) {$7'$};
		\node [style=label] (2) at (-3, 1.5) {$0'$};
		\node [style=label] (5) at (-4, -1) {$7$};
		\node [style=label] (6) at (-3, -1.5) {$0'$};
		\node [style=label] (9) at (-1, -1) {$4$};
		\node [style=label] (10) at (-1, 2) {$3$};
		\node [style=label] (11) at (0, 1.5) {$9$};
		\node [style=none] (13) at (-2, 1.75) {};
		\node [style=none] (14) at (-1.75, -1) {};
		\node [style=label-s] (19) at (-2, 0) {$Y_{I_7}$};
		\node [style=label-s] (21) at (-2.25, 2.25) {$Y_{I_4}$};
		\node [style=label-s] (22) at (-3, 0) {$Y_0^2$};
		\node [style=label-s] (23) at (-0.75, -1.5) {$1\oplus 1$};
		\node [style=label-s] (24) at (-2.25, -1.5) {$I_0$};
	\end{pgfonlayer}
	\begin{pgfonlayer}{edgelayer}
		\draw [style=e4, in=165, out=0, looseness=1.25] (1) to (13.center);
		\draw [style=e4, in=180, out=-15, looseness=1.25] (13.center) to (11);
		\draw [style=e4] (2) to (13.center);
		\draw [style=e4] (13.center) to (10);
		\draw [style=e0] (5) to (14.center);
		\draw [style=e0] (14.center) to (9);
		\draw [style=e0, in=45, out=180] (14.center) to (6);
		\draw (1) to (5);
		\draw (10) to (9);
		\draw (9) to (11);
		\draw [style=d7] (1) to (19);
		\draw [style=d7, in=300, out=135] (19) to (2);
		\draw [style=d7] (19) to (9);
		\draw (2) to (22);
		\draw (22) to (6);
	\end{pgfonlayer}
\end{tikzpicture}

\end{center}
This is $\mathcal{G}_{4560}=\mathcal{G}_4^S$. 

\newpage
Now we'll follow steps (1) - (3) for $v=3$. We constructed $\mathcal{G}_3$ in \Cref{ex:G3}:
\begin{center}
\begin{tikzpicture}[scale=0.56]
	\begin{pgfonlayer}{nodelayer}
		\node [style=label] (0) at (-7.25, 4.5) {$3$};
		\node [style=label] (1) at (-2.25, 4.5) {$4$};
		\node [style=label] (2) at (-7.25, 0.5) {$8'$};
		\node [style=label] (3) at (-2.25, 1.25) {$9$};
		\node [style=label] (4) at (-3, 0.5) {$7'$};
		\node [style=label] (5) at (-3.75, -0.25) {$0'$};
		\node [style=label] (6) at (-1.5, 2) {$1'$};
		\node [style=label] (7) at (-1.5, 5.75) {$1$};
		\node [style=label] (8) at (2.25, 5.75) {$2$};
		\node [style=label] (9) at (2.25, 2) {$3$};
		\node [style=label] (10) at (-7.25, -3.5) {$8$};
		\node [style=label] (11) at (-3.75, -3.5) {$3$};
		\node [style=none] (12) at (0.25, 4) {};
		\node [style=label-s] (13) at (-5.25, 2.5) {$I_4$};
		\node [style=label-s] (14) at (-3, 2) {$Y_{I_7}$};
		\node [style=none] (15) at (-3.75, -1.75) {};
		\node [style=label-s] (16) at (-5.75, 0.5) {$Y_{I_3}$};
		\node [style=label-s] (18) at (-5.5, -1.5) {$I_8$};
		\node [style=label-s] (21) at (-3.75, -2.25) {$Y_{I_4}$};
		\node [style=label-s] (22) at (-2.25, -1) {$Y_{I_1}$};
		\node [style=label-s] (23) at (0, 1.65) {$Y_{I_1}$};
		\node [style=label-s] (24) at (-5.5, 5.35) {$Y_{I_2}$};
		\node [style=label-s] (25) at (-7.75, 2.25) {$Y_{I_8}$};
		\node [style=label] (26) at (-2.75, -3.5) {$=v_{{opt}}$};
		\node [style=label-s] (27) at (-2.75, 5) {$1\oplus 1$};
		\node [style=label-s] (28) at (-5.25, 3.25) {$I_1$};
		\node [style=label-s] (29) at (0.75, 3.75) {$I_2$};
		\node [style=label-s] (30) at (-3.75, -4) {$1\oplus 1$};
		\node [style=label-s] (31) at (-7.5, 5) {$2\oplus 0$};
	\end{pgfonlayer}
	\begin{pgfonlayer}{edgelayer}
		\draw (0) to (1);
		\draw (1) to (3);
		\draw [style=e8] (0) to (2);
		\draw (7) to (8);
		\draw (8) to (9);
		\draw [style=e1] (9) to (6);
		\draw (6) to (7);
		\draw [style=e2] (0) to (7);
		\draw (2) to (10);
		\draw (10) to (11);
		\draw [style=e1] (11) to (6);
		\draw [style=e3] (2) to (16);
		\draw [style=e3, in=-150, out=0, looseness=0.75] (16) to (3);
		\draw [style=e3, in=-180, out=0] (16) to (4);
		\draw [style=e3, in=120, out=0] (16) to (5);
		\draw [style=e3, in=210, out=0, looseness=0.75] (16) to (6);
		\draw [style=e4, in=-120, out=90, looseness=0.50] (15.center) to (4);
		\draw [style=e4] (15.center) to (5);
		\draw [style=e7] (14) to (4);
		\draw [style=e7] (14) to (5);
		\draw [style=d4] (0) to (13);
		\draw [style=d4, in=-180, out=-45, looseness=0.75] (13) to (3);
		\draw [style=d4] (13) to (4);
		\draw [style=d4, in=90, out=-45, looseness=0.75] (13) to (5);
		\draw [style=d8] (2) to (18);
		\draw [style=d8] (18) to (11);
		\draw [style=d2, in=0, out=135, looseness=0.75] (12.center) to (1);
		\draw [style=e4, in=-120, out=90, looseness=0.50] (15.center) to (3);
		\draw (21) to (15.center);
		\draw [style=e4] (21) to (11);
		\draw (21) to (15.center);
		\draw [style=d1] (0) to (6);
		\draw [style=e7] (14) to (1);
		\draw [style=d2] (7) to (12.center);
		\draw [style=d2] (12.center) to (9);
	\end{pgfonlayer}
\end{tikzpicture}
\end{center}
The only neighbor of 3 that is in $S\setminus\overline{S}$ is 8, which we labeled as $a_1$ when creating $\mathcal{G}_4$.  This means $i_1=1$ in this case.  Because $I^1_1=\{8\}=I_8$ is contained in $S$, we have $m_1=1$.  In step 3, we delete everything incident to the vertex 8 since there no diagonals labeled with sets properly contained in $I_8$. 
\begin{center}
\begin{tikzpicture}[scale=0.56]
	\begin{pgfonlayer}{nodelayer}
		\node [style=label] (0) at (-7.25, 4.5) {$3$};
		\node [style=label] (1) at (-2.25, 4.5) {$4$};
		\node [style=label] (2) at (-7.25, 0.5) {$8'$};
		\node [style=label] (3) at (-2.25, 1.25) {$9$};
		\node [style=label] (4) at (-3, 0.5) {$7'$};
		\node [style=label] (5) at (-3.75, -0.25) {$0'$};
		\node [style=label] (6) at (-1.5, 2) {$1'$};
		\node [style=label] (7) at (-1.5, 5.75) {$1$};
		\node [style=label] (8) at (2.25, 5.75) {$2$};
		\node [style=label] (9) at (2.25, 2) {$3$};
		\node [style=label] (11) at (-3.75, -3.5) {$3$};
		\node [style=none] (12) at (0.25, 4) {};
		\node [style=label-s] (13) at (-5.25, 2.5) {$I_4$};
		\node [style=label-s] (14) at (-3, 2) {$I_7$};
		\node [style=none] (15) at (-3.75, -1.75) {};
		\node [style=label-s] (16) at (-5.75, 0.5) {$Y_{I_3}$};
		\node [style=label-s] (18) at (-5.5, -1.5) {$I_8$};
		\node [style=label-s] (21) at (-3.75, -2.25) {$I_4$};
		\node [style=label-s] (22) at (-2.25, -1) {$Y_{I_1}$};
		\node [style=label-s] (23) at (0, 1.75) {$Y_{I_1}$};
		\node [style=label-s] (24) at (-5.5, 5.35) {$Y_{I_2}$};
		\node [style=label-s] (25) at (-7.75, 2.25) {$Y_{I_8}$};
		\node [style=label] (26) at (-2.75, -3.5) {$=v_{{opt}}$};
		\node [style=label-s] (27) at (-2.75, 5) {$1\oplus 1$};
		\node [style=label-s] (28) at (-5.25, 3.25) {$I_1$};
		\node [style=label-s] (29) at (0.75, 3.75) {$I_2$};
		\node [style=label-s] (30) at (-3.75, -4) {$1\oplus 1$};
		\node [style=label-s] (31) at (-7.5, 5) {$2\oplus 0$};
	\end{pgfonlayer}
	\begin{pgfonlayer}{edgelayer}
		\draw (0) to (1);
		\draw (1) to (3);
		\draw [style=e8] (0) to (2);
		\draw (7) to (8);
		\draw (8) to (9);
		\draw [style=e1] (9) to (6);
		\draw (6) to (7);
		\draw [style=e2] (0) to (7);
		\draw [style=e1] (11) to (6);
		\draw [style=e3] (2) to (16);
		\draw [style=e3, in=-150, out=0, looseness=0.75] (16) to (3);
		\draw [style=e3, in=-180, out=0] (16) to (4);
		\draw [style=e3, in=120, out=0] (16) to (5);
		\draw [style=e3, in=210, out=0, looseness=0.75] (16) to (6);
		\draw [style=e4, in=-120, out=90, looseness=0.50] (15.center) to (4);
		\draw [style=e4] (15.center) to (5);
		\draw [style=e7] (14) to (4);
		\draw [style=e7] (14) to (5);
		\draw [style=d4] (0) to (13);
		\draw [style=d4, in=-180, out=-45, looseness=0.75] (13) to (3);
		\draw [style=d4] (13) to (4);
		\draw [style=d4, in=90, out=-45, looseness=0.75] (13) to (5);
		\draw [style=d8] (2) to (18);
		\draw [style=d8] (18) to (11);
		\draw [style=d2, in=0, out=135, looseness=0.75] (12.center) to (1);
		\draw [style=e4, in=-120, out=90, looseness=0.50] (15.center) to (3);
		\draw (21) to (15.center);
		\draw [style=e4] (21) to (11);
		\draw (21) to (15.center);
		\draw [style=d1] (0) to (6);
		\draw [style=e7] (14) to (1);
		\draw [style=d2] (7) to (12.center);
		\draw [style=d2] (12.center) to (9);
	\end{pgfonlayer}
\end{tikzpicture}
\end{center}
We then reduce the valence of vertex 3 adjacent to diagonal $I_8$ and delete the diagonal $I_8$.  
\begin{center}
\begin{tikzpicture}[scale=0.56]
	\begin{pgfonlayer}{nodelayer}
		\node [style=label] (0) at (-7.25, 4.5) {$3$};
		\node [style=label] (1) at (-2.25, 4.5) {$4$};
		\node [style=label] (2) at (-7.25, 0.5) {$8'$};
		\node [style=label] (3) at (-2.25, 1.25) {$9$};
		\node [style=label] (4) at (-3, 0.5) {$7'$};
		\node [style=label] (5) at (-3.75, -0.25) {$0'$};
		\node [style=label] (6) at (-1.5, 2) {$1'$};
		\node [style=label] (7) at (-1.5, 5.75) {$1$};
		\node [style=label] (8) at (2.25, 5.75) {$2$};
		\node [style=label] (9) at (2.25, 2) {$3$};
		\node [style=label] (11) at (-3.75, -3.5) {$3$};
		\node [style=none] (12) at (0.25, 4) {};
		\node [style=label-s] (13) at (-5.25, 2.5) {$I_4$};
		\node [style=label-s] (14) at (-3, 2) {$I_7$};
		\node [style=none] (15) at (-3.75, -1.75) {};
		\node [style=label-s] (16) at (-5.75, 0.5) {$Y_{I_3}$};
		\node [style=label-s] (21) at (-3.75, -2.25) {$Y_{I_4}$};
		\node [style=label-s] (22) at (-2.25, -1) {$Y_{I_1}$};
		\node [style=label-s] (23) at (0, 1.75) {$Y_{I_1}$};
		\node [style=label-s] (24) at (-5.5, 5.35) {$Y_{I_2}$};
		\node [style=label-s] (25) at (-7.75, 2.25) {$Y_{I_8}$};
		\node [style=label] (26) at (-2.75, -3.5) {$=v_{{opt}}$};
		\node [style=label-s] (27) at (-2.75, 5) {$1\oplus 1$};
		\node [style=label-s] (28) at (-5.25, 3.25) {$I_1$};
		\node [style=label-s] (29) at (0.75, 3.75) {$I_2$};
		\node [style=label-s] (30) at (-3.75, -4) {$0\oplus 1$};
		\node [style=label-s] (31) at (-7.5, 5) {$2\oplus 0$};
	\end{pgfonlayer}
	\begin{pgfonlayer}{edgelayer}
		\draw (0) to (1);
		\draw (1) to (3);
		\draw [style=e8] (0) to (2);
		\draw (7) to (8);
		\draw (8) to (9);
		\draw [style=e1] (9) to (6);
		\draw (6) to (7);
		\draw [style=e2] (0) to (7);
		\draw [style=e1] (11) to (6);
		\draw [style=e3] (2) to (16);
		\draw [style=e3, in=-150, out=0, looseness=0.75] (16) to (3);
		\draw [style=e3, in=-180, out=0] (16) to (4);
		\draw [style=e3, in=120, out=0] (16) to (5);
		\draw [style=e3, in=210, out=0, looseness=0.75] (16) to (6);
		\draw [style=e4, in=-120, out=90, looseness=0.50] (15.center) to (4);
		\draw [style=e4] (15.center) to (5);
		\draw [style=e7] (14) to (4);
		\draw [style=e7] (14) to (5);
		\draw [style=d4] (0) to (13);
		\draw [style=d4, in=-180, out=-45, looseness=0.75] (13) to (3);
		\draw [style=d4] (13) to (4);
		\draw [style=d4, in=90, out=-45, looseness=0.75] (13) to (5);
		\draw [style=d2, in=0, out=135, looseness=0.75] (12.center) to (1);
		\draw [style=e4, in=-120, out=90, looseness=0.50] (15.center) to (3);
		\draw (21) to (15.center);
		\draw [style=e4] (21) to (11);
		\draw (21) to (15.center);
		\draw [style=d1] (0) to (6);
		\draw [style=e7] (14) to (1);
		\draw [style=d2] (7) to (12.center);
		\draw [style=d2] (12.center) to (9);
	\end{pgfonlayer}
\end{tikzpicture}
\end{center}
This is $\mathcal{G}_{38}=\mathcal{G}_3^S$.

Gluing $\mathcal{G}_4^S$ and $\mathcal{G}_3^S$ along the edge (3,4), we get the snake graph $\mathcal{G}_{\{345680\}}=\mathcal{G}_S$.  The vertices 3 we are identifying have valence $1\oplus 0$ and $2\oplus 0$, so the identified vertex 3 has valence $(1+2-1)\oplus(0+0)=2\oplus0$.  The vertices 4 we are identifying have valence $1\oplus 1$ and $1\oplus 1$, so the identified vertex 4 has valence $(1+1-1)\oplus(1+1)=1\oplus2$. 
\begin{center}
\begin{tikzpicture}[scale=0.6]
	\begin{pgfonlayer}{nodelayer}
		\node [style=label] (0) at (-7, 4.5) {$3$};
		\node [style=label] (1) at (-2.25, 4.5) {$4$};
		\node [style=label] (2) at (-7, 0.5) {$8'$};
		\node [style=label] (3) at (-2.25, 1.25) {$9$};
		\node [style=label] (4) at (-3, 0.5) {$7'$};
		\node [style=label] (5) at (-3.75, -0.25) {$0'$};
		\node [style=label] (6) at (-1.5, 2) {$1'$};
		\node [style=label] (7) at (-1.5, 5.5) {$1$};
		\node [style=label] (8) at (2, 5.5) {$2$};
		\node [style=label] (9) at (2, 2) {$3$};
		\node [style=label] (11) at (-3.75, -3.5) {$3$};
		\node [style=none] (12) at (0, 4) {};
		\node [style=label-s] (13) at (-5, 2.5) {$I_4$};
		\node [style=label-s] (14) at (-3, 2) {$I_7$};
		\node [style=none] (15) at (-3.75, -1.75) {};
		\node [style=label-s] (16) at (-5.75, 0.5) {$Y_{I_3}$};
		\node [style=label-s] (21) at (-3.75, -2.25) {$Y_{I_4}$};
		\node [style=label-s] (22) at (-2.25, -1) {$Y_{I_1}$};
		\node [style=label-s] (23) at (0, 1.75) {$Y_{I_1}$};
		\node [style=label-s] (24) at (-4, 5) {$Y_{I_2}$};
		\node [style=label-s] (25) at (-7.5, 2.25) {$Y_{I_8}$};
		\node [style=label] (26) at (-2.75, -3.5) {$=v_{{opt}}$};
		\node [style=label-s] (27) at (-3.2, 4.15) {$1\oplus 2$};
		\node [style=label-s] (28) at (-4.25, 3.5) {$I_1$};
		\node [style=label-s] (29) at (0.75, 3.75) {$I_2$};
		\node [style=label-s] (30) at (-3.75, -4) {$0\oplus 1$};
		\node [style=label-s] (31) at (-8, 4.5) {$2\oplus 0$};
		\node [style=label] (38) at (-7.5, 8.5) {$7'$};
		\node [style=label] (39) at (-7, 9) {$0'$};
		\node [style=label] (40) at (-2.75, 8.5) {$7$};
		\node [style=label] (41) at (-2.25, 9) {$0'$};
		\node [style=label] (44) at (-6.5, 5) {$9$};
		\node [style=label-s] (45) at (-7, 6.75) {$Y_{I_4}$};
		\node [style=none] (46) at (-2.25, 6.5) {};
		\node [style=label-s] (47) at (-4.75, 6.75) {$I_7$};
		\node [style=label-s] (49) at (-5.25, 9) {$Y_0^2$};
		\node [style=label-s] (51) at (-1.75, 6.75) {$Y_{I_0}$};
	\end{pgfonlayer}
	\begin{pgfonlayer}{edgelayer}
		\draw (0) to (1);
		\draw (1) to (3);
		\draw [style=e8] (0) to (2);
		\draw (7) to (8);
		\draw (8) to (9);
		\draw [style=e1] (9) to (6);
		\draw (6) to (7);
		\draw [style=e1] (11) to (6);
		\draw [style=e3] (2) to (16);
		\draw [style=e3, in=-150, out=0, looseness=0.75] (16) to (3);
		\draw [style=e3, in=-180, out=0] (16) to (4);
		\draw [style=e3, in=120, out=0] (16) to (5);
		\draw [style=e3, in=210, out=0, looseness=0.75] (16) to (6);
		\draw [style=e4, in=-120, out=90, looseness=0.50] (15.center) to (4);
		\draw [style=e4] (15.center) to (5);
		\draw [style=e7] (14) to (4);
		\draw [style=e7] (14) to (5);
		\draw [style=d4] (0) to (13);
		\draw [style=d4, in=-180, out=-45, looseness=0.75] (13) to (3);
		\draw [style=d4] (13) to (4);
		\draw [style=d4, in=90, out=-45, looseness=0.75] (13) to (5);
		\draw [style=d2, in=0, out=135, looseness=0.75] (12.center) to (1);
		\draw [style=e4, in=-120, out=90, looseness=0.50] (15.center) to (3);
		\draw (21) to (15.center);
		\draw [style=e4] (21) to (11);
		\draw (21) to (15.center);
		\draw [style=d1] (0) to (6);
		\draw [style=e7] (14) to (1);
		\draw [style=d2] (7) to (12.center);
		\draw [style=d2] (12.center) to (9);
		\draw [style=e4, in=90, out=-60] (38) to (45);
		\draw [style=e4, in=120, out=-90] (45) to (44);
		\draw [style=e4] (39) to (45);
		\draw [style=e0, in=90, out=-60, looseness=0.75] (40) to (46.center);
		\draw [style=e0] (46.center) to (41);
		\draw (38) to (40);
		\draw [style=d7, in=135, out=-15, looseness=0.75] (38) to (47);
		\draw [style=d7] (47) to (39);
		\draw (39) to (49);
		\draw (49) to (41);
		\draw [style=e4] (45) to (0);
		\draw [style=e0] (46.center) to (1);
		\draw [style=d7] (47) to (1);
		\draw (44) to (1);
		\draw [style=e2] (0) to (24);
		\draw [style=e2] (24) to (7);
	\end{pgfonlayer}
\end{tikzpicture}
\end{center}
\end{ex}

\begin{remark}\label{rem:gluing-trimming-commutes}
Our algorithm for constructing $\mathcal{G}_S$ first ``trims" $\mathcal{G}_v$ for each $v\in\overline{S}$ and then ``glues" the resulting graphs together. Heuristically, these two procedures commute, so we could instead combine trimming and gluing each $\mathcal G_v$ in any order.
\end{remark}

\begin{remark}
As with singleton sets, our construction of $\mathcal{G}_S$ for a weakly rooted set $S$ is not always unique.  However, as with singleton sets, our main theorem will hold for any snake graph $\mathcal{G}_S$ obtained by following the construction in this section.
\end{remark}

The following is an observation about the labels of diagonals in the snake graphs encountered here.

\begin{prop}\label{prop:NoDiagonalTwice}
Let $I \in \mathcal{I}$, and let $S \subset V(\Gamma)$ be a weakly rooted set. Then, $\mathcal{G}_{S}$ has exactly one diagonal labeled $I$ if $I$ and $S$ are incompatible and otherwise $\mathcal{G}_S$ has no diagonal labeled $I$.
\end{prop}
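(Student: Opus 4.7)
The plan is to analyze which diagonals appear in each piece $\mathcal{G}_v^S$ for $v \in \overline{S}$ and sum, using that the gluing step in \Cref{sec:Gluing} identifies only edges, not diagonals. When $S \in \mathcal{I}$, $\mathcal{G}_S$ is a single hyperedge with no diagonals and $S$ is compatible with every $I \in \mathcal{I}$, so the statement is trivial. Otherwise, for each $v \in \overline{S}$ one has $\{v\} \notin \mathcal{I}$ (else $I_v = \{v\} \subseteq S$ contradicts $v \in \overline{S}$), so $\mathcal{G}_v$ is built from component snake graphs and carries genuine diagonals before trimming.

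The main technical step would be the following characterization: for $v \in \overline{S}$, the set $I \in \mathcal{I}$ labels a diagonal of $\mathcal{G}_v^S$ if and only if $v \in N_\Gamma(I)$ and $I \not\subseteq S$. The first condition is forced because the diagonals of untrimmed $\mathcal{G}_v$ are labeled exactly by the sets $I^i_j(v) \in \mathcal{I}$ incompatible with $\{v\}$. For the second, if $I \subseteq S$ then the unique $a \in N_\Gamma(v) \cap I$ satisfies $I_a \subseteq I \subseteq S$, placing $a$ in $S \setminus \overline{S}$; writing $I = I^{i_\ell}_j(v) \subsetneq S$ forces $j \geq m_\ell$ by the minimality defining $m_\ell$, so $I \subseteq I^{i_\ell}_{m_\ell}(v)$ and the trimming rule removes the diagonal. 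Conversely, if $I \not\subseteq S$, then no $I^{i_\ell}_{m_\ell}(v) \subsetneq S$ can contain $I$, so trimming leaves $I$ intact. Hence the number of diagonals labeled $I$ in $\mathcal{G}_S$ is zero when $I \subseteq S$ and otherwise equals $|N_\Gamma(I) \cap \overline{S}|$.

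It then remains to check the four compatibility subcases for $I$ and $S$. The compatible cases $I \subseteq S$, $S \subseteq I$ (where $\overline{S} \subseteq I$ gives $N_\Gamma(I) \cap \overline{S} = \emptyset$), and $I \cap S = \emptyset$ with no $I$-$S$ edge (where $N_\Gamma(I) \cap S = \emptyset$) all give zero. The incompatible disjoint-with-edge case gives one: since $\Gamma$ is a tree, there is at most one edge between the connected disjoint subtrees $I$ and $S$, and the $S$-endpoint $w$ of that edge lies in $\overline{S}$ because $I$ and $I_w$ are compatible and the joining edge rules out $I_w \subsetneq I$, so $I \subseteq I_w$ and $I \not\subseteq S$ give $I_w \not\subseteq S$. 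The main obstacle is the remaining incompatibility case $I \cap S \neq \emptyset$, $I \not\subseteq S$, $S \not\subseteq I$. Existence of some $w \in N_\Gamma(I) \cap \overline{S}$ follows by walking along a path in $S$ from $I \cap S$ to a vertex in $S \setminus I$, choosing the exit edge, and applying the same $I \subseteq I_w$ argument. For uniqueness, any such $w$ satisfies $w >_\mathcal{I} m_\mathcal{I}(I)$ (from $I \subseteq I_w$ together with $w \notin I$); if $w_1 \neq w_2$ were both such, the unique $\Gamma$-path between them would lie in $\overline{S}$ with interior in $I$, and the rooted hypothesis on $\overline{S}$ forces a $<_\mathcal{I}$-maximum vertex $z^*$ on this path with $z^* \geq_\mathcal{I} w_1, w_2$. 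Whether $z^*$ is an interior vertex (hence $z^* \in I$ gives $z^* \leq_\mathcal{I} m_\mathcal{I}(I) <_\mathcal{I} w_i \leq_\mathcal{I} z^*$) or $z^* = w_i$ for some $i$ (then the rooted monotonicity along the path from $w_{3-i}$ toward $w_i$ forces the $I$-neighbor adjacent to $w_{3-i}$ to exceed $w_{3-i}$ in $<_\mathcal{I}$, contradicting that $I$-vertices are $\leq_\mathcal{I} m_\mathcal{I}(I) <_\mathcal{I} w_{3-i}$), one reaches a contradiction, so $w_1 = w_2$ and $\mathcal{G}_S$ contains exactly one diagonal labeled $I$.
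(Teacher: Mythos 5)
Your proof is correct, and the uniqueness argument at its core is the same mechanism the paper uses: two distinct $w_1, w_2 \in N_\Gamma(I) \cap \overline{S}$ force the interior of $[w_1,w_2]$ into $I$, and the rooted ordering on $\overline{S}$ then yields a contradiction (the paper takes an arbitrary interior vertex $u$, derives $u <_\mathcal{I} w_1, w_2$ from $I_u \subsetneq I_{w_1}, I_{w_2}$, and notes $[u, m_\mathcal{I}(\overline{S})]$ cannot contain both $w_1$ and $w_2$; you take the $<_\mathcal{I}$-maximum $z^*$ on the path and split on whether it is interior). What genuinely differs is the scaffolding. The paper first handles the rooted portion $\overline{S}$ on its own, equating ``incompatible with some vertex of $\overline{S}$'' with ``incompatible with $\overline{S}$'' via the observation that no $I \in \mathcal{I}$ sits inside $\overline{S}$, and then asserts that passing from $\mathcal{G}_{\overline{S}}$ to $\mathcal{G}_S$ removes precisely the diagonals whose labels lie in $S$. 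You instead prove a clean characterization of each trimmed piece up front --- $I$ labels a diagonal of $\mathcal{G}_v^S$ iff $v \in N_\Gamma(I)$ and $I \not\subseteq S$ --- which reduces the whole statement to computing $|N_\Gamma(I) \cap \overline{S}|$ (when $I \not\subseteq S$) and is then settled by a direct case analysis on the compatibility type of $(I,S)$. Your route is longer but makes the trimming bookkeeping fully explicit and supplies the justification for the ``pass to $\overline{S}$'' reduction that the paper states without elaboration, which is a worthwhile trade.
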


\begin{proof}
This claim is immediately true for singleton snake graphs by \Cref{prop:ConsistentSize}.

Let's consider the case of a  rooted set $S$. Recall $\mathcal{G}_S$ is the result of gluing the $\mathcal{G}_v$ for all $v \in S$. Since the claim is true for each $\mathcal{G}_v$, we know that there will be at least one diagonal for each $I$ incompatible with an element of $S$ and no other labels. Moreover, since $S$ is rooted, there is no  $I \in \mathcal{I}$ which is contained in $S$.   Thus the collection of sets incompatible with some $v \in S$ and the collection of sets incompatible with $S$ will be the same.  Thus, there will be at least one diagonal for each $I$ incompatible with $S$ and no other labels. 

Now, suppose for sake of contradiction that there exist two distinct vertices $v,w \in S$ and $I \in \mathcal{I}$ such that $\mathcal{G}_{\{v\}}$ and $\mathcal{G}_{\{w\}}$ both have a diagonal labeled $Y_I$.  Since $S$ is connected and $\Gamma$ is a tree, we know that $[v,w] \subseteq S$. In order for $I$ to be incompatible with both $v$ and $w$, there must be at least one element of $I$ adjacent to $v$ and at least one element adjacent to $w$. Thus, since $\Gamma$ is a tree and $I$ is connected, we need $[v,w] \subseteq I$; otherwise there would be two distinct paths between $v$ and $w$. Moreover, we must have at least one vertex $u \neq v,w$ with $u \in [v,w]$ for this to be possible. But, in this case $S$ would not be rooted since $I_u \subseteq I$ would be smaller than $I_v$ and $I_w$. 

For a weakly rooted set $S$, the labels of diagonals in $\mathcal{G}_S$ are the subset of the labels of diagonals for the snake graph of the rooted portion $\mathcal{G}_{\overline{S}}$ which are not contained in $\mathcal{S}$. Thus, since the claim is true for the snake graph for the rooted portion of $S$, it's true for $S$.
\end{proof}

\section{Cluster Expansions from Snake Graphs}\label{sec:main_formula}

In this section, we give an expansion formula for cluster variables using \emph{admissible matchings} of the snake graphs constructed in Section~\ref{sec:construction}. We begin by defining this restricted class of matchings.

\subsection{Admissible Matchings of Snake Graphs}\label{subsec:AdmissibleMatchings}

Given a snake graph $\mathcal{G}$, we say a subset $P$ of the edge set is a \emph{matching} of $\mathcal{G}$ if, for every vertex $v \in \mathcal{G}$ with $\textrm{val}(v) = a \oplus b$, the number of edges in $P$ incident to $v$ is at least $a$ and no more than $a + b$. If every vertex has valence $1 \oplus 0$, then a matching would be a \emph{perfect matching}. Perfect matchings of snake graphs from surfaces give combinatorial expansion formulas for snake graphs from surfaces in \cite{musiker2011positivity}; matchings of snake graphs with several different valences have also been used to give formula for cluster algebras of type $D$ in \cite{wright2020mixed,farrell2022mixed} and for super cluster algebras \cite{musiker2022double,musiker2023higher}.

We will be interested in particular matchings of our snake graphs called \emph{admissible matchings}.  In order to state our admissibility conditions, we need to define boundary and internal edges for our snake graphs. However, since these are hypergraphs, the tile structure is less clear than for surface snake graphs so our definition will take more care than Definition \ref{def:BoundarySurfaceSnakeGraph}. Given a diagonal $d$ of a snake graph (or component snake graph), let $W(d)$ denote the label of this diagonal. Given an edge $e$ of a snake graph (or component snake graph) with weight $Y_C$ or $Y_C^2$, let $W(e)=C$. If $e$ has weight $1$, let $W(e) = \emptyset$.

\begin{defn}\label{def:BoundaryHyperSnakeGraph}
Consider a snake graph $\mathcal{G}_S$ for a weakly rooted set $S$. 
\begin{enumerate}
    \item If an edge is of the form $(v,w)$ where $v$ and $w$ are both in $\bar{S}$ and $(v,w) \in E(\Gamma')$, then $(v,w)$ is an \emph{internal edge}.
    \item Given any other edge $e$ in $\mathcal{G}_S$, there is a unique $v \in \bar{S}$ such that $e$ appeared in the smaller snake graph $\mathcal{G}_v^S = \mathcal{G}_{v \cup I_{m_1}^{i_1} \cup \cdots \cup I_{m_k}^{i_k}}$ where $I_{m_1}^{i_1},\ldots,I_{m_k}^{i_k} \in \mathcal{I}$ are as in step (2) of the construction of $\mathcal{G}_S$. Set $J_{\mathbf{m}}:= \{v\} \cup I_{m_1}^{i_1} \cup \cdots \cup I_{m_k}^{i_k}$. If either $e$ shares vertices with only one diagonal in $\mathcal{G}_{J_{\mathbf{m}}}$ or $e$ shares vertices with a diagonal $d$ in $\mathcal{G}_{J_v}$ such that $W(e) = W(d)$, then we say that $e$ is a \emph{boundary edge} in $\mathcal{G}_S$. Otherwise, we say $e$ is an \emph{internal edge} in $\mathcal{G}_S$.
\end{enumerate}    
\end{defn}

Note that our definitions of boundary and internal edges agrees with Definition \ref{def:BoundarySurfaceSnakeGraph} when the snake graph is a surface snake graph. Moving forward, when we refer to boundary and internal edges, we will always mean in the sense of Definition \ref{def:BoundaryHyperSnakeGraph}.

\begin{ex}
Consider the snake graph $\mathcal{G}_{S}$ for $S = \{3,4,5,6,8,0\}$ from Example~\ref{ex:weakly-rooted-graph}. Here, $\overline{S} = \{ 3, 4 \}$, so the edge $(3,4)$ is internal by part (1) of Definition \ref{def:BoundaryHyperSnakeGraph}, even though this edge was a boundary edge form the two graphs $\mathcal{G}_3^S = \mathcal{G}_{38}$ and $\mathcal{G}_4^S = \mathcal{G}_{4560}$ which we glued to form $\mathcal{G}_{345680}$. The remaining edges are evaluated by part (2) of Definition \ref{def:BoundaryHyperSnakeGraph}. For example, both edges $Y_{I_1}$ come from $\mathcal{G}_3^S$, and here they are boundary edges as they share vertices with the diagonal $I_1$, therefore they are also boundary edges in $\mathcal{G}_S$.
\end{ex}

\begin{defn}\label{def:AdmissibleMatching}
A matching $P$ of a snake graph $\mathcal{G}$ is \emph{admissible} if the following are satisfied.
\begin{enumerate}
    \item \label{admissibilityCondition1} There does not exist a subset of $P$ of the form $\{e_1, e_2\}$ such that $e_1$ and $e_2$ share at least one vertex and $e_1$ and $e_2$ both share vertices with a diagonal $d$ satisfying $W(e_1) \subsetneq W(d) \subsetneq W(e_2)$ (with $W(e_1)$ possibly $\emptyset$). 
    \item \label{admissibilityCondition2} For every pair of boundary edges $e_1$ and $e_2$ which both share vertices with a diagonal $d$ and which do not share vertices with each other such that $W(e_1), W(e_2)\neq\emptyset$ and $W(e_1) \subsetneq W(d) \subsetneq W(e_2)$, either both $e_1$ and $e_2$ are in $P$ or neither are in $P$.
    \item \label{admissibilityCondition3} There does not exist a subset of $P$ of the form $\{e_1,e_2\}$ such that $e_1$ and $e_2$ are both boundary edges which do not share vertices with each other but which share vertices with the same pair of diagonals $d_1$ and $d_2$ where $W(e_1) = W(d_1)$, $W(e_2) = W(d_2)$, and $W(d_1) \subseteq W(d_2)$. 
    \item \label{admissibilityCondition4} Let $e_1$ and $e_2$ be two edges that share a vertex such that there are vertices $v_1, v_2$  where $v_1$ is incident to $e_1$ but not $e_2$ and $v_2$ is incident to $e_2$ but not $e_1$.  If $v_1$ and $v_2$ are both incident to diagonal $d$ where $W(e_1),W(e_2)\subseteq W(d)$, and $e_1, e_2$ are both internal edges, or both boundary edges incident to internal edges, then $e_1$ and $e_2$ are either both in $P$ or neither are in $P$.
\end{enumerate}
\end{defn}

Note that these admissibility conditions are automatically satisfied for all perfect matchings of surface snake graphs.

In the following series of examples, we show subsets of edges which respect all valence conditions but violate exactly one of the conditions in Definition \ref{def:AdmissibleMatching} to show the necessity of each. 

\begin{ex}
\label{example:nonadmissibleMatching-Condition1}

In Figure \ref{fig:nonadmissible-Condition1}, we provide a graph on six vertices with a maximal nested collection as well as the snake graph $\mathcal{G}_{25}$. We highlight a set of edges such that every vertex with valence $a \oplus b$ is incident to between $a$ and $a+b$ edges. However, the labeled edges $e_1$ and $e_2$, which have labels $W(e_1) = \emptyset$ and $W(e_2) = \{1,2,3\}$, share vertices with each other as well as with the diagonal with label $\{1\}$. By Condition~(\ref{admissibilityCondition1}), we cannot have both $e_1$ and $e_2$ in an admissible matching, so this set of edges is not an admissible matching.

\begin{figure}
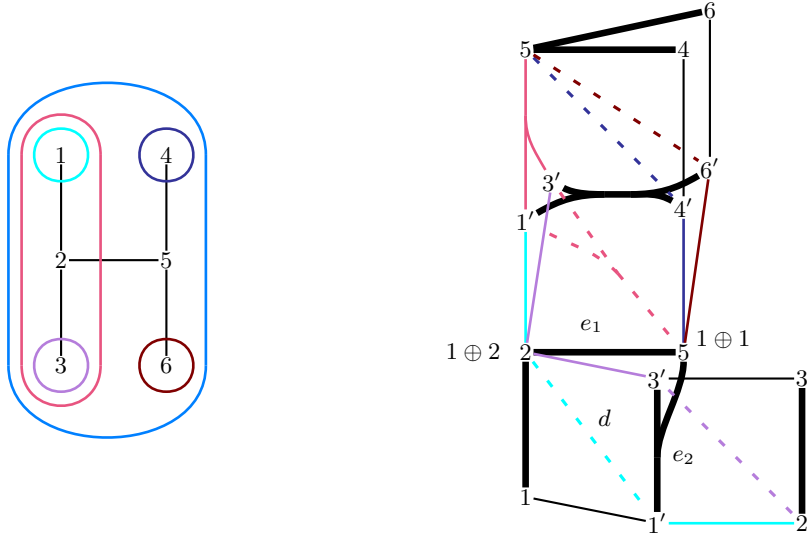

    \centering
    \begin{tikzpicture}[scale=0.7,baseline  = 0]
	\begin{pgfonlayer}{nodelayer}
		\node [style=label-s] (0) at (-4, 3) {$1$};
		\node [style=label-s] (1) at (-2, 3) {$4$};
		\node [style=label-s] (2) at (-4, 1) {$2$};
		\node [style=label-s] (3) at (-2, 1) {$5$};
		\node [style=label-s] (4) at (-4, -1) {$3$};
		\node [style=label-s] (5) at (-2, -1) {$6$};
		\node [style=none] (6) at (-4, -0.5) {};
		\node [style=none] (7) at (-4, -1.5) {};
		\node [style=none] (8) at (-4, 3.5) {};
		\node [style=none] (9) at (-4, 2.5) {};
		\node [style=none] (10) at (-2, -0.5) {};
		\node [style=none] (11) at (-2, -1.5) {};
		\node [style=none] (12) at (-2, 3.5) {};
		\node [style=none] (13) at (-2, 2.5) {};
		\node [style=none] (14) at (-4.75, 3) {};
		\node [style=none] (15) at (-3.25, 3) {};
		\node [style=none] (16) at (-4.75, -1) {};
		\node [style=none] (17) at (-3.25, -1) {};
		\node [style=none] (18) at (-5, 3) {};
		\node [style=none] (19) at (-1.25, 3) {};
		\node [style=none] (20) at (-5, -1) {};
		\node [style=none] (21) at (-1.25, -1) {};
	\end{pgfonlayer}
	\begin{pgfonlayer}{edgelayer}
		\draw (0) to (2);
		\draw (2) to (4);
		\draw (2) to (3);
		\draw (3) to (1);
		\draw (3) to (5);
		\draw [style=e3] (7.center)
			 to [bend right=90, looseness=1.75] (6.center)
			 to [bend left=270, looseness=1.75] cycle;
		\draw [style=e1] (9.center)
			 to [bend right=90, looseness=1.75] (8.center)
			 to [bend left=270, looseness=1.75] cycle;
		\draw [style=e6] (11.center)
			 to [bend right=90, looseness=1.75] (10.center)
			 to [bend left=270, looseness=1.75] cycle;
		\draw [style=e4] (13.center)
			 to [bend right=270, looseness=1.75] (12.center)
			 to [bend left=90, looseness=1.75] cycle;
		\draw [style=e2] (14.center) to (16.center);
		\draw [style=e2, bend right=90, looseness=1.75] (16.center) to (17.center);
		\draw [style=e2] (17.center) to (15.center);
		\draw [style=e2, bend right=90, looseness=1.75] (15.center) to (14.center);
		\draw [style=e5, bend left=90, looseness=1.25] (18.center) to (19.center);
		\draw [style=e5] (19.center) to (21.center);
		\draw [style=e5, bend left=90, looseness=1.25] (21.center) to (20.center);
		\draw [style=e5] (20.center) to (18.center);
	\end{pgfonlayer}
\end{tikzpicture}
    \input{ReduceOpacity}
    \begin{tikzpicture}[scale=0.7, baseline = 0]
	\begin{pgfonlayer}{nodelayer}
		\node [style=label-s] (22) at (1.75, 1.75) {$1'$};
		\node [style=label-s] (23) at (2.25, 2.5) {$3'$};
		\node [style=label-s] (24) at (4.75, 2) {$4'$};
		\node [style=label-s] (25) at (5.25, 2.75) {$6'$};
		\node [style=label-s] (26) at (1.75, 5) {$5$};
		\node [style=label-s] (27) at (4.75, 5) {$4$};
		\node [style=label-s] (28) at (5.25, 5.75) {$6$};
		\node [style=label-s] (29) at (1.75, -0.75) {$2$};
		\node [style=label-s] (30) at (4.25, -1.25) {$3'$};
		\node [style=label-s] (31) at (4.75, -0.75) {$5$};
		\node [style=label-s] (32) at (1.75, -3.5) {$1$};
		\node [style=label-s] (33) at (4.25, -4) {$1'$};
		\node [style=label-s] (34) at (7, -1.25) {$3$};
		\node [style=label-s] (35) at (7, -4) {$2$};
		\node [style=none] (36) at (3.25, 2.25) {};
		\node [style=none] (37) at (3.75, 2.25) {};
		\node [style=none] (38) at (4.25, -2.75) {};
		\node [style=none] (39) at (1.75, 3.75) {};
		\node [style=none] (40) at (3.5, 0.75) {};
		\node [style=label-s] (41) at (0.75, -0.75) {$1\oplus 2$};
		\node [style=label-s] (42) at (5.5, -0.5) {$1\oplus 1$};
		\node [style=label-s] (43) at (3, -0.25) {$e_1$};
		\node [style=label-s] (44) at (3.25, -2) {$d$};
		\node [style=label-s] (45) at (4.75, -2.75) {$e_2$};
	\end{pgfonlayer}
	\begin{pgfonlayer}{edgelayer}
		\draw [style=m1] (26) to (28);
		\draw [style=m1, in=180, out=-30, looseness=0.75] (23) to (36.center);
		\draw [style=m1] (36.center) to (37.center);
		\draw [style=m1, bend right=15] (37.center) to (25);
		\draw [style=m1, in=0, out=150] (24) to (37.center);
		\draw [style=m1, bend right=15] (36.center) to (22);
		\draw [style=m1] (29) to (31);
		\draw [style=m1] (34) to (35);
		\draw [style=m1, in=-90, out=90, looseness=0.75] (38.center) to (31);
		\draw [style=m1] (30) to (38.center);
		\draw [style=m1] (38.center) to (33);
		\draw [style=m1] (26) to (27);
		\draw [style=e2] (26) to (39.center);
		\draw [style=e2, in=120, out=-90] (39.center) to (23);
		\draw [style=e2] (39.center) to (22);
		\draw [style=e4] (31) to (24);
		\draw [style=e6] (31) to (25);
		\draw [style=e1] (33) to (35);
		\draw [style=e1] (22) to (29);
		\draw [style=e3] (29) to (23);
		\draw [style=m1] (29) to (32);
		\draw (32) to (33);
		\draw [style=e3] (29) to (30);
		\draw [in=180, out=0] (30) to (34);
		\draw (27) to (24);
		\draw (28) to (25);
		\draw [style=d4] (26) to (24);
		\draw [style=d6] (26) to (25);
		\draw [style=d1] (29) to (33);
		\draw [style=d3] (30) to (35);
		\draw [style=d2] (23) to (40.center);
		\draw [style=d2, in=330, out=135] (40.center) to (22);
		\draw [style=d2] (40.center) to (31);
	\end{pgfonlayer}
\end{tikzpicture}
    \input{ResetOpacity}
    \caption{On the left is a graph with a maximal nested collection and on the right is a subset of edges of the snake graph $\mathcal{G}_{25}$ which violates Condition~(\ref{admissibilityCondition1}) of Definition \ref{def:AdmissibleMatching}.  We suppress the edge weights and diagonal labels for clarity.}
    \label{fig:nonadmissible-Condition1}
\end{figure}

\end{ex}

\begin{ex}\label{example:nonadmissibleMatching-Condition2}

In Figure \ref{fig:nonadmissible-Condition2}, we consider the same graph as in Example \ref{example:nonadmissibleMatching-Condition1} but with a slightly different maximal nested collection. We provide the snake graph $\mathcal{G}_{25}$ with respect to this maximal nested collection. The highlighted set of edges violates Condition~(\ref{admissibilityCondition2}) because it includes the edge $e_1$ and not the edge $e_2$, even though $e_1$ and $e_2$ do not share vertices with each  other but do share vertices with a diagonal labeled $\{1,2\}$, which is a superset of $W(e_1)=\{1\}$ and a subset of $W(e_2) = \{1,2,3\}$. 

\begin{figure}
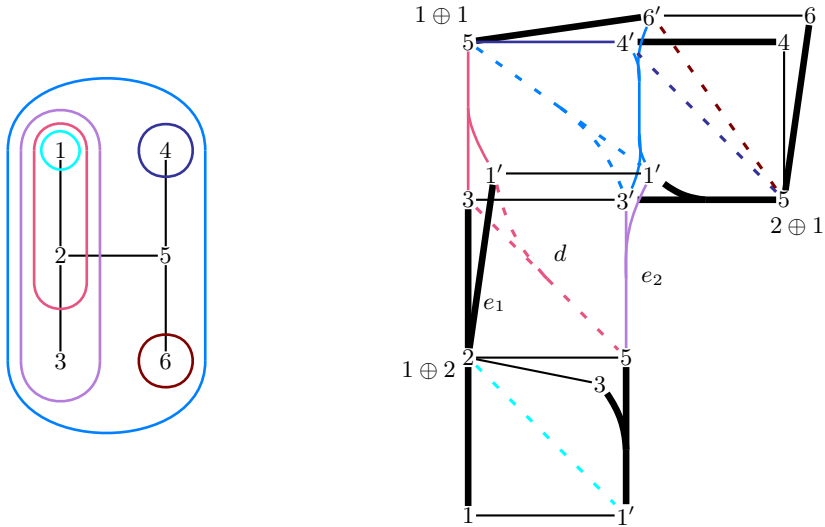

\centering
\hspace{6.75em}
   \begin{tikzpicture}[scale=0.7,baseline = 3.5em]
	\begin{pgfonlayer}{nodelayer}
		\node [style=label-s] (0) at (-4, 3) {$1$};
		\node [style=label-s] (1) at (-2, 3) {$4$};
		\node [style=label-s] (2) at (-4, 1) {$2$};
		\node [style=label-s] (3) at (-2, 1) {$5$};
		\node [style=label-s] (4) at (-4, -1) {$3$};
		\node [style=label-s] (5) at (-2, -1) {$6$};
		\node [style=none] (7) at (-4, -1.5) {};
		\node [style=none] (8) at (-3.75, 3.25) {};
		\node [style=none] (9) at (-4.25, 2.75) {};
		\node [style=none] (10) at (-2, -0.5) {};
		\node [style=none] (11) at (-2, -1.5) {};
		\node [style=none] (12) at (-2, 3.5) {};
		\node [style=none] (13) at (-2, 2.5) {};
		\node [style=none] (14) at (-4.75, 3) {};
		\node [style=none] (15) at (-3.25, 3) {};
		\node [style=none] (16) at (-4.75, -1) {};
		\node [style=none] (17) at (-3.25, -1) {};
		\node [style=none] (18) at (-5, 3) {};
		\node [style=none] (19) at (-1.25, 3) {};
		\node [style=none] (20) at (-5, -1) {};
		\node [style=none] (21) at (-1.25, -1) {};
		\node [style=none] (54) at (-4.5, 3) {};
		\node [style=none] (55) at (-3.5, 3) {};
		\node [style=none] (56) at (-4.5, 0.5) {};
		\node [style=none] (57) at (-3.5, 0.5) {};
	\end{pgfonlayer}
	\begin{pgfonlayer}{edgelayer}
		\draw (0) to (2);
		\draw (2) to (4);
		\draw (2) to (3);
		\draw (3) to (1);
		\draw (3) to (5);
		\draw [style=e1] (9.center)
			 to [bend right=90, looseness=1.75] (8.center)
			 to [bend left=270, looseness=1.75] cycle;
		\draw [style=e6] (11.center)
			 to [bend right=90, looseness=1.75] (10.center)
			 to [bend left=270, looseness=1.75] cycle;
		\draw [style=e4] (13.center)
			 to [bend right=270, looseness=1.75] (12.center)
			 to [bend left=90, looseness=1.75] cycle;
		\draw [style=e3] (14.center) to (16.center);
		\draw [style=e3, bend right=90, looseness=1.75] (16.center) to (17.center);
		\draw [style=e3] (17.center) to (15.center);
		\draw [style=e3, bend right=90, looseness=1.75] (15.center) to (14.center);
		\draw [style=e5, bend left=90, looseness=1.25] (18.center) to (19.center);
		\draw [style=e5] (19.center) to (21.center);
		\draw [style=e5, bend left=90, looseness=1.25] (21.center) to (20.center);
		\draw [style=e5] (20.center) to (18.center);
		\draw [style=e2] (54.center) to (56.center);
		\draw [style=e2, bend right=90, looseness=1.75] (56.center) to (57.center);
		\draw [style=e2] (57.center) to (55.center);
		\draw [style=e2, bend right=90, looseness=1.75] (55.center) to (54.center);
	\end{pgfonlayer}
\end{tikzpicture}
   \input{ReduceOpacity}%
\begin{tikzpicture}[scale = 0.7, baseline = 0]
	\begin{pgfonlayer}{nodelayer}
		\node [style=label-s] (0) at (-3, 3) {$5$};
		\node [style=label-s] (1) at (0, 3) {$4'$};
		\node [style=label-s] (2) at (3, 3) {$4$};
		\node [style=label-s] (3) at (0.5, 3.5) {$6'$};
		\node [style=label-s] (4) at (3.5, 3.5) {$6$};
		\node [style=label-s] (5) at (0, 0) {$3'$};
		\node [style=label-s] (6) at (0.5, 0.5) {$1'$};
		\node [style=label-s] (7) at (3, 0) {$5$};
		\node [style=label-s] (8) at (-3, 0) {$3$};
		\node [style=label-s] (9) at (-2.5, 0.5) {$1'$};
		\node [style=label-s] (10) at (-3, -3) {$2$};
		\node [style=label-s] (11) at (0, -3) {$5$};
		\node [style=label-s] (12) at (-0.5, -3.5) {$3$};
		\node [style=label-s] (13) at (0, -6) {$1'$};
		\node [style=label-s] (14) at (-3, -6) {$1$};
		\node [style=none] (15) at (1.5, 0) {};
		\node [style=none] (16) at (0, -4.75) {};
		\node [style=none] (17) at (0, -1.5) {};
		\node [style=none] (18) at (-3, 1.75) {};
		\node [style=none] (19) at (-1.5, -1.5) {};
		\node [style=none] (20) at (0.25, 2.25) {};
		\node [style=none] (21) at (0.25, 1.25) {};
		\node [style=none] (22) at (-1.25, 1.75) {};
		\node [style=label-s] (23) at (-3.5, 3.5) {$1\oplus 1$};
		\node [style=label-s] (24) at (-3.75, -3.25) {$1\oplus 2$};
		\node [style=label-s] (25) at (-2.5, -2) {$e_1$};
		\node [style=label-s] (26) at (-1.25, -1) {$d$};
		\node [style=label-s] (27) at (0.5, -1.5) {$e_2$};
		\node [style=label-s] (28) at (3.25, -0.5) {$2\oplus 1$};
	\end{pgfonlayer}
	\begin{pgfonlayer}{edgelayer}
		\draw [style=m1] (1) to (2);
		\draw [style=m1] (4) to (7);
		\draw [style=m1] (5) to (15.center);
		\draw [style=m1] (15.center) to (7);
		\draw [style=m1, bend right=15] (6) to (15.center);
		\draw [style=m1, bend left=15] (12) to (16.center);
		\draw [style=m1] (16.center) to (11);
		\draw [style=m1] (16.center) to (13);
		\draw [style=m1] (8) to (10);
		\draw [style=m1] (10) to (14);
		\draw [style=m1] (0) to (3);
		\draw [style=e4] (0) to (1);
		\draw [style=e3] (5) to (17.center);
		\draw [style=e3] (17.center) to (11);
		\draw [style=e3, bend left=15] (17.center) to (6);
		\draw [style=m1] (10) to (9);
		\draw [style=e2] (0) to (18.center);
		\draw [style=e2] (18.center) to (8);
		\draw [style=e2, in=120, out=-90, looseness=0.75] (18.center) to (9);
		\draw (8) to (5);
		\draw (3) to (4);
		\draw [style=e5, bend left=15] (1) to (20.center);
		\draw [style=e5] (20.center) to (21.center);
		\draw [style=e5, bend left=15] (21.center) to (5);
		\draw [style=e5, bend right=15] (21.center) to (6);
		\draw [style=e5, bend left=15] (20.center) to (3);
		\draw (2) to (7);
		\draw (10) to (12);
		\draw (11) to (10);
		\draw (14) to (13);
		\draw [style=d1] (10) to (13);
		\draw [style=d2] (8) to (19.center);
		\draw [style=d2, bend right=15] (9) to (19.center);
		\draw [style=d2] (19.center) to (11);
		\draw [style=d5] (22.center) to (0);
		\draw [style=d5, bend left=15] (22.center) to (5);
		\draw [style=d5] (22.center) to (6);
		\draw (9) to (6);
		\draw [style=d4] (7) to (1);
		\draw [style=d6] (3) to (7);
	\end{pgfonlayer}
\end{tikzpicture}%
\input{ResetOpacity}%

    \caption{On the left is a graph with a maximal nested collection and on the right is a subset of edges of the snake graph $\mathcal{G}_{25}$ which violates Condition~(\ref{admissibilityCondition2}) of Definition \ref{def:AdmissibleMatching}. We suppress the edge weights and diagonal labels for clarity.}\label{fig:nonadmissible-Condition2}
\end{figure}

\end{ex}

\begin{ex}\label{example:nonadmissibleMatching-Condition3}

Beginning with the graph $\Gamma$ and maximal nested collection $\mathcal{I}$ from Figure \ref{fig:cluster}, consider the snake graph $\mathcal{G}_{345680}$ as was constructed in Section \ref{sec:Gluing}. In Figure \ref{fig:nonadmissible-Condition3}, we provide a subset of edges which violates Condition~(\ref{admissibilityCondition3}) by including the edges $Y_{I_4}$ and $Y_{I_7}$, which are each incident to diagonals $I_4$ and $I_7$. 

\begin{figure}
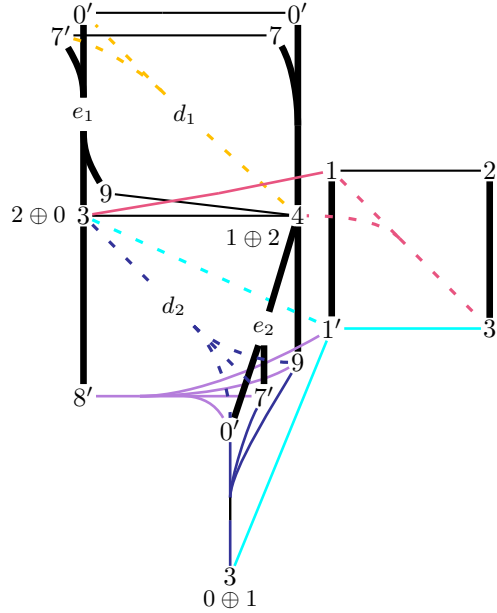

\centering
\hspace{1.7em}
\input{ReduceOpacity}
\begin{tikzpicture}[scale=0.6]
	\begin{pgfonlayer}{nodelayer}
		\node [style=label] (0) at (-7, 4.5) {$3$};
		\node [style=label] (1) at (-2.25, 4.5) {$4$};
		\node [style=label] (2) at (-7, 0.5) {$8'$};
		\node [style=label] (3) at (-2.25, 1.25) {$9$};
		\node [style=label] (4) at (-3, 0.5) {$7'$};
		\node [style=label] (5) at (-3.75, -0.25) {$0'$};
		\node [style=label] (6) at (-1.5, 2) {$1'$};
		\node [style=label] (7) at (-1.5, 5.5) {$1$};
		\node [style=label] (8) at (2, 5.5) {$2$};
		\node [style=label] (9) at (2, 2) {$3$};
		\node [style=label] (11) at (-3.75, -3.5) {$3$};
		\node [style=none] (12) at (0, 4) {};
		\node [style=label-s,inner sep = 0.3 em] (13) at (-5, 2.5) {$d_2$};
		\node [style=label-s,inner sep = 0.3 em] (14) at (-3, 2) {$e_2$};
		\node [style=none] (15) at (-3.75, -1.75) {};
		\node [style=none] (16) at (-5.75, 0.5) {};
		\node [style=none] (21) at (-3.75, -2.25) {};
		\node [style=none] (24) at (-4, 5) {};
		\node [style=label-s] (27) at (-3.25, 4) {$1\oplus 2$};
		\node [style=label-s] (30) at (-3.75, -4) {$0\oplus 1$};
		\node [style=label-s] (31) at (-8, 4.5) {$2\oplus 0$};
		\node [style=label] (38) at (-7.5, 8.5) {$7'$};
		\node [style=label] (39) at (-7, 9) {$0'$};
		\node [style=label] (40) at (-2.75, 8.5) {$7$};
		\node [style=label] (41) at (-2.25, 9) {$0'$};
		\node [style=label] (44) at (-6.5, 5) {$9$};
		\node [style=label-s, inner sep = 0.3 em] (45) at (-7, 6.75) {$e_1$};
		\node [style=none] (46) at (-2.25, 6.5) {};
		\node [style=label-s, inner sep = 0.3 em] (47) at (-4.75, 6.75) {$d_1$};
		\node [style=none] (49) at (-5.25, 9) {};
	\end{pgfonlayer}
	\begin{pgfonlayer}{edgelayer}
		\draw (0) to (1);
		\draw [style=m1] (1) to (3);
		\draw [style=m1] (0) to (2);
		\draw (7) to (8);
		\draw [style=m1] (8) to (9);
		\draw [style=e1] (9) to (6);
		\draw [style=m1] (6) to (7);
		\draw [style=e1] (11) to (6);
		\draw [style=e3] (2) to (16.center);
		\draw [style=e3, in=-150, out=0, looseness=0.75] (16.center) to (3);
		\draw [style=e3, in=-180, out=0] (16.center) to (4);
		\draw [style=e3, in=120, out=0] (16.center) to (5);
		\draw [style=e3, in=210, out=0, looseness=0.75] (16.center) to (6);
		\draw [style=e4, in=-120, out=90, looseness=0.50] (15.center) to (4);
		\draw [style=e4] (15.center) to (5);
		\draw [style=m1] (14) to (4);
		\draw [style=m1] (14) to (5);
		\draw [style=d4] (0) to (13);
		\draw [style=d4, in=-180, out=-45, looseness=0.75] (13) to (3);
		\draw [style=d4] (13) to (4);
		\draw [style=d4, in=90, out=-45, looseness=0.75] (13) to (5);
		\draw [style=d2, in=0, out=135, looseness=0.75] (12.center) to (1);
		\draw [style=e4, in=-120, out=90, looseness=0.50] (15.center) to (3);
		\draw (21.center) to (15.center);
		\draw [style=e4] (21.center) to (11);
		\draw (21.center) to (15.center);
		\draw [style=d1] (0) to (6);
		\draw [style=m1] (14) to (1);
		\draw [style=d2] (7) to (12.center);
		\draw [style=d2] (12.center) to (9);
		\draw [style=m1, in=90, out=-60] (38) to (45);
		\draw [style=m1, in=120, out=-90] (45) to (44);
		\draw [style=m1] (39) to (45);
		\draw [style=m1, in=90, out=-60, looseness=0.75] (40) to (46.center);
		\draw [style=m1] (46.center) to (41);
		\draw (38) to (40);
		\draw [style=d7, in=135, out=-15, looseness=0.75] (38) to (47);
		\draw [style=d7] (47) to (39);
		\draw (39) to (49.center);
		\draw (49.center) to (41);
		\draw [style=m1] (45) to (0);
		\draw [style=m1] (46.center) to (1);
		\draw [style=d7] (47) to (1);
		\draw (44) to (1);
		\draw [style=e2] (0) to (24.center);
		\draw [style=e2] (24.center) to (7);
	\end{pgfonlayer}
\end{tikzpicture}
\input{ResetOpacity}
\caption{A subset of edges of the snake graph $\mathcal{G}_{345680}$ for the graph with maximal nested collection in Figure \ref{fig:cluster} which violate Condition~(\ref{admissibilityCondition3}). We suppress the edge weights and diagonal labels for clarity.}
\label{fig:nonadmissible-Condition3}
\end{figure}

\end{ex}

\begin{ex}\label{example:nonadmissibleMatching-Condition4}

Now, we consider the graph with maximal nested collection in Figure \ref{fig:nonadmissible-Condition4} and the snake graph $\mathcal{G}_0$ also in this figure. This matching includes the internal edge $(2,4)$ but not the internal edge $(2,3')$. This violates Condition~(\ref{admissibilityCondition4}) since these two internal edges share vertex 2 and since $3'$ and $4$ are incident to the same diagonal. 

\begin{figure}
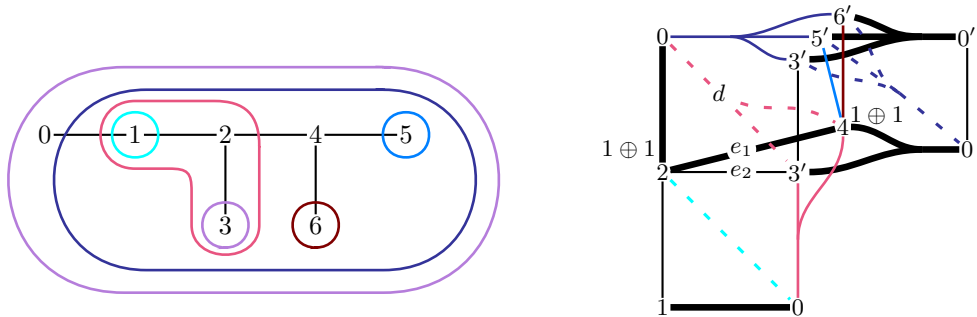

\centering
\begin{tikzpicture}[scale=0.6,baseline = -1.2 em]
	\begin{pgfonlayer}{nodelayer}
		\node [style=label] (0) at (-10, 0) {$0$};
		\node [style=label] (1) at (-8, 0) {$1$};
		\node [style=label] (2) at (-6, 0) {$2$};
		\node [style=label] (3) at (-4, 0) {$4$};
		\node [style=label] (4) at (-2, 0) {$5$};
		\node [style=label] (5) at (-4, -2) {$6$};
		\node [style=label] (6) at (-6, -2) {$3$};
		\node [style=none] (7) at (-6, -1.5) {};
		\node [style=none] (8) at (-6, -2.5) {};
		\node [style=none] (9) at (-2, 0.5) {};
		\node [style=none] (10) at (-2, -0.5) {};
		\node [style=none] (11) at (-4, -1.5) {};
		\node [style=none] (12) at (-4, -2.5) {};
		\node [style=none] (13) at (-8, 0.5) {};
		\node [style=none] (14) at (-8, -0.5) {};
		\node [style=none] (15) at (-8.25, 1.5) {};
		\node [style=none] (16) at (-8.25, -3.5) {};
		\node [style=none] (17) at (-2.5, 1.5) {};
		\node [style=none] (18) at (-2.5, -3.5) {};
		\node [style=none] (19) at (-7.75, 1) {};
		\node [style=none] (20) at (-7.75, -3) {};
		\node [style=none] (21) at (-2.5, 1) {};
		\node [style=none] (22) at (-2.5, -3) {};
		\node [style=none] (31) at (-8, 0.75) {};
		\node [style=none] (32) at (-6.25, 0.75) {};
		\node [style=none] (33) at (-5.25, -0.25) {};
		\node [style=none] (34) at (-5.25, -2) {};
		\node [style=none] (35) at (-8, -0.75) {};
		\node [style=none] (36) at (-6.75, -2) {};
		\node [style=none] (37) at (-7.25, -0.75) {};
		\node [style=none] (38) at (-6.75, -1.25) {};
	\end{pgfonlayer}
	\begin{pgfonlayer}{edgelayer}
		\draw (0) to (1);
		\draw (1) to (2);
		\draw (6) to (2);
		\draw (2) to (3);
		\draw (3) to (4);
		\draw (5) to (3);
		\draw [style=e3] (8.center)
			 to [bend left=90, looseness=1.75] (7.center)
			 to [bend left=90, looseness=1.75] cycle;
		\draw [style=e5] (10.center)
			 to [bend left=90, looseness=1.75] (9.center)
			 to [bend left=90, looseness=1.75] cycle;
		\draw [style=e6] (12.center)
			 to [bend left=90, looseness=1.75] (11.center)
			 to [bend left=90, looseness=1.75] cycle;
		\draw [style=e1] (14.center)
			 to [bend left=90, looseness=1.75] (13.center)
			 to [bend left=90, looseness=1.75] cycle;
		\draw [style=e3] (17.center)
			 to (15.center)
			 to [bend left=270, looseness=1.75] (16.center)
			 to (18.center)
			 to [bend right=90, looseness=1.75] cycle;
		\draw [style=e4] (21.center)
			 to (19.center)
			 to [bend left=270, looseness=1.75] (20.center)
			 to (22.center)
			 to [bend right=90, looseness=1.75] cycle;
		\draw [style=e2] (33.center)
			 to [bend right=45, looseness=1.25] (32.center)
			 to (31.center)
			 to [bend right=90, looseness=1.75] (35.center)
			 to (37.center)
			 to [bend left=45, looseness=1.50] (38.center)
			 to (36.center)
			 to [bend right=90, looseness=1.50] (34.center)
			 to cycle;
	\end{pgfonlayer}
\end{tikzpicture}%
\hspace{-5em}
\input{ReduceOpacity}%
\begin{tikzpicture}[scale=0.6,baseline = 0]
	\begin{pgfonlayer}{nodelayer}
		\node [style=label-s] (39) at (3, 3) {$0$};
		\node [style=label-s] (40) at (6, 2.5) {$3'$};
		\node [style=label-s] (41) at (6.5, 3) {$5'$};
		\node [style=label-s] (42) at (7, 3.5) {$6'$};
		\node [style=label-s] (43) at (9.75, 3) {$0'$};
		\node [style=label-s] (44) at (3, 0) {$2$};
		\node [style=label-s] (45) at (6, 0) {$3'$};
		\node [style=label-s] (46) at (7, 1) {$4$};
		\node [style=label-s] (47) at (9.75, 0.5) {$0$};
		\node [style=label-s] (48) at (6, -3) {$0$};
		\node [style=label-s] (49) at (3, -3) {$1$};
		\node [style=none] (50) at (8.75, 0.5) {};
		\node [style=none] (51) at (8.75, 3) {};
		\node [style=none] (52) at (4.5, 3) {};
		\node [style=none] (53) at (6, -1.5) {};
		\node [style=label-s] (55) at (2.25, 0.5) {$1\oplus 1$};
		\node [style=label-s] (56) at (7.75, 1.25) {$1\oplus 1$};
		\node [style=none] (57) at (4.25, 1.75) {};
		\node [style=none] (59) at (8.25, 1.75) {};
		\node [style=label-w] (60) at (4.75, 0) {$e_2$};
		\node [style=label-w] (61) at (4.75, 0.5) {$e_1$};
		\node [style=label-w] (62) at (4.25, 1.75) {$d$};
	\end{pgfonlayer}
	\begin{pgfonlayer}{edgelayer}
		\draw [style=m1] (39) to (44);
		\draw [style=m1] (44) to (46);
		\draw [style=m1, in=180, out=0] (46) to (50.center);
		\draw [style=m1, in=0, out=-180] (50.center) to (45);
		\draw [style=m1] (50.center) to (47);
		\draw [style=m1] (51.center) to (43);
		\draw [style=m1, in=-180, out=-15, looseness=1.25] (42) to (51.center);
		\draw [style=m1] (51.center) to (41);
		\draw [style=m1, in=-180, out=0] (40) to (51.center);
		\draw [style=m1] (49) to (48);
		\draw (44) to (49);
		\draw [style=e4] (39) to (41);
		\draw [style=e4, in=180, out=0] (52.center) to (42);
		\draw [style=e4, in=180, out=0] (52.center) to (40);
		\draw [style=e2] (48) to (45);
		\draw [style=e2, in=-90, out=90] (53.center) to (46);
		\draw (44) to (45);
		\draw (43) to (47);
		\draw (40) to (45);
		\draw [style=e5] (41) to (46);
		\draw [style=e6] (42) to (46);
		\draw [style=d1] (44) to (48);
		\draw [style=d2, in=135, out=-45] (39) to (57.center);
		\draw [style=d2] (57.center) to (45);
		\draw [style=d2, in=150, out=-30, looseness=1.25] (57.center) to (46);
		\draw [style=d4] (41) to (59.center);
		\draw [style=d4] (59.center) to (47);
		\draw [style=d4, in=-45, out=150] (59.center) to (42);
		\draw [style=d4, in=150, out=-30] (40) to (59.center);
	\end{pgfonlayer}
\end{tikzpicture}

\input{ResetOpacity}
\caption{On the left is a graph with a maximal nested collection and on the right is a subset of edges of the snake graph $\mathcal{G}_0$ which violates Condition~(\ref{admissibilityCondition4}) of Definition \ref{def:AdmissibleMatching}.  We suppress the edge weights and diagonal labels for clarity.}
 \label{fig:nonadmissible-Condition4}
\end{figure}
\end{ex}

\subsection{Statement of Main Theorem}
\label{subsec:main_theorem}

Given a snake graph $\mathcal{G}$, let $\ell(\mathcal{G}):=\prod
_{d\in D} Y_{W(d)}$ where $D$ is the set of diagonals of $\mathcal{G}$.  Let $\chi(\mathcal{G}) := \frac{1}{\ell(G)} \sum_{P \in \mathcal{P}} \wt(P)$ where $\mathcal{P}$ is the set of admissible matchings of $\mathcal{G}$ and $\wt(P):=\prod_{e\in P}\wt(e)$ is the product of the weights of the edges in $P$. 

\begin{thm}\label{thm:main}
For any cluster $\mathcal{I}$ and any weakly rooted set $S$ with respect to $\mathcal I$, \[
\chi(\mathcal{G}_S) = Y_S.
\]
\end{thm}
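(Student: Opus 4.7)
The proof naturally splits into two stages mirroring \Cref{sec:SingeltonProof} and \Cref{sec:ProofGeneral}: first establish the formula for singletons $S=\{v\}$, then bootstrap to arbitrary weakly rooted sets via the gluing/trimming construction of \Cref{sec:Gluing}.

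For the singleton case, the plan is to induct on the number of sets in $\mathcal{I}$ incompatible with $\{v\}$, which by \Cref{prop:NoDiagonalTwice} equals the number of diagonals in $\mathcal{G}_v$. The base case $\{v\}\in\mathcal{I}$ is immediate: $\mathcal{G}_v$ is a single hyperedge of weight $Y_v$ whose unique admissible matching yields $\chi(\mathcal{G}_v)=Y_v$. For the inductive step I would stratify on $r=|N_{\Gamma'}(v)\cap(\Gamma')^{\mathcal{I}}_{<v}|$. When $r=1$, there is no vertex $v_{\fix}$, every vertex has valence $1\oplus b$, and $\mathcal{G}_v$ is assembled from its component snake graphs $H_{\ell,k}$ by identifications along boundary edges; the technical heart is to show admissible matchings of $\mathcal{G}_v$ correspond to compatible tuples of perfect matchings on the $H_{\ell,k}$, after which \Cref{thm:dimer} delivers the classical polygon expansion on each $H_{\ell,k}$ and the products combine to give $Y_v$. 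When $r>1$, the additional vertex $v_{\fix}$ of valence $(r-1)\oplus 0$ forces $r-1$ incident edges; I would either peel off a leaf-direction to reduce to the $r=1$ case, or invoke a graph LP exchange relation from \cite{lam2016linear} to express $Y_v$ through strictly smaller snake graphs covered by the inductive hypothesis.

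For the general weakly rooted case, I would induct on $(|\overline{S}|,\,|S|-|\overline{S}|)$ in lexicographic order, mirroring the trimming-then-gluing structure of \Cref{sec:Gluing}. A trimming step replaces a region of $\mathcal{G}_v$ bounded by a diagonal labeled $I^{i_\ell}_{m_\ell}$ by absorbing $I^{i_\ell}_{m_\ell}\in\mathcal{I}$ into $S$; I would partition admissible matchings of the pre-trimmed graph by their behavior on the trimmed region, verify that one class contributes exactly the $Y_I$ factors removed from $\ell(\mathcal{G}_S)$, and identify the surviving class with admissible matchings of $\mathcal{G}_S$, using \Cref{prop:APlaceToCut} to locate a clean vertex at which to cut. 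A gluing step combines $\mathcal{G}_v^S$ and $\mathcal{G}_w^S$ along the unique edge $(v,w)$ guaranteed by \Cref{prop:ij-edge}; I would split admissible matchings based on whether $(v,w)$ lies in the matching, biject the two cases with pairs of admissible matchings on the pieces under the appropriate valence adjustment, and match the resulting product of generating functions to the corresponding graph LP exchange relation.

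The main obstacle will be controlling the admissibility conditions---especially Conditions~(\ref{admissibilityCondition3}) and~(\ref{admissibilityCondition4}) of \Cref{def:AdmissibleMatching})---across gluing seams, since these conditions are global and can couple edges lying in different constituent pieces or relate internal to boundary edges in ways that depend on the full diagonal structure. A secondary difficulty is that $\mathcal{G}_S$ is not canonical, depending on the choice of $a_1$ at each vertex of $\overline{S}$ and on the gluing order, so the bookkeeping must remain invariant under these choices, consistent with \Cref{rem:gluing-trimming-commutes}. I would address both by phrasing each inductive step as an explicit bijection between admissible matchings on the two sides of a single move and tracking weights and diagonal contributions in parallel, so that the Laurent-monomial identity and the independence from construction choices both fall out of the same correspondence.
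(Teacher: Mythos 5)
Your high-level plan (singletons first, then bootstrap via gluing) mirrors the paper's structure, and your general-case gluing argument --- splitting admissible matchings of $\mathcal{G}_{S}$ on whether the glued edge $(v,w)$ is used and bijecting with pairs of matchings on the pieces --- is essentially the paper's Lemma~\ref{lem:DecomposeToAdmissible} combined with Lemma~\ref{lemma:term_cancellation} and the exchange relation in Lemma~\ref{lem:YRecursionRootedSet}. But there are real gaps in the singleton argument.

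For $r=1$, the claim that admissible matchings of $\mathcal{G}_v$ correspond to ``compatible tuples of perfect matchings on the $H_{\ell,k}$'' after which ``the products combine to give $Y_v$'' is misleading: $\chi(\mathcal{G}_v)$ is not a product of $\chi(H_{\ell,k})$ over pairs $(\ell,k)$, and a naive tuple count badly overcounts. The compatibility constraints collapse everything onto a \emph{single} ordinary zigzag snake graph $\tilde{\mathcal{G}}_v$ whose edge weights $A_j^i$ are the accumulated products of the weights across all identified edges from the various $H_{\ell,k}$ (Lemma~\ref{lem:r1admissible}); the generating function identity then follows from Lemma~\ref{lem:ZigZagSurfaceMatchings} compared against the mutation-derived formula of Corollary~\ref{cor:SingletonFormula}. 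Without that collapse to a single zigzag and a target algebraic formula to compare against, the $r=1$ case doesn't close.

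The $r>1$ case is the more serious gap. Neither of your two suggested routes is workable as stated. ``Peeling off a leaf-direction'' changes the sets $A,B$ and hence the construction of $\mathcal{G}_v$ itself, not just the portion near one $a_i$; there is no obvious sub-snake-graph of $\mathcal{G}_{v}$ that is again of the form $\mathcal{G}_{v'}$ for a simpler $v'$. And ``invoking an exchange relation to reduce to strictly smaller snake graphs'' would require expressing $Y_v$ in terms of other $Y_{S'}$ whose snake graphs are structurally simpler, but mutations at $\{v\}$ move you among $Y_{J_{\mathbf{m}}}$-type variables, all of which have snake graphs with the same $v_{\fix}$ vertex of valence $(r-1)\oplus 0$, so there is no descent in that direction. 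The paper handles $r>2$ by introducing the branch snake graphs $\mathcal{H}_i$ and partitioning the admissible matchings of $\mathcal{G}_v$ by which $\mathcal{H}_i$ they restrict to admissibly (Lemma~\ref{lem:HowDoGvRestrictToHi}), with the key observation that restrictions which fail to be matchings always have the same ``distinguished non-matching'' form, allowing the generating function to split as a sum over $i$. That decomposition is a genuine idea not captured by either of your proposed alternatives, and without it the coupling forced by $v_{\fix}$ having valence $(r-1)\oplus 0$ is not under control.
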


\begin{ex}\label{ex:main-thm}
Below is the set of admissible matchings, with their weights, of the graph $\mathcal G_{345680}$ constructed in \Cref{ex:weakly-rooted-graph}. Recall in Example \ref{example:nonadmissibleMatching-Condition3} we demonstrated another matching of this graph which respected valence of all vertices but which was not admissible.

\begin{center}
\tikzstyle{e1}=[-, color=1, line width=1, opacity = 0.4]
\tikzstyle{e2}=[-, color=2, line width=1, opacity = 0.4]
\tikzstyle{e3}=[-, color=3, line width=1, opacity = 0.4]
\tikzstyle{e4}=[-, color=4, line width=1, opacity = 0.4]
\tikzstyle{e5}=[-, color=5, line width=1, opacity = 0.4]
\tikzstyle{e6}=[-, color=6, line width=1, opacity = 0.4]
\tikzstyle{e7}=[-, color=7, line width=1, opacity = 0.4]
\tikzstyle{e8}=[-, color=8, line width=1, opacity = 0.4]
\tikzstyle{e9}=[-, color=9, line width=1, opacity = 0.4]
\tikzstyle{e0}=[-, color=0, line width=1, opacity = 0.4]
\tikzstyle{d1}=[-, color=1, line width=1.2, loosely dashed, opacity = 0.4]
\tikzstyle{d2}=[-, color=2, line width=1.2, loosely dashed, opacity = 0.4]
\tikzstyle{d3}=[-, color=3, line width=1.2, loosely dashed, opacity = 0.4]
\tikzstyle{d4}=[-, color=4, line width=1.2, loosely dashed, opacity = 0.4]
\tikzstyle{d5}=[-, color=5, line width=1.2, loosely dashed, opacity = 0.4]
\tikzstyle{d6}=[-, color=6, line width=1.2, loosely dashed, opacity = 0.4]
\tikzstyle{d7}=[-, color=7, line width=1.2, loosely dashed, opacity = 0.4]
\tikzstyle{d8}=[-, color=8, line width=1.2, loosely dashed, opacity = 0.4]
\tikzstyle{d9}=[-, color=9, line width=1.2, loosely dashed, opacity = 0.4]
\tikzstyle{d0}=[-, color=0, line width=1.2, loosely dashed, opacity = 0.4]

\begin{tikzpicture}[scale=0.5]
	\begin{pgfonlayer}{nodelayer}
		\node [style=label] (0) at (-7, 4.5) {$3$};
		\node [style=label] (1) at (-2.25, 4.5) {$4$};
		\node [style=label] (2) at (-7, 0.5) {$8'$};
		\node [style=label] (3) at (-2.25, 1.25) {$9$};
		\node [style=label] (4) at (-3, 0.5) {$7'$};
		\node [style=label] (5) at (-3.75, -0.25) {$0'$};
		\node [style=label] (6) at (-1.5, 2) {$1'$};
		\node [style=label] (7) at (-1.5, 5.5) {$1$};
		\node [style=label] (8) at (2, 5.5) {$2$};
		\node [style=label] (9) at (2, 2) {$3$};
		\node [style=label] (11) at (-3.75, -3.5) {$3$};
		\node [style=none] (12) at (0, 4) {};
		\node [style=label-s] (13) at (-5, 2.5) {${I_4}$};
		\node [style=none] (14) at (-3, 2) {};
		\node [style=none] (15) at (-3.75, -1.75) {};
		\node [style=none] (16) at (-5.75, 0.5) {};
		\node [style=label-s] (22) at (-2.25, -1) {$Y_{I_1}$};
		\node [style=label-s] (23) at (0.25, 1.5) {$Y_{I_1}$};
		\node [style=label-s] (25) at (-7.5, 2.25) {$Y_{I_8}$};
		\node [style=label-s] (27) at (-3.3, 4) {$1\oplus 2$};
		\node [style=label-s] (28) at (-4.5, 3.75) {${I_1}$};
		\node [style=label-s] (29) at (0.75, 3.75) {${I_2}$};
		\node [style=label-s] (30) at (-3.75, -4) {$0\oplus 1$};
		\node [style=label-s] (31) at (-8, 4.25) {$2\oplus 0$};
		\node [style=label] (38) at (-7.5, 8.5) {$7'$};
		\node [style=label] (39) at (-7, 9) {$0'$};
		\node [style=label] (40) at (-2.75, 8.5) {$7$};
		\node [style=label] (41) at (-2.25, 9) {$0'$};
		\node [style=label] (44) at (-6.5, 5) {$9$};
		\node [style=none] (45) at (-7, 6.75) {};
		\node [style=none] (46) at (-2.25, 6.5) {};
		\node [style=label-s] (47) at (-4.75, 6.75) {${I_7}$};
		\node [style=label-s] (51) at (-1.75, 6.75) {$Y_{I_0}$};
		\node [style=label-s] (52) at (-4.75, 9.5) {${Y_0^2}$};
		\node [style=label-s] (53) at (-5, 0) {$Y_{I_3}$};
		\node [style=label-s] (54) at (-4.25, 5.5) {$Y_{I_2}$};
		\node [style=label-s] (55) at (-4.25, -1.75) {$Y_{I_4}$};
		\node [style=label-s] (56) at (-7.5, 6.75) {$Y_{I_4}$};
		\node [style=label-s] (57) at (-3.5, 2.25) {$Y_{I_7}$};
		\node [style=none] (58) at (-4, -5) {$Y_{123456780}Y_2Y_{0}^2$};
	\end{pgfonlayer}
	\begin{pgfonlayer}{edgelayer}
		\draw [style=m1] (0) to (1);
		\draw [opacity=0.4] (1) to (3);
		\draw [style=e8] (0) to (2);
		\draw [opacity=0.4] (7) to (8);
		\draw [style=m1] (8) to (9);
		\draw [style=e1] (9) to (6);
		\draw [opacity=0.4] (6) to (7);
		\draw [style=e1] (11) to (6);
		\draw [style=m1] (2) to (16.center);
		\draw [style=m1, in=-150, out=0, looseness=0.75] (16.center) to (3);
		\draw [style=m1, in=-180, out=0] (16.center) to (4);
		\draw [style=m1, in=120, out=0] (16.center) to (5);
		\draw [style=m1, in=210, out=0, looseness=0.75] (16.center) to (6);
		\draw [style=e4, in=-120, out=90, looseness=0.50] (15.center) to (4);
		\draw [style=e4] (15.center) to (5);
		\draw [style=e7] (14.center) to (4);
		\draw [style=e7] (14.center) to (5);
		\draw [style=d4] (0) to (13);
		\draw [style=d4, in=-180, out=-45, looseness=0.75] (13) to (3);
		\draw [style=d4] (13) to (4);
		\draw [style=d4, in=90, out=-45, looseness=0.75] (13) to (5);
		\draw [style=d2, in=0, out=135, looseness=0.75] (12.center) to (1);
		\draw [style=e4, in=-120, out=90, looseness=0.50] (15.center) to (3);
		\draw [style=d1] (0) to (6);
		\draw [style=e7] (14.center) to (1);
		\draw [style=d2] (7) to (12.center);
		\draw [style=d2] (12.center) to (9);
		\draw [style=e4, in=90, out=-60] (38) to (45.center);
		\draw [style=e4, in=120, out=-90] (45.center) to (44);
		\draw [style=e4] (39) to (45.center);
		\draw [style=e0, in=90, out=-60, looseness=0.75] (40) to (46.center);
		\draw [style=e0] (46.center) to (41);
		\draw [style=m1] (38) to (40);
		\draw [style=d7, in=135, out=-15, looseness=0.75] (38) to (47);
		\draw [style=d7] (47) to (39);
		\draw [style=e4] (45.center) to (0);
		\draw [style=e0] (46.center) to (1);
		\draw [style=d7] (47) to (1);
		\draw [style=m1] (44) to (1);
		\draw [style=m1] (39) to (41);
		\draw [style=m1] (0) to (7);
		\draw [style=e4] (15.center) to (11);
	\end{pgfonlayer}
\end{tikzpicture}
\qquad
\begin{tikzpicture}[scale=0.5]
	\begin{pgfonlayer}{nodelayer}
		\node [style=label] (0) at (-7, 4.5) {$3$};
		\node [style=label] (1) at (-2.25, 4.5) {$4$};
		\node [style=label] (2) at (-7, 0.5) {$8'$};
		\node [style=label] (3) at (-2.25, 1.25) {$9$};
		\node [style=label] (4) at (-3, 0.5) {$7'$};
		\node [style=label] (5) at (-3.75, -0.25) {$0'$};
		\node [style=label] (6) at (-1.5, 2) {$1'$};
		\node [style=label] (7) at (-1.5, 5.5) {$1$};
		\node [style=label] (8) at (2, 5.5) {$2$};
		\node [style=label] (9) at (2, 2) {$3$};
		\node [style=label] (11) at (-3.75, -3.5) {$3$};
		\node [style=none] (12) at (0, 4) {};
		\node [style=label-s] (13) at (-5, 2.5) {$I_4$};
		\node [style=none] (14) at (-3, 2) {};
		\node [style=none] (15) at (-3.75, -1.75) {};
		\node [style=none] (16) at (-5.75, 0.5) {};
		\node [style=label-s] (22) at (-2.25, -1) {$Y_{I_1}$};
		\node [style=label-s] (23) at (0.25, 1.5) {$Y_{I_1}$};
		\node [style=label-s] (25) at (-7.5, 2.25) {$Y_{I_8}$};
		\node [style=label-s] (27) at (-3.3, 4) {$1\oplus 2$};
		\node [style=label-s] (28) at (-4.5, 3.75) {$I_1$};
		\node [style=label-s] (29) at (0.75, 3.75) {$I_2$};
		\node [style=label-s] (30) at (-3.75, -4) {$0\oplus 1$};
		\node [style=label-s] (31) at (-8, 4.25) {$2\oplus 0$};
		\node [style=label] (38) at (-7.5, 8.5) {$7'$};
		\node [style=label] (39) at (-7, 9) {$0'$};
		\node [style=label] (40) at (-2.75, 8.5) {$7$};
		\node [style=label] (41) at (-2.25, 9) {$0'$};
		\node [style=label] (44) at (-6.5, 5) {$9$};
		\node [style=none] (45) at (-7, 6.75) {};
		\node [style=none] (46) at (-2.25, 6.5) {};
		\node [style=label-s] (47) at (-4.75, 6.75) {$I_7$};
		\node [style=label-s] (51) at (-1.75, 6.75) {$Y_{I_0}$};
		\node [style=label-s] (52) at (-4.75, 9.5) {$Y_0^2$};
		\node [style=label-s] (53) at (-5, 0) {$Y_{I_3}$};
		\node [style=label-s] (54) at (-4.25, 5.5) {$Y_{I_2}$};
		\node [style=label-s] (55) at (-4.25, -1.75) {$Y_{I_4}$};
		\node [style=label-s] (56) at (-7.5, 6.75) {$Y_{I_4}$};
		\node [style=label-s] (57) at (-3.5, 2.25) {$Y_{I_7}$};
		\node [style=none] (58) at (-4, -5) {};
		\node [style=none] (59) at (-4, -5) {$Y_8 Y_{45670} Y_0^2$};
	\end{pgfonlayer}
	\begin{pgfonlayer}{edgelayer}
		\draw [style=m1] (0) to (1);
		\draw [opacity=0.4] (1) to (3);
		\draw [style=m1] (0) to (2);
		\draw [opacity=0.4] (7) to (8);
		\draw [style=m1] (8) to (9);
		\draw [style=e1] (9) to (6);
		\draw [style=m1] (6) to (7);
		\draw [style=e1] (11) to (6);
		\draw [style=e3] (2) to (16.center);
		\draw [style=e3, in=-150, out=0, looseness=0.75] (16.center) to (3);
		\draw [style=e3, in=-180, out=0] (16.center) to (4);
		\draw [style=e3, in=120, out=0] (16.center) to (5);
		\draw [style=e3, in=210, out=0, looseness=0.75] (16.center) to (6);
		\draw [style=m1, in=-120, out=90, looseness=0.50] (15.center) to (4);
		\draw [style=m1] (15.center) to (5);
		\draw [style=e7] (14.center) to (4);
		\draw [style=e7] (14.center) to (5);
		\draw [style=d4] (0) to (13);
		\draw [style=d4, in=-180, out=-45, looseness=0.75] (13) to (3);
		\draw [style=d4] (13) to (4);
		\draw [style=d4, in=90, out=-45, looseness=0.75] (13) to (5);
		\draw [style=d2, in=0, out=135, looseness=0.75] (12.center) to (1);
		\draw [style=m1, in=-120, out=90, looseness=0.50] (15.center) to (3);
		\draw [style=d1] (0) to (6);
		\draw [style=e7] (14.center) to (1);
		\draw [style=d2] (7) to (12.center);
		\draw [style=d2] (12.center) to (9);
		\draw [style=e4, in=90, out=-60] (38) to (45.center);
		\draw [style=e4, in=120, out=-90] (45.center) to (44);
		\draw [style=e4] (39) to (45.center);
		\draw [style=e0, in=90, out=-60, looseness=0.75] (40) to (46.center);
		\draw [style=e0] (46.center) to (41);
		\draw [style=m1] (38) to (40);
		\draw [style=d7, in=135, out=-15, looseness=0.75] (38) to (47);
		\draw [style=d7] (47) to (39);
		\draw [style=e4] (45.center) to (0);
		\draw [style=e0] (46.center) to (1);
		\draw [style=d7] (47) to (1);
		\draw [style=m1] (44) to (1);
		\draw [style=m1] (39) to (41);
		\draw [opacity=0.4] (0) to (7);
		\draw [style=m1] (15.center) to (11);
	\end{pgfonlayer}
\end{tikzpicture}

\begin{tikzpicture}[scale=0.5]
	\begin{pgfonlayer}{nodelayer}
		\node [style=label] (0) at (-7, 4.5) {$3$};
		\node [style=label] (1) at (-2.25, 4.5) {$4$};
		\node [style=label] (2) at (-7, 0.5) {$8'$};
		\node [style=label] (3) at (-2.25, 1.25) {$9$};
		\node [style=label] (4) at (-3, 0.5) {$7'$};
		\node [style=label] (5) at (-3.75, -0.25) {$0'$};
		\node [style=label] (6) at (-1.5, 2) {$1'$};
		\node [style=label] (7) at (-1.5, 5.5) {$1$};
		\node [style=label] (8) at (2, 5.5) {$2$};
		\node [style=label] (9) at (2, 2) {$3$};
		\node [style=label] (11) at (-3.75, -3.5) {$3$};
		\node [style=none] (12) at (0, 4) {};
		\node [style=label-s] (13) at (-5, 2.5) {$I_4$};
		\node [style=none] (14) at (-3, 2) {};
		\node [style=none] (15) at (-3.75, -1.75) {};
		\node [style=none] (16) at (-5.75, 0.5) {};
		\node [style=label-s] (22) at (-2.25, -1) {$Y_{I_1}$};
		\node [style=label-s] (23) at (0.25, 1.5) {$Y_{I_1}$};
		\node [style=label-s] (25) at (-7.5, 2.25) {$Y_{I_8}$};
		\node [style=label-s] (27) at (-3.3, 4) {$1\oplus 2$};
		\node [style=label-s] (28) at (-4.5, 3.75) {$I_1$};
		\node [style=label-s] (29) at (0.75, 3.75) {$I_2$};
		\node [style=label-s] (30) at (-3.75, -4) {$0\oplus 1$};
		\node [style=label-s] (31) at (-8, 4.25) {$2\oplus 0$};
		\node [style=label] (38) at (-7.5, 8.5) {$7'$};
		\node [style=label] (39) at (-7, 9) {$0'$};
		\node [style=label] (40) at (-2.75, 8.5) {$7$};
		\node [style=label] (41) at (-2.25, 9) {$0'$};
		\node [style=label] (44) at (-6.5, 5) {$9$};
		\node [style=none] (45) at (-7, 6.75) {};
		\node [style=none] (46) at (-2.25, 6.5) {};
		\node [style=label-s] (47) at (-4.75, 6.75) {$I_7$};
		\node [style=label-s] (51) at (-1.75, 6.75) {$Y_{I_0}$};
		\node [style=label-s] (52) at (-4.75, 9.5) {$Y_0^2$};
		\node [style=label-s] (53) at (-5, 0) {$Y_{I_3}$};
		\node [style=label-s] (54) at (-4.25, 5.5) {$Y_{I_2}$};
		\node [style=label-s] (55) at (-4.25, -1.75) {$Y_{I_4}$};
		\node [style=label-s] (56) at (-7.5, 6.75) {$Y_{I_4}$};
		\node [style=label-s] (57) at (-3.5, 2.25) {$I_7$};
		\node [style=none] (58) at (-3.75, -5) {};
		\node [style=none] (59) at (-3.75, -5) {$Y_8Y_{45670}^2Y_{560}Y_{12}$};
	\end{pgfonlayer}
	\begin{pgfonlayer}{edgelayer}
		\draw [opacity=0.4] (0) to (1);
		\draw [opacity=0.4] (1) to (3);
		\draw [style=m1] (0) to (2);
		\draw [style=m1] (7) to (8);
		\draw [opacity=0.4] (8) to (9);
		\draw [style=m1] (9) to (6);
		\draw [opacity=0.4] (6) to (7);
		\draw [style=e1] (11) to (6);
		\draw [style=e3] (2) to (16.center);
		\draw [style=e3, in=-150, out=0, looseness=0.75] (16.center) to (3);
		\draw [style=e3, in=-180, out=0] (16.center) to (4);
		\draw [style=e3, in=120, out=0] (16.center) to (5);
		\draw [style=e3, in=210, out=0, looseness=0.75] (16.center) to (6);
		\draw [style=m1, in=-120, out=90, looseness=0.50] (15.center) to (4);
		\draw [style=m1] (15.center) to (5);
		\draw [style=e7] (14.center) to (4);
		\draw [style=e7] (14.center) to (5);
		\draw [style=d4] (0) to (13);
		\draw [style=d4, in=-180, out=-45, looseness=0.75] (13) to (3);
		\draw [style=d4] (13) to (4);
		\draw [style=d4, in=90, out=-45, looseness=0.75] (13) to (5);
		\draw [style=d2, in=0, out=135, looseness=0.75] (12.center) to (1);
		\draw [style=m1, in=-120, out=90, looseness=0.50] (15.center) to (3);
		\draw [style=d1] (0) to (6);
		\draw [style=e7] (14.center) to (1);
		\draw [style=d2] (7) to (12.center);
		\draw [style=d2] (12.center) to (9);
		\draw [style=m1, in=90, out=-60] (38) to (45.center);
		\draw [style=m1, in=120, out=-90] (45.center) to (44);
		\draw [style=m1] (39) to (45.center);
		\draw [style=m1, in=90, out=-60, looseness=0.75] (40) to (46.center);
		\draw [style=m1] (46.center) to (41);
		\draw [opacity=0.4] (38) to (40);
		\draw [style=d7, in=135, out=-15, looseness=0.75] (38) to (47);
		\draw [style=d7] (47) to (39);
		\draw [style=m1] (45.center) to (0);
		\draw [style=m1] (46.center) to (1);
		\draw [style=d7] (47) to (1);
		\draw [opacity=0.4] (44) to (1);
		\draw [opacity=0.4] (39) to (41);
		\draw [opacity=0.4] (0) to (7);
		\draw [style=m1] (15.center) to (11);
	\end{pgfonlayer}
\end{tikzpicture}
\qquad
\begin{tikzpicture}[scale=0.5]
	\begin{pgfonlayer}{nodelayer}
		\node [style=label] (0) at (-7, 4.5) {$3$};
		\node [style=label] (1) at (-2.25, 4.5) {$4$};
		\node [style=label] (2) at (-7, 0.5) {$8'$};
		\node [style=label] (3) at (-2.25, 1.25) {$9$};
		\node [style=label] (4) at (-3, 0.5) {$7'$};
		\node [style=label] (5) at (-3.75, -0.25) {$0'$};
		\node [style=label] (6) at (-1.5, 2) {$1'$};
		\node [style=label] (7) at (-1.5, 5.5) {$1$};
		\node [style=label] (8) at (2, 5.5) {$2$};
		\node [style=label] (9) at (2, 2) {$3$};
		\node [style=label] (11) at (-3.75, -3.5) {$3$};
		\node [style=none] (12) at (0, 4) {};
		\node [style=label-s] (13) at (-5, 2.5) {$I_4$};
		\node [style=none] (14) at (-3, 2) {};
		\node [style=none] (15) at (-3.75, -1.75) {};
		\node [style=none] (16) at (-5.75, 0.5) {};
		\node [style=label-s] (22) at (-2.25, -1) {$Y_{I_1}$};
		\node [style=label-s] (23) at (0.25, 1.5) {$Y_{I_1}$};
		\node [style=label-s] (25) at (-7.5, 2.25) {$Y_{I_8}$};
		\node [style=label-s] (27) at (-3.3, 4) {$1\oplus 2$};
		\node [style=label-s] (28) at (-4.5, 3.75) {$I_1$};
		\node [style=label-s] (29) at (0.75, 3.75) {$I_2$};
		\node [style=label-s] (30) at (-3.75, -4) {$0\oplus 1$};
		\node [style=label-s] (31) at (-8, 4.25) {$2\oplus 0$};
		\node [style=label] (38) at (-7.5, 8.5) {$7'$};
		\node [style=label] (39) at (-7, 9) {$0'$};
		\node [style=label] (40) at (-2.75, 8.5) {$7$};
		\node [style=label] (41) at (-2.25, 9) {$0'$};
		\node [style=label] (44) at (-6.5, 5) {$9$};
		\node [style=none] (45) at (-7, 6.75) {};
		\node [style=none] (46) at (-2.25, 6.5) {};
		\node [style=label-s] (47) at (-4.75, 6.75) {$I_7$};
		\node [style=label-s] (51) at (-1.75, 6.75) {$Y_{I_0}$};
		\node [style=label-s] (52) at (-4.75, 9.5) {$Y_0^2$};
		\node [style=label-s] (53) at (-5, 0) {$Y_{I_3}$};
		\node [style=label-s] (54) at (-4.25, 5.5) {$Y_{I_2}$};
		\node [style=label-s] (55) at (-4.25, -1.75) {$Y_{I_4}$};
		\node [style=label-s] (56) at (-7.5, 6.75) {$Y_{I_4}$};
		\node [style=label-s] (57) at (-3.5, 2.25) {$I_7$};
		\node [style=none] (58) at (-3.75, -5) {};
		\node [style=none] (59) at (-3.75, -5) {$Y_8Y_{45670}Y_0^2Y_{12}$};
	\end{pgfonlayer}
	\begin{pgfonlayer}{edgelayer}
		\draw [style=m1] (0) to (1);
		\draw [opacity=0.4] (1) to (3);
		\draw [style=m1] (0) to (2);
		\draw [style=m1] (7) to (8);
		\draw [opacity=0.4] (8) to (9);
		\draw [style=m1] (9) to (6);
		\draw [opacity=0.4] (6) to (7);
		\draw [style=e1] (11) to (6);
		\draw [style=e3] (2) to (16.center);
		\draw [style=e3, in=-150, out=0, looseness=0.75] (16.center) to (3);
		\draw [style=e3, in=-180, out=0] (16.center) to (4);
		\draw [style=e3, in=120, out=0] (16.center) to (5);
		\draw [style=e3, in=210, out=0, looseness=0.75] (16.center) to (6);
		\draw [style=m1, in=-120, out=90, looseness=0.50] (15.center) to (4);
		\draw [style=m1] (15.center) to (5);
		\draw [style=e7] (14.center) to (4);
		\draw [style=e7] (14.center) to (5);
		\draw [style=d4] (0) to (13);
		\draw [style=d4, in=-180, out=-45, looseness=0.75] (13) to (3);
		\draw [style=d4] (13) to (4);
		\draw [style=d4, in=90, out=-45, looseness=0.75] (13) to (5);
		\draw [style=d2, in=0, out=135, looseness=0.75] (12.center) to (1);
		\draw [style=m1, in=-120, out=90, looseness=0.50] (15.center) to (3);
		\draw [style=d1] (0) to (6);
		\draw [style=e7] (14.center) to (1);
		\draw [style=d2] (7) to (12.center);
		\draw [style=d2] (12.center) to (9);
		\draw [style=e4, in=90, out=-60] (38) to (45.center);
		\draw [style=e4, in=120, out=-90] (45.center) to (44);
		\draw [style=e4] (39) to (45.center);
		\draw [style=e0, in=90, out=-60, looseness=0.75] (40) to (46.center);
		\draw [style=e0] (46.center) to (41);
		\draw [style=m1] (38) to (40);
		\draw [style=d7, in=135, out=-15, looseness=0.75] (38) to (47);
		\draw [style=d7] (47) to (39);
		\draw [style=e4] (45.center) to (0);
		\draw [style=e0] (46.center) to (1);
		\draw [style=d7] (47) to (1);
		\draw [style=m1] (44) to (1);
		\draw [style=m1] (39) to (41);
		\draw [opacity=0.4] (0) to (7);
		\draw [style=m1] (15.center) to (11);
	\end{pgfonlayer}
\end{tikzpicture}

\begin{tikzpicture}[scale=0.5]
	\begin{pgfonlayer}{nodelayer}
		\node [style=label] (0) at (-7, 4.5) {$3$};
		\node [style=label] (1) at (-2.25, 4.5) {$4$};
		\node [style=label] (2) at (-7, 0.5) {$8'$};
		\node [style=label] (3) at (-2.25, 1.25) {$9$};
		\node [style=label] (4) at (-3, 0.5) {$7'$};
		\node [style=label] (5) at (-3.75, -0.25) {$0'$};
		\node [style=label] (6) at (-1.5, 2) {$1'$};
		\node [style=label] (7) at (-1.5, 5.5) {$1$};
		\node [style=label] (8) at (2, 5.5) {$2$};
		\node [style=label] (9) at (2, 2) {$3$};
		\node [style=label] (11) at (-3.75, -3.5) {$3$};
		\node [style=none] (12) at (0, 4) {};
		\node [style=label-s] (13) at (-5, 2.5) {$I_4$};
		\node [style=none] (14) at (-3, 2) {};
		\node [style=none] (15) at (-3.75, -1.75) {};
		\node [style=none] (16) at (-5.75, 0.5) {};
		\node [style=label-s] (22) at (-2.25, -1) {$Y_{I_1}$};
		\node [style=label-s] (23) at (0.25, 1.5) {$Y_{I_1}$};
		\node [style=label-s] (25) at (-7.5, 2.25) {$Y_{I_8}$};
		\node [style=label-s] (27) at (-3.3, 4) {$1\oplus 2$};
		\node [style=label-s] (28) at (-4.5, 3.75) {${I_1}$};
		\node [style=label-s] (29) at (0.75, 3.75) {$I_2$};
		\node [style=label-s] (30) at (-3.75, -4) {$0\oplus 1$};
		\node [style=label-s] (31) at (-8, 4.25) {$2\oplus 0$};
		\node [style=label] (38) at (-7.5, 8.5) {$7'$};
		\node [style=label] (39) at (-7, 9) {$0'$};
		\node [style=label] (40) at (-2.75, 8.5) {$7$};
		\node [style=label] (41) at (-2.25, 9) {$0'$};
		\node [style=label] (44) at (-6.5, 5) {$9$};
		\node [style=none] (45) at (-7, 6.75) {};
		\node [style=none] (46) at (-2.25, 6.5) {};
		\node [style=label-s] (47) at (-4.75, 6.75) {$I_7$};
		\node [style=label-s] (51) at (-1.75, 6.75) {$Y_{I_0}$};
		\node [style=label-s] (52) at (-4.75, 9.5) {$Y_0^2$};
		\node [style=label-s] (53) at (-5, 0) {$Y_{I_3}$};
		\node [style=label-s] (54) at (-4.25, 5.5) {$Y_{I_2}$};
		\node [style=label-s] (55) at (-4.25, -1.75) {$Y_{I_4}$};
		\node [style=label-s] (56) at (-7.5, 6.75) {$Y_{I_4}$};
		\node [style=label-s] (57) at (-3.5, 2.25) {$Y_{I_7}$};
		\node [style=none] (58) at (-3.75, -5) {$Y_2Y_{5670}Y_8Y_0^2Y_{12}$};
	\end{pgfonlayer}
	\begin{pgfonlayer}{edgelayer}
		\draw [opacity=0.4] (0) to (1);
		\draw [opacity=0.4] (1) to (3);
		\draw [style=m1] (0) to (2);
		\draw [opacity=0.4] (7) to (8);
		\draw [style=m1] (8) to (9);
		\draw [style=e1] (9) to (6);
		\draw [opacity=0.4] (6) to (7);
		\draw [style=m1] (11) to (6);
		\draw [style=e3] (2) to (16.center);
		\draw [style=e3, in=-150, out=0, looseness=0.75] (16.center) to (3);
		\draw [style=e3, in=-180, out=0] (16.center) to (4);
		\draw [style=e3, in=120, out=0] (16.center) to (5);
		\draw [style=e3, in=210, out=0, looseness=0.75] (16.center) to (6);
		\draw [style=e4, in=-120, out=90, looseness=0.50] (15.center) to (4);
		\draw [style=e4] (15.center) to (5);
		\draw [style=m1] (14.center) to (4);
		\draw [style=m1] (14.center) to (5);
		\draw [style=d4] (0) to (13);
		\draw [style=d4, in=-180, out=-45, looseness=0.75] (13) to (3);
		\draw [style=d4] (13) to (4);
		\draw [style=d4, in=90, out=-45, looseness=0.75] (13) to (5);
		\draw [style=d2, in=0, out=135, looseness=0.75] (12.center) to (1);
		\draw [style=e4, in=-120, out=90, looseness=0.50] (15.center) to (3);
		\draw [style=d1] (0) to (6);
		\draw [style=m1] (14.center) to (1);
		\draw [style=d2] (7) to (12.center);
		\draw [style=d2] (12.center) to (9);
		\draw [style=e4, in=90, out=-60] (38) to (45.center);
		\draw [style=e4, in=120, out=-90] (45.center) to (44);
		\draw [style=e4] (39) to (45.center);
		\draw [style=e0, in=90, out=-60, looseness=0.75] (40) to (46.center);
		\draw [style=e0] (46.center) to (41);
		\draw [style=m1] (38) to (40);
		\draw [style=d7, in=135, out=-15, looseness=0.75] (38) to (47);
		\draw [style=d7] (47) to (39);
		\draw [style=e4] (45.center) to (0);
		\draw [style=e0] (46.center) to (1);
		\draw [style=d7] (47) to (1);
		\draw [style=m1] (44) to (1);
		\draw [style=m1] (39) to (41);
		\draw [style=m1] (0) to (7);
		\draw [style=e4] (15.center) to (11);
	\end{pgfonlayer}
\end{tikzpicture}
\qquad
\begin{tikzpicture}[scale=0.5]
	\begin{pgfonlayer}{nodelayer}
		\node [style=label] (0) at (-7, 4.5) {$3$};
		\node [style=label] (1) at (-2.25, 4.5) {$4$};
		\node [style=label] (2) at (-7, 0.5) {$8'$};
		\node [style=label] (3) at (-2.25, 1.25) {$9$};
		\node [style=label] (4) at (-3, 0.5) {$7'$};
		\node [style=label] (5) at (-3.75, -0.25) {$0'$};
		\node [style=label] (6) at (-1.5, 2) {$1'$};
		\node [style=label] (7) at (-1.5, 5.5) {$1$};
		\node [style=label] (8) at (2, 5.5) {$2$};
		\node [style=label] (9) at (2, 2) {$3$};
		\node [style=label] (11) at (-3.75, -3.5) {$3$};
		\node [style=none] (12) at (0, 4) {};
		\node [style=label-s] (13) at (-5, 2.5) {$I_4$};
		\node [style=none] (14) at (-3, 2) {};
		\node [style=none] (15) at (-3.75, -1.75) {};
		\node [style=none] (16) at (-5.75, 0.5) {};
		\node [style=label-s] (22) at (-2.25, -1) {$Y_{I_1}$};
		\node [style=label-s] (23) at (0.25, 1.5) {$Y_{I_1}$};
		\node [style=label-s] (25) at (-7.5, 2.25) {$Y_{I_8}$};
		\node [style=label-s] (27) at (-3.3, 4) {$1\oplus 2$};
		\node [style=label-s] (28) at (-4.5, 3.75) {$I_1$};
		\node [style=label-s] (29) at (0.75, 3.75) {$I_2$};
		\node [style=label-s] (30) at (-3.75, -4) {$0\oplus 1$};
		\node [style=label-s] (31) at (-8, 4) {$2\oplus 0$};
		\node [style=label] (38) at (-7.5, 8.5) {$7'$};
		\node [style=label] (39) at (-7, 9) {$0'$};
		\node [style=label] (40) at (-2.75, 8.5) {$7$};
		\node [style=label] (41) at (-2.25, 9) {$0'$};
		\node [style=label] (44) at (-6.5, 5) {$9$};
		\node [style=none] (45) at (-7, 6.75) {};
		\node [style=none] (46) at (-2.25, 6.5) {};
		\node [style=label-s] (47) at (-4.75, 6.75) {$I_7$};
		\node [style=label-s] (51) at (-1.75, 6.75) {$Y_{I_0}$};
		\node [style=label-s] (52) at (-4.75, 9.5) {$Y_0^2$};
		\node [style=label-s] (53) at (-5, 0) {$Y_{I_3}$};
		\node [style=label-s] (54) at (-4.25, 5.5) {$I_2$};
		\node [style=label-s] (55) at (-4.25, -1.75) {$Y_{I_4}$};
		\node [style=label-s] (56) at (-7.5, 6.75) {$Y_{I_4}$};
		\node [style=label-s] (57) at (-3.5, 2.25) {$Y_{I_7}$};
		\node [style=none] (58) at (-3.75, -5) {};
		\node [style=none] (59) at (-3.75, -5) {$Y_{123456780}Y_{2}Y_{45670}Y_{560}$};
	\end{pgfonlayer}
	\begin{pgfonlayer}{edgelayer}
		\draw [opacity=0.4] (0) to (1);
		\draw [opacity=0.4] (1) to (3);
		\draw [style=e8] (0) to (2);
		\draw [opacity=0.4] (7) to (8);
		\draw [style=m1] (8) to (9);
		\draw [style=e1] (9) to (6);
		\draw [opacity=0.4] (6) to (7);
		\draw [style=e1] (11) to (6);
		\draw [style=m1] (2) to (16.center);
		\draw [style=m1, in=-150, out=0, looseness=0.75] (16.center) to (3);
		\draw [style=m1, in=-180, out=0] (16.center) to (4);
		\draw [style=m1, in=120, out=0] (16.center) to (5);
		\draw [style=m1, in=210, out=0, looseness=0.75] (16.center) to (6);
		\draw [style=e4, in=-120, out=90, looseness=0.50] (15.center) to (4);
		\draw [style=e4] (15.center) to (5);
		\draw [style=e7] (14.center) to (4);
		\draw [style=e7] (14.center) to (5);
		\draw [style=d4] (0) to (13);
		\draw [style=d4, in=-180, out=-45, looseness=0.75] (13) to (3);
		\draw [style=d4] (13) to (4);
		\draw [style=d4, in=90, out=-45, looseness=0.75] (13) to (5);
		\draw [style=d2, in=0, out=135, looseness=0.75] (12.center) to (1);
		\draw [style=e4, in=-120, out=90, looseness=0.50] (15.center) to (3);
		\draw [style=d1] (0) to (6);
		\draw [style=e7] (14.center) to (1);
		\draw [style=d2] (7) to (12.center);
		\draw [style=d2] (12.center) to (9);
		\draw [style=m1, in=90, out=-60] (38) to (45.center);
		\draw [style=m1, in=120, out=-90] (45.center) to (44);
		\draw [style=m1] (39) to (45.center);
		\draw [style=m1, in=90, out=-60, looseness=0.75] (40) to (46.center);
		\draw [style=m1] (46.center) to (41);
		\draw [opacity=0.4] (38) to (40);
		\draw [style=d7, in=135, out=-15, looseness=0.75] (38) to (47);
		\draw [style=d7] (47) to (39);
		\draw [style=m1] (45.center) to (0);
		\draw [style=m1] (46.center) to (1);
		\draw [style=d7] (47) to (1);
		\draw [opacity=0.4] (44) to (1);
		\draw [opacity=0.4] (39) to (41);
		\draw [style=m1] (0) to (7);
		\draw [style=e4] (15.center) to (11);
	\end{pgfonlayer}
\end{tikzpicture}

\begin{tikzpicture}[scale=0.5]
	\begin{pgfonlayer}{nodelayer}
		\node [style=label] (0) at (-7, 4.5) {$3$};
		\node [style=label] (1) at (-2.25, 4.5) {$4$};
		\node [style=label] (2) at (-7, 0.5) {$8'$};
		\node [style=label] (3) at (-2.25, 1.25) {$9$};
		\node [style=label] (4) at (-3, 0.5) {$7'$};
		\node [style=label] (5) at (-3.75, -0.25) {$0'$};
		\node [style=label] (6) at (-1.5, 2) {$1'$};
		\node [style=label] (7) at (-1.5, 5.5) {$1$};
		\node [style=label] (8) at (2, 5.5) {$2$};
		\node [style=label] (9) at (2, 2) {$3$};
		\node [style=label] (11) at (-3.75, -3.5) {$3$};
		\node [style=none] (12) at (0, 4) {};
		\node [style=label-s] (13) at (-5, 2.5) {$I_4$};
		\node [style=none] (14) at (-3, 2) {};
		\node [style=none] (15) at (-3.75, -1.75) {};
		\node [style=none] (16) at (-5.75, 0.5) {};
		\node [style=label-s] (22) at (-2.25, -1) {$Y_{I_1}$};
		\node [style=label-s] (23) at (0.25, 1.5) {$Y_{I_1}$};
		\node [style=label-s] (25) at (-7.5, 2.25) {$Y_{I_8}$};
		\node [style=label-s] (27) at (-3.3, 4) {$1\oplus 2$};
		\node [style=label-s] (28) at (-4.5, 3.75) {$I_1$};
		\node [style=label-s] (29) at (0.75, 3.75) {$I_2$};
		\node [style=label-s] (30) at (-3.75, -4) {$0\oplus 1$};
		\node [style=label-s] (31) at (-8, 4.25) {$2\oplus 0$};
		\node [style=label] (38) at (-7.5, 8.5) {$7'$};
		\node [style=label] (39) at (-7, 9) {$0'$};
		\node [style=label] (40) at (-2.75, 8.5) {$7$};
		\node [style=label] (41) at (-2.25, 9) {$0'$};
		\node [style=label] (44) at (-6.5, 5) {$9$};
		\node [style=none] (45) at (-7, 6.75) {};
		\node [style=none] (46) at (-2.25, 6.5) {};
		\node [style=label-s] (47) at (-4.75, 6.75) {$I_7$};
		\node [style=label-s] (51) at (-1.75, 6.75) {$Y_{I_0}$};
		\node [style=label-s] (52) at (-4.75, 9.5) {$Y_0^2$};
		\node [style=label-s] (53) at (-5, 0) {$Y_{I_3}$};
		\node [style=label-s] (54) at (-4.25, 5.5) {$Y_{I_2}$};
		\node [style=label-s] (55) at (-4.25, -1.75) {$Y_{I_4}$};
		\node [style=label-s] (56) at (-7.5, 6.75) {$Y_{I_4}$};
		\node [style=label-s] (57) at (-3.5, 2.25) {$Y_{I_7}$};
		\node [style=none] (58) at (-3.75, -5) {$Y_8Y_{45670}^2Y_{560}$};
	\end{pgfonlayer}
	\begin{pgfonlayer}{edgelayer}
		\draw [opacity=0.4] (0) to (1);
		\draw [opacity=0.4] (1) to (3);
		\draw [style=m1] (0) to (2);
		\draw [opacity=0.4] (7) to (8);
		\draw [style=m1] (8) to (9);
		\draw [style=e1] (9) to (6);
		\draw [style=m1] (6) to (7);
		\draw [style=e1] (11) to (6);
		\draw [style=e3] (2) to (16.center);
		\draw [style=e3, in=-150, out=0, looseness=0.75] (16.center) to (3);
		\draw [style=e3, in=-180, out=0] (16.center) to (4);
		\draw [style=e3, in=120, out=0] (16.center) to (5);
		\draw [style=e3, in=210, out=0, looseness=0.75] (16.center) to (6);
		\draw [style=m1, in=-120, out=90, looseness=0.50] (15.center) to (4);
		\draw [style=m1] (15.center) to (5);
		\draw [style=e7] (14.center) to (4);
		\draw [style=e7] (14.center) to (5);
		\draw [style=d4] (0) to (13);
		\draw [style=d4, in=-180, out=-45, looseness=0.75] (13) to (3);
		\draw [style=d4] (13) to (4);
		\draw [style=d4, in=90, out=-45, looseness=0.75] (13) to (5);
		\draw [style=d2, in=0, out=135, looseness=0.75] (12.center) to (1);
		\draw [style=m1, in=-120, out=90, looseness=0.50] (15.center) to (3);
		\draw [style=d1] (0) to (6);
		\draw [style=e7] (14.center) to (1);
		\draw [style=d2] (7) to (12.center);
		\draw [style=d2] (12.center) to (9);
		\draw [style=m1, in=90, out=-60] (38) to (45.center);
		\draw [style=m1, in=120, out=-90] (45.center) to (44);
		\draw [style=m1] (39) to (45.center);
		\draw [style=m1, in=90, out=-60, looseness=0.75] (40) to (46.center);
		\draw [style=m1] (46.center) to (41);
		\draw [opacity=0.4] (38) to (40);
		\draw [style=d7, in=135, out=-15, looseness=0.75] (38) to (47);
		\draw [style=d7] (47) to (39);
		\draw [style=m1] (45.center) to (0);
		\draw [style=m1] (46.center) to (1);
		\draw [style=d7] (47) to (1);
		\draw [opacity=0.4] (44) to (1);
		\draw [opacity=0.4] (39) to (41);
		\draw [opacity=0.4] (0) to (7);
		\draw [style=m1] (15.center) to (11);
	\end{pgfonlayer}
\end{tikzpicture}

\tikzstyle{e1}=[-, color=1, line width=1]
\tikzstyle{e2}=[-, color=2, line width=1]
\tikzstyle{e3}=[-, color=3, line width=1]
\tikzstyle{e4}=[-, color=4, line width=1]
\tikzstyle{e5}=[-, color=5, line width=1]
\tikzstyle{e6}=[-, color=6, line width=1]
\tikzstyle{e7}=[-, color=7, line width=1]
\tikzstyle{e8}=[-, color=8, line width=1]
\tikzstyle{e9}=[-, color=9, line width=1]
\tikzstyle{e0}=[-, color=0, line width=1]
\tikzstyle{d1}=[-, color=1, line width=1.2, loosely dashed]
\tikzstyle{d2}=[-, color=2, line width=1.2, loosely dashed]
\tikzstyle{d3}=[-, color=3, line width=1.2, loosely dashed]
\tikzstyle{d4}=[-, color=4, line width=1.2, loosely dashed]
\tikzstyle{d5}=[-, color=5, line width=1.2, loosely dashed]
\tikzstyle{d6}=[-, color=6, line width=1.2, loosely dashed]
\tikzstyle{d7}=[-, color=7, line width=1.2, loosely dashed]
\tikzstyle{d8}=[-, color=8, line width=1.2, loosely dashed]
\tikzstyle{d9}=[-, color=9, line width=1.2, loosely dashed]
\tikzstyle{d0}=[-, color=0, line width=1.2, loosely dashed]
\tikzstyle{none}=[inner sep=0mm]
\end{center}

We have $\ell(\mathcal G_{345680})=Y_{2}Y_{12}Y_{5670}Y_{45670}$, so \Cref{thm:main} tells us that \begin{align*}
Y_{345680}=&\ \frac{1}{Y_{2}Y_{12}Y_{5670}Y_{45670}}\left(Y_{123456780}Y_2Y_{0}^2+Y_8Y_{45670}Y_{0}^2+Y_8Y_{45670}^2Y_{560}Y_{12}+Y_8Y_{45670}Y_{0}^2Y_{12}+\right.\\
&\ \left.Y_2Y_{5670}Y_8Y_{0}^2Y_{12}+Y_{123456780}Y_2Y_{45670}Y_{560}+Y_8Y_{45670}^2Y_{560}\right)
\end{align*}
We can check this expression by starting with the cluster given by the maximal nested collection in \Cref{fig:cluster} and mutating at $Y_{5670}$, the $Y_{12}$, then $Y_{2}$, then $Y_{45670}$ with the mutation equations given in Lemma 4.11 of~\cite{lam2016linear}.
\end{ex}

\begin{remark}
\begin{enumerate}
    \item Note that \Cref{thm:main} does not guarantee an expression for $Y_S$ in terms of the cluster variables $\{Y_I\ |\ I\in\mathcal I\}$.  In particular, the expression in the previous example contains $Y_0$ and $\{0\}$ is not a set in the original nested collection in \Cref{fig:cluster}.
    \item The denominator of the expansion we obtain from \Cref{thm:main} is guaranteed to be square-free by Proposition \ref{prop:NoDiagonalTwice}.  However, if we expand any  variables that  are not in the original  cluster, we no longer necessarily have a square-free denominator.  For example, if we expand $Y_0$ in terms of the cluster in \Cref{ex:main-thm}, the denominator of the expression is no longer square-free.
\end{enumerate}
\end{remark}

We defer the proof of Theorem~\ref{thm:main} to Sections~\ref{sec:SingeltonProof} and \ref{sec:ProofGeneral}, as it is quite technical.

\section{Positivity}\label{sec:positivity}

As noted above, Theorem~\ref{thm:main} does not necessarily given an expansion for cluster variable $Y_S$ in terms of the nested collection $\mathcal I$ because the edges in the snake graph may have weights that are not expressed in terms of $\mathcal I$.  The following theorem characterizes when our formula gives an expansion purely in terms of the cluster from the maximal nested collection. 

\begin{thm}\label{thm:off-in-cluster}
If $S$ is a weakly rooted set with respect to $\mathcal I$, then the snake graph $\mathcal{G}_S$ has edges weighted only by products of variables in $\{Y_I\ |\ I\in\mathcal I\}$ if and only if for every vertex $v\in \overline{S}$, the connected components of the set $I_{j+1}^i(v)\setminus[v,a_j^i(v)]$ are elements of $\mathcal I$ for all $1\leq i\leq r$ and $1\leq j < c_i(v)$ where $I_j^i(v)\nsubseteq S$. 
\end{thm}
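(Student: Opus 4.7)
The plan is to analyze the possible edge weights in $\mathcal{G}_S$ by systematically examining the construction of Section~\ref{sec:construction}. Every edge of a component snake graph $H_{\ell,k}$ inherits a weight from one of the rules (1)--(4c) in Section~\ref{subsec:SingletonConstruction1}: rules (1) and (4a) give weight $1$; rules (2), (3), and (4b) give weights of the form $Y_I$ for some $I \in \mathcal{I}$ (specifically $I_v$, $I_z$ for $z \in \mathcal{C}_{a_i}$, or $I_z$ for $z \in \mathcal{C}_{a_j^i}$, all of which lie in $\mathcal{I}$); only rule (4c) can produce a weight $Y_C^2$, where $C = C_{i,j}^{\ell,k}$ is a connected component of $I_{j+1}^i(v) \setminus [v, a_j^i(v)]$. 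The trimming step only deletes edges, and the gluing step in Section~\ref{sec:Gluing} only identifies weight-$1$ internal edges between elements of $\overline{S}$, so neither modifies or introduces weights. Hence the only weights in $\mathcal{G}_S$ potentially outside $\{Y_I : I \in \mathcal{I}\}$ are of the form $Y_C^2$.

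For the $(\Leftarrow)$ direction, if every such $C$ that appears in $\mathcal{G}_S$ lies in $\mathcal{I}$, then each $Y_C^2 = Y_C \cdot Y_C$ is a product of cluster variables in $\{Y_I : I \in \mathcal{I}\}$, while every other weight is already of that form.

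For the $(\Rightarrow)$ direction I would prove the contrapositive. Fix $v \in \overline{S}$, indices $i \leq r$ and $j < c_i(v)$ with $I_j^i(v) \nsubseteq S$, and a connected component $C$ of $I_{j+1}^i(v) \setminus [v, a_j^i(v)]$ with $C \notin \mathcal{I}$. Since $C$ is connected and $C \notin \mathcal{I}$, the variable $Y_C$ is not itself a cluster variable in $\{Y_I : I \in \mathcal{I}\}$, and by algebraic independence of the cluster, the monomial $Y_C^2$ cannot be written as a product of these cluster variables. It therefore suffices to exhibit an edge of $\mathcal{G}_S$ carrying weight $Y_C^2$. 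To do so, I pick a leaf $k \in B$ of $\Gamma'$ such that the path $[v, k]$ enters $I_j^i(v)$ at $a_i$, passes through $C$, and exits $I_j^i(v)$ at a vertex $w \in C$. Such a $k$ can always be produced: since $\Gamma'$ attaches a new leaf to every leaf of $\Gamma$ and every vertex of $\Gamma$ lies on a path in $\Gamma'$ to some such leaf, one can extend a path from $v$ through a chosen vertex of $C$ to a leaf of $\Gamma'$ whose approach vertex lies in $C$. Pairing $k$ with any $\ell \in A$, rule (4c) then assigns the shared edge between tiles $I_j^i(v)$ and $I_{j+1}^i(v)$ of $H_{\ell,k}$ the weight $Y_C^2$, since $b_{i,j}^{\ell,k}$ is adjacent to $w \in C$ but not to any vertex of $[v, a_j^i(v)]$.

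The main obstacle is verifying that this weight-$Y_C^2$ edge survives the trimming and gluing operations of Section~\ref{sec:Gluing} to persist in $\mathcal{G}_S$. The hypothesis $I_j^i(v) \nsubseteq S$ ensures $I_j^i(v)$ is not contained in any $I_{m_\ell}^{i_\ell} \subsetneq S$ from the trimming, so the diagonal labeled $I_j^i(v)$ survives; consequently the endpoints of the weight-$Y_C^2$ edge retain positive valence and are not removed as isolated vertices. In the subcase $I_{j+1}^i(v) \subseteq S$, the diagonal $I_{j+1}^i(v)$ is itself removed in step~(3) of the construction of $\mathcal{G}_v^S$, but the associated boundary edges are not, because that step only deletes edges incident to diagonals \emph{strictly} contained in some $I_{m_\ell}^{i_\ell}$. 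Finally, the gluing in Section~\ref{sec:Gluing} only identifies along weight-$1$ internal edges between elements of $\overline{S}$, so the weight-$Y_C^2$ edge is never merged or relabeled, and $\mathcal{G}_S$ inherits it intact, completing the contrapositive.
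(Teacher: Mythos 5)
Your approach matches the paper's: classify the edge-weight rules, note that only rule (4c) produces a weight outside $\{Y_I : I \in \mathcal{I}\}$, and for the contrapositive direction exhibit a component snake graph whose $Y_C^2$ edge survives the trimming. The forward direction is fine, and your explicit appeal to algebraic independence to rule out $Y_C^2$ being a monomial in the cluster variables is a nice detail the paper glosses over. Your verification that the trimming at $I_{m_i}^i(v)$ does not delete the $Y_C^2$ edge when $I_j^i(v) \nsubseteq S$ is also correct.

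There is, however, a gap in the contrapositive direction: you always ``pick a leaf $k \in B$ such that the path $[v,k]$ enters $I_j^i(v)$ at $a_i$.'' This is impossible when $i = 1$, because $B$ is by definition the set of leaves whose path to $v$ avoids $a_1$, while your condition forces $a_1 \in [v,k]$. Recall that in the construction of $H_{\ell,k}$, the vertices $b_{1,j}^{\ell,k}$ are controlled by the choice of $\ell \in A$, not $k \in B$. So when $i = 1$ you need to choose $\ell \in A$ whose path from $v$ enters $C$, paired with an arbitrary $k \in B$; only for $i > 1$ does your choice of $k$ determine the relevant $b_{i,j}^{\ell,k}$. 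The paper handles this by choosing one leaf $u$ with $[u,v] \cap C \neq \emptyset$ and setting $u = \ell$ if $i=1$ and $u = k$ if $i > 1$. Incorporating this case split would close the gap; the rest of your argument (existence of such a leaf, verification that $b_{i,j}^{\ell,k}$ is a neighbor of $I_{j+1}^i(v)$ not adjacent to $[v, a_j^i(v)]$, and survival under trimming and gluing) carries through without change.
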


\begin{proof}
First suppose that for every vertex $v\in \overline{S}$, the set $I_{j+1}^i(v)\setminus[v,a_j^i(v)]$ consists of connected components which are elements of $\mathcal I$ for all $1\leq j < c_i(v)$ where $I_j^i(v)\nsubseteq S$. The snake graph $\mathcal{G}_{\overline{S}}$ is a gluing of $\mathcal{G}_{v}^S$ for all $v\in\overline {S}$.  From the list of edge weights in \Cref{subsec:SingletonConstruction1}, the only edges in $\mathcal{G}_{v}$ with weights that are not 1 or in the set $\{Y_I\ |\ I\in\mathcal I\}$ have weight $Y_C^2$ where $C$ is a connected component of $I_{j+1}^i(v)\setminus[v,a_j^i(v)]$.  Thus, the only edges in $\mathcal{G}_{\overline{S}}$ with weights that are not 1 or in the set $\{Y_I\ |\ I\in\mathcal I\}$ have weight $Y_C^2$ where $C$ is a connected component of $I_{j+1}^i(v)\setminus[v,a_j^i(v)]$ for some $v\in\overline{S}$.  Such an edge is adjacent to diagonals $I_j^i(v)$ and $I_{j+1}^i(v)$ in $\mathcal{G}_v$ and thus is adjacent to diagonals $I_j^i(v)$ and $I_{j+1}^i(v)$ in $\mathcal{G}_{\overline{S}}$.  If $I_j^i(v)\subseteq S$, then this edge gets deleted in creating $\mathcal{G}_v^S$ ,which is a subgraph of $\mathcal{G}_{\overline{S}}$.  Otherwise, the weight of the edge is a product of variables in $\{Y_I\ |\ I\in\mathcal I\}$ by our assumption.

For the other direction, suppose there is a vertex $v\in \overline{S}$ such that the set $I_{j+1}^i(v)\setminus[v,a_j^i(v)]$ has a connected component $C\not\in\mathcal I$ for some $1\leq j < c_i(v)$ where $I_j^i(v)\nsubseteq S$.  Let $u$ be a leaf in $\Gamma'$ such that $[u,v]$ has nonempty intersection with $C$.  Consider some pair $\ell,k$ such that $u=\ell$ if $i=1$ and  $u=k$ if $i>1$.  By definition, $I^i_{j+1}(v)$ is a connected component of $I^i_j(v)\setminus\{a_j^i(v)\}$.  Thus the neighbor of $I^i_{j+1}(v)$ that is not $v$ in $[\ell,k]$ and the neighbor of $I^i_j(v)$ that is not $v$ in $[\ell,k]$ are the same.  In other words, $b_{i,j}^{\ell,k}=b_{i,j+1}^{\ell,k}$.  The vertex $b_{i,j+1}^{\ell,k}$ is not adjacent to a vertex on the path $[v,a_j^i(v)]$ as this would force $[u,v]\cap C$ to be empty.  Thus, $H_{\ell,k}$ has an edge weighted $Y_C^2$ adjacent to diagonals $I^i_j(v)$ and $I^i_{j+1}(v)$.  This means $\mathcal{G}_v$ has an edge weighted $Y_C^2$ adjacent to diagonals $I^i_j(v)$ and $I^i_{j+1}(v)$ and therefore $\mathcal{G}_{\overline{S}}$ has an edge weighted $Y_C^2$ adjacent to diagonals $I^i_j(v)$ and $I^i_{j+1}(v)$.  Since $I_j^i(v)\nsubseteq S$, this edge is not removed when creating $\mathcal{G}_v^S$, so it remains in $\mathcal{G}_{S}$. We can conclude that there is an edge weighted $Y_C^2$ in $\mathcal{G}_S$.  Our proofs in Sections \ref{subsec:ris1}, \ref{subsec:risnot1}, \ref{subsec:SingeltonProofWeakSingleton}, and \ref{sec:GlueProof} show that this edge gets used in an admissible matching. Since $Y_C^2$ is not a product of variables in $\{Y_I\ |\ I\in\mathcal I\}$, the statement holds.
\end{proof}

\begin{ex}
As mentioned in the previous section, the expansion for $Y_{34}$ in \Cref{ex:main-thm} contains cluster variables that are not in the original cluster $\{Y_I\ |\ \in\mathcal{I}\}$.  Notice that vertex 4 is in $\overline{S}$ and $I^1_2(4)\setminus[4,a^1_1(4)]=\{5,6,0\}\setminus[4,7]=\{0\}$.  Since $\{0\}$ is not in $\mathcal{I}$, the fact that the snake graph does not give an expansion in terms of the variables $\{Y_I\ |\ \in\mathcal{I}\}$ is consistent with \Cref{thm:off-in-cluster}.
\end{ex}

Together, Theorem~\ref{thm:main} and Theorem~\ref{thm:off-in-cluster} produce the following corollary:

\begin{cor}
If  $S$ is a weakly rooted set with respect to $\mathcal I$ and for every vertex $v\in \overline{S}$, the set $I_{j+1}^i(v)\setminus[v,a_j^i(v)]$ consists of connected components which are elements of $\mathcal I$ for all $1\leq i\leq r$ and $1\leq j < c_i(v)$ where $I_j^i(v)\nsubseteq S$, then $Y_S$ can be written as a Laurent polynomial with positive coefficients in the cluster variables $\{Y_I\ |\ I\in\mathcal I\}$.
\end{cor}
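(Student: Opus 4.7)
The proof is essentially an assembly of two results already proved in the excerpt, so the plan is short. First I would apply Theorem~\ref{thm:main}, which asserts
\[
Y_S = \chi(\mathcal{G}_S) = \frac{1}{\ell(\mathcal{G}_S)} \sum_{P \in \mathcal{P}} \mathrm{wt}(P),
\]
where the sum is over admissible matchings $P$ of $\mathcal{G}_S$. This already expresses $Y_S$ as a ratio of two combinatorial objects, with unit (hence positive) integer coefficients in the numerator, so the only question is whether the individual edge weights and diagonal labels appearing in the expression belong to the prescribed cluster $\{Y_I : I \in \mathcal{I}\}$.

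For the denominator, recall that $\ell(\mathcal{G}_S) = \prod_{d \in D} Y_{W(d)}$, where $D$ is the set of diagonals of $\mathcal{G}_S$. By Proposition~\ref{prop:NoDiagonalTwice}, each diagonal of $\mathcal{G}_S$ is labeled by some set $I \in \mathcal{I}$ incompatible with $S$, and each such $I$ appears at most once. Hence $\ell(\mathcal{G}_S)$ is already a square-free monomial in $\{Y_I : I \in \mathcal{I}\}$, with no further work needed.

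For the numerator, I would invoke Theorem~\ref{thm:off-in-cluster} directly. The hypothesis of the corollary is exactly the condition in Theorem~\ref{thm:off-in-cluster}, which guarantees that every edge weight in $\mathcal{G}_S$ is either $1$ or a product of variables in $\{Y_I : I \in \mathcal{I}\}$. Consequently, for every admissible matching $P$, the weight $\mathrm{wt}(P) = \prod_{e \in P} \mathrm{wt}(e)$ is a monomial with nonnegative integer exponents in the chosen cluster variables. Summing over $P$ and dividing by the monomial $\ell(\mathcal{G}_S)$ then writes $Y_S$ as a Laurent polynomial in $\{Y_I : I \in \mathcal{I}\}$ whose coefficients are positive integers (each admissible matching contributes $+1$ to its corresponding monomial).

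There is no genuine obstacle here, since the substantive work has been shouldered by Theorems~\ref{thm:main} and~\ref{thm:off-in-cluster} and by Proposition~\ref{prop:NoDiagonalTwice}; the corollary is a direct consequence of combining these. The only thing to check carefully is that none of the $Y_C^2$-type edge weights, whose potential presence is the whole reason for Theorem~\ref{thm:off-in-cluster}, survive the trimming procedure in the construction of $\mathcal{G}_S$ when the hypothesis holds—but this is precisely what Theorem~\ref{thm:off-in-cluster} verifies.
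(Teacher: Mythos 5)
Your proof is correct and follows the same route the paper does: combine Theorem~\ref{thm:main} (the matching expansion) with Theorem~\ref{thm:off-in-cluster} (which guarantees all edge weights lie in the cluster under the stated hypothesis), noting that each admissible matching contributes a monomial with coefficient $+1$. The paper states the corollary as an immediate consequence of these two theorems without further argument, and your extra observation via Proposition~\ref{prop:NoDiagonalTwice} about the denominator being a square-free cluster monomial is consistent with what the paper records in a remark.
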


\section{Hyper $T$-paths}\label{sec:TPath}

In \cite{rooted}, we constructed \emph{hyper $T$-paths} for any set $S$ with respect to a rooted cluster and gave an expansion formula for the variable $Y_S$ that was indexed by the set of all complete hyper $T$-paths for $S$. This setting was more restrictive than our current setting of weakly rooted sets with respect to an arbitrary cluster. One of the main distinctions is that, in terms of present notation, in \cite{rooted} all singleton sets $\{v\}$ had $c_i(v) = 1$ for all $1 \leq i \leq r$.

In the setting of ordinary cluster algebras, there is a correspondence between $T$-paths and perfect matchings of snake graphs, described in \cite{musiker2011positivity}, where the vertices of the snake graph appear as \emph{nodes} in the $T$-path and the edges used in the perfect matching and diagonals appear as \emph{connections} between the nodes. For ordinary cluster algebras, $T$-paths are genuine path graphs. The spirit of this correspondence extends to the graph LP algebra setting, where we can now use our snake graph construction to describe a new, more general, version of hyper $T$-paths via the analogous correspondence with admissible matchings of $\mathcal{G}_S$, for any weakly rooted set $S$. Now, vertices of $\mathcal{G}_S$ continue to correspond to nodes, but the edges of an admissible matching and diagonals correspond to \emph{hyperedges} and the resulting hyper $T$-path is a \emph{hypergraph}. 

To describe the resulting hyper $T$-paths, we modify Definition 4.3 of \cite{rooted} to allow for our more general setting. Given a weakly rooted vertex subset $S$ of a tree $\Gamma$ with snake graph $\mathcal{G}_S$, let $\mathcal{B}$ denote the set of vertices of $\mathcal{G}_S$ with valence $0 \oplus b_k$. Note that vertices in $\mathcal{B}$ necessarily have $b_k > 0$ since vertices with valence $0 \oplus 0$ are deleted during the construction of $\mathcal{G}_S$. For any vertex subset $S$, recall that $N_{\Gamma'}(S)$ denotes the neighbors of $S$ in $\Gamma'$. 

To state our modified definition, we also need some relevant terminology for hypergraphs. In an undirected hypergraph with vertex set $X$ and hyperedge set $E$, each $e \in E$ is a subset of $X$. Elements of $e$ are referred to as \emph{endpoints} of $e$. A \emph{path} on a hypergraph is a sequence $(v_0, e_1, v_1, \dots, v_{m-1}, e_m, v_{m})$ of distinct vertices and hyperedges such that $\{v_{i-1}, v_i \} \subseteq e_i$. Paths on a hypergraph correspond to paths, defined in the usual way, on its incidence graph. A \emph{cycle} on a hypergraph is a path where $v_{0} = v_{m}$.

\begin{defn}
    \label{def:hyperTpath}
    Let $S$ be a weakly rooted vertex subset of $\Gamma$ with snake graph $\mathcal{G}_S$, constructed as in Section~\ref{sec:construction}. A \emph{complete hyper $T$-path} for $S$ with respect to $\mathcal{I}$ is a set of nodes, which correspond to the vertices of $\mathcal{G}_S$, that are joined by connections, which are hyperedges that correspond to edges and diagonals of $\mathcal{G}_S$. Connections that correspond to edges of $\mathcal{G}_S$ are called \emph{odd} and connections that correspond to diagonals of $\mathcal{G}_S$ are called \emph{even}. Each odd connection has an associated weight that is the same as the weight of the corresponding edge in $\mathcal{G}_S$.  Each even connection has an associated label that is the same as the label of the corresponding diagonal in $\mathcal{G}_S$.  For an odd connection $c$ corresponding to edge $e$, we define $W(c)=W(e)$ and for an even connection $c$ corresponding to diagonal $d$, we define $W(c)=W(d)$. Each node has the same associated valence $a_k \oplus b_k$ as the vertex it corresponds to in $\mathcal{G}_S$. This collection of nodes and connections forms a connected diagram, such that:
    \begin{enumerate}
        \item If a connection is weighted by $Y_I$, it joins nodes labeled by all of the vertices in $I$ with multiplicity one. If a connection is weighted by $Y_{I}^2$, then it joins a collection of nodes labeled by all vertices of $N_{\Gamma'}(I)$ except for the neighbor that lies on the path between an  element $v\in I$ and an  element of $u\in S$, with multiplicity two.  Note that this vertex does not depend on the choice of $v$ and $u$.
        \item There is a distinguished set of \emph{boundary nodes} labeled by elements of $N_{\Gamma'}(S)\cup\mathcal{B}$?. The set of boundary nodes must contain exactly one node labeled by each vertex in $N_{\Gamma'}(S)$ and at most one node labeled by each vertex in $\mathcal{B}$. All other nodes are called \emph{internal nodes}.
        \item A boundary node labeled by $v_k$ is adjacent to between $a_k$ and $a_k+b_k$ odd connections and no even connections.
        \item \label{TpathInternalNodeValence} An internal node labeled by $v_k$ is adjacent to exactly $a_k$ even connections and between $a_k$ and $a_k + b_k$ odd connections.
        \item If $x,y  \in N_{\Gamma'}(S)$ are such that there exist $v,w\in S$ with $x<_{\mathcal{I}} v$ and $y <_{\mathcal{I}} w$ (where possibly $v = w$), any path in any complete hyper $T$-path from boundary node $x$ to boundary node $y$ uses even connections labeled, in order, by $I_{x}$, $I_{u_\ell}$, $I_{u_{\ell-1}}, \dots, I_{u_1}, I_{w_1}, I_{w_2}, \dots, I_{w_k}, I_{y}$ where the path from $x$ to $y$ in $\Gamma'$ is $x, u_{\ell}, u_{\ell-1}, \dots, u_{1}, x \vee y, w_1, w_2, \dots, w_{k}, y$ for $k,\ell \geq 0$.
        \item If $x,y \in N_\Gamma(S)$ such that there exists $v \in S$ with $x <_{\mathcal{I}} v$ and $y >_{\mathcal{I}} m_{\mathcal{I}}(S)$, any path in any complete hyper $T$-path from boundary node $x$ to boundary node $y$ uses even connections that are labeled, in order, by $I_{x}, I_{u_\ell}, \dots, I_{u_2}$, where the path from $x$ to $y$ in $\Gamma'$ is $x, u_{\ell}, u_{\ell-1}, \dots, u_{1}, y$ for $\ell \geq 1$. If $\ell = 1$, then a path from $x$ to $y$ uses only the even connection $I_{x}$.
        \item If $x,y$ are boundary nodes and the path from $x$ to $y$ in $\Gamma'$ is $x, u_{\ell}, \dots, u_{k}, \dots, u_1,$ ${x\vee y},$ $ w_{1}, \dots, w_{g}, \dots, w_{h}, y$ where $u_{\ell}, \dots, u_{k-1} \in I \in \mathcal{I}$ such that $I \subseteq S$ and $w_{g+1}, \dots, w_{h} \in J \in \mathcal{I}$ such that $J \subseteq S$, then any path in the complete hyper $T$-path from $x$ to $y$ uses nodes labeled by the elements of $N_{\Gamma}(I_{j}^{i}(u_{k}))$ if $x \in I_{j}^{i}(u_{k})$, elements of $N_{\Gamma}(I_{j}^{i}(u_{k-1}))$ if $u_{k} \in I_{j}^{i}(u_{k-1})$, $\dots$, and elements of $N_{\Gamma}(I_{j}^{i}(w_{g}))$ if $y \in I_{j}^{i}(w_{g})$. 
        \item \label{invalidTpath1}There does not exist a cycle using connections $o_1, e_2,$ and $o_3$ such that $o_1, o_3$ are odd, $e_2$ is even, $o_1 \cap o_3 \neq \emptyset$, and $W(o_1) \subsetneq W(e_2) \subsetneq W(o_3)$.
        \item \label{invalidTpath2} There does not exist a cycle using connections $o_1,e_1,o_2,$ and $e_2$ such that $o_1, o_2$ are odd, $e_1, e_2$ are even, $W(o_1) = W(e_1)$, and $W(o_2) = W(e_2)$.
        \item \label{invalidTpath3}If $\mathcal{G}_S$ contains a pair of boundary edges $e_1, e_2$ that are labeled with non-empty sets such that both share vertices with a diagonal $d$ but do not share any vertices with each other, and have $W(e_1) \subsetneq W(d) \subsetneq W(e_2)$, then there must be either no odd connections corresponding to $e_1$ and $e_2$ or there must be odd connections that correspond to both.
        \item \label{invalidTpath4} Suppose that $\mathcal{G}_{S}$ contains a pair of edges $e_1, e_2$ that share a vertex and that there exist other vertices $v_1,v_2$ in $\mathcal{G}_{S}$ such that $v_1$ is an endpoint of $e_1$ but not $e_2$ and $v_2$ is an endpoint of $e_2$ but not $e_1$.  If $v_1$ and $v_2$ are both endpoints of a diagonal labeled by $d$ where $W(e_1),W(e_2) \subseteq W(d)$ and $e_1,e_2$ are either both internal edges or both boundary edges incident to internal edges, then either both $e_1$ and $e_2$ have corresponding odd connections or neither have corresponding odd connections.
    \end{enumerate}
\end{defn}
Let $\alpha$ denote a complete hyper $T$-path. We continue to define its \emph{weight} as
\[ \wt(\alpha) := \left( \prod_{\textrm{odd connections  } c} \wt(c) \right) \left( \prod_{\textrm{even connections } c} Y_{W(c)} \right)^{-1}.  \]

 Using this more general definition of a complete hyper $T$-path, our previous expansion formula from \cite{rooted}  now extends to produce expansion formulas for all weakly rooted sets:
\[ Y_S = \sum_{\substack{\textrm{complete hyper} \\ T\textrm{-paths }\alpha \textrm{ for } S}} \wt(\alpha).\]

In our following examples, odd connections will be drawn with solid lines and even connections will be drawn with dashed lines. 

\begin{ex}
Let us return to Example~\ref{example:nonadmissibleMatching-Condition1} and consider the inadmissible matching shown in Figure~\ref{fig:nonadmissible-Condition1}. This inadmissible matching would correspond to the hyper $T$-path:
\begin{center}
    \begin{tikzpicture}
        \node (1) at (0,0) {$1$};
        \node (2) at (2,0) {$2$};
        \node (3) at (4,-0.5) {$1'$};
        \node (4) at (4,0.5) {$5$};
        \node (5) at (6,-0.5) {$3'$};
        \node (6) at (8,-0.5) {$2$};
        \node (7) at (10,-0.5) {$3$};
        \node (8) at (6,1) {$1'$};
        \node (9) at (6,2) {$3'$};
        \node (10) at (8,2) {4'};
        \node (11) at (8,1) {6'};
        \node (12) at (10,1.5) {$5$};
        \node (13) at (12,2) {4};
        \node (14) at (12,1) {$6$};

        \draw (1) to (2); 
        \draw[color=black!10!white,thick, line width = 10pt] (2) to (3); 
        \draw[thick, densely dashed, color=blue] (2) to (3); 
        \node at (3,-0.6) {${I_1}$};
        \draw[color=black!10!white,thick, line width = 10pt] (2) to (4); 
        \draw (2) to (4); 
        \draw[color=black!10!white,thick, line width = 10pt] (3) to (5); 
        \draw[color=black!10!white,thick, line width = 10pt] (4) to (5,-0.5); 
        \draw[thick,red] (3) to (5); 
        \draw[thick,red] (4) to (5,-0.5); 
        \node at (5.25,-0.2) {$Y_{I_2}$};
        \draw[thick, densely dashed, color = 7] (5) to (6); 
        \node at (7,-0.8) {${I_3}$};
        \draw (6) to (7);
        \draw[thick,red, densely dashed] (4) to (8); 
        \draw[thick,red, densely dashed] (5,0.75) to (9); 
        \node at (4.75,1) {${I_2}$};
        \draw (12) to (13); 
        \draw (12) to (14); 
        \draw[thick,densely dashed, purple] (10) to (12); 
        \node at (9,2) {${I_4}$};
        \draw[thick, densely dashed, green] (11) to (12); 
        \node at (9,1) {${I_6}$};
        \draw[thick,orange] (6.75,1.5) to (7.25,1.5);
        \draw[thick,orange] (9) to (6.75,1.5);
        \draw[thick,orange] (8) to (6.75,1.5);
        \draw[thick,orange] (7.25,1.5) to (10);
        \draw[thick,orange] (7.25,1.5) to (11);
        \node at (7,1.75) {$Y_{I_5}$};
    \end{tikzpicture}
\end{center}
 Notice that this hyper $T$-path contains a highlighted sequence of connections $(o_1,e_2,o_3)$ where $o_1$ is the $2 - 5$ connection, $e_2$ is the $2 - 1'$ connection, and $o_3$ is the odd connection between $5, 1',$ and $3'$. Notice that $o_1$ and $o_3$ share an endpoint and that $W(o_1) = \emptyset$, $W(e_2) = I_1$, and $W(o_3) = I_2$. Because $W(o_1) \subsetneq W(e_2) \subsetneq W(o_3)$, this hyper $T$-path violates Condition~(\ref{invalidTpath1}).
\end{ex}

\begin{ex}
Next, let us consider the graph and maximal nested collection from Example~\ref{example:nonadmissibleMatching-Condition2} and the snake graph $\mathcal{G}_{25}$ shown in Figure~\ref{fig:nonadmissible-Condition2}. This figure displays a non-admissible matching that corresponds to the following (invalid) hyper $T$-path:

\begin{center}
    \begin{tikzpicture}
        \node (1) at (2,-1.5) {$1$};
        \node (2) at (2,0) {$2$};
        \node (3) at (4,0) {$1'$};
        \node (4) at (6,0.5) {$3$};
        \node (5) at (6,-0.5) {$5$};
        \node (6) at (8,0) {$1'$};
        \node (7) at (8,-1) {$3$};

        \node (8) at (10,0) {$6'$};
        \node (9) at (12,0) {$5$};
        \node (10) at (14,0.5) {$1'$};
        \node (11) at (14,-0.5) {$3'$};
        \node (12) at (16,0) {$5$};
        \node (13) at (18,-0.5) {$6$};
        \node (14) at (18,0.5) {$4'$};
        \node (15) at (18,2) {$4$};

        \draw (1) to (2);
        \draw[thick, densely dashed, color=1] (2) to (3);
        \node at (3,-0.4) {${I_1}$};
        \draw[thick, color=1,out = 45, in = 110] (2) to node[midway,above]{\textcolor{black}{$Y_{I_1}$}} (6);
        \draw[thick, color=2] (3) to (5.25,0);
        \draw[thick, color=2] (5.25,0) to (4);
        \draw[thick, color=2] (5.25,0) to (5);
        \node at (5,0.35) {$Y_{I_{2}}$};
        \draw[thick, densely dashed, color = 2] (5) to (7.25,-0.5);
        \draw[thick, densely dashed, color = 2] (7.25,-0.5) to (6);
        \draw[thick, densely dashed, color = 2] (7.25,-0.5) to (7);
        \node at (7,-0.9) {${I_2}$};

        \draw[thick, color = 6] (8) to (9);
        \node at (11,0.35) {$Y_{I_6}$};
        \draw[thick, densely dashed, color = 6,out=-45,in=-135] (8) to node[midway,below]{\textcolor{black}{${I_6}$}} (11);
        \draw[thick, densely dashed, color = 3] (9) to (13.25,0);
        \draw[thick, densely dashed, color = 3] (13.25,0) to (10);
        \draw[thick, densely dashed, color = 3] (13.25,0) to (11);
        \node at (13,0.35) {${I_3}$};
        \draw[thick, color = 3] (12) to (14.75,0);
        \draw[thick, color = 3] (14.75,0) to (10);
        \draw[thick, color = 3] (14.75,0) to (11);
        \node at (15,0.35) {$Y_{I_3}$};
        \draw (12) to (13);
        \draw[thick, densely dashed, color = 4] (12) to (14);
        \node at (17,0.6) {${I_4}$};
        \draw (14) to (15);
    \end{tikzpicture}
\end{center}
Note that this diagram is disconnected and also violates Condition~(\ref{invalidTpath3}), which requires that either both or neither of the pair of edges labeled by $e_1$ and $e_2$ in Figure~\ref{fig:nonadmissible-Condition2} must correspond to odd connections. In this example, only $e_1$ corresponds to an odd connection.
\end{ex}

\begin{ex}
Let us return to considering the weakly rooted set $S = \{ 3, 4, 5 ,6, 8, 0 \}$ and maximal nested collection $\mathcal{I}$ from Figure~\ref{fig:cluster}. We previously exhibited the set of admissible matchings of $\mathcal{G}_{S}$ in Example~\ref{ex:main-thm} and an example of a non-admissible matchings that violates Condition~(\ref{admissibilityCondition3}) of Definition~\ref{def:AdmissibleMatching} in Example~\ref{example:nonadmissibleMatching-Condition3}.  The corresponding hyper $T$-path for this non-admissible matching is shown below. Odd connections are solid and even connections are dashed.

\begin{center}
\begin{tikzpicture}
    \node (1) at (-2,0) {$8'$};
    \node (2) at (0,0) {$3$};
    \node (3) at (2,0) {$0'$};
    \node (4) at (2,1) {$9$};
    \node (5) at (2,-1) {$7'$};
    \node (6) at (4,0) {$4$};
    \node (7) at (3.5,1.5) {0'};
    \node (8) at (4.5,1.5) {7};
    \node (9) at (6,1) {9};
    \node (10) at (6,0) {7'};
    \node (11) at (6,-1) {0'};
    \draw[color=8,thick] (1) to (2);
    \node[above] at (-1,0){$Y_{I_8}$};
    \draw[color=black!10!white,thick, line width = 10pt] (2) to (1.25,0); 
    \draw[color=black!10!white,thick, line width = 10pt] (1.25,0) to (3); 
    \draw[color=black!10!white,thick, line width = 10pt] (1.25,0) to (4); 
    \draw[color=black!10!white,thick, line width = 10pt] (1.25,0) to (5); 
    \draw[color=4,thick] (2) to (1.25,0);
    \draw[color=4,thick] (1.25,0) to (3);
    \draw[color=4,thick] (1.25,0) to (4);
    \draw[color=4,thick] (1.25,0) to (5);
    \node[above] at (0.75,0){$Y_{I_4}$};
    \draw[color=black!10!white,thick, line width = 10pt] (3) to (6); 
    \draw[color=black!10!white,thick, line width = 10pt] (5) to (2.75,0); 
    \draw[color=7, densely dashed,thick] (3) to (6);
    \draw[color=7, densely dashed,thick] (5) to (2.75,0);
    \node[below] at (3.25,0){${I_7}$};
    \draw[color=0,thick] (6) to (4,1);
    \draw[color=0,thick] (4,1) to (7);
    \draw[color=0,thick] (4,1) to (8);
    \node[left] at (4,0.6){$Y_{I_0}$};
    \draw (6) to (9);
    \draw[color=black!10!white,thick, line width = 10pt] (6) to (10); 
    \draw[color=black!10!white,thick, line width = 10pt] (11) to (5.25,0); 
    \draw[color=7,thick] (6) to (10);
    \draw[color=7,thick] (11) to (5.25,0);
    \node[below] at (4.75,0){$Y_{I_7}$};
    \draw[color=black!10!white,thick, line width = 10pt] (10) to (6.75,0); 
    \draw[color=black!10!white,thick, line width = 10pt,out=0,in=90] (6.75,0) to (7,-1); 
    \draw[color=black!10!white,thick, line width = 10pt,out=-90,in=180] (2) to (4,-2); 
    \draw[color=black!10!white,thick, line width = 10pt, out=-90, in =0]  (7,-1) to (4,-2); 
    \draw[color=black!10!white,thick, line width = 10pt] (9) to (6.75,0); 
    \draw[color=black!10!white,thick, line width = 10pt] (11) to (6.75,0); 
    \draw[color=4, densely dashed,thick] (10) to (6.75,0);
    \draw[color=4, densely dashed,out=0,in=90,thick] (6.75,0) to (7,-1);
    \draw[color=4, densely dashed,out=-90,in=180,thick] (2) to (4,-2);
    \draw[color=4, densely dashed, out=-90, in =0,thick]  (7,-1) to (4,-2);
    \draw[color=4, densely dashed,thick] (9) to (6.75,0);
    \draw[color=4, densely dashed,thick] (11) to (6.75,0);
    \node[above] at (4,-2){${I_4}$};

    \node (12) at (8.5,0) {$1'$};
    \node (13) at (10,0) {$1$};
    \node (14) at (11.5,0) {$3$};
    \node (15) at (13,0) {$2$};
    \draw (12) to (13);
    \draw (14) to (15);
    \draw[color=4,thick,densely dashed] (13) to (14);
    \node[above] at (10.75,0){$I_4$};
\end{tikzpicture}
\end{center}

Note that this diagram both is disconnected and violates Condition~(\ref{invalidTpath2}) of Definition~\ref{def:hyperTpath} due to the highlighted cycle, which uses connections $o_1, e_1, o_2, e_2$ such that $W(o_1) = W(e_1) = Y_{I_7}$ and $W(o_2) = W(e_2) = Y_{I_4}$. Hence, it is not a valid hyper $T$-path for $S$.

Next, we exhibit two hyper $T$-paths that do correspond to admissible matchings of $\mathcal{G}_{345680}$. We begin with the valid hyper $T$-path with weight $\frac{Y_{8}Y_{45670}Y_{560}^2}{Y_{12}Y_{2}Y_{5670}}$. 

\begin{center}
\begin{tikzpicture}
    \node (0) at (12,0) {$2$};
    \node (1) at (10,0) {$3$};
    \node (2) at (8,0) {$1$};
    \node (3) at (6,0) {$1'$};
    \node (4) at (6,2) {$3$};
    \node (5) at (4,2) {$8'$};
    \node (6) at (8,2) {$7'$};
    \node (7) at (8,3) {$9$};
    \node (8) at (8,1) {$0'$};
    \node (9) at (10,2) {$3$};
    \node (10) at (8,4) {$4$};
    \node (11) at (6,4.5) {$9$};
    \node (12) at (10,3.5) {$7'$};
    \node (13) at (10,4.5) {$0'$};
    \node (14) at (12,3.5) {$7$};
    \node (15) at (12,4.5) {$0'$};

    \draw[color=8, thick] (5) to (4); 
    \node[above] at (5,2){$Y_{I_8}$};
    \draw[color=1, thick, densely dashed] (4) to (3);
    \node[left] at (6,1){${I_1}$};
    \draw[color=1, thick] (3) to (2);
    \node[below] at (7,0){$Y_{I_1}$};
    \draw[color=2, thick, densely dashed] (2) to (1);
    \node[below] at (9,0){${I_2}$};
    \draw (1) to (0);

    \draw[color=4, thick, densely dashed] (4) to (7.25,2);
    \node[below] at (6.75,2){${I_4}$};
    \draw[color=4, thick, densely dashed] (6) to (7.25,2);
    \draw[color=4, thick, densely dashed] (7) to (7.25,2);
    \draw[color=4, thick, densely dashed] (8) to (7.25,2);

    \node[below] at (9.25,2){$Y_{I_4}$};
    \draw[thick, color=4] (8.75,2) to (9);
    \draw[thick, color=4] (8.75,2) to (6);
    \draw[thick, color=4] (8.75,2) to (7);
    \draw[thick, color=4] (8.75,2) to (8);

    \draw[out = 60, in = 210] (4) to (10); 
    \draw (10) to (11);
    \draw[color=7, thick, densely dashed] (10) to (9.25,4); 
    \draw[color=7, thick, densely dashed] (12) to (9.25,4); 
    \draw[color=7, thick, densely dashed] (13) to (9.25,4); 
    \node[below] at (9,4){${I_7}$};
    \draw[color=0, thick] (13) to (15);
    \node[above] at (11,4.5){$Y_{I_0}^2$};
    \draw (12) to (14);
\end{tikzpicture}
\end{center}

Note that the set of boundary nodes of the hyper $T$-path with weight $Y_{8}Y_{45670}Y_{0}^2$ is larger than $S' = \{2, 7, 8', 9, 0' \}$: it also contains a vertex labeled by $3$ because there is a vertex labeled 3 with valence $0 \oplus 1$ in $\mathcal{G}_{345680}$. This is allowed in our more general definition for hyper $T$-paths for weakly rooted sets, but was not allowed in the definition of hyper $T$-paths for rooted clusters in \cite{rooted}. Additionally, we see that there is a vertex labeled 3 with  more than one adjacent even connection. In $\mathcal{G}_{345680}$, this vertex had valence $2 \oplus 0$ and therefore Condition~(\ref{TpathInternalNodeValence}) of our more general definition for hyper $T$-paths requires that it be adjacent to exactly two even connections. This would not have been allowed by the definition in \cite{rooted}.

Then we consider the valid hyper $T$-path with weight $\frac{Y_{123456780}Y_{45670}}{Y_{12}Y_{5670}}$. 

\begin{center}
\begin{tikzpicture}
        \node (0) at (0,0) {$8'$};
        \node (1) at (2,-0.5) {$0'$};
        \node (2) at (2,-1.5) {$1'$};
        \node (3) at (2,0.5) {$7'$};
        \node (4) at (2,1.5) {$9$};
        \node (5) at (4,0) {$3$};
        \node (6) at (6,-1.5) {$1$};
        \node (7) at (6,0) {$0'$};
        \node (8) at (6,1) {$7'$};
        \node (9) at (6,2) {$9$};
        \node (10) at (8,0.5) {$4$};
        \node (11) at (10,1) {$7$};
        \node (12) at (10,0) {$0'$};
        \node (13) at (8,-1.5) {$3$};
        \node (14) at (10,-1.5) {$2$};

        
        \draw[color=3,thick] (0) to (1.25,0); 
        \draw[color=3,thick,out=0,in=135] (1.25,0) to (1); 
        \draw[color=3,thick] (1.25,0) to (2); 
        \draw[color=3,thick,out=0,in=-135] (1.25,0) to (3); 
        \draw[color=3,thick] (1.25,0) to (4); 
        \node at (0.75,0.3) {$Y_{I_3}$}; 

        \draw[color=4,thick,densely dashed] (5) to (2.75,0); 
        \draw[color=4,thick,densely dashed,in=-70,out=150] (3,0) to (4); 
        \draw[color=4,thick,densely dashed,out=180,in=45] (2.75,0) to (1); 
        \draw[color=4,thick,densely dashed,out=180,in=-45] (2.75,0) to (3); 
        \node at (3.4,0.3) {${I_4}$}; 

        \draw[color=1,thick,densely dashed] (2) to (5); 
        \node at (3.25,-1) {${I_1}$};

        \draw[color=2,thick] (5) to (6); 
        \node at (4.6,-0.9) {$Y_{I_7}$}; 
        \draw[color=2,thick,densely dashed] (6) to (13); 
        \node at (7,-1.85) {${I_2}$}; 
        \draw (13) to (14); 

        \draw[color=4,thick] (5) to (8); 
        \draw[color=4,thick] (5.3, 0.65) to (9); 
        \draw[color=4,thick] (5.3, 0.65) to (7); 
        \node at (4.75,0.75) {$Y_{I_4}$};

        \draw[color=7,thick,densely dashed] (10) to (6.75,0.5); 
        \draw[color=7,thick,densely dashed] (6.75,0.5) to (8); 
        \draw[color=7,thick,densely dashed] (6.75,0.5) to (7); 
        \node at (7.15,0.2) {${I_7}$};

        \draw[color=0,thick] (10) to (9.25,0.5); 
        \draw[color=0,thick] (9.25,0.5) to (11); 
        \draw[color=0,thick] (9.25,0.5) to (12); 
        \node at (8.85,0.2) {$Y_{I_0}$}; 
\end{tikzpicture}
\end{center}

Notice that this hyper $T$-path also contains a node (labeled by 3) that has multiple adjacent even connections.
\end{ex}

\begin{ex}
Consider the graph and maximal nested collection from Example~\ref{example:nonadmissibleMatching-Condition4}. The inadmissible matching shown in Figure~\ref{fig:nonadmissible-Condition4} would correspond to the (invalid) hyper $T$-path shown below.

    \begin{center}
    \begin{tikzpicture}
        \node (1) at (0,0) {$1$};
        \node (2) at (2,0) {$0$};
        \node (3) at (4,0) {$2$};
        \node (4) at (4,1.5) {$0$};
        \node (5) at (6,1) {$4$};
        \node (6) at (6,2) {$3'$};
        \node (7) at (8,1) {$0$};
        \node (8) at (10,1) {$5'$};
        \node (9) at (10,2) {$3'$};
        \node (10) at (10,0) {$6'$};
        \node (11) at (12,1) {$0'$};

        \draw (1) to (2); 
        \draw[thick, densely dashed, color=1] (2) to (3); 
        \node at (3,-0.4) {${I_1}$}; 
        \draw[thick, color=1] (3) to (4); 
        \node at (3.7,0.75) {$Y_{I_1}$};
        \draw (3) to (5); 
        \draw[thick, densely dashed,color=2] (4) to (5.25,1.5); 
        \draw[thick, densely dashed,color=2] (5.25,1.5) to (6); 
        \draw[thick, densely dashed,color=2] (5.25,1.5) to (5); 
        \node at (4.75,1.8) {${I_2}$};
        \draw[thick,color=2] (5) to (7); 
        \draw[thick,color=2] (6) to (7,1); 
        \node at (7,0.65) {$Y_{I_2}$};
        \draw[thick,color=4, densely dashed] (7) to (8); 
        \draw[thick,color=4, densely dashed] (9.4,1) to (9); 
        \draw[thick,color=4, densely dashed] (9.4,1) to (10); 
        \node at (9,1.25) {$I_4$};
        \draw[thick, color=0] (8) to (11); 
        \draw[thick, color=0] (9) to (10.6,1); 
        \draw[thick,color=0] (10) to (10.6,1); 
        \node at (11, 1.25) {$Y_{I_0}$};
    \end{tikzpicture}
    \end{center}
From Example~\ref{example:nonadmissibleMatching-Condition1}, we know that $\mathcal{G}_{0}$ contains internal edges $2 - 4$ and $2 - 3'$ such that an admissible matching of $\mathcal{G}_{0}$ must use either both or neither of these edges. Hence, a valid hyper $T$-path should contain odd connections that correspond to both or neither of these internal edges. The hyper $T$-path displayed above contains an odd connection that corresponds to the internal $2 - 4$ edge, but does not contain one that corresponds to the internal $2 - 3'$ edge, violating Condition~(\ref{invalidTpath4}).
\end{ex}

\section{Proof for Singleton Sets}
\label{sec:SingeltonProof}

This section and the following section consist of a proof of \Cref{thm:main}.

We first recall some relevant notation from Section~\ref{subsec:SingletonConstruction1} that will be used extensively in the next two sections:
\begin{itemize}
    \item We have $N_{\Gamma'}(v)=\{a_1,\dots,a_p\}$ with $a_i<_{\mathcal I} v$ when $1\leq i\leq r$ and $a_i>_{\mathcal I} v$ when $r< i\leq p$ for some $r$.
    \item For $1\leq i\leq p$, we define $c_i = \vert \{I \in \mathcal{I} : I_{a_i} \subseteq I \subsetneq I_v\} \vert$.
    \item The sets counted by $c_i$ are $I^i_j$ such that $I_{a_i} = I^i_{c_i} \subset I^i_{c_i-1} \subset \cdots \subset I^i_1$.
    \item We have $a_j^i := m_{\mathcal{I}}(I_j^i)$ for $1\leq i\leq p$ and $1\leq j\leq c_i$.  In other words, $I^i_j=I_{a^i_j}$.  Notice that 
    $a_{c_i}^i=a_i$, and $\mathcal{C}_v = \{a_1^1,\ldots,a_1^r\}$.
\end{itemize}

In addition, we will define the following: 
\begin{itemize}
    \item $I_{c_i+1}^i := \emptyset$ for $1\leq i\leq p$
    \item For all $1 \leq i \leq r$,
        \begin{align*}
        A_j^i := \begin{cases} \bigg( \prod_{w \in \mathcal{C}_{a_{j}^i} \backslash \{a_{j+1}^i\}} Y_{I_w} \bigg) \cdot Y_{I_{j+1}^i - [v,a_j^i]}^2 & j < c_i \\
        \prod_{w \in \mathcal{C}_{a_{c_i}^i}} Y_{I_w} & j = c_i \\
        \end{cases}
        \end{align*}
\end{itemize}

This section contains a proof of \Cref{thm:main} for singleton sets; recall all singleton sets are weakly rooted.  In Section~\ref{subsec:SingletonFormulas}, we give an explicit formula for variables $Y_v$. In fact, we give a more general formula for $Y_{S}$ where $S = \{v\} \cup I_{m_1}^1(v) \cup \cdots \cup I_{m_r}^r(v)$ for values $1 \leq m_i \leq c_i+1$ by applying the exchange relations for graph LP algebras to a specific mutation sequence; recall the graphs $\mathcal{G}_S$ were constructed in Section \ref{sec:Gluing}. In Section~\ref{subsec:ris1}, we prove Theorem~\ref{thm:main} for singleton sets $\{ v \}$ where $r = 1$ or $r=2$. In Section~\ref{subsec:risnot1}, we then prove Theorem~\ref{thm:main} for singleton sets $\{ v \}$ where $r > 2$.

\subsection{Formulas in $\mathcal{A}_\Gamma$}\label{subsec:SingletonFormulas}

When the \emph{rooted portion} $\bar{S}$ of a set $S$ that is weakly rooted with respect to a maximal nested collection $\mathcal{I}$ is a singleton, we can compute $Y_{\bar{S}}$ directly by exhibiting a mutation sequence from $\mathcal{I}$ to another maximal nested collection that contains $\bar{S}$.

In the following proposition, we will  consider sets of the form  $J_{\mathbf{m}}(v) = \{v\} \cup I_{m_1}^1 \cup \cdots \cup I_{m_r}^r$ where $1 \leq m_i \leq c_i+1$ for each $i$.  When the vertex $v$ is understood, we will denote this set simply as $J_{\mathbf{m}}$. For convenience, we will assume $a_1,\dots,a_r$ are labeled such that $m_i > 1$ for $1 \leq i \leq t$ and $m_i = 1$ for $t+1 \leq i \leq r$ for some $0 \leq t \leq r$.

\begin{prop}\label{prop:SingletonAdjoinSets}
For any choice of $1 \leq m_i \leq c_i+1$ for each $1 \leq i \leq r$, let $J_{\mathbf{m}} =\{v\} \cup I_{m_1}^1 \cup \cdots \cup I_{m_r}^r$. Then, \[
Y_{J_{\mathbf{m}}} = Y_{J_{\mathbf{m}} \backslash \{v\}} \bigg( \frac{Y_{I_v}}{Y_{I_1^1}\cdots Y_{I_1^r}} + \sum_{i=1}^r \sum_{j = 1}^{m_i-1} \frac{A_j^i}{Y_{I_j^i} Y_{I_{j+1}^i}} \bigg).
\]

\end{prop}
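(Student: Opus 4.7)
The plan is to prove the formula by induction on $M = \sum_{i=1}^r (m_i-1)$, the total ``distance'' the tuple $\mathbf{m}$ sits from the all-ones tuple. The base case $M=0$ has $m_i=1$ for all $i$, so $J_{\mathbf{m}} = I_v$ and $J_{\mathbf{m}}\setminus\{v\}$ decomposes into the connected components $I_1^1, \dots, I_1^r$. The asserted formula then collapses to $Y_{I_v} = (Y_{I_1^1}\cdots Y_{I_1^r}) \cdot Y_{I_v}/(Y_{I_1^1}\cdots Y_{I_1^r})$, which holds because $Y$ is multiplicative across connected components.

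For the inductive step, by relabeling we may assume $\mathbf{m}'$ differs from $\mathbf{m}$ only by increasing $m_1$ to $m_1+1$. The key geometric observation is that $J_{\mathbf{m}}$ and $J_{\mathbf{m}'}$ differ exactly by $\{a_{m_1}^1\} \sqcup \bigsqcup_{w \in \mathcal{C}_{a_{m_1}^1}\setminus\{a_{m_1+1}^1\}} I_w$, which is precisely the decomposition of $I_{m_1}^1 \setminus I_{m_1+1}^1$ into connected components after removing $a_{m_1}^1$. I would then construct a mutation path in $\mathcal{A}_\Gamma$ which starts from $\mathcal{I}$ and reaches a seed containing both $Y_{J_{\mathbf{m}}}$ and $Y_{I_{m_1}^1}$, using \Cref{thm:ClustersInGraphLPAlgebra} to verify at each step that the relevant collection remains a valid maximal nested collection; the verification is manageable because the $I_j^i$ are totally ordered by inclusion along each branch indexed by $i$, and the branches interact only through $I_v$.

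Mutating at $Y_{I_{m_1}^1}$ in this seed introduces $Y_{J_{\mathbf{m}'}}$, and applying the graph-LP exchange relation (Lemma 4.11 of \cite{lam2016linear}) gives an explicit rational identity
\[
Y_{I_{m_1}^1}\cdot Y_{J_{\mathbf{m}'}} = Y_{J_{\mathbf{m}}}\cdot Y_{I_{m_1+1}^1} \prod_{w\in\mathcal{C}_{a_{m_1}^1}\setminus\{a_{m_1+1}^1\}} Y_{I_w} + (\text{term built from the other cluster variables})
\]
whose two summands reflect the two ways one can ``exchange'' at $a_{m_1}^1$. Substituting the inductive formula for $Y_{J_{\mathbf{m}}}$ and simplifying, the first summand contributes the updated prefactor $Y_{J_{\mathbf{m}'}\setminus\{v\}}$ times the inherited sum, and the second summand contributes precisely the new term $A_{m_1}^1/(Y_{I_{m_1}^1}Y_{I_{m_1+1}^1})$, matching the claim for $\mathbf{m}'$.

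The main obstacle will be the bookkeeping: producing a clean mutation sequence and carefully tracking how the cluster variables transform, in particular verifying that the intermediate seeds are valid (so that we may legitimately mutate at $Y_{I_{m_1}^1}$) and that the exchange polynomial from Lemma 4.11 of \cite{lam2016linear} simplifies to exactly the summand required by the telescoping structure of the formula. The simpler special case $r=1$, $m_1=c_1+1$ (giving $Y_v$ directly) is a useful sanity check and suggests that the branches $i=1,\dots,r$ can be handled independently, making the induction on $M$ well-posed regardless of the order in which we increment the $m_i$.
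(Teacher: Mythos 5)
Your proof reorganizes the paper's argument as an induction on $M = \sum_i (m_i - 1)$, applying one exchange relation per step, whereas the paper exhibits the entire mutation sequence up front and then recursively back-substitutes. Both rest on the same machinery, namely the exchange relations of Lemma 4.11 of \cite{lam2016linear}; the main structural difference is that your single-step induction requires the exchange relation in a seed containing $J_{\mathbf{m}}$ and $I^1_{m_1}$, so the ``side set'' playing the role of the paper's $W^i$ is $I^2_{m_2}\cup\cdots\cup I^r_{m_r}$ rather than $I^2_1\cup\cdots\cup I^r_1$. Your geometric observation about $J_{\mathbf{m}}\setminus J_{\mathbf{m}'}$, the base case, and the claim that mutating at $I^1_{m_1}$ in such a seed produces $J_{\mathbf{m}'}$ are all correct, and the approach does work.

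The displayed exchange relation, however, is wrong as written: the factor $\prod_{w\in\mathcal{C}_{a_{m_1}^1}\setminus\{a_{m_1+1}^1\}} Y_{I_w}$ is a constituent of $A^1_{m_1}$ (together with $Y^2_{I^1_{m_1+1}-[v,a^1_{m_1}]}$) and belongs in the \emph{second} summand, not as a factor of the first. The relation paralleling the paper's use of Lemma 4.11 is
\[
Y_{I^1_{m_1}}\, Y_{J_{\mathbf{m}'}} \;=\; Y_{J_{\mathbf{m}}}\, Y_{I^1_{m_1+1}} \;+\; Y_{I^2_{m_2}}\cdots Y_{I^r_{m_r}}\cdot A^1_{m_1}.
\]
One can already see the discrepancy on the path $1$--$2$--$3$--$4$--$5$ with $\mathcal{I}=\{\{2\},\{4\},\{2,3,4\},\{1,2,3,4\},\{1,\dots,5\}\}$, $v=5$, and $\mathbf{m}=(2)\to(3)$: the correct relation is $Y_{234}\,Y_{45}=Y_{2345}\,Y_4 + Y_2$, whereas your display gives a first summand $Y_{2345}\,Y_4\,Y_2$. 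If carried through literally, the spurious product would not cancel and the induction would fail. That said, your subsequent prose (that the first summand contributes $Y_{J_{\mathbf{m}'}\setminus\{v\}}$ times the inherited sum and the second contributes $A^1_{m_1}/(Y_{I^1_{m_1}}Y_{I^1_{m_1+1}})$) is consistent with the corrected relation rather than the one you displayed, so this is likely a slip; with the corrected relation, dividing by $Y_{I^1_{m_1}}$, substituting the inductive formula for $Y_{J_{\mathbf{m}}}$, and using $Y_{J_{\mathbf{m}}\setminus\{v\}}=Y_{I^1_{m_1}}\cdots Y_{I^r_{m_r}}$ closes the inductive step exactly as you indicate.
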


\begin{proof}

If $t = 0$, $J_{\mathbf{m}}=I_v \in \mathcal{I}$, so this case is trivial.  For $t > 0$ we exhibit a mutation sequence that results in a cluster $\mathcal{I}'$ containing $J_{\mathbf{m}}$.  Namely, this sequence (written left-to-right) is given by mutating at $I_1^1 I_2^1, \dots, I_{m_1-1}^1, I_1^2, I_2^2,\dots,I_{m_t - 2}^t,  I_{m_t-1}^t$.

For $0 \leq i \leq t$, let $W^i = I_{m_1}^1 \cup I_{m_2}^2 \cup \cdots \cup I_{m_{i-1}}^{i-1} \cup I_1^{i+1} \cup \cdots \cup I_1^r$.  Define ${\widetilde I^i_j}:=W^{i} \cup I_{j+1}^i \cup \{v\}$ for $0\leq j \leq m_i$.  One can check that  $\widetilde I_{0}^{i+1} = \widetilde I_{m_i-1}^i$.  Observe that each mutation in this sequence replaces $I_j^i$ with ${\widetilde I^i_j}$. Each $\widetilde I_j^i$ is a connected vertex subset because every connected component of $W^{i}$ is adjacent to $v$, as is $I_{j+1}^{i}$.

Using Lemma 4.11 of \cite{lam2016linear}, we obtain exchange relations of the form 
\begin{equation*}
Y_{\widetilde I^i_j}Y_{I^i_j}=Y_{\widetilde I^i_{j-1}}Y_{I^i_{j+1}} + Y_{W^{i}}\bigg(\prod_{w \in {\cal C}_{a_j^i} \backslash \{a_{j+1}^i\}} Y_{I_w}\bigg)Y_{I_{j+1}^i - [v,a_j^i]}^2 = Y_{\widetilde I^i_{j-1}}Y_{I^i_{j+1}} + Y_{W^{i}}A_j^i,
\end{equation*}
for $1 \leq i \leq t$ and $1 \leq j \leq m_i$.

The final mutation in our sequence produces the relation
$$Y_{\widetilde I^t_{m_t-1}}Y_{I^t_{m_t-1}}= Y_{\widetilde I^t_{m_t-2}}Y_{I^t_{m_t}} + Y_{W^{t}}A_{m_t-1}^t.$$
Because $Y_{\widetilde I^t_{m_t-1}}=Y_{J_{\bf m}}$, this can be equivalently written as
$$Y_{J_{\bf m}}= Y_{\widetilde I^t_{m_t-2}}\frac{Y_{I^t_{m_t}}}{Y_{I^t_{m_t-1}}} + \frac{Y_{W^{t}}}{Y_{I^t_{m_t-1}}}A_{m_t-1}^t.$$ 
We can then substitute the formula we have for $Y_{\widetilde I^t_{m_t-2}}$ from the previous mutation to obtain another formula for $Y_{J_{\bf m}}$, which contains $Y_{\widetilde I^t_{m_t-3}}$  if $m_t>2$ and $Y_{\widetilde I^{t-1}_{m_{t-1}-1}}$ if $m_t=2$. Via repeated recursive substitution using our formulas from previous mutations, we will eventually produce a formula for $Y_{J_{\bf m}}$ in terms the $A^i_j$'s and the variables in the original cluster.

Because every mutation has exactly one term containing $Y_{\widetilde I^i_j}$, we will eventually make the substitution $$Y_{\widetilde I_1^1} = Y_{I_v} \frac{Y_{I_2^1}}{Y_{I_1^1}} + \frac{Y_{W^1}}{Y_{I_1^1}}A^1_1$$ in exactly one place. Hence, our formula for $Y_{J_{\bf m}}$ will have exactly one term containing $Y_{I_{v}}$, which we can compute as
$$Y_{I_v}\cdot\frac{Y_{I_2^1}}{Y_{I_1^1}} \cdot \frac{Y_{I_3^1}}{Y_{I_2^1}}\cdot \frac{Y_{I_4^1}}{Y_{I_3^1}} \cdots \frac{Y_{I_{m_1}^1}}{Y_{I_{m_1-1}^1}}\cdot \frac{Y_{I_2^2}}{Y_{I_1^2}}\cdot \cdots \cdot \frac{Y_{I_{m_t}^t}}{Y_{I_{m_t-1}^t}} = \frac{Y_{J_{\mathbf{m}} \backslash \{v\}} Y_{I_v}}{Y_{I_1^1}\cdots Y_{I_{1}^t}}.$$

For each mutation, the term that does not contain a factor $Y_{\widetilde I^i_j}$ is divisible by $A^i_j$.  This means our formula for $Y_{J_{\bf m}}$ will have exactly one term containing $A^i_j$ for $1\leq i\leq t$ and $1\leq j\leq m_i$.  Tracking this term through further substitutions, we can see that it has the form
\[
\frac{Y_{W^{i}}A_j^i}{Y_{I_j^i}} \cdot \frac{Y_{I_{j+2}^i}}{Y_{I_{j+1}^i}}\cdot\frac{Y_{I_{j+3}^i}}{Y_{I_{j+2}^i}}\cdots \frac{Y_{I_{m_i}^i}}{Y_{I_{m_i-1}^i}}\cdot\frac{Y_{I_2^{i+1}}}{Y_{I_1^{i+1}}}\cdot\frac{Y_{I_3^{i+1}}}{Y_{I_2^{i+1}}}\cdots  \frac{Y_{I_{m_t}^t}}{Y_{I_{m_t-1}^t}} = \frac{ Y_{J_{\mathbf{m}} \backslash \{v\}} A_i^j }{Y_{I_j^i}Y_{I_{j+1}^i}},
\]
using the fact that $Y_{W^{i+1}}$ has factors $ Y_{I_1^{i+1}} Y_{I_1^{i+2}} \cdots Y_{I_1^r}$, which cancel with factors in the denominator. 

This accounts for all terms in this sequence of mutations, completing the proof.
\end{proof}

By setting all $m_i = c_i+1$, we obtain a formula for variables of the form $Y_{\{v\}}$.

\begin{cor}\label{cor:SingletonFormula}
Let $v \in V(\Gamma)$. Then \[
Y_{\{v\}} = \frac{Y_{I_v}}{Y_{I_1^1}\cdots Y_{I_1^r}} + \sum_{i = 1}^r \sum_{j = 1}^{c_i} \frac{A_j^i}{Y_{I_{j}^i} Y_{I_{j+1}^i}}
\]

where $I_{c_i+1}^i := \emptyset$. 
\end{cor}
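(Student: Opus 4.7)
The plan is to derive this as a direct specialization of Proposition \ref{prop:SingletonAdjoinSets}. Specifically, I would take $m_i = c_i + 1$ for every $1 \leq i \leq r$, which is within the allowed range $1 \leq m_i \leq c_i + 1$ in that proposition. Under the convention $I_{c_i+1}^i := \emptyset$ already recorded in the corollary's statement, this choice makes each $I_{m_i}^i$ empty, so that
$$J_{\mathbf{m}} = \{v\} \cup I_{c_1+1}^1 \cup \cdots \cup I_{c_r+1}^r = \{v\}.$$
Consequently $J_{\mathbf{m}} \setminus \{v\} = \emptyset$, and by the convention $Y_\emptyset = 1$ recalled after Theorem \ref{thm:ClustersInGraphLPAlgebra}, the overall factor $Y_{J_{\mathbf{m}} \setminus \{v\}}$ appearing in the proposition collapses to $1$.

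With this choice the inner summation range $1 \leq j \leq m_i - 1$ becomes $1 \leq j \leq c_i$, matching the corollary exactly. Thus the formula
$$Y_{\{v\}} \;=\; \frac{Y_{I_v}}{Y_{I_1^1}\cdots Y_{I_1^r}} \;+\; \sum_{i=1}^{r}\sum_{j=1}^{c_i} \frac{A_j^i}{Y_{I_j^i} Y_{I_{j+1}^i}}$$
follows immediately. Since the corollary is obtained by a single substitution into the proposition, there is no genuine obstacle: the real content of the argument is in Proposition \ref{prop:SingletonAdjoinSets}, whose proof proceeds by an explicit mutation sequence together with the LP exchange relations of \cite{lam2016linear}.
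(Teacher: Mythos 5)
Your proof is correct and is precisely the argument the paper intends: the sentence immediately preceding the corollary states that it is obtained from Proposition \ref{prop:SingletonAdjoinSets} by setting all $m_i = c_i+1$. Your verification that $J_{\mathbf{m}} = \{v\}$, that $Y_{J_{\mathbf{m}} \setminus \{v\}} = Y_\emptyset = 1$, and that the inner summation range becomes $1 \leq j \leq c_i$ is exactly what makes the substitution work.
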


It will be convenient to use the shorthand $Z(v) := \frac{Y_{I_v}}{\prod_{i \in \mathcal{C}_v} Y_{I_i}}$ for $v \in V(\Gamma)$. Using this notation, we can rewrite the statement of Corollary \ref{cor:SingletonFormula} as $Y_{\{v\}} = Z(v) + \sum_{i=1}^r \sum_{j=1}^{c_i} \frac{A_j^i}{Y_{I_j^i}Y_{I_{j+1}^i}}$.

\begin{ex}
Consider the nested collection from our running example in \Cref{fig:cluster}.  By applying Proposition \ref{prop:SingletonAdjoinSets} with $v=4$ and $m_1 = 2$, we can compute $Y_{4560}$ as

\begin{align*}
 Y_{\{4\}\cup I^1_2} &= Y_{I^1_2}\left(\frac{Y_{I_4}}{Y_{I^1_1}}+\frac{A^1_1}{Y_{I^1_1}Y_{I^1_2}}\right)\\
 Y_{\{4\}\cup I^1_2} &= Y_{I^1_2}\left(\frac{Y_{I_4}}{Y_{I^1_1}}+\frac{(\prod_{w\in\mathcal{C}_7\setminus\{0\}}Y_{I_w})Y^2_{I^1_2\setminus[4,7]}}{Y_{I^1_1}Y_{I^1_2}}\right)\\
 Y_{4560} &= Y_{560}\left(\frac{Y_{45670}}{Y_{5670}}+\frac{Y^2_0}{Y_{5670}Y_{560}}\right)\\
 Y_{4560} &= \frac{Y_{45670}Y_{560}+Y^2_0}{Y_{5670}}\\
\end{align*}

By Corollary \ref{cor:SingletonFormula}, we can write $Y_4$ as 
\begin{align*}
Y_4 &= \frac{Y_{I_4}}{Y_{I_1^1
}} + \frac{A^1_1}{Y_{I^1_1}Y_{I^1_2}} + \frac{A^1_2}{Y_{I^1_2}Y_{I^1_3}} + \frac{A^1_3}{Y_{I^1_3}Y_{\emptyset}}\\
&= \frac{Y_{I_4}}{Y_{I_1^1
}} + \frac{(\prod_{w\in\mathcal{C}_7\setminus\{0\}}Y_{I_w})Y^2_{I^1_2\setminus[4,7]}}{Y_{I^1_1}Y_{I^1_2}} + \frac{(\prod_{w\in\mathcal{C}_0\setminus\{5\}}Y_{I_w})Y^2_{I^1_3\setminus[4,0]}}{Y_{I^1_2}Y_{I^1_3}} + \frac{\prod_{w\in\mathcal{C}_5}Y_{I_w}}{Y_{I^1_3}Y_{\emptyset}}\\
&= \frac{Y_{45670}}{Y_{5670
}} + \frac{Y^2_0}{Y_{5670}Y_{560}} + \frac{Y^2_6}{Y_{560}Y_{56}} + \frac{Y_6}{Y_{56}}
\end{align*}
\end{ex}

\subsection{r = 1, 2}
\label{subsec:ris1}
We first prove our main theorem is true for snake graphs $\mathcal{G}_v$ when $r=1$ or $r=2$.
Following \Cref{sec:construction}, we have $A := \{\ell\in \Gamma'\setminus \Gamma\ |\ a_1\in [v,l]\}$, $B := \{k\in \Gamma'\setminus \Gamma\ |\  a_1\notin [v,k]\}$, and $b_{i,j}^{\ell,k}$ denotes the vertex other than $v$ in  $N_{\Gamma'}(I_j^i) \cap [\ell,k]$.

\begin{lemma}\label{lem:r1valence}
When $r=1$ or $r=2$, in $\cal G_v$: 
\begin{itemize}
\item The vertex $v_\opt$ has valence $1\oplus p-2$.
\item The vertex $a_i$ has valence $1\oplus \deg_{\Gamma'}(a_i) - 2$ for $1\leq i\leq r$.
\item The vertex $a_j^i$ incident to diagonal $I_{j+1}^i$ has valence $1\oplus \deg_{\Gamma'}(a_j^i) - 2$ for each $1\leq i\leq r$ and $1\leq j\leq c_i$.
\item The rest of the vertices are either labeled $v$, $a_j$ for some $r< j\leq p$, or $b_{i,j}^{\ell,k}$ for some $\ell\in A, k\in B, 1\leq i\leq r$, and $1\leq j\leq c_i$.  These vertices all have valence $1\oplus 0$.
\end{itemize}
\end{lemma}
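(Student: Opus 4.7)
The plan is to carry out a direct vertex-by-vertex analysis of the gluing process described in Section 3.2. By Proposition \ref{prop:ConsistentSize}, each component snake graph $H_{\ell,k}$ is a zigzag surface snake graph, so every vertex has degree $2$ at an outer corner and $3$ at an inner shared corner; in either case, its valence inside $H_{\ell,k}$ is $1\oplus 0$. Hence the quantity $b$ in the valence $1\oplus b$ of a vertex $x$ of $\mathcal{G}_v$ records exactly the number of extra incidences $x$ gains under the gluing of the $H_{\ell,k}$'s. The restriction $r\in\{1,2\}$ keeps the behavior at the critical vertex $v_\opt$ uniform (and, when $r=2$, makes $v_\fix$ contribute trivially to the count with its declared valence $(r-1)\oplus 0=1\oplus 0$).

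The key combinatorial step is to track which edges coming from different component snake graphs actually combine after identification. Weight-$Y_I$ edges with $I\in\mathcal{I}$ that share an endpoint and the same incident diagonals collapse into a single (hyper)edge by construction, whereas weight-$1$ edges correspond to pairs $(x,y)\in E(\Gamma')$ and stay distinct whenever the endpoint labels $(x,y)$ differ. Consequently, at a vertex $x$, extra edges in $\mathcal{G}_v$ arise precisely from distinct weight-$1$ pairs with $x$ as one endpoint, or from the extra nontrivially-weighted edges that connect $x$ to identified vertices not already present in the chosen component snake graph.

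I then compute valences case by case. For $v_\opt$: in any $H_{\ell,k}$ it is an outer corner of tile $I_1^1$ with degree $2$, while in $\mathcal{G}_v$ it acquires one edge on the shared boundary with tile $I_2^1$ (single edge, same in every component), together with $p-r$ distinct weight-$1$ edges to the big neighbors $a_i$ for $r<i\le p$, and, when $r=2$, one further edge of weight $Y_{I_1^2}$ to $b_1^2$; this totals degree $p$, yielding valence $1\oplus(p-2)$. For $a_i$ with $1\le i\le r$: in $H_{\ell,k}$ it sits at an outer corner with degree $2$, but in $\mathcal{G}_v$ each of its $\deg_{\Gamma'}(a_i)-1$ non-$v$ neighbors contributes a distinct weight-$1$ edge, giving degree $\deg_{\Gamma'}(a_i)$ and valence $1\oplus(\deg_{\Gamma'}(a_i)-2)$. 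For $a_j^i$ on the diagonal $I_{j+1}^i$ the same argument applies, with Proposition \ref{prop:APlaceToCut} guaranteeing the vertex's existence and its local structure. For the remaining vertices (internal $v$'s, the big $a_j$ with $j>r$, and the $b_{i,j}^{\ell,k}$), all edges contributed from distinct $H_{\ell,k}$'s either combine (nontrivial weight) or correspond to a common pair in $E(\Gamma')$ (weight $1$), leaving the degree unchanged and the valence equal to $1\oplus 0$.

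The main obstacle will be the careful bookkeeping at $v_\opt$: one must argue simultaneously that the weight-$1$ edges $(v_\opt,a_i)$ for distinct big neighbors $a_i$ remain distinct after gluing (since each is the unique edge for its pair in $E(\Gamma')$), while the weight-$Y_{I_1^1}$ edges from all components collapse to a single hyperedge, and, when $r=2$, the weight-$Y_{I_1^2}$ contribution is also a single edge. Once this is pinned down using Proposition \ref{prop:ConsistentSize} and the explicit zigzag structure of Figures \ref{fig:singleton-zigzag} and \ref{fig:singleton-zigzag-partial}, the counts in each case reduce to routine verification.
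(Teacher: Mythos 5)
Your proof follows essentially the same route as the paper: a vertex-by-vertex analysis based on type, tracking which edges from the various component snake graphs $H_{\ell,k}$ collapse to a single (hyper)edge under gluing and which stay distinct. One small imprecision worth fixing: at a vertex $a_i$ (and likewise at $a_j^i$), not all of the $\deg_{\Gamma'}(a_i)-1$ extra incident edges have weight $1$; a non-$v$ neighbor $w$ of $a_i$ with $w\in I_{a_i}$ lies in exactly one $I_z$ for $z\in\mathcal C_{a_i}$ and contributes a (combined) hyperedge of weight $Y_{I_z}$, whereas $w\notin I_{a_i}$ contributes a weight-$1$ edge $(w,a_i)$. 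The total count of $\deg_{\Gamma'}(a_i)-1$ extra edges is still correct because, $\Gamma$ being a tree, distinct such $w$ lie in distinct $I_z$ or none, so the correspondence is one-to-one; your conclusion and the resulting valences all hold.
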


\begin{proof}

Recall that the component snake graphs $H_{\ell,k}$ have the form shown in \Cref{fig:singleton-zigzag} or in \Cref{fig:singleton-zigzag-partial}.

First we consider the vertices in $\cal G_v$ with label $v$ except for $v_\opt$. Each vertex $v$ except for the vertex $v_{\text{opt}}$ is incident to edges with the same weights in each component snake graph.  We know that as we glue the component snake graphs to form  $\cal G_v$, these edges with the same weights will be glued into one hyperedge.  Thus, each corresponding vertex in $\mathcal{G}_v$ will have valence $1 \oplus 0$.

We then consider the vertex $v_{\opt}$, which has degree two in each of the component snake graph. One of these edges always has weight $Y_{I^1_2}$.  In the process of creating $\mathcal{G}_v$, these will get combined.  If $r=2$, we will also combine the edges $\{(v_{\opt},b_{i,1}^{\ell,k}):\ell\in A,k\in B, 2\leq i\leq r\}$ which will create a single hyperedge with label $I^2_1$.  For both $r=1$ and $r=2$, there will also be component snake graphs with edges from the set $\{(v_\opt,a_i):r < i\leq p\}$ which will not be identified for different $i$, so that $v_\opt$ is incident to $p$ edges in $\cal G_v$. Thus, we see that $v_\opt$ has valence $1\oplus (p-2)$ in $\mathcal{G}_v$. The vertices $a_i$ for $r < i \leq p$ are always incident to these edges as well as the edge $Y_{I_v}$, so they will have valence $1\oplus 0$.

Next, we look at what happens to a vertex $b_{1,j}^{\ell,k}$ from $H_{\ell,k}$.  First suppose $1<j<c_1$.  Then the vertex $b_{1,j}^{\ell,k}$  in $H_{\ell,k}$ will be incident to two different vertices $v$, a vertex $b_{1,j-1}^{\ell,k}$, and a vertex $b_{1,j+1}^{\ell,k}$.  Consider some other $H_{\ell',k'}$ where $b_{1,j}^{\ell',k'}=b_{1,j}^{\ell,k}$. This vertex will also have two edges incident to two different vertices $v$.  These will get  identified with the corresponding edges from $H_{\ell,k}$ when we create $\mathcal{G}_v$.  For all $\ell',k'$ with $b_{1,j}^{\ell',k'}=b_{1,j}^{\ell,k}$, we will have $b_{1,j+1}^{\ell',k'}=b_{1,j+1}^{\ell,k}$ because $N_{\Gamma'}(I^1_{j+1})\cap[\ell,k]=N_{\Gamma'}(I^1_{j+1})\cap[b_{1,j}^{\ell,k},v]=N_{\Gamma'}(I^1_{j+1})\cap[\ell',k']$.  This means that the edges $(b_{1,j}^{\ell,k}, b_{1,j+1}^{\ell,k})$ and $(b_{1,j}^{\ell',k'}, b_{1,j+1}^{\ell',k'})$ will also get identified when constructing  $\mathcal{G}_v$.  Thus, any additional valence comes from edges $(b_{1,j}^{\ell',k'}, b_{1,j-1}^{\ell',k'})$.

If $b_{1,j}^{\ell,k}\in I^1_{j-1}$, then $b_{1,j}^{\ell,k}=a_{j-1}^1$. Since the path $[\ell',k']$ passes through $a_{j-1}^1$, it will pass through two neighbors of $a_{j-1}^1$: the unique neighbor in $[a_{j-1}^1,v]$ and another. This second neighbor, call it $w$, is either in $I_{j-1}^1$ or not. If $w \in I_{j-1}^1$, then $w$ is contained in a set $I_z$ for $z \in \mathcal{C}_{a_{j-1}^1}$ and the path $[\ell',k']$ will pass through an element of $N_{\Gamma'}(I_z) \backslash \{a_{j-1}^1\}$. Thus, the edge $(b_{1,j-1}^{\ell',k'},b_{1,j}^{\ell',k'}) = (b_{1,j-1}^{\ell',k'},a_{j-1}^1)$ will have weight $Y_{I_z}$. Otherwise, $w$ is not in $I_{j-1}^1$. In this case, it must be that $(b_{1,j-1}^{\ell',k'},b_{1,j}^{\ell',k'})=(w,a_{j-1}^1) \in E(\Gamma)$, so we weight this edge with 1. Notice that in $\mathcal{G}_v$, all edges with weight $Y_z$  will get combined for each $z$ and the edges of weight 1 will all remain separate edges.  This means that there will be $\deg_{\Gamma'}(a_{j-1}^1)-1$ edges adjacent to the vertex $a_{j-1}^1$ in $\mathcal{G}_v$ that came from edges $(b_{1,j}^{\ell',k'}, b_{1,j-1}^{\ell',k'})$ with $b_{1,j}^{\ell',k'}=a_{j-1}^1$.  This gives the vertex $a_{j-1}^1$ a valence of $1\oplus\deg_{\Gamma'}(a_{j-1}^1)-2$.

If $b_{1,j}^{\ell,k}\not\in I^1_{j-1}$, then $b_{1,j}^{\ell,k}\in N_{\Gamma'}(I^1_{j-1})$.  Notably, in this case $b_{1,j}^{\ell,k}\neq a_{j-1}^1$ and $b_{1,j}^{\ell,k} = b_{1,j-1}^{\ell,k}$. Since the vertex $b_{1,j+1}^{\ell,k}$ is also determined by $b_{1,j}^{\ell,k}$, we know this vertex is incident to the same edges in all component snake graphs where it appears, so $b_{1,j}^{\ell,k}$ will have valence $1\oplus0$ in $\mathcal{G}_v$.

Now consider the vertex $b_{1,c_1}^{\ell,k}$ in $H_{\ell,k}$.  If $c_1=1$ then $b_{1,c_1}^{\ell,k}$ will either be incident to a vertex $a_1$ and a vertex $a_i$ for some $r<i$ or it will be incident to a vertex $a_1$, a vertex $b_{i,1}^{\ell,k}$ for some $1<i\leq r$, and a vertex $v$.  Consider some other $H_{\ell',k'}$ where $b_{1,c_1}^{\ell',k'}=b_{1,c_1}^{\ell,k}$. This vertex will either have edges incident to a vertex $a_1$ and a vertex $a_{i'}$ for some $r<i'$ or it will have edges incident to a vertex $a_1$, a vertex $b_{i',1}^{\ell',k'}$ for some $1<i'\leq r$, and a vertex $v$.  Note that the edge from $b_{1,c_1}^{\ell,k}$ to $a_i$ or $b_{i,1}^{\ell,k}$ and the edge from $b_{1,c_1}^{\ell',k'}$ to $a_{i'}$ or $b_{i',1}^{\ell',k'}$ will both have weight $Y_{I_v}$.  Thus the edges extending from $b_{1,c_1}^{\ell',k'}$ in each direction will get identified with the corresponding edges from $H_{\ell,k}$ when we create $\mathcal{G}_v$. So $b_{1,c_1}^{\ell,k}$ will have valence $1\oplus 0$. Note that in this case $b_{1,c_1}^{\ell,k}\neq a^i_j$ for any $i,j$ as $b_{1,c_1}^{\ell,k}=b_{1,1}^{\ell,k}\not\in I_1^i$ for any $i$.  If $c_1\neq 1$, the vertex $b_{1,c_1}^{\ell,k}$ in $H_{\ell,k}$ will be incident to a vertex $v$, a vertex $a_1$, and a vertex $b_{1,c_1-1}^{\ell,k}$.  Consider some other $H_{\ell',k'}$ where $b_{1,c_1}^{\ell',k'}=b_{1,c_1}^{\ell,k}$. This vertex will have edges incident to a vertex $v$ and a vertex $a_1$ which will get identified with the corresponding edges from $H_{\ell,k}$ when we  create $\mathcal{G}_v$.  The logic from case of $b_{i,j}^{\ell,k}$ with $1<j<c_1$ applies here to obtain our conclusion.

Our next case is vertices $b_{1,1}^{\ell,k}$.  Consider the vertex $b_{1,1}^{\ell,k}$ in $H_{\ell,k}$.  If $c_1=1$ we are done by a previous case.  Otherwise, this vertex will be incident to two vertices $v$, a vertex $b_{1,2}^{\ell,k}$, and either a vertex $a_i$ for some $i>r$ or a vertex $b_{i,1}^{\ell,k}$ for some $1<i\leq r$.  Consider some other $H_{\ell',k'}$ where $b_{1,1}^{\ell',k'}=b_{1,1}^{\ell,k}$. This vertex will have edges incident to two vertices $v$, a vertex $b_{1,2}^{\ell',k'}$, and either a vertex $a_{i'}$ for some $i'>r$ or a vertex $b_{i',1}^{\ell',k'}$ for some $1<i'\leq r$.  The edge from $b_{1,1}^{\ell,k}$ to $a_i$ or $b_{i,1}^{\ell,k}$ and the edge from $b_{1,1}^{\ell',k'}$ to $a_{i'}$ or $b_{i',1}^{\ell',k'}$ will both have weight $Y_{I_v}$, so they will get identified when we create $\mathcal{G}_v$. The edges adjacent to $b_{1,1}^{\ell,k}$ and vertices $v$ are exactly the same as edges adjacent to $b_{1,1}^{\ell',k'}$, and vertices $v$ so they will also get identified when we create $\mathcal{G}_v$.  For all $\ell',k'$ with $b_{1,1}^{\ell',k'}=b_{1,1}^{\ell,k}$, we will have $b_{1,2}^{\ell',k'}=b_{1,2}^{\ell,k}$ because $N_{\Gamma'}(I^1_{2})\cap[\ell,k]=N_{\Gamma'}(I^1_{2})\cap[b_{1,1}^{\ell,k},v]=N_{\Gamma'}(I^1_{2})\cap[\ell',k']$.  This means that the edges $(b_{1,1}^{\ell,k}, b_{1,2}^{\ell,k})$ and $(b_{1,1}^{\ell',k'}, b_{1,2}^{\ell',k'})$ will also get identified when constructing  $\mathcal{G}_v$. Thus, the  valence of $b_{1,1}^{\ell,k}$ in $\mathcal{G}_v$ is $1\oplus 0$.  As earlier, $b_{1,1}^{\ell,k}\neq a^i_j$ for any $i,j$ as $b_{1,1}^{\ell,k}\not\in I_j^i$ for any $i,j$.

The argument for vertices $b_{i,j}^{\ell,k}$ for $1<i\leq r$ and $1\leq j\leq c_i$ is exactly the same as in the cases where $i=1$ detailed above. 

Finally, we analyze the vertex $a_i$ for $1\leq i\leq r$. In $\cal G_v$, the vertex $a_1$ is incident to an edge of the form $(a_1,v)$ which is the result of combining such an edge from every component snake graph. On the other hand, after gluing the edges $(a_1,b^{\ell,k}_{1,c_1})$, for all $\ell\in A$ and $k\in B$, the vertex $a_1$ will have $\deg_{\Gamma'}(a_1)$ more incident edges by the same logic as above for the $a_j^1$ vertices. Therefore $a_1$ has valence $1\oplus \deg_{\Gamma'}({a_1})-1$.  The same logic applies to $a_2$ if $r=2$.
\end{proof}

Combining Lemma \ref{lem:r1valence} and Condition~(\ref{admissibilityCondition4}) in Definition \ref{def:AdmissibleMatching} will give us a useful characterization of subsets of edges which must appear together in an admissible matching. We define $B_j^i = N_{\Gamma'}(I_j^i) \backslash \{v\}$.

\begin{prop}\label{prop:OldCond4Singleton_rsmall}
Let $P$ be an admissible matching of $\mathcal{G}_v$ where $r \leq 2$. Let $e_1$ and $e_2$ be two internal edges which both share vertices with diagonals $d_1$ and $d_2$. Then, $e_1$ and $e_2$ are either both in $P$ or neither are in $P$. 
\end{prop}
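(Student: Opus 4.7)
The plan is to reduce the claim to Condition~(\ref{admissibilityCondition4}) of Definition~\ref{def:AdmissibleMatching}. Concretely, I would establish the following structural claim and then check that its consequence is precisely the hypothesis of Condition~(\ref{admissibilityCondition4}):

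\textbf{Structural claim.} For $r \leq 2$, any two distinct internal edges $e_1, e_2$ of $\mathcal{G}_v$ that both share vertices with the same pair of diagonals $d_1 = I^i_j$ and $d_2 = I^i_{j+1}$ must share a vertex in $\mathcal{G}_v$.

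To prove the structural claim, I would analyze how internal edges between $d_1$ and $d_2$ arise in $\mathcal{G}_v$. Each such internal edge comes from the shared boundary of the two consecutive tiles labeled $I^i_j$ and $I^i_{j+1}$ in some component snake graph $H_{\ell,k}$, and its weight and endpoints are determined by which of Cases (a), (b), or (c) of the weighting rule in Section~\ref{subsec:SingletonConstruction1} applies. In Cases (a) and (b), the endpoint of the shared edge on the smaller-set side is $a^i_j = m_{\mathcal{I}}(I^i_j)$, and this vertex is identified across every component snake graph in which it appears. Hence any two Case~(a)/(b) shared edges give rise to internal edges of $\mathcal{G}_v$ that share the vertex labeled $a^i_j$. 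For Case~(c), the doubled-label endpoint is characterized by the local structure of $\Gamma'$ around $v$; using Lemma~\ref{lem:r1valence} and the constraint $r \leq 2$, one verifies that this endpoint either coincides with $a^i_j$ or is identified (across components) with the endpoint contributed by a Case~(a)/(b) edge. In every subcase, $e_1$ and $e_2$ end up sharing a vertex.

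Given the structural claim, Condition~(\ref{admissibilityCondition4}) applies as follows. Let $v^*$ be the shared vertex of $e_1$ and $e_2$, and let $v_1 \in e_1 \setminus e_2$, $v_2 \in e_2 \setminus e_1$ be their remaining endpoints. Since $e_1$ and $e_2$ each connect a vertex on $d_1$ to one on $d_2$ and share $v^*$, both $v_1$ and $v_2$ lie on the diagonal $d_k \in \{d_1, d_2\}$ not containing $v^*$. Any internal edge between the nested diagonals $d_1, d_2$ has weight $Y_C$ or $Y_C^2$ with $C \subseteq I^i_{j+1} \subseteq I^i_j$, so $W(e_1), W(e_2) \subseteq W(d_k)$ automatically. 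Both edges are internal by hypothesis, so Condition~(\ref{admissibilityCondition4}) forces $e_1, e_2$ to be in $P$ together or neither.

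The main obstacle is the Case~(c) subcase of the structural claim: when the shared edge in some $H_{\ell,k}$ comes from the doubled-label situation and produces an edge in $\mathcal{G}_v$ whose endpoints might a priori be disjoint from those of Case~(a)/(b) edges. Handling this requires a careful analysis of the identification rules during gluing and, crucially, the constraint $r \leq 2$ which bounds the combinatorial complexity of the $v$-labeled and $b$-labeled vertices at the consecutive tiles.
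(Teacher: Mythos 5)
Your structural claim is false, and the paper's own proof makes this explicit. From step (4) of Section~\ref{subsec:SingletonConstruction1}, the internal edges of $\mathcal{G}_v$ that border both diagonals $I_j^i$ and $I_{j+1}^i$ fall into two types: those from Cases (a) and (b) of the weighting rule, which are all incident to the vertex $a_j^i$, and those from Case (c), which have both polygon endpoints equal to some $b_{i,j}^{\ell,k} \in N_{\Gamma'}(I_{j+1}^i)$ with $b_{i,j}^{\ell,k} \neq a_j^i$. After gluing, the Case (c) edges become (hyper)edges of weight $Y_C^2$ whose endpoints lie in $N_{\Gamma'}(C) \setminus [v,a_j^i]$; these are disjoint from $a_j^i$ and pairwise disjoint from one another. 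The paper states this outright in its proof: there is a set of internal edges ``each incident to $a_j^i \in B_{j+1}^i$'' and then remaining edges that ``do not share vertices with this set or with each other.'' So you cannot reduce the entire claim to Condition~(\ref{admissibilityCondition4}), which only constrains edges sharing a vertex. You flag the Case (c) edges as an ``obstacle'' to be handled by identification rules, but no amount of tracking the gluing will make them intersect $a_j^i$ --- their endpoints lie off the path $[v,a_j^i]$ by construction, and $r \leq 2$ does not change this.

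The paper's proof only applies Condition~(\ref{admissibilityCondition4}) to the $a_j^i$-incident subfamily and then handles the remaining internal edges by a separate valence/covering contradiction: assuming a proper subset of the internal edges between $I_j^i$ and $I_{j+1}^i$ is used, there are uncovered vertices in $B_{j+1}^i$ that cannot be matched (neither of the boundary hyperedges can be used without violating valences, and pushing the deficiency to the internal edges between $I_{j+1}^i$ and $I_{j+2}^i$ only delays the problem, terminating in a contradiction at $j = c_i - 1$ via the edges out of $a_i$). Your proof never develops any machinery of this kind, so the Case (c) edges are genuinely unconstrained in your argument. To repair the proposal you would need to replace the structural claim and the single application of Condition~(\ref{admissibilityCondition4}) by the paper's two-part argument (or an equivalent descending valence analysis).
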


\begin{proof}
The possible pairs of diagonals which can share vertices with the same edges would either have labels $d_1 = I_1^1$ and $d_2 = I_1^2$ or $d_1 = I_j^i$ and $d_2 = I_{j+1}^i$ for $1\leq i \leq r$ and $1 \leq j < c_i$. For the first case, note that there is exactly one internal edge, weighted $Y_{I_v}$, which shares vertices with diagonals $I_1^1$ and $I_1^2$, and so there is nothing to check in this case.

Consider the set of all internal edges which share vertices with diagonals $I_{j+1}^i$ and $I_j^i$ for some $1 \leq i \leq r$ and $1 \leq j < c_i$.  From step (4) in Section \ref{subsec:SingletonConstruction1}, we see that there is a set of edges which are each incident to $a_j^i \in B_{j+1}^i$ and that the remaining edges do not share vertices with this set or with each other. Condition~(\ref{admissibilityCondition4}) of Definition \ref{def:AdmissibleMatching} implies that an admissible matching must include all of the former set of edges or none of them. By Lemma \ref{lem:r1valence}, if we include this entire set of edges, we cannot include any other edge incident to $a_j^i \in B_{j+1}^i$. 

Suppose for sake of contradiction that there exists an admissible matching, $P$, which includes a proper subset of the set of internal edges which share vertices with diagonals $I_{j+1}^i$ and $I_j^i$. Suppose first that $j = c_i-1$. We know there are vertices in $B_{j+1}^i$ which are not covered by these internal edges, so they must be covered by some other edges. There is a hyperedge $Y_{I_{c_i}^i}$ incident to all vertices $B_{j+1}^i$. However, including this hyperedge which violate the valence of the edges which are incident to internal edges in $P$, so we cannot include it. There is also a set of edges from $a_i$, with labels $Y_{I_z}$ for $z \in \mathcal{C}_{a_i}$. Condition~(\ref{admissibilityCondition4}) of Definition \ref{def:AdmissibleMatching} implies that an admissible matching must include all or none of this set, so we also cannot cover the vertices with these edges. Therefore, in this case, this configuration cannot be part of an admissible matching.

Now, let $j < c_i-1$. If we take a proper subset of the internal edges incident to diagonals $I_{j+1}^i$ and $I_j^i$, we again have vertices in $B_{j+1}^i$ which must be covered by other edges in $P$. As before, we cannot use either of the boundary edges incident to $B_{j+1}^i$ since this would violate the valence of vertices incident to this proper subset of internal edges. There is a second set of internal edges incident to $B_{j+1}^i$ in this case, namely the set which is incident to diagonals $I_{j+2}^i$ and $I_{j+1}^i$. It is possible that we can cover the necessary vertices in $B_{j+1}^i$ using these internal edges. However, since we cannot use all of these internal edges, we have now included a proper subset of these internal edges. Proceeding in this way, we would be required to use a proper subset of the internal edges which share vertices with diagonals $I_{c_i}^i$ and $I_{c_i-1}^i$, and by our previous paragraph this cannot be a subset of an admissible matching. 
\end{proof}

\begin{lemma}\label{lem:r1admissible}
Let $B^i_j:= N_{\Gamma'}(I^i_j)\setminus\{v\}$ and $\tilde{a}:=\{a_{r+1},\dots,a_p\}$. If $r=1$ the admissible matchings of $\mathcal{G}_v$ are in weight preserving bijection with perfect matchings of the snake graph $\tilde{\mathcal{G}}_v$ shown in \Cref{fig:req1_snakegraph_simplify}, and if $r=2$ the admissible matchings of $\mathcal{G}_v$ are in weight preserving bijection with perfect matchings of the snake graph $\tilde{\mathcal{G}}_v$ shown in \Cref{fig:req2_snakegraph_simplify}.
\end{lemma}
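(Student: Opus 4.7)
The plan is to exhibit an explicit bijection by ``simplifying'' $\mathcal{G}_v$ into a surface snake graph $\tilde{\mathcal{G}}_v$ in which every vertex has valence $1 \oplus 0$, so that perfect matchings make sense in the classical way. The key structural input is Proposition~\ref{prop:OldCond4Singleton_rsmall}: for each pair of adjacent diagonals $I_j^i$ and $I_{j+1}^i$ with $1 \leq j < c_i$, the internal edges sharing vertices with both diagonals form an ``all or none'' block in any admissible matching. I would contract each such block in $\mathcal{G}_v$ into a single edge of $\tilde{\mathcal{G}}_v$, identifying the endpoints appropriately, so the incident vertices lose the extra valence contributions they were carrying. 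Separately, at the high valence vertices identified in Lemma~\ref{lem:r1valence} ($v_\opt$, the $a_i$, and each $a_j^i$ on the diagonal $I_{j+1}^i$), I would split each hyperedge into its constituent ordinary edges so that each vertex sees one edge per ``direction.''  The resulting $\tilde{\mathcal{G}}_v$ is a genuine Musiker–Schiffler snake graph whose tiles are labeled exactly by $I_{c_i}^i, I_{c_i-1}^i, \dots, I_1^1, \dots, I_{c_1}^1$ as in Proposition~\ref{prop:ConsistentSize}.

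Next, I would define the forward map $\Phi$ sending an admissible matching $P$ of $\mathcal{G}_v$ to a perfect matching $\tilde{P}$ of $\tilde{\mathcal{G}}_v$: for each contracted block of internal edges, include the corresponding contracted edge in $\tilde P$ iff the full block lies in $P$; and for each hyperedge of $P$ at a high valence vertex, include in $\tilde P$ the component edges that cover the endpoints used by the hyperedge. The valence data in Lemma~\ref{lem:r1valence} guarantees that each vertex of $\tilde{\mathcal{G}}_v$ is then incident to exactly one edge of $\tilde{P}$, so $\tilde{P}$ is a perfect matching, and the weight equality $\wt(P)=\wt(\tilde P)$ holds because contracting weight-$1$ internal edges does not change the weight, and a hyperedge of weight $Y_I$ contributes the same total weight as the unique edge $Y_I$ of $\tilde{\mathcal{G}}_v$ that replaces it.

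For the inverse $\Psi$, a perfect matching $\tilde P$ of $\tilde{\mathcal{G}}_v$ lifts to a subset of edges $P$ of $\mathcal{G}_v$ by expanding each contracted edge to its associated block and each component edge back to the hyperedge containing it. I would then verify that $P$ is admissible: the valence conditions follow by construction; Condition~(4) of Definition~\ref{def:AdmissibleMatching} holds for $P$ by the ``all or none'' property encoded in the contraction (this is exactly Proposition~\ref{prop:OldCond4Singleton_rsmall}); and Conditions~(1)–(3) reduce, after tracing edges through the collapse, to statements about pairs of edges in $\tilde{\mathcal{G}}_v$ sharing a common diagonal, which are automatically satisfied by perfect matchings of a zig-zag surface snake graph.

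The main obstacle I anticipate is the bookkeeping at the high valence vertices $v_\opt$, $a_i$ for $r < i \leq p$, and the $a_j^i$. Here a single vertex in $\mathcal{G}_v$ corresponds to several vertices in $\tilde{\mathcal{G}}_v$ (one per tile it belongs to), and a hyperedge in $\mathcal{G}_v$ decomposes into several ordinary edges in $\tilde{\mathcal{G}}_v$. I need to check carefully — using the explicit form of $H_{\ell,k}$ in Figures~\ref{fig:singleton-zigzag} and~\ref{fig:singleton-zigzag-partial} — that the decomposition of each hyperedge is forced by which component snake graph contributed each incidence, and that this decomposition is compatible with the zig-zag tile structure of $\tilde{\mathcal{G}}_v$. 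The case split between $r=1$ and $r=2$ enters only through the treatment of the tile at $v_\opt$, where for $r=2$ the hyperedge labeled $Y_{I_v}$ additionally identifies the $v_\opt$ vertices between the two ``wings'' of $\tilde{\mathcal{G}}_v$.
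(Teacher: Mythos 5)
Your overall strategy — use the all-or-none property from Proposition~\ref{prop:OldCond4Singleton_rsmall} to bundle internal edges into blocks, collapse the vertex sets $B_j^i$ to single vertices, and check weight preservation at the end — is the same strategy the paper follows. But there is a substantive gap in the forward direction. You assert that ``the valence data in Lemma~\ref{lem:r1valence} guarantees that each vertex of $\tilde{\mathcal{G}}_v$ is then incident to exactly one edge of $\tilde{P}$, so $\tilde{P}$ is a perfect matching.'' This is precisely the hard part, and valence data together with the all-or-none property do \emph{not} suffice to establish it. What needs to be shown is that, for every admissible matching of $\mathcal{G}_v$, the set of edges covering the vertices of a given $B_j^i$ falls into exactly one of four patterns: all of $(B_j^i,B_{j+1}^i)$, all of $(B_j^i,B_{j-1}^i)$, the single hyperedge $(v_1,B_j^i)$, or the single hyperedge $(v_2,B_j^i)$. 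Ruling out mixed configurations — e.g.\ the block $(B_j^i,B_{j+1}^i)$ together with $(v_2,B_j^i)$, which certainly does not violate valence at $B_j^i$ on its own — requires invoking admissibility Conditions (1) and (3) of Definition~\ref{def:AdmissibleMatching}, tracking the distinguished vertex $a_j^i\in B_{j+1}^i$ and its valence, and propagating the resulting forcings across several consecutive tiles (the paper's cases (c) and especially (d)(i)–(iii)). None of this is ``automatically satisfied'' or a consequence of Lemma~\ref{lem:r1valence} alone; it is the bulk of the argument.

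A secondary issue is the mechanism you describe for the collapse. ``Splitting hyperedges at high-valence vertices into constituent edges'' runs in the wrong direction (it would raise, not lower, degree), and the sets $B_j^i$ and $B_{j+1}^i$ can overlap, so ``identifying endpoints appropriately'' is not as clean as contracting disjoint blocks. The actual correspondence is: the entire vertex set $B_j^i$ of $\mathcal{G}_v$ maps to the single vertex labeled $B_j^i$ in $\tilde{\mathcal{G}}_v$; the full block of internal edges between $B_j^i$ and $B_{j+1}^i$ maps to the single edge with weight $A_j^i$; and the high-valence vertex $a_j^i$ acquires valence $1\oplus 0$ not because hyperedges are split, but because it is merged into $B_{j+1}^i$ and its extra incident edges are absorbed into the weight of a single edge of $\tilde{\mathcal{G}}_v$. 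Your inverse map and your weight-preservation argument are along the right lines, but the forward direction as written would not go through without the missing local case analysis.
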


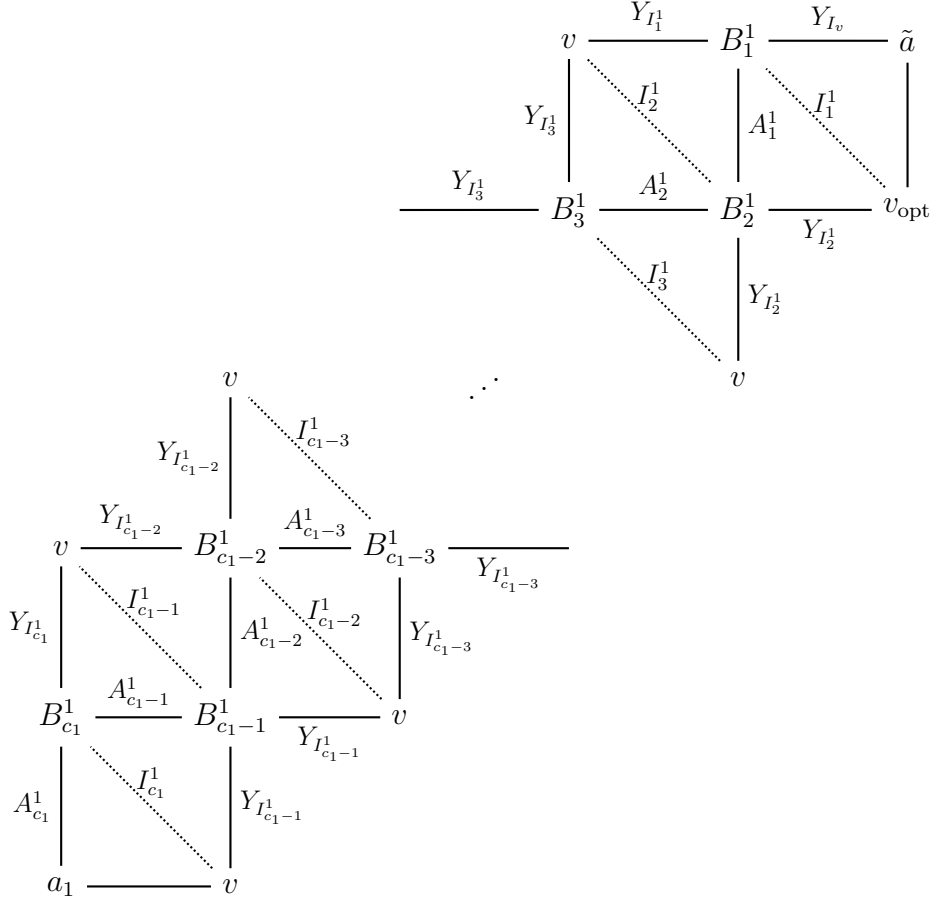
\begin{figure}[ht]
\begin{tikzpicture}[scale=2.25]
\node (11) at (1,0) {$v$};
\node (12) at (0,2) {$v$};
\node (13) at (2,1) {$v$};
\node (14) at (1,3) {$v$};
\node (15) at (4,3) {$v$};
\node (16) at (3,5) {$v$};
\node (17) at (5,4) {$v_{\opt}$};
\node (2) at (0,0) {$a_1$};
\node (a) at (0,1) {$B^1_{c_1}$};
\node (b) at (1,1) {$B^1_{c_1-1}$};
\node (c) at (1,2) {$B^1_{c_1-2}$};
\node (d) at (2,2) {$B^1_{c_1-3}$};
\node (i) at (3,4) {$B^1_3$};
\node (j) at (4,4) {$B^1_2$};
\node (r) at (4,5) {$B^1_1$};
\node (s) at (5,5) {$\tilde{a}$};
\node at (2.5,3) {$\reflectbox{$\ddots$}$};
\draw (11) to (2);
\draw (2) to node[scale=0.85,left]{$A^1_{c_1}$} (a);
\draw (a) to node[scale=0.85,above]{$A^1_{c_1-1}$} (b);
\draw (b) to node[scale=0.85,right]{$A^1_{c_1-2}$} (c);
\draw (c) to node[scale=0.85,above]{$A^1_{c_1-3}$} (d);
\draw (i) to node[scale=0.85,above]{$A^1_2$} (j);
\draw (r) to node[scale=0.85,right]{$A^1_1$} (j);
\draw (r) to node[scale=0.85,above]{$Y_{I_v}$} (s);
\draw (s) to (17);
\draw (a) to node[scale=0.85,left]{$Y_{I_{c_1}^1}$} (12);
\draw (b) to node[scale=0.85,right]{$Y_{I_{c_1-1}^1}$} (11);
\draw (b) to node[scale=0.85,below]{$Y_{I_{c_1-1}^1}$} (13);
\draw (c) to node[scale=0.85,above]{$Y_{I_{c_1-2}^1}$} (12);
\draw (c) to node[scale=0.85,left]{$Y_{I_{c_1-2}^1}$} (14);
\draw (d) to node[scale=0.85,right]{$Y_{I_{c_1-3}^1}$} (13);
\draw (d) to node[scale=0.85,below]{$Y_{I_{c_1-3}^1}$} (3,2);
\draw (i) to node[scale=0.85,above]{$Y_{I_3^1}$} (2,4);
\draw (i) to node[scale=0.85,left]{$Y_{I_3^1}$} (16);
\draw (j) to node[scale=0.85,right]{$Y_{I_2^1}$} (15);
\draw (j) to node[scale=0.85,below]{$Y_{I_2^1}$} (17);
\draw (r) to node[scale=0.85,above]{$Y_{I_1^1}$} (16);
\draw[dashed] (11) to node[scale=0.85,above]{$I_{c_1}^1$} (a);
\draw[dashed] (b) to node[scale=0.85,above, xshift = 6]{$I_{c_1-1}^1$} (12);
\draw[dashed] (13) to node[scale=0.85,above, xshift = 6]{$I_{c_1-2}^1$} (c);
\draw[dashed] (d) to node[scale=0.85,above,xshift = 6]{$I_{c_1-3}^1$} (14);
\draw[dashed] (15) to node[scale=0.85,above]{$I_3^1$} (i);
\draw[dashed] (j) to node[scale=0.85,above]{$I_2^1$} (16);
\draw[dashed] (17) to node[scale=0.85,above]{$I_1^1$} (r);
\end{tikzpicture}
\caption{The snake graph $\tilde{\cal G}_v$ when $r=1$.}
\label{fig:req1_snakegraph_simplify}
\end{figure}

\begin{figure}[ht]
\begin{tikzpicture}[scale=2.25]
\node (11) at (1,0) {$v$};
\node (12) at (0,2) {$v$};
\node (13) at (2,1) {$v$};
\node (14) at (1,3) {$v$};
\node (15) at (4,3) {$v_\opt$};
\node (16) at (3,5) {$v_\fix$};
\node (17) at (6,5) {$v$};
\node (2) at (0,0) {$a_1$};
\node (a) at (0,1) {$B_{c_1}^1$};
\node (b) at (1,1) {$B_{c_1-1}^1$};
\node (c) at (1,2) {$B_{c_1-2}^1$};
\node (d) at (2,2) {$B_{c_1-3}^1$};
\node (i) at (3,4) {$B_1^1$};
\node (j) at (4,4) {$B_1^2\cup\tilde{a}$};
\node (r) at (5,6) {$B_{c_2}^2$};
\node (s) at (6,6) {$a_2$};
\node at (2.5,3) {$\reflectbox{$\ddots$}$};
\node at (4.5,5) {$\reflectbox{$\ddots$}$};
\draw (11) to (2);
\draw (2) to node[scale=0.85,left]{$A_{c_1}^1$} (a);
\draw (a) to node[scale=0.85,above]{$A_{c_1-1}^1$} (b);
\draw (b) to node[scale=0.85,right]{$A_{c_1-2}^1$} (c);
\draw (c) to node[scale=0.85,above]{$A_{c_1-3}^1$}  (d);
\draw (i) to node[scale=0.85,above]{$Y_{I_v}$} (j);
\draw (r) to node[scale=0.85,above]{$A_{c_2}^2$} (s);
\draw (s) to (17);
\draw (a) to node[scale=0.85,left]{$Y_{I_{c_1}^1}$} (12);
\draw (b) to node[scale=0.85,right]{$Y_{I_{c_1-1}^1}$} (11);
\draw (b) to node[scale=0.85,below]{$Y_{I_{c_1-1}^1}$} (13);
\draw (c) to node[scale=0.85,above]{$Y_{I_{c_1-2}^1}$} (12);
\draw (c) to node[scale=0.85,left]{$Y_{I_{c_1-2}^1}$} (14);
\draw (d) to node[scale=0.85,right]{$Y_{I_{c_1-3}^1}$} (13);
\draw (d) to node[scale=0.85,below]{$Y_{I_{c_1-3}^1}$} (3,2);
\draw (i) to node[scale=0.85,above]{$Y_{I_1^1}$} (2,4);
\draw (i) to node[scale=0.85,left]{$Y_{I_1^1}$} (16);
\draw (j) to node[scale=0.85,right]{$Y_{I_1^2}$} (15);
\draw (j) to node[scale=0.85,below]{$Y_{I_1^2}$} (5,4);
\draw (r) to node[scale=0.85,above]{$Y_{I_{c_2}^2}$} (4,6);
\draw[dashed] (11) to node[scale=0.85,above]{$I_{c_1}^1$} (a);
\draw[dashed] (b) to node[scale=0.85,above, xshift = 6]{$I_{c_1-1}^1$} (12);
\draw[dashed] (13) to node[scale=0.85,above, xshift = 6]{$I_{c_1-2}^1$} (c);
\draw[dashed] (d) to node[scale=0.85,above,xshift = 6]{$I_{c_1-3}^1$} (14);
\draw[dashed] (15) to node[scale=0.85,above]{$I_1^1$} (i);
\draw[dashed] (j) to node[scale=0.85,above]{$I_1^2$} (16);
\draw[dashed] (17) to node[scale=0.85,above]{$I_{c_2}^2$} (r);
\end{tikzpicture}
\caption{The snake graph $\tilde{\cal G}_v$ when $r=2$.} 
\label{fig:req2_snakegraph_simplify}
\end{figure}

\begin{proof}
We begin discussing the structures of an admissible matching around each vertex in $\tilde{\cal{G}}_v$.

First of all, if the vertex $v$ adjacent to $a_1$ is not $v_\opt$, it is connected to an edge with weight $Y_{I^1_{c_1-1}}$ and an edge of weight 1.  An admissible matching of $\cal{G}_v$ will use exactly one of these edges, which corresponds to one of the two edges $(v,B^1_{c_1-1})$ and $(v,a_1)$ in $\tilde{\cal G}_v$. When $r=2$ the same is true for the vertex $v$ adjacent to $a_2$ as long as it is not $v_\fix$.  Similarly, every other vertex $v$ except for $v_\opt$ and $v_\fix$ is connected to two (hyper)edges with weights $Y_{I_{j+1}^i}$ and $Y_{I_{j-1}^i}$ respectively. An admissible matching of $\cal{G}_v$ will use exactly one of these edges, which corresponds to one of the two edges $(v,B^i_{j+1})$ and $(v,B^i_{j-1})$ in $\tilde{\cal G}_v$.

The vertex $v_\opt$ has higher valence, but it still has only two possibilities in $\cal{G}_v$. The vertices $a_j, j > r$ have degree 2 in $\cal{G}_v$ and if $r=2$ then the vertices in $B_1^2$ also have degree 2 in $\cal{G}_v$.  These vertices are each adjacent to the edge weighted $Y_{I_v}$ and an edge $(a_j,v_\opt)$ with weight 1 or the edge $(B_1^2,v_\opt)$ with weight $Y_1^2$. If the edge $Y_{I_v}$ is included in an admissible matching, then none of the edges $(a_j,v_\opt)$ or $(B_1^2,v_\opt)$ can be included in the matching. Conversely, if the edge $Y_{I_v}$ is not included in the matching, then we must use all the edges $(a_j,v_\opt)$ and edge $(B_1^2,v_\opt)$. This matches the configuration around $v_\opt$ in the ordinary snake graph $\tilde{\cal{G}_v}$.

If $r=1$, there is no vertex $v_\fix$.  If $r=2$, the vertex $v_\fix$ has valence $1\oplus 0$ and is connected to an edge of weight $Y_{I_1^1}$ and an edge of weight $Y_{I_2^2}$ respectively. An admissible matching of $\cal{G}_v$ will use exactly one of these edges, which corresponds to one of the two edges $(v,B^1_1)$ and $(v,B^2_2)$ in $\tilde{\cal G}_v$.

The vertex $a_1$ also has two possibilities in an admissible matching of $\cal G_v$. By Condition (\ref{admissibilityCondition4}) in Definition \ref{def:AdmissibleMatching}, if we include any of the edges between $a_1$ and vertices in $B_{c_1}^1)$ then we include all such edges.  Otherwise we instead must include $(a_1,v)$. This corresponds to the two choices we have for the vertex $a_1$ in $\tilde{\cal{G}_v}$.  When $r=2$, the same argument holds for the vertex $a_2$.

 We now consider the vertices $a_i$ for $r<i\leq p$. Each such $a_i$ is connected to $v_\opt$ with an edge of weight 1 and connected to all the vertices in $B_1^j$ for all $1\leq j\leq r$ by an edge of weight $Y_{I_v}$.  Since the vertices $a_i$ all have valence $1\oplus 0$, every admissible matching either contains the edge $Y_{I_v}$ or all the edges $a_i$ to $v_\opt$.  This matches the configuration around $\tilde{a}$ in $\tilde{\cal G}_v$ for $r=1$.  For $r=2$, if the edge $Y_{I_v}$ is used in an admissible matching on $\tilde{\cal G}_v$ the edge $B_1^2$ to $v_\opt$ must not be used and if the edge $Y_{I_v}$ is not used in an admissible matching on $\tilde{\cal G}_v$, then the edge $B_1^2$ to $v_\opt$ must also be used. So the edge $B_1^2$ to $v_\opt$ is used exactly when the edges $a_i$ to $v_\opt$ are used.  This matches the configuration around $\tilde{a}$ in $\tilde{\cal G}_v$ for $r=2$.

We next discuss what happens around vertices $b_{i,j}^{\ell,k}\in B_j^i$ for some $1\leq i\leq r$ and $1\leq j\leq c_i$.  First suppose $b_{i,j}^{\ell,k}\not\in B_{c_1}^1,B_1^1,B_1^2,B_{c_2}^2$.  Then the relevant part of $\cal G_v$ will look as follows, where the sets $B_{j-1}^i,B_j^i$, and $B_{j+1}^i$ represent (possibly) multiple vertices and the double edges mean that there are (possibly) multiple hyperedges between vertices in the sets. By abusing notation, we use $(B_{j-1}^i,B_j^i)$ and $(B_j^i,B_{j+1}^i)$ to denote the set of edges between vertices in two sets in $\cal G_v$. Note that the vertices $v,v_1,$ and $v_2$ in the picture are all labeled by the same vertex $v$ from $\Gamma$, but we label them here so that they are distinguishable on $\cal G_v$.

\begin{center}
\begin{tikzpicture}[scale=2]
\node(vopt) at (0,0){$v_2$};
\node(b12) at (1,0){$B_{j}^i$};
\node(b22) at (2,0){$B_{j-1}^i$};
\node(v) at (1,1){$v_1$};
\node(vfix) at (2,-1){$v$};
\node(b11) at (1,-1){$B_{j+1}^i$};
\draw (vopt) to node[above]{$Y_{I_j^i}$} (b12);
\draw [double]  (b12) to node[above]{$A_{j-1}^i$} (b22);
\draw(b12) to node[left]{$Y_{I_j^i}$} (v);
\draw [double] (b12) to node[right]{$A_j^i$} (b11);
\draw(vfix) to node[right]{$Y_{I_{j-1}^i}$} (b22);
\draw[dashed] (v) to node[right]{$I_{j-1}^i$} (b22);
\draw[dashed] (b12) to node[right]{$I_j^i$} (vfix);
\draw (vfix) to node[below]{$Y_{I_{j+1}^i}$} (b11);
\draw[dashed] (vopt) to node[right]{$I_{j+1}^i$}(b11);

\end{tikzpicture}
\end{center}

By Proposition \ref{prop:OldCond4Singleton_rsmall}, all edges in $(B_{j-1}^i,B_j^i)$ have to appear together in an admissible matching of $\cal G_v$, and similarly for $(B_j^i,B_{j+1}^i)$. We will show an admissible matching of $\cal G_v$ cannot contain any configuration near $B_j^i$ which would not correspond to a configuration near such a vertex in the ordinary snake graph $\tilde{\cal{G}_v}$.

\begin{enumerate}[(a)]
    \item Since $a^i_{j-1}$ is a neighbor of $I_j^i$ other than $v$, $a^i_{j-1}\in B_j^i$.  From the proof of \Cref{lem:r1valence}, there are $\deg_{\Gamma'}(a^i_{j-1})-1$ edges from $a^i_{j-1}$ to $B_{j-1}^i$. Thus by \Cref{lem:r1valence}, if we use the edges in $(B_j^i,B_{j-1}^i)$ in an admissible matching, we can't include the edges in $(B_j^i,B_{j+1}^i)$ the edge $(v_1,B_j^i)$, or the edge $(v_2,B_j^i)$, as this would exceed the valence of $a_{j-1}^i\in B_j^i$.

    \item The edges $(v_1,{B_j^i})$ and $(v_2,{B_j^i})$ cannot be used together in an admissible matching because $v$ has valence $1\oplus 0$ by \Cref{lem:r1valence} and adding in either edge adjacent to $v$ would cause a violation of Condition~(\ref{admissibilityCondition3}) of Definition \ref{def:AdmissibleMatching}.

    \item Suppose we use the edges $(B_j^i, B_{j+1}^i)$ as well as the edge $(v_1, B_j^i)$ in an admissible matching. Then by Condition~(\ref{admissibilityCondition3}) of Definition \ref{def:AdmissibleMatching}, we cannot use the edge $(v, B_{j-1}^i)$. This means we must use the edge $(v, B_{j+1}^i)$. By the same argument as in part (a), we know $a^i_{j}\in B_{j+1}^i$ and there are $\deg_{\Gamma'}(a^i_{j})-1$ edges from $a^i_{j}$ to $B_{j}^i$. Thus by \Cref{lem:r1valence}, if we use the edges in $(B_j^i, B_{j+1}^i)$ and the edge $(v, B_{j+1}^i)$ we will exceed the valence of $a_{j}^i\in B_{j+1}^i$.  Therefore it is impossible to use the edges $(B_j^i, B_{j+1}^i)$ as well as the edge $(v_1, B_j^i)$ in an admissible matching.


    \item Now suppose we use the edges $(B_j^i, B_{j+1}^i)$ and the edge $(v_2,B_j^i)$ in an admissible matching.
    \begin{enumerate}[(i)]
    \item Assume $j<c_i-2$.  In this case, we can extend our local picture as follows, where $v_3$ and $v_4$ are additional vertices labeled $v$.

\begin{center}
\begin{tikzpicture}[scale=2]
\node(vopt) at (0,0){$v_2$};
\node(bb) at (1,0){$B_j^i$};
\node(b11) at (1,-1){$B_{j+1}^i$};
\node(Bj2i) at (0,-1){$B_{j+2}^i$};
\node(v3) at (1,-2){$v_3$};
\node(a) at (0,-2){$B_{j+3}^i$};
\node(b) at (-1,-1){$v_4$};
\draw[dashed] (vopt) to node[right]{$Y_{I_{j+1}^i}$}(b11);
\draw (Bj2i) to node[left]{$Y_{I_{j+2}^i}$} (vopt);
\draw(vopt) to node[above]{$I_j^i$}(bb);
\draw[double] (bb) to node[right]{$A_j^i$}(b11);
\draw[double] (Bj2i) to node[above]{$A_{j+1}^i$} (b11);
\draw[dashed] (Bj2i) to node[right]{$I_{j+2}^i$} (v3);
\draw (v3) to node[below]{$Y_{I_{j+3}^i}$} (a);
\draw[double] (Bj2i) to node[right]{$A_{j+2}^i$} (a);
\draw (b11) to node[right]{$Y_{I_{j+1}^i}$} (v3);
\draw (b) to node[above]{$Y_{I_{j+2}^i}$} (Bj2i);
\draw[dashed] (a) to node[right]{$I_{j+3}^i$} (b);
\end{tikzpicture}
\end{center}

    By the same argument as in part (a), we know $a^i_{j+1}\in B_{j+2}^i$. By \Cref{lem:r1valence}, $a^i_{j+1}$ must be covered in an admissible matching. We cannot use the edge $(v_2,B_{j+2}^i)$, as this would violate the valence of $v_2$ by \Cref{lem:r1valence}.  We also cannot use the edges $(B_{j+1}^i,B_{j+2}^i)$ as this would violate the valence of $a_{j}^i\in B_{j+1}^i$ by \Cref{lem:r1valence}.  If we use the edge $(v_4,B_{j+2}^i)$, we could not use the edge $(v_3,B_{j+3}^i)$ by Condition~(\ref{admissibilityCondition3}) of Definition \ref{def:AdmissibleMatching}.  So, by \Cref{lem:r1valence}, we would have to use $(v_3,B_{j+1}^i)$.  This would violate the valence of $a^i_j\in B_{j+1}^i$, so we can't use $(v_4,B_{j+2}^i)$.  This means we must use $(B_{j+2}^i,B_{j+3}^i)$ to cover $a^i_{j+1}$.  However we are then unable to cover $v_3$, as using the edge $(v_3,B_{j+1}^i)$ would violate the valence of $a^i_j\in B_{j+1}^i$ and using the edge $(v_3,B_{j+3}^i)$ would violate the valence of $a^i_{j-2}\in B_{j+3}^i$.  Thus we  have a contradiction and cannot use the edges $(B_j^i, B_{j+1}^i)$ and the edge $(v_2,B_j^i)$ together in an admissible matching.

    \item Now assume $j=c_i-2$.  In this case we can extend  our local picture as follows.
\begin{center}
\begin{tikzpicture}[scale=2]
\node(vopt) at (0,0){$v_2$};
\node(b11) at (1,-1){$B_{j+1}^i$};
\node(Bj2i) at (0,-1){$B_{j+2}^i$};
\node(v3) at (1,-2){$v_3$};
\node(a) at (0,-2){$a_i$};
\draw[dashed] (vopt) to node[right]{$I_{j+1}^i$}(b11);
\draw (Bj2i) to node[left]{$Y_{I_{j+2}^i}$} (vopt);
\draw[double] (Bj2i) to node[above]{$A_{j+1}^i$} (b11);
\draw[dashed] (Bj2i) to node[right]{$I_{j+2}^i$} (v3);
\draw (v3) to node[below]{$Y_{I_{j+3}^i}$} (a);
\draw[double] (Bj2i) to node[right]{$A_{j+2}^i$} (a);
\draw (b11) to node[right]{$Y_{I_{j+1}^i}$} (v3);
\end{tikzpicture}
\end{center}
    The vertex $a_i=a_{c_i}^i=a_{j+2}^i$ is in the same configuration with respect to $B_{j+2}^i$ and $v_3$ as it was in the $j<c_i-2$ case. So the same arguments as in the $j<c_i-2$ case show we cannot use any of the edges adjacent to $B_{j+2}^i$ in an admissible matching at the same  time as the edges $(B_j^i, B_{j+1}^i)$ and the edge $(v_2,B_j^i)$.  Therefore, we can't use the edges $(B_j^i, B_{j+1}^i)$ and the edge $(v_2,B_j^i)$ together in an admissible matching in this case either.

    \item Finally, if $j=c_i-1$, we can extend  our local picture as follows.
\begin{center}
\begin{tikzpicture}[scale=2]
\node(vopt) at (0,0){$v_2$};
\node(b11) at (1,-1){$B_{j+1}^i$};
\node(Bj2i) at (0,-1){$a_i$};
\draw[dashed] (vopt) to node[right]{$I_{j+1}^i$}(b11);
\draw (Bj2i) to node[left]{$Y_{I_{j+2}^i}$} (vopt);
\draw[double] (Bj2i) to node[above]{$A_{j+1}^i$} (b11);
\end{tikzpicture}
\end{center}
    The vertex $a_i=a_{c_i}^i=a_{j+1}^i$ is in the same configuration with respect to $B_{j+1}^i$ and $v_2$ as it was in the $j<c_i-2$ case. So the same arguments as in the $j<c_i-2$ case show we cannot use any of the edges adjacent to $a_i$ in an admissible matching at the same time as the edges $(B_j^i, B_{j+1}^i)$ and the edge $(v_2,B_j^i)$.  Therefore, we can't use the edges $(B_j^i, B_{j+1}^i)$ and the edge $(v_2,B_j^i)$ together in an admissible matching in this final case.
    \end{enumerate}
\end{enumerate}

Therefore, for an admissible matching $P$,
the only possibilities around $B_j^i$ are the following:
\begin{enumerate}[noitemsep]
    \item $P$ uses $(v_1,B_j^i)$, and no other edges connected to $B_j^i$.
    \item $P$ uses $(v_2,B_j^i)$, and no other edges connected to $B_j^i$.
    \item  $P$ uses all of $(B_j^i,B_{j+1}^i)$, and no other edges connected to $B_j^i$.
    \item $P$ uses all of $(B_j^i,B_{j-1}^i)$, and no other edges connected to $B_j^i$.
\end{enumerate}

Now suppose $b_{i,c_i}^{\ell,k}\in B_{c_i}^i$ for $1\leq i\leq r$.  Then the relevant part of $\cal G_v$ will look as follows.

\begin{center}
\begin{tikzpicture}[scale=2]
\node(b12) at (1,0){$B_{c_i}^i$};
\node(b22) at (2,0){$B_{c_i-1}^i$};
\node(v) at (1,1){$v_1$};
\node(vfix) at (2,-1){$v$};
\node(b11) at (1,-1){$a_i$};
\draw [double]  (b12) to node[above]{$A_{c_i-1}^i$} (b22);
\draw(b12) to node[left]{$Y_{I_{c_i}^i}$} (v);
\draw [double] (b12) to node[left]{$A_{c_i}^i$} (b11);
\draw(vfix) to node[right]{$Y_{I_{c_i-1}^i}$} (b22);
\draw[dashed] (v) to node[right]{$I_{c_i}^i$} (b22);
\draw[dashed] (b12) to node[right]{$I_{c_i}^i$} (vfix);
\draw (vfix) to node[below]{} (b11);
\end{tikzpicture}
\end{center}

The arguments from parts (a), (b), and (c) from the $1<j<c_i$ case show that for an admissible matching $P$,
the only possibilities around $B_{c_i}^i$ are the following:
\begin{enumerate}[noitemsep]
    \item $P$ uses $(v^*,B_{c_i}^i)$, and no other edges connected to $B_{c_i}^i$.
    \item  $P$ uses all of $(B_{c_i}^i,a_i)$, and no other edges connected to $B_{c_i}^i$.
    \item $P$ uses all of $(B_{c_i}^i,B_{c_i-1}^i)$, and no other edges connected to $B_{c_i}^i$.
\end{enumerate}

Finally, suppose $b_{i,j}^{\ell,k}\in B_1^i$ for $1\leq i\leq r$.  Then the relevant part of $\cal G_v$ will look as follows, where the picture on the left is for $r=1$ and the picture on the right  is for $r=2$.

\begin{center}
\begin{tikzpicture}[scale=2]
\node(vopt) at (0,0){$v_\opt$};
\node(v1) at (-2,1){$v_1$};
\node(b11) at (-1,1){$B_1^1$};
\node(b21) at (-1,0){$B_2^1$};
\node(ai) at (0,1){$\tilde{a}$};
\draw (vopt) to node[above]{} (ai);
\draw (ai) to (b11);
\draw [double] (b21) to (b11);
\draw(vopt) to node[right]{} (b21);
\draw(v1) to node[right]{} (b11);
\draw[double] (vopt) to node[right]{} (ai);
\draw[dashed] (vopt) to node[right]{}(b11);
\draw[dashed] (v1) to node[right]{} (b21);
\end{tikzpicture}
\hspace{0.75in}
\begin{tikzpicture}[scale=2]
\node(vopt) at (0,0){$v_\opt$};
\node(b12) at (0,1){$B_1^2$};
\node(b22) at (0,2){$B_2^2$};
\node(v2) at (1,1){$v_2$};
\node(v1) at (-2,1){$v_1$};
\node(vfix) at (-1,2){$v_\fix$};
\node(b11) at (-1,1){$B_1^1$};
\node(b21) at (-1,0){$B_2^1$};
\node(ai) at (-0.25,0.75){$\tilde{a}$};
\draw (vopt) to node[above]{} (b12);
\draw [double]  (b12) -- (b22);
\draw(b12) to node[left]{} (v2);
\draw (b12) to (b11);
\draw(vfix) to node[right]{} (b22);
\draw [double] (b21) to (b11);
\draw(vopt) to node[right]{} (b21);
\draw(v1) to node[right]{} (b11);
\draw[dashed] (v2) to node[right]{} (b22);
\draw[dashed] (b12) to node[right]{} (vfix);
\draw (vfix) to node[below]{} (b11);
\draw[double] (vopt) to node[right]{} (ai);
\draw (-0.5,1) to node[right]{} (ai);
\draw[dashed] (vopt) to node[right]{}(b11);
\draw[dashed] (v1) to node[right]{} (b21);
\end{tikzpicture}
\end{center}

By Proposition \ref{prop:OldCond4Singleton_rsmall} of Definition \ref{def:AdmissibleMatching}, the edges in $(B_1^1,B_2^1)$ have to appear together in an admissible matching of $\cal G_v$.  By \Cref{lem:r1valence}, all vertices in $B_1^1$ have valence $1\oplus 0$.  This means that if $r=1$ an admissible matching of $\mathcal{G}_v$ must use either all of the $(B_1^1,B_2^1)$ edges or exactly one of the other edges adjacent to $B_1^1$.  Similarly, if $r=2$ then an admissible matching of $\mathcal{G}_v$ must use either all of the $(B_1^2,B_2^2)$ edges or exactly one of the other edges adjacent to $B_1^2$.

We have now seen that the admissible matchings of $\cal G_v$ are in bijection with perfect matchings of $\tilde{\cal G}_v$. We will next show that this bijection is weight preserving.  That is, we will show that each edge in $\tilde{\cal G}_v$ has the same weight as the corresponding set of edges in $\cal G_v$.

All boundary edges except for $(a_i,B^i_{c_i})$ correspond to either a hyperedge with weight $Y_{I^i_j}$, a set of edges with weight $1$ in $\cal G_v$, or a hyperedge with weight $Y_{I^i_j}$ and a set of edges with weight $1$ in $\cal G_v$.  Thus our claim is clear for these edges. If $r=2$, this is also clear for the edge $(B_1^1,B_1^2)$ because it  corresponds to an edge of weight $Y_{I_v}$ in $\cal G_v$.
 
For the edges $(a_i,B^i_{c_i})$, notice that after combining all component snake graphs, we get an hyperedge from $a_i$ to a subset of $B^i_{c_i}$ in $\cal G_v$ for each $I_w$ where $w\in \cal C_{a_i}$.  We also potentially get several edges of weight 1 from $a_i$ to vertices in $B^i_{c_i}\cap (\Gamma'_{>  a_i})^{\mathcal I}$.
Since $a_i=a^i_{c_1}$, the product of all these edges in $\cal G_v$ is $\prod_{w\in\cal C(a^i_{c_i})} Y_{I_w} = A_{c_i}^i$, which agrees with the weight of the edge $(a_i,B^i_{c_i})$ in $\tilde{\cal G}_v$.

We are now left to consider the edges $(B^i_j,B^i_{j+1})$ in $\tilde{\cal G}_v$.  The edge $(B^i_j,B^i_{j+1})$ corresponds to the edges $(b_{i,j}^{\ell,k},b_{i,j+1}^{\ell,k})$ from $\mathcal{G}_v$. Recall from \Cref{sec:construction}, in the component snake graphs, the edge $(b_{i,j}^{\ell,k},b_{i,j+1}^{\ell,k})$ can have three different possible weights.
\begin{enumerate}[noitemsep]
    \item When $b_{i,j}^{\ell,k}\in N_{\Gamma'}(a^i_j)$, the edge $(b_{i,j}^{\ell,k},b_{i,j+1}^{\ell,k})$ has weight $1$.
    \item When $b^{\ell,k}_{i,j} \in N_{\Gamma'} (I_w) $ 
    for some $w\in \mathcal{C}_{a^i_j} \setminus \{a^i_{j+1} \}$
    , the edge $(b_{i,j}^{\ell,k},b_{i,j+1}^{\ell,k})$ is weighted by $Y_{I_w}$.
    \item When $b^{\ell,k}_{i,j}\in N_{\Gamma'}(I_{i,j+1})$, $(b_{i,j}^{\ell,k},b_{i,j+1}^{\ell,k})$ is weighted by $Y_{C_{i,j}^{\ell,k}}$ where $C_{i,j}^{\ell,k}$ is the connected component of $I^i_{j+1}\setminus [v,a^i_j]$ which contains a vertex adjacent to $b^{\ell,k}_{i,j}$.
\end{enumerate}
In the second scenario, as we range over all possible $\ell\in A$, we get edges weighted by $Y_{I_w}$ for all $w\in \cal{C}_{a^i_j}\setminus{a^i_{j+1}}$ in $\cal G_v$. These edges will contribute to a weight of $\prod_{w\in \cal{C}_{a^i_j}\setminus{a^i_{j+1}}} Y_{I_w}$. 

In the third scenario, since each $C_{i,j}^{\ell,k}$ is a connected component of $I^i_{j+1}\setminus [v,a^i_j]$ which contains a vertex adjacent to $b^{\ell,k}_{i,j}$, we know that $\bigcup_{\ell\in A} C_{i,j}^{\ell,k}$ is union of all the connected components of $I^i_{j+1}\setminus [v,a^i_j]$ which is adjacent to some vertex in $B^i_j = N_{\Gamma'}(I^i_{j})\setminus \{v\}$. This must be the entire set of $I^i_{j+1}\setminus [v,a^i_j]$, therefore the product of weights of these edges will be
\[\prod_{\ell\in A} Y^2_{C_{i,j}^{\ell,k}} = Y^2_{I^i_{j+1}\setminus [v,a^i_j]}.\]

Finally, when all the edges of the form $(b_{i,j}^{\ell,k},b_{i,j+1}^{\ell,k})$ are used together in $\cal G_{v}$, their weight will be
\[\left(\prod_{w\in \cal{C}_{a^i_j}\setminus{a^i_{j+1}}} Y_{I_w}\right)Y^2_{I^i_{j+1}\setminus [v,a^i_j]}\]
which is exactly $A^i_j$.

Therefore, the admissible matchings of $\cal G_v$ is in weight preserving bijection with perfect matchings of $\tilde{\cal G}_v$ whose edge weights are defined as in \Cref{fig:req1_snakegraph_simplify}.
\end{proof}

Since the previous lemma compares our snake graph with a zigzag snake graph, we can use some a basic facts concerning the latter.  To state these, we first introduce some notation analogous to \cite{banaian2022generalization}. When considering a surface snake graph, we will say a boundary edge which is only incident to one diagonal and one tile is \emph{dominant} and a boundary edge which is incident to one diagonal but two tiles is \emph{non-dominant}. We provide an alternative definition for our snake graphs since the tile structure is not always clear.

\begin{defn}\label{def:Dominant}
Let $e$ be an edge of a snake graph $\mathcal{G}_v$ which only shares vertices with one diagonal. We say $e$ is \emph{dominant} if $e$ is of the form $(v,w)$ where $w \in N_\Gamma(v)$.  Otherwise, we say $e$ is \emph{non-dominant}.
\end{defn}

Recall that when we form snake graphs for larger sets $S$ we glue along snake graph boundary edges of a snake graph of the form  $(v,w)$ where $v$ and $w$ are neighbors in $\Gamma$. These edges are always dominant edges.

The following lemma is a general statement for all zigzag snake graphs.  

\begin{lemma}\label{lem:ZigZagSurfaceMatchings}
If $\mathcal{G}$ is a zigzag surface snake graph on $n$ tiles, then it has $n+1$ perfect matchings. In particular, it has $n-1$ perfect matchings such that each use exactly one internal edge and two perfect matchings that use complementary sets of boundary edges. Moreover, each of the boundary edge matchings uses exactly one dominant edge.

In particular, if $\mathcal{G}$ is a zigzag with tiles labeled by $\tau_{1},\ldots,\tau_{d}$ in order from southwest to northeast, with the internal edge between $\tau_{j}$ and $\tau_{j+1}$ labeled with $z_j$, the non-dominant edge on tile  $\tau_1$ (resp. tile $\tau_d$) labeled with $z_0$ (resp. $z_d$) and the dominant edge at on tile $\tau_1$ (resp. tile $\tau_d$) labeled with $u$ (resp. $v$), then \[
\chi(\mathcal{G}) = \frac{u}{x_1} + \frac{v}{x_d} + \sum_{i=1}^{d-1} \frac{z_i}{x_i x_{i+1}}
\]
where $x_i$ is the cluster variable associated to $\tau_i$.
\end{lemma}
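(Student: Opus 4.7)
The plan is to proceed by induction on the number of tiles $d$. For the base case $d=1$, a single tile is a $4$-cycle with four vertices, so it has exactly two perfect matchings, namely the two pairs of opposite edges. One of these pairs contains the dominant edge $u$ (paired with $z_0$) and the other contains the dominant edge $v$ (paired with $z_d$). Dividing by $\ell(\mathcal{G}) = x_1$ reproduces the formula $\chi(\mathcal{G}) = u/x_1 + v/x_1$, noting that the ``internal'' sum is empty.

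For the inductive step, suppose the result holds for zigzag snake graphs on $d-1$ tiles, and consider a zigzag $\mathcal{G}$ on $d$ tiles. I would partition perfect matchings $M$ of $\mathcal{G}$ according to whether $M$ contains the internal edge $z_{d-1}$ between $\tau_{d-1}$ and $\tau_d$. If $z_{d-1} \in M$, then the two vertices of $\tau_d$ not incident to $z_{d-1}$ are diagonally opposite and must be covered by the unique pair of boundary edges of $\tau_d$ on the far side; moreover, removing $\tau_d$ and the matched edges on it restricts $M$ to a perfect matching of the zigzag on $d-1$ tiles that uses the non-dominant boundary edge of $\tau_{d-1}$ opposite to where $\tau_d$ was glued. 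The zigzag condition (consecutive tiles do not share parallel edges on the middle tile) forces this restriction to be exactly one of the two boundary matchings guaranteed by induction, giving a single contribution of the form $z_{d-1}/(x_{d-1}x_d)$. If $z_{d-1} \notin M$, then the vertices on the shared edge must be covered by the boundary edges on either side, forcing a unique extension to $\tau_d$; the restriction to $\mathcal{G}'=\mathcal{G}\setminus\tau_d$ is again a perfect matching of the zigzag on $d-1$ tiles, and every such matching extends uniquely.  By induction $\mathcal{G}'$ has $d$ matchings, contributing $d$ matchings to $\mathcal{G}$ (the $d-2$ matchings of $\mathcal{G}'$ that use an internal edge $z_i$ for $i < d-1$, plus the two boundary matchings of $\mathcal{G}'$).

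Combining both cases yields $d+1$ matchings of $\mathcal{G}$: two all-boundary matchings (inherited from $\mathcal{G}'$, each extended in the unique way that uses a boundary edge of $\tau_d$) and $d-1$ internal-edge matchings (one for each $z_i$). For the formula, I would verify directly from the inductive construction that the matching containing $z_i$ has total edge weight $z_i \prod_{j\neq i,i+1} x_j$, and that the two all-boundary matchings have total edge weights $u\prod_{j\geq 2}x_j$ and $v\prod_{j\leq d-1}x_j$ respectively, by observing that the non-$z_i$, non-$u$, non-$v$ boundary edges in each matching sit on tiles whose far boundary is labeled by the cluster variable of the adjacent tile (Lemma~\ref{lem:LabelBoundaryArcSurfaceSnake}). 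Dividing by $\ell(\mathcal{G})=\prod_{i=1}^d x_i$ then produces exactly the stated formula.

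The main bookkeeping obstacle is identifying the weights of the remaining boundary edges in each of the $d+1$ matchings so they telescope correctly against $\ell(\mathcal{G})$; this uses Lemma~\ref{lem:LabelBoundaryArcSurfaceSnake} but no new ideas. Everything else is routine enumeration based on the zigzag shape.
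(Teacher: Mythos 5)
Your argument is a genuinely different route from the paper's: the paper disposes of this lemma by citing Propp and Canakci--Schiffler, while you give a self-contained induction. The strategy (induct on the number of tiles, split on whether the last internal edge $z_{d-1}$ is used, then telescope weights) is the right shape for a direct proof, but the case analysis is not airtight.

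The gap is in the ``$z_{d-1}\notin M$'' branch. Write $p,q$ for the endpoints of $z_{d-1}$ and $r,s$ for the other two corners of $\tau_d$; note $r,s$ are the two endpoints of the boundary edge of $\tau_d$ opposite $z_{d-1}$, so they are \emph{adjacent}, not diagonally opposite. There are two sub-cases once $z_{d-1}\notin M$: either $M$ covers $r,s$ by the single opposite edge $(r,s)$, so $p,q$ are covered inside $\mathcal{G}'$ and $M|_{\mathcal{G}'}$ is a perfect matching of $\mathcal{G}'$; or $M$ covers $p,q,r,s$ using the two side edges $(p,r),(q,s)$ of $\tau_d$, in which case $M|_{\mathcal{G}'}$ fails to cover $p,q$ and is not a perfect matching of $\mathcal{G}'$ at all. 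Your argument only allows the first sub-case. Moreover, your proposed bijection between ``matchings of $\mathcal{G}$ with $z_{d-1}\notin M$'' and ``all $d$ matchings of $\mathcal{G}'$'' is off by one in each direction: the matching of $\mathcal{G}'$ that uses $z_{d-1}$ (now a boundary edge of $\mathcal{G}'$) extends to a matching of $\mathcal{G}$ that \emph{does} use $z_{d-1}$, which you already counted in the first branch, while the $(p,r),(q,s)$ matching of $\mathcal{G}$ is never produced. The total $d+1$ comes out right only because the two errors cancel. To repair this, treat $(p,r),(q,s)$ as a third case; the zigzag hypothesis guarantees exactly one matching of this form (its restriction to the zigzag on $\tau_1,\dots,\tau_{d-2}$ is forced), and that one matching supplies the remaining boundary term of the formula.

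Two smaller points. First, in the base case you conclude $\chi(\mathcal{G})=u/x_1+v/x_1$ from the two matchings $\{u,z_0\}$ and $\{v,z_d\}$; this silently requires $\mathrm{wt}(z_0)=\mathrm{wt}(z_d)=1$, which holds in the snake graphs $\tilde{\mathcal{G}}_v$ where the lemma is applied (the corner edges of the form $(v,w)$ with $(v,w)\in E(\Gamma')$ all carry weight $1$) but is not part of the zigzag hypothesis, so it should be stated. Second, once the three-way case split is in place, the telescoping of the remaining ``middle'' boundary edges via Lemma~\ref{lem:LabelBoundaryArcSurfaceSnake} is indeed the only bookkeeping left, as you say.
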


\begin{proof}
The fact that $\mathcal{G}$ has $n+1$ perfect matchings is well-known (see, for example, Section 4 of \cite{propp}). The perfect matchings of $\mathcal{G}$ are explicitly described in the proof of Theorem 3.4 of \cite{canakci2017continuedfractions}, and the expansion for this particular zigzag snake graph follows from applying the definition of $\chi(\mathcal{G})$ given at the beginning of Section~\ref{subsec:main_theorem}. 
\end{proof}

\begin{lemma}\label{lem:req1zigzag}
Let $\cal P_v$ denote the set of admissible matchings of $\cal G_v$. When $r=1$, we have
\[
\frac{1}{\ell(\cal{G}_v)}\sum_{P \in \cal{P}_v} \wt(P) = \frac{Y_{I_v}}{Y_{I^1_1}} + \sum_{j=1}^{c_1} \frac{A^1_j}{Y_{I^1_j}Y_{I^1_{j+1}}}
\]
\end{lemma}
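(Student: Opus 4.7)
The approach is to reduce to the expansion formula for ordinary zigzag snake graphs. By Lemma~\ref{lem:r1admissible}, admissible matchings of $\mathcal{G}_v$ are in weight-preserving bijection with perfect matchings of the zigzag $\tilde{\mathcal{G}}_v$ shown in Figure~\ref{fig:req1_snakegraph_simplify}. Since the diagonals of $\tilde{\mathcal{G}}_v$ carry the same labels as those of $\mathcal{G}_v$, we also have $\ell(\tilde{\mathcal{G}}_v) = \ell(\mathcal{G}_v) = \prod_{j=1}^{c_1} Y_{I^1_j}$, and therefore
\[
\frac{1}{\ell(\mathcal{G}_v)} \sum_{P \in \mathcal{P}_v} \wt(P) = \chi(\tilde{\mathcal{G}}_v).
\]

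Next, I would apply Lemma~\ref{lem:ZigZagSurfaceMatchings} to $\tilde{\mathcal{G}}_v$. Ordering its $c_1$ tiles $\tau_1, \ldots, \tau_{c_1}$ from southwest to northeast as in Figure~\ref{fig:req1_snakegraph_simplify}, the tile $\tau_i$ carries the diagonal $I^1_{c_1 - i + 1}$, so $x_i = Y_{I^1_{c_1-i+1}}$; the internal edge between $\tau_i$ and $\tau_{i+1}$ has weight $z_i = A^1_{c_1 - i}$; and the end-tile boundary edges contributing to the two boundary-only matchings are $(a_1, B^1_{c_1})$ of weight $A^1_{c_1}$ on $\tau_1$ and $(B^1_1, \tilde{a})$ of weight $Y_{I_v}$ on $\tau_{c_1}$. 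Substituting into Lemma~\ref{lem:ZigZagSurfaceMatchings} gives
\[
\chi(\tilde{\mathcal{G}}_v) = \frac{A^1_{c_1}}{Y_{I^1_{c_1}}} + \frac{Y_{I_v}}{Y_{I^1_1}} + \sum_{i=1}^{c_1 - 1} \frac{A^1_{c_1 - i}}{Y_{I^1_{c_1 - i + 1}} Y_{I^1_{c_1 - i}}}.
\]
Reindexing the sum via $j = c_1 - i$, and using the convention $I^1_{c_1+1} = \emptyset$ with $Y_\emptyset = 1$ to rewrite $\frac{A^1_{c_1}}{Y_{I^1_{c_1}}}$ as the $j = c_1$ case of the sum, then yields the target formula.

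The main point requiring care is the correct identification of the boundary-matching contributions, which determines the weights of the end-tile edges entering Lemma~\ref{lem:ZigZagSurfaceMatchings}. This is most transparently verified by writing the two boundary-only perfect matchings of $\tilde{\mathcal{G}}_v$ explicitly; the small case $c_1 = 2$ already pins down the pattern. In each boundary matching, the intermediate boundary edges traversed along the zigzag carry weights $Y_{I^1_j}$ for various $j$, which cancel against all but one factor of $\ell(\tilde{\mathcal{G}}_v)$, leaving only the end-tile contributions $\frac{A^1_{c_1}}{Y_{I^1_{c_1}}}$ and $\frac{Y_{I_v}}{Y_{I^1_1}}$. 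An alternative sanity check is to compare with the formula $Y_v = Z(v) + \sum_j \frac{A^1_j}{Y_{I^1_j} Y_{I^1_{j+1}}}$ from Corollary~\ref{cor:SingletonFormula} for $r = 1$, since the target expression is precisely $Y_v$ in this case.
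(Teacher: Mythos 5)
Your proof is correct and takes essentially the same approach as the paper, whose proof is a two-line appeal to the weight-preserving bijection of Lemma~\ref{lem:r1admissible} (the paper's text cites Lemma~\ref{lem:r1valence} there, an apparent typo) combined with the zigzag expansion of Lemma~\ref{lem:ZigZagSurfaceMatchings}. Your explicit bookkeeping---identifying $x_i = Y_{I^1_{c_1-i+1}}$, $z_i = A^1_{c_1-i}$, the end-tile contributions $u = A^1_{c_1}$ and $v = Y_{I_v}$, and then reindexing by $j = c_1 - i$ and absorbing $A^1_{c_1}/Y_{I^1_{c_1}}$ as the $j=c_1$ term via $Y_{I^1_{c_1+1}} = Y_{\emptyset} = 1$---fills in the arithmetic that the paper leaves implicit, and your sanity check against Corollary~\ref{cor:SingletonFormula} is the right way to confirm which end-tile boundary edges feed the $u/x_1$ and $v/x_d$ terms.
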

\begin{proof}

In \Cref{lem:r1valence}, we showed that $\cal P_v$ is in bijection with the set of ordinary perfect matchings in $\tilde {\cal G_v}$. Hence, the conclusion follows from  \Cref{fig:req1_snakegraph_simplify} and \Cref{lem:ZigZagSurfaceMatchings}.
\end{proof}

\begin{lemma}\label{lem:req2zigzag}
Let $\cal P_v$ denote the set of admissible matchings of $\cal G_v$. When $r=2$, we have
\[
\frac{1}{\ell(\cal{G}_v)}\sum_{P \in \cal{P}_v} \wt(P) = \frac{Y_{I_v}}{Y_{I^1_1}Y_{I^1_2}} + \sum_{j=1}^{c_1} \frac{A^1_j}{Y_{I^1_j}Y_{I^1_{j+1}}} + \sum_{j=1}^{c_2} \frac{A^2_j}{Y_{I^2_j}Y_{I^2_{j+1}}}
\]
\end{lemma}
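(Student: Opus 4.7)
The plan is to mirror the proof of Lemma~\ref{lem:req1zigzag}. The first step is to invoke Lemma~\ref{lem:r1admissible}, which gives a weight-preserving bijection between admissible matchings of $\mathcal{G}_v$ and perfect matchings of the ordinary zigzag snake graph $\tilde{\mathcal{G}}_v$ pictured in Figure~\ref{fig:req2_snakegraph_simplify}. This reduces the problem to computing $\chi(\tilde{\mathcal{G}}_v)$ by a single application of Lemma~\ref{lem:ZigZagSurfaceMatchings} to an honest surface zigzag.

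For the bookkeeping step I would read the tile and edge labels directly off Figure~\ref{fig:req2_snakegraph_simplify}: $\tilde{\mathcal{G}}_v$ has $d = c_1 + c_2$ tiles, with diagonal $I_{c_1-i+1}^1$ on tile $i$ (so $x_i = Y_{I_{c_1-i+1}^1}$) for $1 \leq i \leq c_1$, and diagonal $I_j^2$ on tile $c_1+j$ (so $x_{c_1+j} = Y_{I_j^2}$) for $1 \leq j \leq c_2$. The internal edges between consecutive tiles are $z_i = A_{c_1-i}^1$ for $1 \leq i \leq c_1-1$, the central internal edge joining the two arms is $z_{c_1} = Y_{I_v}$, and $z_{c_1+j} = A_j^2$ for $1 \leq j \leq c_2-1$. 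At the two end tiles, the extremal terms $u/x_1$ and $v/x_d$ of Lemma~\ref{lem:ZigZagSurfaceMatchings} contribute weights $A_{c_1}^1$ and $A_{c_2}^2$ respectively, exactly as in the proof of Lemma~\ref{lem:req1zigzag}.

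Plugging in and reindexing, the $d-1$ internal-edge terms of Lemma~\ref{lem:ZigZagSurfaceMatchings} produce $\sum_{j=1}^{c_1-1} \frac{A_j^1}{Y_{I_j^1}Y_{I_{j+1}^1}}$ from the first arm, the mixed central term $\frac{Y_{I_v}}{Y_{I_1^1}Y_{I_1^2}}$ from $z_{c_1}$, and $\sum_{j=1}^{c_2-1} \frac{A_j^2}{Y_{I_j^2}Y_{I_{j+1}^2}}$ from the second arm. The two extremal terms yield $\frac{A_{c_1}^1}{Y_{I_{c_1}^1}}$ and $\frac{A_{c_2}^2}{Y_{I_{c_2}^2}}$, which under the convention $Y_{I_{c_i+1}^i} = Y_\emptyset = 1$ are precisely the $j = c_i$ summands missing from the two $A_j^i$ sums. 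Assembling everything gives the claimed formula.

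The main obstacle is not substantive: all the delicate admissibility, valence, and weight-matching work needed to pass from the hypergraph $\mathcal{G}_v$ to the ordinary zigzag $\tilde{\mathcal{G}}_v$ has already been carried out in Lemmas~\ref{lem:r1valence} and~\ref{lem:r1admissible}. The only structural difference from the $r=1$ case is that the $Y_{I_v}$-weighted edge now sits as a middle internal edge of a two-armed zigzag rather than at the terminal end of a one-armed zigzag, which simply converts the contribution $Y_{I_v}/Y_{I_1^1}$ into the mixed term $Y_{I_v}/(Y_{I_1^1}Y_{I_1^2})$; no additional argument is required.
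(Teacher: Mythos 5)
Your proposal is correct and mirrors the paper's own proof, which invokes the weight-preserving bijection of Lemma~\ref{lem:r1admissible} (the paper's proof text cites Lemma~\ref{lem:r1valence}, but \ref{lem:r1admissible} is the statement actually establishing the bijection, so your citation is the right one) and then reads off $\chi(\tilde{\mathcal{G}}_v)$ from Figure~\ref{fig:req2_snakegraph_simplify} via Lemma~\ref{lem:ZigZagSurfaceMatchings}; your bookkeeping of the $z_i$, $x_i$, and end weights is accurate. One small remark: the central term you derive, $Y_{I_v}/\bigl(Y_{I_1^1}Y_{I_1^2}\bigr)$, is what actually follows (and what Corollary~\ref{cor:SingletonFormula} requires for $r=2$), whereas the displayed lemma statement has $Y_{I_1^1}Y_{I_2^1}$ in that denominator; this appears to be a superscript/subscript transposition in the paper, and your version is the correct one.
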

\begin{proof}
In \Cref{lem:r1valence}, we showed that $\cal P_v$ is in bijection with the set of ordinary perfect matchings in $\tilde {\cal G_v}$. Hence, the conclusion follows from  \Cref{fig:req2_snakegraph_simplify} and \Cref{lem:ZigZagSurfaceMatchings}.
\end{proof}

\begin{proof}[Proof of Theorem \ref{thm:main} for $S = \{v\}$ when $r=1,2$]
Comparing \Cref{lem:req1zigzag} to \Cref{cor:SingletonFormula} shows that $\frac{1}{\ell(\mathcal{G}_v)} \sum_{P \in \mathcal{P}_v} \wt(P) = Y_v$ for any $v \in \Gamma$ with $r=1$. Comparing \Cref{lem:req2zigzag} to \Cref{cor:SingletonFormula} shows that $\frac{1}{\ell(\mathcal{G}_v)} \sum_{P \in \mathcal{P}_v} \wt(P) = Y_v$ for any $v \in \Gamma$ with $r=2$. 
\end{proof}

\subsection{r > 2}
\label{subsec:risnot1}

In this section we study the structure of admissible matchings on a singleton snake graph $\mathcal{G}_v$ such that $\vert N_{\Gamma}(v)\,\cap\,\Gamma^{\mathcal{I}}_{<v} \vert>2$. Unlike in Section \ref{subsec:ris1}, here we will have vertices with valence $a \oplus b$ where $a > 1$. We will first look at a family of intermediate graphs, which are the result of gluing some but not all component snake graphs. Let $\mathcal{H}_i$ denote the effect of gluing all component snake graphs $H_{\ell,k}$ where $a_1 \in [\ell,v]$ and $a_i \in [k,v]$, with $2 \leq i \leq r$. We call each $\mathcal{H}_i$ a \emph{branch snake graph}. The center and right graphs in Figure \ref{fig:restrictToBranch} are the two branch snake graphs for the snake graph $\mathcal{G}_3$ built in Example \ref{ex:G3}.

Recall that, for a vertex $v$ with $r \leq 2$, Lemma \ref{lem:r1valence} listed all possible valences of vertices of $\mathcal{G}_v$. When $r > 2$, this lemma is true for all vertices except $v_\fix$, which has valence $(r - 1) \oplus 0$. From this observation, we have that Proposition \ref{prop:OldCond4Singleton_rsmall} is also true for $r > 2$.

\begin{lemma}\label{lem:OldCond4Singleton_rlarge}
Let  $P$ be an admissible matching of $\mathcal{G}_v$ where $r > 2$. Let $e_1$ and $e_2$ be two internal edges which both share vertices with diagonals $d_1$ and $d_2$. Then, $e_1$ and $e_2$ are either both in $P$ or neither are in $P$.     
\end{lemma}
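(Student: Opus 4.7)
The plan is to follow essentially the same strategy used in Proposition \ref{prop:OldCond4Singleton_rsmall}, since for $r>2$ the configuration of internal edges in $\mathcal{G}_v$ differs from the $r\leq 2$ case only at the distinguished vertex $v_\fix$, which carries valence $(r-1)\oplus 0$.

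First, I would classify the pairs of diagonals $(d_1,d_2)$ that can share vertices with two distinct internal edges of $\mathcal{G}_v$. Inspecting the construction in Section \ref{sec:construction}, these pairs fall into two types:
\begin{enumerate}
    \item[(i)] $d_1=I_j^i$ and $d_2=I_{j+1}^i$ for some $1\leq i\leq r$ and $1\leq j<c_i$;
    \item[(ii)] $d_1=I_1^i$ and $d_2=I_1^{i'}$ for distinct $i,i'\in\{1,\ldots,r\}$, in which case $v_\fix$ is incident to both diagonals.
\end{enumerate}

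For Type (ii), I would argue the claim is vacuous: every edge weighted $Y_{I_v}$ appearing in a component snake graph $H_{\ell,k}$ gets identified into a single hyperedge during the construction of $\mathcal{G}_v$, so there is at most one internal edge shared between $I_1^i$ and $I_1^{i'}$.

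For Type (i), the argument is essentially identical to the one given in Proposition \ref{prop:OldCond4Singleton_rsmall}. The relevant internal edges are precisely those incident to the vertex $a_j^i\in B_{j+1}^i$ (coming from gluing edges from distinct $H_{\ell,k}$), and Condition (\ref{admissibilityCondition4}) of Definition \ref{def:AdmissibleMatching} applied to $a_j^i$ together with its neighbors immediately forces all of these edges into $P$ or none. Suppose for contradiction that a proper subset is in $P$. Then some vertex of $B_{j+1}^i$ must be covered by another edge; the hyperedge $Y_{I_{j+1}^i}$ is forbidden (it would exceed the valence of vertices already covered by internal edges), and the boundary edges $(v,B_{j+1}^i)$ are forbidden by Condition (\ref{admissibilityCondition3}). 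The only remaining option is to use internal edges between $B_{j+1}^i$ and $B_{j+2}^i$ (when $j<c_i-1$), which begins a cascading violation ultimately producing an uncoverable vertex near $B_{c_i}^i$, exactly as in the $r\leq 2$ case.

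The main subtlety — what I expect to be the hardest thing to verify — is that the cascading step transfers unchanged to $r>2$. This works because all the edges and vertices involved in the cascade live inside a single branch snake graph $\mathcal{H}_i$, and along this cascade the valence of every vertex coincides with the valence it would have had in the $r\leq 2$ setting (i.e.\ the $v_\fix$ vertex, which is the only place the valence formula of Lemma \ref{lem:r1valence} changes, is never encountered). With that verification, the proof of Proposition \ref{prop:OldCond4Singleton_rsmall} applies verbatim to complete the argument.
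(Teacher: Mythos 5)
Your proof is correct and follows essentially the same route as the paper's: the paper's proof simply observes that the only structural change from the $r\leq2$ case is the vertex $v_\fix$ with valence $(r-1)\oplus 0$, that no internal edges are incident to $v_\fix$, and that therefore the argument of Proposition~\ref{prop:OldCond4Singleton_rsmall} carries over unchanged. Your version is more explicit — classifying the relevant diagonal pairs, dispensing with the $(I_1^i,I_1^{i'})$ pairs by the uniqueness of the $Y_{I_v}$ hyperedge, and spelling out why the cascade never encounters $v_\fix$ — but the underlying idea is identical.
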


\begin{proof}
The only difference between this setting and a snake graph $\mathcal{G}_v$ where $r \leq 2$ is that there is a vertex, $v_\fix$, with valence $(r -1) \oplus 0$. There are no internal edges incident to $v_\fix$ and the same arguments as for the $r \leq 2$ will hold here. 
\end{proof}

Along the lines of  \Cref{lem:r1admissible}, we will show that admissible matchings of $\mathcal{H}_i$ are in weight-preserving bijection with the set of perfect matchings of a snake graph.

\begin{lemma}\label{lem:DecomposeAdimssibleBranch}
Let $\tilde{\mathcal{G}}_v^i$ be the snake graph of the same shape as that in Figure \ref{fig:req2_snakegraph_simplify} and with weights given by replacing all superscript ``2"s with ``$i"s$ and all ``$c_2$"s with ``$c_i$"s. The   set of admissible matchings of $\mathcal{H}_i$ is in weight-preserving bijection with the set of perfect matchings of $\tilde{\mathcal{G}}_{v}^i$.
\end{lemma}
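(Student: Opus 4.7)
The plan is to imitate the proof of \Cref{lem:r1admissible}, viewing the branch snake graph $\mathcal{H}_i$ as a ``local $r=2$ snake graph'' built from the two distinguished neighbors $a_1$ and $a_i$ of $v$ (together with whatever edges into $\tilde a$ are inherited from the $v_{\opt}$ gluing). The first step is to establish a branch analogue of \Cref{lem:r1valence}: I will show that within $\mathcal{H}_i$, every vertex has valence $1\oplus 0$ except (i) the vertex $v_{\opt}$, which has valence $1\oplus(p-2)$ because the edges to the vertices $a_j$ with $j>r$ appear in every component snake graph $H_{\ell,k}$ being glued here and therefore survive in $\mathcal{H}_i$ exactly as in the $r=2$ case; (ii) the vertices $a_j^1$ incident to diagonal $I_{j+1}^1$ and the vertices $a_j^i$ incident to diagonal $I_{j+1}^i$, which inherit valence $1\oplus(\deg_{\Gamma'}(a_j^1)-2)$ and $1\oplus(\deg_{\Gamma'}(a_j^i)-2)$ respectively by the same counting argument as in the single-branch case; and (iii) the leaf vertices $a_1$ and $a_i$, which likewise have valences $1\oplus(\deg_{\Gamma'}(a_1)-2)$ and $1\oplus(\deg_{\Gamma'}(a_i)-2)$. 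The crucial point is that the vertex $v_{\fix}$ of the full graph $\mathcal{G}_v$ is created only when one identifies the distinguished vertex $v$ across two or more different branches; inside a single $\mathcal{H}_i$ this identification has not yet happened, so the corresponding vertex sits at the end of the zigzag with valence $1\oplus 0$, exactly as in \Cref{fig:req2_snakegraph_simplify}.

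Second, I will invoke \Cref{lem:OldCond4Singleton_rlarge} (which is already proved for $r>2$) to conclude that internal edges sharing the same pair of diagonals must appear together in any admissible matching of $\mathcal{H}_i$. This is the only globally-valid admissibility consequence I need; all remaining constraints are local.

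Third, I will run the four-part case analysis (a)--(d) in the proof of \Cref{lem:r1admissible} vertex-by-vertex around each $B_j^i$, and then the two endpoint analyses around $B_{c_i}^i$ and $B_1^i$, with branch $i$ replacing branch $2$. Every step uses only the local valence data established above together with Conditions (\ref{admissibilityCondition3}) and (\ref{admissibilityCondition4}) of \Cref{def:AdmissibleMatching}, so these arguments transfer verbatim to $\mathcal{H}_i$. The conclusion is that admissible matchings of $\mathcal{H}_i$ are in bijection with perfect matchings of the zigzag snake graph $\tilde{\mathcal{G}}_v^i$. Weight preservation is then a direct computation identical to the one at the end of \Cref{lem:r1admissible}: boundary edges of $\tilde{\mathcal{G}}_v^i$ carry either a single $Y_{I_j^*}$ weight or a product of weight-$1$ edges with a hyperedge; the edge $(a_i,B_{c_i}^i)$ multiplies out to $A_{c_i}^i=\prod_{w\in\mathcal{C}_{a_i}}Y_{I_w}$; and each packet $(B_j^i,B_{j+1}^i)$ splits into the three types (weight $1$, weight $Y_{I_w}$ for $w\in\mathcal{C}_{a_j^i}\setminus\{a_{j+1}^i\}$, and weight $Y_{C_{i,j}^{\ell,k}}$) whose product across all $\ell\in A$ is exactly $A_j^i$, by the same union-of-components argument.

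The main obstacle I anticipate is verifying claim (i) cleanly, namely that $v_{\opt}$ in $\mathcal{H}_i$ really does end up incident to the entire $B_1^i\cup\tilde a$ vertex packet displayed in \Cref{fig:req2_snakegraph_simplify}. This requires carefully tracking, for each pair $(\ell,k)$ with $a_1\in[\ell,v]$ and $a_i\in[k,v]$, which edges of $H_{\ell,k}$ are adjacent to the corner $v$ that becomes $v_{\opt}$, and confirming that the edges to $\tilde a$ and to $B_1^i$ glue across all such pairs to form a single hyperedge in $\mathcal{H}_i$. Once this bookkeeping is done, the entire case analysis of \Cref{lem:r1admissible} transfers without further modification, and the weight-preserving bijection with $\tilde{\mathcal{G}}_v^i$ follows.
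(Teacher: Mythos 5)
Your proposal has a genuine gap in the valence computation for $v_\opt$, and the error would be fatal to the argument. You claim $v_\opt$ has valence $1\oplus(p-2)$ in $\mathcal{H}_i$ ``because the edges to the vertices $a_j$ with $j>r$ appear in every component snake graph $H_{\ell,k}$ being glued here.'' This is false. By definition, $\mathcal{H}_i$ is the gluing of exactly those $H_{\ell,k}$ with $a_1 \in [\ell,v]$ and $a_i \in [k,v]$ for the \emph{fixed} index $i \leq r$. Since $\Gamma'$ is a tree, the path $[\ell,k]$ passes through exactly two neighbors of $v$, namely $a_1$ and $a_i$; so no $a_j$ with $j \neq 1,i$ --- in particular no $a_j$ with $j > r$ --- lies on any such path, and the vertices $a_j$ (hence the edges $(v_\opt,a_j)$) simply do not exist in $\mathcal{H}_i$. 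The correct valence of $v_\opt$ in $\mathcal{H}_i$ is therefore $1\oplus 0$: in every glued component it is adjacent to the same pair of edges, weighted $Y_{I_2^1}$ and $Y_{I_1^i}$, which fuse into two (hyper)edges after gluing. (The paper's proof makes exactly this observation, for both $v_\opt$ and $v_\fix$ simultaneously.)

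This matters: if $v_\opt$ really had valence $1\oplus(p-2)$ while being incident to only two edges, an admissible matching could use both $Y_{I_2^1}$ and $Y_{I_1^i}$ at $v_\opt$ (Condition~(\ref{admissibilityCondition1}) does not forbid it since $I_2^1$ and $I_1^i$ are incomparable), giving spurious matchings with no counterpart in $\tilde{\mathcal{G}}_v^i$ and breaking the claimed bijection. Ironically, the intuition you invoked for $v_\fix$ --- ``inside a single $\mathcal{H}_i$ this identification has not yet happened'' --- is precisely the observation you needed for $v_\opt$ as well: the extra valence of $v_\opt$ in $\mathcal{G}_v$ comes entirely from gluing across different branches, and within $\mathcal{H}_i$ no such cross-branch gluing occurs. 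Also note that the label ``$B_1^2\cup\tilde a$'' in Figure~\ref{fig:req2_snakegraph_simplify} is merely a name for a vertex of $\tilde{\mathcal{G}}_v^i$, not a claim that $\tilde a$ lies inside $\mathcal{H}_i$; the bijection only depends on the zigzag shape and the edge weights, and in $\mathcal{H}_i$ the edge weighted $Y_{I_v}$ is incident to $B_1^1 \cup B_1^i$ only. Once the valence of $v_\opt$ is corrected to $1\oplus 0$, the rest of your plan --- invoking Lemma~\ref{lem:OldCond4Singleton_rlarge} and running the local case analysis of Lemma~\ref{lem:r1admissible} --- goes through, and is in fact simpler than the $r=2$ case of $\mathcal{G}_v$ since there is no high-valence vertex to handle.
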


\begin{proof}
For vertices other than $v_\fix$ and $v_\opt$, the same logic as Lemma \ref{lem:r1valence} holds for considering the valence of vertices in $\mathcal{H}_i$. The vertices $v_\fix$ and $v_\opt$ will have valence $1 \oplus 0$ in $\mathcal{H}_i$ since each is adjacent to the same edges in all the component snake graphs $H_{\ell,k}$ which glue together to form $\mathcal{H}_i$.
Then, we can apply similar reasoning as in the proof of Lemma \ref{lem:r1admissible} for the $r = 2$ case.

\end{proof}

We will now compare admissible matchings of the snake graph $\mathcal{G}_v$ to matchings of the branch snake graphs $\mathcal{H}_i$. It will not always be true that an admissible matching of $\mathcal{G}_v$ will decompose to admissible matchings of each $\mathcal{H}_i$. For example, in Figure \ref{fig:restrictToBranch}, we give an admissible matching of the snake graph $\mathcal{G}_3$ from Example \ref{ex:G3} and its restrictions to its two branch snake graphs, $\mathcal{H}_2$ and $\mathcal{H}_3$. We notice that the restriction to $\mathcal{H}_3$ is an admissible matching of $\mathcal{H}_3$ but the restriction to $\mathcal{H}_2$ is not a matching. 

\begin{figure}
    \centering
    \begin{tikzpicture}[scale=0.5]
	\begin{pgfonlayer}{nodelayer}
		\node [style=label] (0) at (-6.75, 5) {$3$};
		\node [style=label] (1) at (-2, 5) {$4$};
		\node [style=label] (2) at (-6.75, 0) {$8'$};
		\node [style=label] (3) at (-2, 1) {$9$};
		\node [style=label] (4) at (-2.75, 0) {$7'$};
		\node [style=label] (5) at (-3.5, -1) {$0'$};
		\node [style=label] (6) at (-1.25, 2) {$1'$};
		\node [style=label] (7) at (-1.25, 6) {$1$};
		\node [style=label] (8) at (2.5, 6) {$2$};
		\node [style=label] (9) at (2.5, 2) {$5$};
		\node [style=label] (10) at (-6.75, -4.25) {$8$};
		\node [style=label] (11) at (-1.25, -4.25) {$3$};
		\node [style=none] (12) at (0.25, 4.5) {};
		\node [style=label-s] (13) at (-4.5, 2.5) {$I_4$};
		\node [style=label-s] (14) at (-2.75, 2) {$I_7$};
		\node [style=none] (15) at (-2, -2) {};
		\node [style=label-s] (16) at (-5.5, 0) {$I_3$};
		\node [style=label-s] (18) at (-4, -2) {$I_8$};
		\node [style=none] (21) at (-1.75, -2.75) {};
		\node [style=label] (26) at (0.25, -4.25) {$=v_{{opt}}$};
		\node [style=label-s] (27) at (-3, 4.5) {$1\oplus 1$};
		\node [style=label-s] (28) at (-4.25, 3.5) {$I_1$};
		\node [style=label-s] (29) at (1, 4.5) {$I_2$};
		\node [style=label-s] (30) at (-2, -5) {$1\oplus 1$};
		\node [style=label-s] (31) at (-7.5, 5.5) {$2\oplus 0$};
		\node [style=label] (32) at (5.5, 4.25) {$3$};
		\node [style=label] (34) at (5.5, 0.5) {$8'$};
		\node [style=label] (38) at (9.25, 0.5) {$1'$};
		\node [style=label] (39) at (9.25, 4.25) {$1$};
		\node [style=label] (40) at (12.75, 4.25) {$2$};
		\node [style=label] (41) at (12.75, 0.5) {$5$};
		\node [style=label] (42) at (5.5, -3.25) {$8$};
		\node [style=label] (43) at (9.25, -3.25) {$3$};
		\node [style=label-s] (48) at (7, 0.5) {$I_3$};
		\node [style=label-s] (49) at (7.5, -1.5) {$I_8$};
		\node [style=label-s] (57) at (7.5, 2.75) {$I_1$};
		\node [style=label-s] (58) at (11.5, 2.75) {$I_2$};
		\node [style=label] (61) at (17, 4.75) {$3$};
		\node [style=label] (62) at (22.25, 4.75) {$4$};
		\node [style=label] (63) at (17, 0.5) {$8'$};
		\node [style=label] (64) at (22.25, 1.5) {$9$};
		\node [style=label] (65) at (21.5, 0.5) {$7'$};
		\node [style=label] (66) at (20.75, -0.5) {$0'$};
		\node [style=label] (71) at (17, -3.75) {$8$};
		\node [style=label] (72) at (22.25, -3.75) {$3$};
		\node [style=label-s] (74) at (19.5, 2.75) {$I_4$};
		\node [style=none] (75) at (21.5, 2.25) {};
		\node [style=none] (76) at (22.25, -2) {};
		\node [style=label-s] (77) at (18.5, 0.5) {$I_3$};
		\node [style=label-s] (78) at (19.75, -1.75) {$I_8$};
		\node [style=none] (79) at (22.25, -2.5) {};
		\node [style=label-s] (85) at (22, 5.25) {$1\oplus 1$};
		\node [style=label-s] (89) at (17, 5.25) {$2\oplus 0$};
	\end{pgfonlayer}
	\begin{pgfonlayer}{edgelayer}
		\draw (0) to (1);
		\draw [style=m1] (1) to (3);
		\draw [style=m1] (0) to (2);
		\draw (7) to (8);
		\draw [style=m1] (8) to (9);
		\draw [style=e1] (9) to (6);
		\draw (6) to (7);
		\draw [style=m1] (0) to (7);
		\draw (2) to (10);
		\draw [style=m1] (10) to (11);
		\draw [style=m1] (11) to (6);
		\draw [style=e3] (2) to (16);
		\draw [style=e3, in=-150, out=0, looseness=0.75] (16) to (3);
		\draw [style=e3, in=-180, out=0] (16) to (4);
		\draw [style=e3, in=150, out=0, looseness=0.50] (16) to (5);
		\draw [style=e3, in=210, out=0, looseness=0.75] (16) to (6);
		\draw [style=e4, in=-75, out=105] (15.center) to (4);
		\draw [style=e4, in=-30, out=105] (15.center) to (5);
		\draw [style=m1] (14) to (4);
		\draw [style=m1] (14) to (5);
		\draw [style=d4] (0) to (13);
		\draw [style=d4, in=-180, out=-45, looseness=0.75] (13) to (3);
		\draw [style=d4] (13) to (4);
		\draw [style=d4, in=90, out=-45, looseness=0.75] (13) to (5);
		\draw [style=d8] (2) to (18);
		\draw [style=d8] (18) to (11);
		\draw [style=d2, in=0, out=135, looseness=0.75] (12.center) to (1);
		\draw [style=e4, in=270, out=105] (15.center) to (3);
		\draw (21.center) to (15.center);
		\draw [style=e4] (21.center) to (11);
		\draw (21.center) to (15.center);
		\draw [style=d1] (0) to (6);
		\draw [style=m1] (14) to (1);
		\draw [style=d2] (7) to (12.center);
		\draw [style=d2] (12.center) to (9);
		\draw [style=m1] (32) to (34);
		\draw (39) to (40);
		\draw [style=m1] (40) to (41);
		\draw [style=e1] (41) to (38);
		\draw (38) to (39);
		\draw [style=m1] (32) to (39);
		\draw (34) to (42);
		\draw [style=m1] (42) to (43);
		\draw [style=m1] (43) to (38);
		\draw [style=e3] (34) to (48);
		\draw [style=e3] (48) to (38);
		\draw [style=d8] (34) to (49);
		\draw [style=d8] (49) to (43);
		\draw [style=d1] (32) to (38);
		\draw (61) to (62);
		\draw [style=m1] (62) to (64);
		\draw [style=m1] (61) to (63);
		\draw (63) to (71);
		\draw [style=m1] (71) to (72);
		\draw [style=e3] (63) to (77);
		\draw [style=e3, in=-150, out=0, looseness=0.75] (77) to (64);
		\draw [style=e3, in=-180, out=0] (77) to (65);
		\draw [style=e3, in=150, out=0, looseness=0.75] (77) to (66);
		\draw [style=e4, in=-60, out=105] (76.center) to (65);
		\draw [style=e4, in=-30, out=105] (76.center) to (66);
		\draw [style=m1] (75.center) to (65);
		\draw [style=m1] (75.center) to (66);
		\draw [style=d4] (61) to (74);
		\draw [style=d4, in=-180, out=-45, looseness=0.75] (74) to (64);
		\draw [style=d4] (74) to (65);
		\draw [style=d4, in=90, out=-45, looseness=0.75] (74) to (66);
		\draw [style=d8] (63) to (78);
		\draw [style=d8] (78) to (72);
		\draw [style=e4] (76.center) to (64);
		\draw (79.center) to (76.center);
		\draw [style=e4] (79.center) to (72);
		\draw (79.center) to (76.center);
		\draw [style=m1] (75.center) to (62);
		\draw [style=d2] (39) to (41);
	\end{pgfonlayer}
\end{tikzpicture}
    \caption{From left-to-right, we have an admissible matching $P$ of the snake graph $\mathcal{G}_3$ from Example \ref{ex:G3}, the restriction of $P$ to $\mathcal{H}_2$, and the restriction of $P$ to $\mathcal{H}_3$, where we order the neighbors of $3$ as $a_1 = 8, a_2 = 2$ and  $a_3 = 4$.}
    \label{fig:restrictToBranch}
\end{figure}
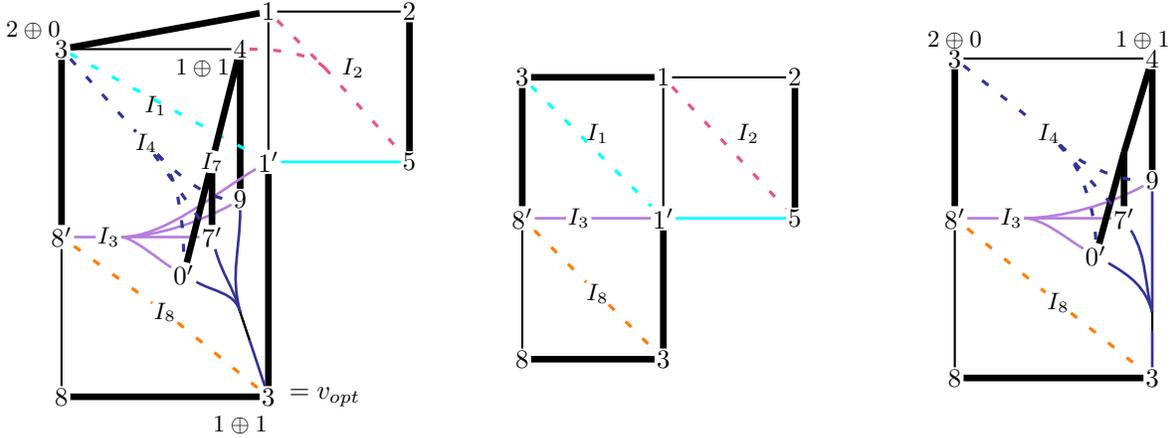
\begin{center}

\end{center}

The reason this fails to be a matching comes from the fact that vertices $v_\fix$ and $v_\opt$ have valence $1 \oplus 0$ in the branch snake graphs. When these restrictions are not matchings, they will have a common form, which we will call the \emph{distinguished non-matching of $\mathcal{H}_i$}.  This set of edges will include the edges weighted $Y_{I_1^i}$ and $Y_{I_2^1}$ incident to the vertex $v$ which lies on diagonal $I_1^1$ and similarly the edges $Y_{I_1^1}$ and $Y_{I_2^i}$ incident to the vertex $v$ on diagonal $I_1^i$, as described above.  Then, for all $2 \leq j \leq c_1$, it will include the boundary edge weighted $Y_{I_{j+1}^1}$ incident to the vertex $v$ on diagonal $I_j^1$ and for $2 \leq j \leq c_i$, it will include the boundary edge weighted $Y_{I_{j+1}^i}$ incident to the vertex $v$ on diagonal $I_j^i$. Finally, this matching includes both dominant edges. The set of edges highlighted in $\mathcal{H}_2$ in Figure \ref{fig:restrictToBranch} is an example of a distinguished non-matching.  The weight of this matching will be $Y_{I_1^1} Y_{I_2^1} \cdots Y_{I_{c_1}^1} Y_{I_1^i} Y_{I_2^i} \cdots Y_{I_{m_i+1}^i}$ so that its weight is equal to $\ell(\mathcal{H}_i)$.

In the following, let $\mathcal{P}_v$ denote the set of admissible matchings of $\mathcal{G}_v$. Let $\mathcal{P}_v^1$ denote the set of admissible matchings of $\mathcal{G}_v$ which restrict to an admissible matching for each $\mathcal{H}_i$. Let $\mathcal{P}_v^i$, $2 \leq i \leq r$, denote the set of admissible matchings which only restrict to an admissible matching of $\mathcal{H}_i$ and restrict to the distinguished non-matching for all $\mathcal{H}_j, j \neq i$. Since $r > 2$, it follows from the definitions that  $\mathcal{P}_v^i$ and $\mathcal{P}_v^j$ are disjoint for $i \neq j$.

\begin{lemma}\label{lem:HowDoGvRestrictToHi}
The sets $\mathcal{P}_v^1,\ldots,\mathcal{P}_v^r$ partition $\mathcal{P}_v$. 
\end{lemma}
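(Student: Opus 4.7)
The plan is to verify two statements: (i) the sets $\mathcal{P}_v^1, \ldots, \mathcal{P}_v^r$ are pairwise disjoint, and (ii) they together cover $\mathcal{P}_v$. Disjointness follows immediately from the definitions, since the distinguished non-matching of any $\mathcal{H}_j$ fails the valence $1 \oplus 0$ at $v_\opt$ and $v_\fix$ in $\mathcal{H}_j$ and therefore is not an admissible matching of $\mathcal{H}_j$. If $P$ lay in $\mathcal{P}_v^i \cap \mathcal{P}_v^j$ with $i \neq j$, its restriction to some $\mathcal{H}_\ell$ would be simultaneously an admissible matching and the distinguished non-matching, a contradiction.

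For (ii), fix $P \in \mathcal{P}_v$ and let $P_i := P \cap E(\mathcal{H}_i)$ for each $2 \leq i \leq r$. The key observation is that the only vertices of $\mathcal{G}_v$ whose valence differs from that of the corresponding vertex in any branch snake graph $\mathcal{H}_i$ containing it are $v_\opt$ and $v_\fix$: a vertex on the shared $a_1$-side has the same incident edges in every $\mathcal{H}_i$, while a vertex on an $a_i$-side for $i \geq 2$ lies in only one branch. Hence each restriction $P_i$ automatically satisfies every valence and local admissibility condition of $\mathcal{H}_i$ except possibly at these two distinguished vertices, so it remains to analyze them.

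The core of the argument is a case analysis at $v_\fix$ and $v_\opt$. The vertex $v_\fix$ has valence $(r-1) \oplus 0$ in $\mathcal{G}_v$ and exactly $r$ incident edges: the shared edge of weight $Y_{I_1^1}$ that appears in every $\mathcal{H}_i$, together with the $r-1$ branch-specific edges of weight $Y_{I_2^i}$ for $2 \leq i \leq r$. Exactly one of these is omitted in $P$: either the shared edge (Case~A) or one branch-specific edge $Y_{I_2^k}$ (Case~B with parameter $k$). A parallel dichotomy holds at $v_\opt$. In Case~A at both vertices, each $P_i$ has valence $1 \oplus 0$ at $v_\opt$ and $v_\fix$ and therefore is an admissible matching of $\mathcal{H}_i$, so $P \in \mathcal{P}_v^1$. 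In Case~B with parameter $k$ at both vertices, $P_k$ has valence $1$ at each of $v_\opt, v_\fix$ while every $P_i$ with $i \neq k$ has valence $2$ at both; these excess edges are precisely the ``corner'' edges of the distinguished non-matching of $\mathcal{H}_i$, and propagating them along the zigzag of $\mathcal{H}_i$ via Condition~(4) of Definition~\ref{def:AdmissibleMatching} (as in the proof of Lemma~\ref{lem:r1admissible}, using Lemma~\ref{lem:OldCond4Singleton_rlarge} and the zigzag shape from Proposition~\ref{prop:ConsistentSize}) shows $P_i$ coincides with the distinguished non-matching, placing $P \in \mathcal{P}_v^k$.

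The main obstacle is ruling out ``hybrid'' configurations, e.g.\ Case~A at $v_\fix$ together with Case~B at $v_\opt$, or Case~B at both vertices but with different omitted branches. Such a hybrid would produce some $P_i$ with valence $1$ at one of $v_\opt, v_\fix$ and valence $2$ at the other, a combination that is incompatible with both an admissible matching of $\mathcal{H}_i$ and the distinguished non-matching, yet the cascade of Condition~(4) forces one of these two outcomes. Thus the analyses at $v_\opt$ and $v_\fix$ must agree on the case (and on the omitted branch when in Case~B), which gives the partition of $\mathcal{P}_v$ into the sets $\mathcal{P}_v^1, \ldots, \mathcal{P}_v^r$.
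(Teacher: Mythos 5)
Your plan — partition according to the configuration at $v_\fix$ and $v_\opt$, then propagate along each branch — matches the paper's strategy in broad outline. The disjointness observation is fine (and is stated in the paper immediately before the lemma), and the observation that $v_\fix$ and $v_\opt$ are the only vertices whose valences differ between $\mathcal{G}_v$ and $\mathcal{H}_i$ is correct. However, the proof has a genuine gap at the step where you claim ``a parallel dichotomy holds at $v_\opt$.''

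The dichotomy at $v_\fix$ is clean because $v_\fix$ has valence $(r-1)\oplus 0$ with exactly $r$ incident edges, so exactly one is omitted. But $v_\opt$ has valence $1 \oplus (\deg_{\Gamma'}(v)-2)$ with $p = \deg_{\Gamma'}(v)$ incident edges, so $P$ may omit anywhere from $1$ to $p-1$ of them. Your ``Case A / Case B with parameter $k$'' framework implicitly assumes $v_\opt$ also omits exactly one of the $r$ relevant edges $Y_{I_2^1}, Y_{I_1^2}, \ldots, Y_{I_1^r}$, but nothing in the valence alone forces this: a priori $P$ could omit $Y_{I_2^1}$ and $Y_{I_1^2}$ simultaneously at $v_\opt$, giving a restriction to $\mathcal{H}_2$ with $v_\opt$ uncovered, which is neither an admissible matching nor the distinguished non-matching. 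Your hybrid-elimination paragraph asserts the Condition~(4) cascade rules this out, but that is precisely the nontrivial content to be proved; as written it is circular.

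The paper's proof pins down the dichotomy only at $v_\fix$ and then \emph{derives} the configuration at $v_\opt$ through contradiction arguments that hinge on the edge $Y_{I_v}$ and the boundary edges $(v_\opt, a_j)$ for $j > r$ — none of which appear in your sketch. Concretely: once any $Y_{I_1^i}$ is in $P$ at $v_\opt$, the edge $Y_{I_v}$ cannot be (its endpoints have valence $1\oplus 0$), so all $(v_\opt, a_j)$ for $j > r$ are forced, and a counting argument at $v_\opt$ then limits which of the remaining $r$ edges can be omitted. The contradiction for the ``wrong'' omission patterns is then obtained via local pictures around $B_1^i$ (and $B_2^1$ in the other case), using valence considerations and Condition~(3) to show some set of vertices cannot be covered. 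These explicit arguments — in particular the role of $Y_{I_v}$ as the linchpin constraining $v_\opt$ — are what your proof is missing.
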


\begin{proof}

We have noted that the vertices $v_\opt$ and $v_\fix$ are the only vertices which have different valences in $\mathcal{G}_v$ and $\mathcal{H}_i$. In $\mathcal{H}_i$, the vertex $v_{\fix}$ is incident to boundary edges weighted $Y_{I_1^1}$ and $Y_{I_2^i}$, the vertex $v_\opt$ is incident to boundary edges weighted $Y_{I_2^1}$ and $Y_{I_1^i}$, and both vertices have valence $1 \oplus 0$. Now, we turn to $\mathcal{G}_v$.
The vertex $v_\fix$ is incident to $r$ edges, with weights $Y_{I_1^1}, Y_{I_2^2}, Y_{I_2^3},\ldots, Y_{I_2^r}$ and has valence $(r-1) \oplus 0$. The vertex $v_\opt$ has valence $(1 \oplus \deg_{\Gamma'}(v)-2)$ and is incident to $\deg_{\Gamma'}(v)$ edges, where $r$ have weights $Y_{I_2^1},Y_{I_1^2},Y_{I_1^3},\ldots,Y_{I_1^r}$ and the remaining $\deg_{\Gamma'}(v)-r$ are of the form $(v_\opt,a_i)$ for $r+1 \leq i \leq \deg_{\Gamma'}(v)$.  

Note that every admissible matching of $\mathcal{G}_v$ must include all but one edge incident to $v_\fix$. We will show that $\mathcal{P}_v^1$ is the set of admissible matchings which does not contain the edge $Y_{I_1^1}$ incident to $v_\fix$ and $\mathcal{P}_v^i$ is the set of admissible matchings which does not use the edge $Y_{I_2^i}$ incident to $v_\fix$.

Suppose first that a matching $P$ does not contain the edge $Y_{I_1^1}$ incident to $v_\fix$. Then, it must contain all edges in the set $\{Y_{I_2^i}| 2 \leq i \leq r\}$ which are incident to $v_\fix$. We claim any such matching will restrict to an admissible matching on each $\mathcal{H}_i$. We can show this by showing that we will not use both the edge weighted $Y_{I_2^1}$ incident to $v_\opt$ and any edge $Y_{I_1^i}$ for $ 2 \leq i \leq r$ since this is the only other vertex whose valence is different in $\mathcal{G}_v$ and $\mathcal{H}_i$. 

Suppose for sake of contradiction that the edges from $P$ incident to $v_\opt$ consists of the edge $Y_{I_2^1}$  and some nonempty subset of the edges $\{Y_{I_1^i}\vert 2 \leq i \leq r\}$. Once the set contains at least one edge $Y_{I_1^i}$ incident to $v_\opt$, we know it cannot contain the edge $Y_{I_v}$ because all vertices incident to $I_v$ have valence $1 \oplus 0$.  Therefore, the matching must include all edges $(v_\opt,a_i)$ for $r+1 \leq i \leq \deg_{\Gamma'}(v)$. This means that there must also be at least one value $1 \leq i \leq r$ such that the edge $Y_{I_1^i}$ incident to $v_\opt$ is not in $P$.

Suppose without loss of generality that this edge is $Y_{I_1^2}$. As in Section \ref{subsec:ris1}, let $B_j^i = N_{\Gamma'}(I_j^i) \backslash \{v\} = \{b_{i,j}^{\ell,k} : \ell \in A, k \in B\}$ for $1 \leq j \leq c_1$.   Now, we claim that there is no way to find an edge in $P$ which is incident to any vertex in $B_1^2$. The local picture is as follows.

\begin{center}
\begin{tikzpicture}[scale=2]
\node(vopt) at (0,0){$v_\opt$};
\node(b12) at (1,0){$B_1^2$};
\node(b22) at (2,0){$B_2^2$};
\node(v) at (1,1){$v$};
\node(vfix) at (2,-1){$v_\fix$};
\node(b11) at (1,-1){$B_1^1$};
\draw (vopt) to node[above]{$Y_{I_1^2}$} (b12);
\draw(b12) -- (b22);
\draw(b12) to node[left]{$Y_{I_1^2}$} (v);
\draw(b12) to node[right]{$Y_{I_v}$} (b11);
\draw(vfix) to node[right]{$Y_{I_2^2}$} (b22);
\draw[dashed] (v) to node[right]{$I_2^2$} (b22);
\draw[dashed] (b12) to node[right]{$I_1^2$} (vfix);
\draw (vfix) to node[below]{$Y_{I_1^1}$} (b11);
\draw[dashed] (vopt) to node[right]{$I_1^1$}(b11);
\end{tikzpicture}
\end{center}

We already assume we cannot use the edge between $v_\opt$ and $B_1^2$, and we have shown we cannot use the edge $Y_{I_v}$. We claim we cannot use any of the edges between the sets $B_2^2$ and $B_1^2$. Note that we already use the edge between $v_\fix$ and $B_2^2$, and if we included one edge between $B_2^2$ and $B_1^2$, by Lemma \ref{lem:OldCond4Singleton_rlarge} we would need to include all such edges. We can immediately see that this configuration would violate the valence of at least one vertex in $B_2^2$ unless $B_2^2 = \{a_1^2\}$ and the valence of $a_1^2$ is $1 \oplus a$ for $a > 0$. However, from the proof of Lemma \ref{lem:r1valence} we know that $a_1^2$ is incident to $a+1$ distinct edges of the form $(a_1^2, B_1^2)$, so that including all of these edges plus the edge $(v_\fix, B_2^2)$ would again violate the valence of $a_1^2$.

 Finally, we cannot use the edge incident to $B_1^2$ which has weight $Y_{I_1^2}$ and is incident to a vertex $v \neq v_\opt$ because, since we use the edge $Y_{I_2^2}$ incident to $v_\fix$, this would violate Condition~(\ref{admissibilityCondition3}) in Definition \ref{def:AdmissibleMatching}. In conclusion, when we know that $P$ contains the edges incident to $v_\fix$ labeled $I_2^2,\ldots,I_r^2$, then $P$ will restrict to an admissible matching on all $\mathcal{H}_i$. 
 

 Next, suppose that the edge with weight $Y_{I_1^1}$ incident to $v_\fix$ is in the matching $P$. Then one edge $Y_{I^i_2}$ must not be in the matching; suppose without loss of generality this is $I_2^2$. Now, we claim that $P$ restricts to the distinguished non-matching on each $\mathcal{H}_i$ for $i \geq 3$ and $P$ restricts to an admissible matching of $\mathcal{H}_2$. This will be equivalent to showing that $P$ contains all edges incident to $v_\opt$ except the edge $Y_{I_1^2}$. In the following, we assume $c_1 \geq 3$; the claim can be checked directly for small values of $c_1$.

Suppose for sake of contradiction that one of these edges is not in $P$. Since the edge $Y_{I_1^1}$ incident to $v_\fix$ is in $P$, we know that we cannot use the edge $Y_{I_v}$. Thus, we know immediately that we must use all edges $(v_\opt,a_i)$ for $r+1 \leq i \leq \deg_{\Gamma'}(v)$. 

If an edge $Y_{I_1^i}$, $i > 2$, incident to $v_\opt$ is not in $P$, then with a very similar argument to the previous case where edges $Y_{I_2^2},\ldots,Y_{I_2^r}$ incident to $v_\fix$ were in $P$, we can show that there is no way to find an edge in $P$ which is incident to any vertex in $ B_1^3$. So all edges $Y_{I_1^i}$, $i > 2$ must be in $P$.

Finally, suppose that the edge $Y_{I_2^1}$ incident to $v_\opt$ is not in $P$. We claim that there cannot be an edge in $P$ which is incident to $B_2^1$. The local picture is as follows.

\begin{center}
\begin{tikzpicture}[scale=2]
\node(vopt) at (0,0){$v$};
\node(b12) at (1,0){$B_2^1$};
\node(b22) at (2,0){$B_1^1$};
\node(v) at (1,1){$v_\opt$};
\node(vfix) at (2,-1){$v$};
\node(b11) at (1,-1){$B_3^1$};
\node(vreallyfix) at (3,0){$v_\fix$};
\draw (vopt) to node[above]{$Y_{I_2^1}$} (b12);
\draw(b12) -- (b22);
\draw(b12) to node[left]{$Y_{I_2^1}$} (v);
\draw(b12) to (b11);
\draw(vfix) to node[right]{$Y_{I_1^1}$} (b22);
\draw[dashed] (v) to node[right]{$I_1^1$} (b22);
\draw[dashed] (b12) to node[right]{$I_2^1$} (vfix);
\draw (vfix) to node[below]{$Y_{I_3^1}$} (b11);
\draw[dashed] (vopt) to node[right]{$I_3^1$}(b11);
\draw (b22) to node[above]{$Y_{I_1^1}$} (vreallyfix);
\end{tikzpicture}
\end{center}

First, note that since the edge $Y_{I_1^1}$ incident to $v_\fix$ is in $P$, we must include the edge $Y_{I_1^3}$ incident to the vertex $v$ on diagonal $I_2^1$. The other edge incident to this vertex would create a valence violation at $B_1^1$ as all vertices in $B_1^1$ have valence  $1 \oplus 0$. Therefore, $P$ cannot contain the edge $Y_{I_2^1}$ which is incident to $B_2^1$ and the vertex $v$ on diagonal $I_3^1$ by Condition~(\ref{admissibilityCondition3}) of Definition \ref{def:AdmissibleMatching}.  We cannot use any edge between $B_2^1$ and $B_1^1$ because this would also create a valence violation for vertices in $B_1^1$. We also claim we cannot use any edges  between $B_3^1$ and $B_2^1$. As in the previous case, by Lemma \ref{lem:OldCond4Singleton_rlarge}, we would need to include all internal edges which share vertices with diagonals $I_3^1$ and $I_2^1$ and this would still create a valence violation since we also have included  the edge $Y_{I_3^1}$. Therefore, we have shown that when an admissible matching does not contain $Y_{I_1^i}$,then it will restrict to an admissible matching of $\mathcal{H}_i$ and the distinguished non-matching of $\mathcal{H}_j, j \neq i$.

\end{proof}

We include the following corollary to Lemma \ref{lem:HowDoGvRestrictToHi} which will be useful in Section \ref{sec:GlueProof}.
Recall from Proposition \ref{prop:ij-edge} that for $a_i \in N_{\Gamma'}(v)$, a unique boundary edge $(v,a_i)$  is guaranteed to exist in $\mathcal{G}_v$.

\begin{cor}\label{cor:WhichMatchingsAvoidDominant}
For each $a_i \in N_{\Gamma'}(v)$, there is a unique admissible matching of $\mathcal{G}_v$ which does not include the boundary edge $(v,a_i)$.    
\end{cor}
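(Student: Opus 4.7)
The overall strategy is a two-level case split: first by whether $r \leq 2$ or $r > 2$, and then by whether $a_i <_{\mathcal I} v$ or $a_i >_{\mathcal I} v$. In every case, the plan is to pull the uniqueness claim back from perfect matchings of a zigzag snake graph via the bijections already established. The underlying combinatorial fact I would extract from Lemma \ref{lem:ZigZagSurfaceMatchings} is this: a zigzag on $d$ tiles has $d+1$ perfect matchings, of which the $d-1$ single-internal-edge matchings each use both dominant edges, while the two boundary-only matchings use exactly one dominant edge each. Hence every dominant edge of the zigzag is avoided by exactly one of its $d+1$ perfect matchings, which is the engine behind all the uniqueness assertions below.

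For $r \leq 2$, Lemma \ref{lem:r1admissible} supplies a weight-preserving bijection between $\mathcal P_v$ and the perfect matchings of the zigzag $\tilde{\mathcal G}_v$. For $1 \leq i \leq r$ the boundary edge $(v, a_i)$ is in direct correspondence with a single dominant edge of $\tilde{\mathcal G}_v$, so the uniqueness is immediate from the zigzag fact. For $r < i \leq p$, the proof of Lemma \ref{lem:r1admissible} already shows that the edges $(v_\opt, a_j)$ with $j > r$ appear in an admissible matching as an all-or-nothing block, opposite the hyperedge $Y_{I_v}$; in $\tilde{\mathcal G}_v$ this block collapses to the single dominant edge at the $\tilde a$-end, so avoiding any one of these edges is equivalent to avoiding that single dominant edge of the zigzag, and uniqueness follows again.

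For $r > 2$, I would use Lemma \ref{lem:HowDoGvRestrictToHi} to decompose $\mathcal P_v = \mathcal P_v^1 \sqcup \cdots \sqcup \mathcal P_v^r$ and apply Lemma \ref{lem:DecomposeAdimssibleBranch} to each branch snake graph $\mathcal H_k$ to reduce to the zigzag $\tilde{\mathcal G}_v^k$. The subcase $r < i \leq p$ goes through essentially verbatim, because the forcing dichotomy at $v_\opt$ between the block of edges $(v_\opt, a_j)$ and the hyperedge $Y_{I_v}$ is unaffected by the presence of $v_\fix$. For $i = 1$, I would observe that $(v, a_1)$ belongs to the distinguished non-matching of every $\mathcal H_k$, so any $P$ avoiding $(v, a_1)$ is forced into $\mathcal P_v^1$; each restriction to $\mathcal H_k$ is then the unique admissible matching of $\mathcal H_k$ avoiding $(v, a_1)$ supplied by the $r \leq 2$ analysis, and I would verify consistency across branches by reading off that this matching must use $Y_{I_2^k}$ rather than $Y_{I_1^1}$ at $v_\fix$, matching the $\mathcal P_v^1$-membership criterion of Lemma \ref{lem:HowDoGvRestrictToHi}.

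The main obstacle is the remaining subcase $2 \leq i \leq r$ with $r > 2$, where $(v, a_i)$ lies only in $\mathcal H_i$ but appears in the distinguished non-matching of $\mathcal H_i$ itself while not appearing in the distinguished non-matching of any $\mathcal H_k$ with $k \neq i$. A priori an avoiding $P$ could therefore belong to either $\mathcal P_v^1$ or $\mathcal P_v^i$, and in both cases its restriction to $\mathcal H_i$ is the unique matching $M$ of $\mathcal H_i$ avoiding $(v, a_i)$ from the $r \leq 2$ analysis. The hard step is to trace the alternating structure of the zigzag $\tilde{\mathcal G}_v^i$ from the forced use of the dominant edge $(v, a_1)$ at the end of $M$ opposite $(v, a_i)$ all the way to $v_\fix$, in order to determine whether $M$ uses $Y_{I_1^1}$ or $Y_{I_2^i}$ there. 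That determination then assigns $P$ to exactly one of $\mathcal P_v^1$ or $\mathcal P_v^i$ by Lemma \ref{lem:HowDoGvRestrictToHi}, and the extension of $M$ to the remaining branches is uniquely forced within that class, pinning $P$ down to a single admissible matching and completing the proof.
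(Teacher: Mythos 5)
Your overall plan mirrors the paper's: split on $r\le 2$ versus $r>2$, use Lemma~\ref{lem:r1admissible} (or Lemma~\ref{lem:DecomposeAdimssibleBranch}) to move to the zigzag, and appeal to the combinatorics of Lemma~\ref{lem:ZigZagSurfaceMatchings}. Your treatment of $r\le 2$, of $r<i\le p$, and of $i=1$ matches the paper closely. The one place you diverge is the subcase $r>2$, $2\le i\le r$, and this is also the one step you leave unfinished. You propose to trace the alternating structure of $\tilde{\mathcal G}_v^i$ from the dominant edge $(v,a_1)$ to $v_\fix$ to determine which boundary edge $M$ uses there, and you correctly identify this as the crux. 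The paper sidesteps the trace entirely: it invokes the observation (established when evaluating $\mathcal P_v^1$ in the proof of Theorem~\ref{thm:main} for singletons) that repeated use of Condition~(\ref{admissibilityCondition3}) forces every matching in $\mathcal P_v^1$ to contain the dominant edge $(v,a_i)$ for each $i>1$. That immediately rules out $\mathcal P_v^1$ as a home for any matching avoiding $(v,a_i)$, so such a matching lies in $\mathcal P_v^i$, where Lemmas~\ref{lem:ZigZagSurfaceMatchings} and~\ref{lem:DecomposeAdimssibleBranch} give exactly one such matching. Your trace, if carried out, does give the right answer ($M$ uses $Y_{I_1^1}$ at $v_\fix$, placing $P$ in $\mathcal P_v^i$), so your route is viable; but as written it stops at ``the hard step,'' and you should either carry out the trace explicitly along the boundary edges of the zigzag (propagating from $(a_1,v)$: each $B^1_j$ is covered by its edge toward the NE end, ending with $(B_1^1,v_\fix)$ of weight $Y_{I_1^1}$) or, more economically, use the paper's observation about $\mathcal P_v^1$ and dominant edges to avoid the trace altogether.
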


\begin{proof}
This is immediate for the case where $r=1$  and $r=2$ by Lemma \ref{lem:r1admissible}. Now, suppose $r > 2$. We first consider the case $1<i \leq r$. Then, a matching which does not include $(v,a_i)$ could only be in the sets $\mathcal{P}_v^1$ and $\mathcal{P}_v^i$. However, in the proof of Lemma \ref{lem:HowDoGvRestrictToHi}, we show that all matchings in $\mathcal{P}_v^1$ include the edge $(v,a_i)$. Combining Lemmas \ref{lem:ZigZagSurfaceMatchings} and \ref{lem:DecomposeAdimssibleBranch} shows that there is exactly one matching in $\mathcal{P}_v^i$ which includes the non-dominant boundary edges incident to diagonal $I_{c_i}^i$ instead and therefore does not include edge $(v,a_i)$.

If $i = 1$, then we can see that every matching in $\mathcal{P}_v^i$ for $i > 1$ contains this boundary edge. Thus, we will similarly get exactly one matching not containing $(v,a_1)$ in $\mathcal{P}_v^1$.

Now, if $r<i\leq p$ then in $\mathcal{G}_v$ this vertex is incident to edge $(a_i,v)$ and another edge with weight $Y_{I_v}$. We also know that this vertex has valence $1 \oplus 0$ so exactly one of these edges will be present in any admissible matching. From the proof of Lemma \ref{lem:HowDoGvRestrictToHi}, we see that there is a unique matching using the edge $Y_{I_v}$, so we are done. 
\end{proof}

The proof of Theorem \ref{thm:main} in the case of $S = \{v\}$ follows from Lemma \ref{lem:HowDoGvRestrictToHi}.

\begin{proof}[Proof of Theorem \ref{thm:main} for $S = \{v\}$]
By Lemma \ref{lem:HowDoGvRestrictToHi}, we can separate the sum in the definition of $\chi(\mathcal{G}_v)$ according to how the various admissible matchings of $\mathcal{G}_v$ restrict on the branch snake graphs. It remains to evaluate the weights of these sums. 

We first evaluate the possible weights of the matchings in $\mathcal{P}_v^1$. Repeated use of Condition~(\ref{admissibilityCondition3}) in Definition \ref{def:AdmissibleMatching} shows that, given a vertex $v$ incident to diagonal $I_j^i$ for $i>1$, the matching can only contain the boundary edge at this vertex with weight $Y_{I_{j+1}^i}$. This includes the dominant edge incident to each diagonal $I_{c_i}^i$. 

 None of the edges specified so far are incident to any diagonal $I_j^1$. There are in fact no restrictions on which edges in this subgraph can or cannot be included in the matching. Therefore, we can apply the reasoning of Lemma \ref{lem:r1admissible} to list all possible ways the matching can look for this section. Since the contributions for the edges outside this subgraph  cancel with the part of the denominator coming from diagonals outside this subgraph, the sum of the weights of such matchings again follows from Lemmas \ref{lem:r1admissible} and \ref{lem:ZigZagSurfaceMatchings}. Thus, we have 
\[
\frac{1}{\ell(\mathcal{G}_v)}\sum_{P \in \mathcal{P}_v^1} wt(P) = \frac{Y_{I_v}}{Y_{I_1^1}\cdots Y_{I_1^r}} + \sum_{j=1}^{c_1} \frac{A_j^1}{Y_{I_j^1}Y_{I_{j+1}^1}}.
 \]

Now, consider $\mathcal{P}_v^i$ for $2 \leq i \leq r$. Since any matching in $\mathcal{P}_v^i$ restricts to the distinguished non-matching on $\mathcal{H}_j$ for $j \geq 2, j \neq i$, we can simply consider the restriction to $\mathcal{H}_i$. In Lemma \ref{lem:HowDoGvRestrictToHi}, we have shown that any $P \in \mathcal{P}_v^i$ contains the edge $Y_{I_1^1}$ at $v_\fix$ and the edge $Y_{I_2^1}$ at $v_\opt$. Using Condition~(\ref{admissibilityCondition3}) of Definition \ref{def:AdmissibleMatching} as we have previously, we can see that we will have edges $Y_{I_3^1},\ldots,Y_{I_{c_1}^1}$ and the dominant edge incident to diagonal $I_{c_1}^1$. No other edges are forced, so we see that it is possible to find a $P$ which restricts to any admissible matching of $\mathcal{H}_i$ which includes the set of internal edges incident to diagonals $I_j^i$ and $I_{j+1}^i$ for $1 \leq j < c_i$ or the admissible matching which includes the set of non-dominant edges incident to diagonal $I_{c_i}^i$. By combining Lemmas \ref{lem:ZigZagSurfaceMatchings} and \ref{lem:DecomposeAdimssibleBranch}, these are all the possibilities. Thus, we have,

\[
\frac{1}{\ell(\mathcal{G}_v)}\sum_{P \in \mathcal{P}_v^i} wt(P) = \sum_{j=1}^{c_i} \frac{A_j^i}{Y_{I_j^i}Y_{I_{j+1}^i}}.
\]  

Thus, from Corollary \ref{cor:SingletonFormula}, we can conclude \[
\frac{1}{\ell(\mathcal{G}_v)} \sum_{P \in \mathcal{P}_v} wt(P) = \frac{1}{\ell(\mathcal{G}_v)} \sum_{P \in \mathcal{P}_v^1} wt(P) + \sum_{i=2}^r \sum_{P \in \mathcal{P}_v^i} wt(P) = Y_v.
\]

\end{proof}

\section{Proof for General Weakly Rooted Sets}
\label{sec:ProofGeneral}

Now, we turn to proving Theorem \ref{thm:main} for general weakly rooted sets. The main proof will be contained in Section \ref{sec:GlueProof} where we will induct on $\vert \bar{S} \vert$.  We complete the base case $\vert \bar{S} \vert = 1$ in Section \ref{subsec:SingeltonProofWeakSingleton}, building on the machinery we developed for when $\vert S \vert = 1$ in Section \ref{sec:SingeltonProof}. 

\subsection{Proof for Sets with Singleton Weakly Rooted Portions}\label{subsec:SingeltonProofWeakSingleton}

We provide here a proof of Theorem \ref{thm:main} for a weakly rooted set $S = J_{\mathbf{m}}$. We constructed snake graphs for two such sets in Example \ref{ex:weakly-rooted-graph} as intermediate steps.

Recall from the statement of Proposition \ref{prop:SingletonAdjoinSets}, for a vertex $v$ and a tuple $\mathbf{m} = (m_1,\ldots,m_r)$, that we set $J_{\mathbf{m}} = \{v\} \cup I_{m_1}^1(v) \cup \cdots \cup I_{m_r}^r(v)$. It is clear that $J_{\mathbf{m}}$ is weakly rooted and  $\overline{J_{\mathbf{m}}} = v$. Moreover, every set $S$ with $\overline{S} = \{v\}$ will be of the form $S = J_{\mathbf{m}}$.

First, we show that Proposition \ref{prop:OldCond4Singleton_rsmall} remains true in this setting.

\begin{lemma}\label{lem:OldCond4WeaklyRootedSingleton}
Let $S = J_{\mathbf{m}}$ be a weakly rooted set with $\bar{S} = \{v\}$ and let $P$ be an admissible matching of $\mathcal{G}_S$. Let $e_1$ and $e_2$ be two internal edges which both share vertices with diagonals $d_1$ and $d_2$. Then, $e_1$ and $e_2$ are either both in $P$ or neither are in $P$.
\end{lemma}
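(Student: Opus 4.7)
The plan is to reduce the claim to Proposition~\ref{prop:OldCond4Singleton_rsmall} and Lemma~\ref{lem:OldCond4Singleton_rlarge} by comparing $\mathcal{G}_S$ with $\mathcal{G}_v$. I would begin by recording what the trimming procedure of Section~\ref{sec:Gluing} changes: it deletes every edge of $\mathcal{G}_v$ incident to a diagonal $I$ with $I \subsetneq I^{i_\ell}_{m_\ell}$ for some $\ell$, lowers by one the first coordinate of the valence at the single vertex labeled $v$ incident to each $I^{i_\ell}_{m_\ell}$, removes each diagonal $I \subseteq I^{i_\ell}_{m_\ell}$, and prunes any resulting vertex of valence $0\oplus 0$. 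Crucially, the valence of every non-$v$ vertex---in particular of every $a^i_j$ and every $b^{\ell,k}_{i,j}$ used in the proofs of Lemma~\ref{lem:r1valence} and Proposition~\ref{prop:OldCond4Singleton_rsmall}---is inherited unchanged from $\mathcal{G}_v$.

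Next I would observe that if an internal edge $e$ of $\mathcal{G}_S$ shares vertices with two diagonals $d_1$ and $d_2$, then both diagonals survived the trimming and $e$ itself survived, so $e$ was already an internal edge of $\mathcal{G}_v$ sharing $d_1$ and $d_2$. The classification at the start of the proof of Proposition~\ref{prop:OldCond4Singleton_rsmall} then forces $(d_1, d_2) = (I^i_j, I^i_{j+1})$ for some $1 \leq i \leq r$ and $1 \leq j < c_i$ (the alternative $(I^1_1, I^1_2)$ involves only one internal edge in $\mathcal{G}_v$, so is vacuous), and the collection of internal edges between these two diagonals in $\mathcal{G}_S$ agrees with the collection in $\mathcal{G}_v$. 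Suppose, for a contradiction, that $P$ contains a proper nonempty subset of these internal edges. The argument of Proposition~\ref{prop:OldCond4Singleton_rsmall} shows that the vertex $a^i_j \in B^i_{j+1}$ then has an uncovered incidence, and every possible covering is ruled out either by Condition~(\ref{admissibilityCondition4}) (forcing an all-or-nothing group that exceeds the valence of some $a^i_k$) or by Condition~(\ref{admissibilityCondition3}) combined with edges already in $P$. Since all these obstructions depend only on the valences of $a^i_k$ and $b^{\ell,k}_{i,k}$, each step of the chain argument in the original proof carries over unchanged.

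The main obstacle is the new endpoint of the zigzag when the branch has been trimmed, i.e.\ when $i = i_\ell$ for some $\ell$, so that the chain can only iterate outward as far as the pair $(I^i_{m_\ell - 2}, I^i_{m_\ell - 1})$. The former internal edges between $I^i_{m_\ell - 1}$ and $I^i_{m_\ell}$ survive the trimming but lose their outer diagonal, so by Definition~\ref{def:BoundaryHyperSnakeGraph}(2) they have become boundary edges of $\mathcal{G}_S$. I would verify that the terminal contradiction at $a^i_{m_\ell - 1} \in B^i_{m_\ell - 1}$ still goes through for the same structural reason as the $j = c_i - 1$ case of the original proof: after using a proper subset of the internal edges between $I^i_{m_\ell - 2}$ and $I^i_{m_\ell - 1}$, the only remaining candidates to cover $a^i_{m_\ell - 1}$ are either these new boundary edges, which form an all-or-nothing group by Condition~(\ref{admissibilityCondition4}) and exceed the valence of $a^i_{m_\ell - 1}$ when taken together, or the hyperedges incident to $a^i_{m_\ell - 1}$, which are likewise all-or-nothing and exceed its valence. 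Branches with $i \notin \{i_1, \dots, i_k\}$ are untouched by the trimming, so for such branches the original terminal case applies verbatim. Once this endpoint is verified, the reduction is complete and the lemma follows.
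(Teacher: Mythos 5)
Your overall reduction to Proposition~\ref{prop:OldCond4Singleton_rsmall} is exactly the paper's strategy, and you correctly locate the one place where new work is needed: the terminal step on a trimmed branch, where the diagonal $I^{i_\ell}_{m_\ell}$ has been removed and the former internal edges between $I^{i_\ell}_{m_\ell-1}$ and $I^{i_\ell}_{m_\ell}$ have become boundary edges. However, the contradiction you propose for that terminal case does not hold up as stated, and since this is precisely the content that distinguishes Lemma~\ref{lem:OldCond4WeaklyRootedSingleton} from Proposition~\ref{prop:OldCond4Singleton_rsmall}, this is a genuine gap rather than mere imprecision.

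Two concrete problems. First, an index slip: the special vertex $a^i_{m_\ell-1} = m_{\mathcal{I}}(I^i_{m_\ell-1})$ lies in $I^i_{m_\ell-1}$ and hence is \emph{not} in $B^i_{m_\ell-1} = N_{\Gamma'}(I^i_{m_\ell-1})\setminus\{v\}$; rather $a^i_{m_\ell-1}\in B^i_{m_\ell}$, by the same reasoning that put $a^i_{j-1}\in B^i_j$ in the proof of Lemma~\ref{lem:r1valence}. Second, and more substantively, the former internal edges incident to $a^i_{m_\ell-1}$ do \emph{not} ``exceed the valence of $a^i_{m_\ell-1}$ when taken together.'' Just as in the proof of Lemma~\ref{lem:r1valence}, there are $\deg_{\Gamma'}(a^i_{m_\ell-1})-1$ of them while the valence is $1\oplus(\deg_{\Gamma'}(a^i_{m_\ell-1})-2)$, so the all-or-nothing group exactly saturates the valence, it does not violate it. The contradiction in the paper's proof is instead a two-step cascade: a proper subset of the internal edges between $I^i_{m_\ell-2}$ and $I^i_{m_\ell-1}$ leaves uncovered vertices in $B^i_{m_\ell-1}$, and the only way to cover them without over-covering is a \emph{proper} subset of the new boundary edges between $B^i_{m_\ell-1}$ and $B^i_{m_\ell}$; but that in turn leaves uncovered $1\oplus 0$ vertices of $B^i_{m_\ell}$, whose only remaining candidate cover is the single hyperedge $Y_{I^i_{m_\ell}}$, and that hyperedge would over-cover the $B^i_{m_\ell}$ vertices already hit by the chosen proper subset. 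Your proposal would need to push the argument through $B^i_{m_\ell}$ and $Y_{I^i_{m_\ell}}$ in this way rather than stopping at $a^i_{m_\ell-1}$.
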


\begin{proof}
We can repeat the logic as in the proof of Proposition \ref{prop:OldCond4Singleton_rsmall} but we must take a bit more care in the first case for any $i$ such that $m_i \leq c_i$.  In order for edges between $B_{j}^i$ and $B_{j-1}^i$ to be internal, we must have $j<m_i$.  Therefore, let $m_i > 2$ and suppose we take a proper subset of the internal edges which share vertices with diagonals $I_{m_i-1}^i$ and $I_{m_i-2}^i$. A consequence of Condition (\ref{admissibilityCondition4}) and Lemma \ref{lem:r1valence} is that every vertex in $B_{m_i-1}^i$ is either not covered or is covered a maximal number of times. Since this is a proper subset, there must be vertices in $B_{m_i-1}^i$ which have not been covered at all.   We cannot cover these with the boundary edge $Y_{I_{m_i-1}^i}$ since this would violate the valence of the vertices which are already incident to these internal edges. If $m_i\neq c_i+1$, then the other set of boundary edges incident to $B_{m_i-1}^i$ consists of a set of edges incident to $a_{m_i-1}^i$ and other edges disjoint from these. This is because these edges were the internal edges which shared vertices with diagonals $I_{m_i-1}^i$ and $I_{m_i}^i$ in $\mathcal{G}_v$.

It is possible that we could cover the necessary vertices in $B_{m_i-1}^i$ using some of the edges between $B_{m_i}^i$ and $B_{m_i-1}^i$. However, in order to not violate the valences of any vertex in $B_{m_i-1}^i$, this would require a proper subset of these edges. The only other edge, $Y_{I_{m_i}^i}$, incident to the set of vertices $B_{m_i}^i$ is incident to all of these vertices. Condition~(\ref{admissibilityCondition4}) of Definition \ref{def:AdmissibleMatching} forces us to include either all or none the edges incident to $a_{m_i-1}^i$ in an admissible matching. By the same logic as in Lemma \ref{lem:r1valence}, an admissible matching that contains all of these edges cannot also contain any other edge incident to $a_{m_i-1}^i$. Because all vertices in $B_{m_i}^i$ other than $a_{m_i-1}^i$ have valence $1\oplus 0$, using edge $Y_{I_{m_i}^i}$ would violate the valence of at least one vertex in $B_{m_i}^i$. Therefore, we cannot include a proper subset of the internal edges which share vertices with diagonals $I_{m_i-1}^i$ and $I_{m_i-2}^i$.

As in the proof of Proposition \ref{prop:OldCond4Singleton_rsmall}, if we include proper subsets of the set of internal edges which share vertices with $I_{m_i-j}^i$ and $I_{m_i-j-1}^i$ with $1<j<m_i+1$, we would be forced back to this case.

\end{proof}

\begin{proof}[Proof of Theorem \ref{thm:main} when $\bar{S} = \{v\}$]
Let $S$ be a weakly rooted set such that $\bar{S} = \{v\}$.  Then  $S= J_{\mathbf{m}}$ for some ${\bf m}=(m_1,\dots,m_r)$. In Proposition \ref{prop:SingletonAdjoinSets}, we have an exact formula for $Y_S$, so all that remains is to show that this formula agrees with the weighted sum indexed by all admissible matchings of $\mathcal{G}_S$.

We reorder the neighbors of $v$ as $a_1,\ldots,a_r$ so that for some $0 \leq t \leq r$, we have $m_i > 1$ for $1 \leq i \leq t$ and $m_i = 1$ for $t+1 \leq i \leq r$. 

First, note that if $t = r$, then we can use the same reasoning as in Sections \ref{subsec:ris1} and \ref{subsec:risnot1} to understand the admissible matchings of $\mathcal{G}_S$. The only difference is that, when $m_i < c_i+1$, instead of a dominant edge $(v,a_i)$ we have an edge with weight $Y_{I_{m_i}^i}$ that shares vertices with diagonal $I_{m_i-1}^i$. For a given $i$, every admissible matching will include this edge except the boundary edge with weight $Y_{I_{m_i-2}^i}$ which shares vertices with the diagonal $I_{m_i-1}^i$. Such a matching must also include the edges whose product of weights is $A_{m_i-1}^i$, and no other matching will use these edges. Comparing these observations with the formula given in Proposition \ref{prop:SingletonAdjoinSets} shows the claim in this case.

The other extreme value of $t$ is when $t = 0$. In this case, $S = I_v$ and $\mathcal{G}_S$ is the single hyperedge labeled $I_v$ where all vertices have valence $1 \oplus 0$. The claim is immediate.

Now suppose $0 < t < r$. In particular, we know that $m_1 > 1$, which means the valence at $v_\opt$ will not change when we pass from $\mathcal{G}_v$ to $\mathcal{G}_S$. However, the valence of the vertex $v_\fix$ will decrease from $(r-1) \oplus 0$ to $(t-1) \oplus 0$ because we will remove the diagonals $I_i^1$ for all $t+1 \leq i \leq r$. For each such $i$, the only remaining edge with a label of the form $I_j^i$ will be the edge incident to $v_\opt$ with label $I_1^i$, and the only other edge the  vertices $N_{\Gamma'}(I_j^i) \backslash \{v\}$ are incident to is $Y_{I_v}$. Therefore, this edge now behaves just as the edges $(v,a_i)$ for $a_i >_{\mathcal{I}} v$. In Lemma \ref{lem:r1admissible}, we see that if $r = 1$ or $r = 2$, exactly one admissible matching uses the edge weighted $Y_{I_v}$. Now, suppose $r > 2$ and let $\mathcal{H}_2,\ldots,\mathcal{H}_r$ be the branch snake graphs from $\mathcal{G}_v$. The edge $Y_{I_v}$ is in each branch snake graph and by definition the distinguished non-matching does not include $Y_{I_v}$. Therefore, by Lemma \ref{lem:HowDoGvRestrictToHi}, any matching including this edge is in $\mathcal{P}_v^1$. In the proof of Theorem \ref{thm:main} for $r > 2$, we analyze the weights of matchings in $\mathcal{P}_v^1$ and we again see that there is only one admissible matching including $Y_{I_v}$. Therefore, all other matchings must include the edge $Y_{I_j^i}$, which matches what we expect from Proposition \ref{prop:SingletonAdjoinSets}. 
\end{proof}

\subsection{General Proof}\label{sec:GlueProof}

We turn to proving Theorem \ref{thm:main} for sets with larger rooted portions with the following steps.  First, in Lemma \ref{lem:YRecursionRootedSet} we show an identity  involving $Y_S, Y_T,$ and $Y_{S \cup T}$ for disjoint incompatible sets $S$ and $T$ on a tree. Our goal is then to show, for a weakly rooted set $S \cup J_{\mathbf{m}}$ with $S$ weakly rooted, and $S$ and $J_{\mathbf{m}}$ disjoint, the same relation holds for $\chi(S), \chi(J_{\mathbf{m}}),$ and $\chi(S\cup J_{\mathbf{m}})$.  In Lemma \ref{lem:DecomposeToAdmissible} we show that the admissible matchings of $\mathcal{G}_{S \cup J_{\mathbf{m}}}$ can always be decomposed to admissible matchings of $\mathcal{G}_{S}$ and $\mathcal{G}_{J_{\mathbf{m}}}$ such that at least one of these matchings used the glued edge. Then, in Lemma \ref{lemma:term_cancellation}, we study the set of matchings of a snake graph $\mathcal{G}_{S}$ which do not use a given boundary edge. These two results put together show that the sums of admissible matchings satisfies the same relation as for $Y$-variables in Lemma \ref{lem:YRecursionRootedSet}. The proof of Theorem \ref{thm:main} follows quickly from these results.

\begin{lemma}\label{lem:YRecursionRootedSet}
Let $\Gamma$ be a tree. Let $S,T \subset V(\Gamma)$ be disjoint, connected sets of vertices and suppose there exists $(w,v) \in E(\Gamma)$ such that $w \in S,  v \in T$.  Then, $Y_{S}Y_{T} = Y_{S\cup T} + Y_{S \backslash \{w\}}Y_{T \backslash \{v\}}$. 
\end{lemma}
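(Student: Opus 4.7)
My plan is to induct on $|T|$. The base case $|T|=1$ will be handled by a direct application of Corollary \ref{cor:SingletonFormula} to a carefully chosen maximal nested collection, and the inductive step $|T|\geq 2$ will follow from combining three instances of the induction hypothesis.

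For the base case, write $T=\{v\}$, so I must prove $Y_S Y_v = Y_{S\cup\{v\}} + Y_{S\setminus\{w\}}$. I would begin with the partial nested collection $\mathcal I_0 = \{S\cup\{v\},\, S\} \cup \{C : C \text{ a connected component of } S\setminus\{w\}\}$, verify it satisfies both conditions of Definition \ref{def:nested}, and extend it greedily to a maximal nested collection $\mathcal I$. Two key properties must be established for any such extension. First, $\{v\}\notin\mathcal I$: the sets $\{v\}$ and $S$ are disjoint but $\{v\}\cup S$ is connected, violating condition (2). Second, no set strictly between $\{w\}$ and $S$ that contains $w$ can belong to $\mathcal I$, because any such set would, together with the components of $S\setminus\{w\}$, form a pairwise disjoint subfamily whose union is the connected set $S$, again violating condition (2). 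Combined with the fact that the unique edge between $S$ and $T$ in the tree $\Gamma$ is $(w,v)$, these observations force $I_v=S\cup\{v\}$, $I_w=S$, $r=1$, $a_1=w$, $c_1=1$, $I_1^1=S$, and $A_1^1=\prod_{u\in\mathcal C_w}Y_{I_u}=Y_{S\setminus\{w\}}$. Substituting into Corollary \ref{cor:SingletonFormula} will yield $Y_v = (Y_{S\cup\{v\}}+Y_{S\setminus\{w\}})/Y_S$, which rearranges to the claim.

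For the inductive step, the subtree induced by $T$ has at least two vertices, so $v$ has a neighbor $u\in T$ with $u\neq v$. Removing the edge $(v,u)$ splits this subtree into two connected pieces $T_v\ni v$ and $T_u\ni u$, both strictly smaller than $T$. I would then apply the induction hypothesis three times: to $(S,T_v)$ along $(w,v)$, to $(T_v,T_u)$ along $(v,u)$, and to $(S\cup T_v,T_u)$ along $(v,u)$. In each case connectedness, disjointness, and the required edge are immediate, and the ``$T$-side'' has size strictly less than $|T|$. A routine algebraic combination of the three identities will reduce them to $Y_S Y_T = Y_{S\cup T} + Y_{S\setminus\{w\}}\,Y_{T_v\setminus\{v\}}\,Y_{T_u}$. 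The argument will finish by observing that $T\setminus\{v\} = (T_v\setminus\{v\})\sqcup T_u$ is a disjoint union with no edges of $\Gamma$ between the two parts (the only such edge, $(v,u)$, was removed together with $v$), so $Y_{T\setminus\{v\}} = Y_{T_v\setminus\{v\}}\,Y_{T_u}$.

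The main obstacle will be the base case: one must carefully verify that the partial nested collection extends to a maximal one while retaining the structure $I_v=S\cup\{v\}$, $I_w=S$, $r=c_1=1$, which amounts to ruling out the addition of $\{v\}$ and of any intermediate set between $\{w\}$ and $S$. Once this is in place, the identification $A_1^1=Y_{S\setminus\{w\}}$ and the substitution into Corollary \ref{cor:SingletonFormula} are immediate, and the inductive step reduces to a bookkeeping combination of three applications of the hypothesis.
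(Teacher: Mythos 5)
Your proposal is correct, but it takes a genuinely different route from the paper. The paper's proof is a one-step mutation argument: it observes that the sets $S$, $S\cup T$, and the connected components of $S\setminus\{w\}$ and $T\setminus\{v\}$ are pairwise compatible, chooses a maximal nested collection $\mathcal I$ containing all of them, and then notes that mutating at $S$ in the corresponding cluster produces $T$ with exchange relation exactly $Y_S Y_T = Y_{S\cup T} + Y_{S\setminus\{w\}}Y_{T\setminus\{v\}}$ (Equation (5.2) of Lam--Pylyavskyy). Your proof instead inducts on $|T|$, deriving the base case $|T|=1$ from Corollary~\ref{cor:SingletonFormula} applied to a similarly chosen cluster (after verifying $I_v = S\cup\{v\}$, $I_w = S$, $r = c_1 = 1$, $I_1^1 = S$, and $A_1^1 = Y_{S\setminus\{w\}}$), and completing the inductive step with a Ptolemy-style algebraic combination of three smaller instances. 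Your base case is in fact re-deriving a single exchange relation via a longer mutation sequence, so the paper's direct invocation of the LP exchange formula is shorter; on the other hand, your argument only leans on intermediate results already proved in the paper and makes the inductive skeleton of the identity explicit. One minor imprecision to tighten: in ruling out a set $U\in\mathcal I$ with $w\in U\subsetneq S$, you say $U$ together with \emph{all} the components of $S\setminus\{w\}$ is pairwise disjoint; in fact $U$ may contain some of those components (by nestedness, for each component $C$ either $C\subseteq U$ or $C\cap U=\emptyset$). The correct violating family is $U$ together with the components of $S\setminus\{w\}$ \emph{not contained in} $U$ --- this family is pairwise disjoint, nonempty, and its union is the connected set $S$, which gives the contradiction. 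The same refinement is needed (and suffices) to show $\{w\}\notin\mathcal I$ and hence $I_w = S$. With this fixed, the rest of your base-case analysis and the inductive-step algebra go through.
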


\begin{proof}

Note that the sets $S \backslash \{w\}, T \backslash \{v\}, S, $ and $S \cup T$ with the given conditions are all compatible.  Suppose $\mathcal{I}$ is a cluster containing these sets; it is possible that $S \backslash \{w\}$ or $T \backslash \{v\}$ is disconnected, in which case $\mathcal{I}$ would contain all connected components. In this cluster, the mutation of the set $S$ produces the set $T$, and the expansion of $Y_{S}Y_{T}$ follows from Equation (5.2) of \cite{lam2016linear}.

\end{proof}

To prepare for further arguments involving admissible matchings, we show that Proposition \ref{prop:OldCond4Singleton_rsmall} will also hold for general weakly rooted sets. 

\begin{lemma}\label{lem:OldCond4General}
Let $S$ be a weakly rooted set and let $P$ be an admissible matching of $\mathcal{G}_S$. Let $e_1$ and $e_2$ be two internal edges which both share vertices with diagonals $d_1$ and $d_2$. Then, $e_1$ and $e_2$ are either both in $P$ or neither are in $P$. 
\end{lemma}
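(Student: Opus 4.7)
The plan is to reduce the statement to Lemma \ref{lem:OldCond4WeaklyRootedSingleton}, which handles the case $\overline{S}=\{v\}$. Recall that $\mathcal{G}_S$ is assembled from the trimmed singleton graphs $\mathcal{G}_v^S$ for $v\in\overline{S}$ by gluing pairs of dominant boundary edges of the form $(v,w)$ for adjacent $v,w\in\overline{S}$. These gluing edges are precisely the type (1) internal edges of $\mathcal{G}_S$ and are not incident to any diagonal.

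The first step is to show that the two diagonals $d_1$ and $d_2$ must lie inside a single $\mathcal{G}_v^S$. By Proposition \ref{prop:NoDiagonalTwice}, every diagonal label appears on at most one diagonal of $\mathcal{G}_S$, and since the gluing only identifies dominant edges that carry no diagonals, each diagonal of $\mathcal{G}_S$ is inherited --- together with all its endpoint vertices --- from a unique $\mathcal{G}_v^S$. Moreover, when $\mathcal{G}_{v_1}^S$ and $\mathcal{G}_{v_2}^S$ are glued along the edge $(v_1,v_2)$, the identified $v_1$-vertex of $\mathcal{G}_S$ carries only the diagonal incidences from $\mathcal{G}_{v_1}^S$ (the corresponding $v_1$-vertex in $\mathcal{G}_{v_2}^S$ is an outer neighbor which, by Proposition \ref{prop:ij-edge}, lies on no diagonal), and symmetrically for $v_2$. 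Consequently, if $d_1\in\mathcal{G}_{v_1}^S$ and $d_2\in\mathcal{G}_{v_2}^S$ with $v_1\neq v_2$, any edge with one endpoint on $d_1$ and another distinct endpoint on $d_2$ would have to bridge the two components through non-glued vertices, which is impossible. Hence $d_1$ and $d_2$ belong to a common $\mathcal{G}_v^S$.

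Next I would argue that $e_1$ and $e_2$ themselves originate in this $\mathcal{G}_v^S$. Each type (2) internal edge has a unique originating component by Definition \ref{def:BoundaryHyperSnakeGraph}. The endpoints of $e_i$ on $d_1$ and $d_2$ are both vertices of $\mathcal{G}_v^S$, and since the only vertices shared between $\mathcal{G}_v^S$ and another $\mathcal{G}_w^S$ are the two gluing endpoints --- and of these, the one labeled by $w$ lies on no diagonal of $\mathcal{G}_v^S$ --- at least one endpoint of $e_i$ lies in the interior of $\mathcal{G}_v^S$, forcing $e_i$ to originate in $\mathcal{G}_v^S$.

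Once this reduction is in place, Lemma \ref{lem:OldCond4WeaklyRootedSingleton} applies directly within $\mathcal{G}_v^S$. Its proof is entirely local: it uses Condition (4) of admissibility and the valence analysis at interior vertices such as $a_j^i(v)$ (via Lemma \ref{lem:r1valence}) to propagate a cascading valence violation whenever any proper subset of such internal edges is included in the matching. Since the valence of each such interior vertex in $\mathcal{G}_S$ equals its valence in $\mathcal{G}_v^S$ unless the vertex lies in $\overline{S}$, and when it does lie in $\overline{S}$ the first coordinate of its valence only increases under gluing (tightening the constraint rather than loosening it), the cascade transfers verbatim to $\mathcal{G}_S$. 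The main technical point to check carefully is that the escape routes invoked in the proof of Lemma \ref{lem:OldCond4WeaklyRootedSingleton} remain blocked in $\mathcal{G}_S$; this follows because gluing never introduces new edges from the interior of $\mathcal{G}_v^S$ to distant parts of the graph, so the local neighborhood of $a_j^i(v)$ is structurally unchanged.
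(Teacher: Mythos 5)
Your proposal is correct and follows essentially the same approach as the paper: localize $e_1$ and $e_2$ to a single component $\mathcal{G}_v^S$ (equivalently, $\mathcal{G}_{J_{\mathbf{m}}}$) of $\mathcal{G}_S$, then invoke Lemma~\ref{lem:OldCond4WeaklyRootedSingleton}. The paper gets to the localization more directly, by observing that the glued edge $(v,w)$ is the only internal edge that shares vertices with diagonals spanning two different components, so two distinct internal edges sharing the same pair of diagonals must both lie in one $\mathcal{G}_{J_{\mathbf{m}}}$; your version first pins down where $d_1, d_2$ live and then where $e_1, e_2$ live, which amounts to the same thing.

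One sentence in your write-up is false as stated: the gluing edges \emph{are} incident to diagonals. By Proposition~\ref{prop:ij-edge}, in the edge $(v,a_i)$ of $\mathcal{G}_v$ the endpoint $a_i$ carries no diagonal, but the endpoint $v$ does --- it lies on the diagonal labeled $I_{a_i}$ when $a_i <_{\mathcal{I}} v$, or on $I_1^1(v)$ when $a_i >_{\mathcal{I}} v$ (this is $v_{\opt}$). Thus after gluing, the glued edge $(v,w)$ shares vertices with diagonals from both $\mathcal{G}_v^S$ (through $v$) and $\mathcal{G}_w^S$ (through $w$); see for instance the edge $(3,4)$ in $\mathcal{G}_{345680}$, which is incident to the diagonals $I_1, I_4$ through vertex~$3$ and to $I_2, I_7$ through vertex~$4$. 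You implicitly correct this a few sentences later when you note that the identified $v$-vertex carries only the diagonal incidences from $\mathcal{G}_v^S$, and that observation --- not the earlier blanket claim --- is what actually drives the localization. The rest of the reduction, including your remark that the valence cascade of Lemma~\ref{lem:OldCond4WeaklyRootedSingleton} transfers since gluing only affects valences of vertices of $\overline{S}$ and only tightens the lower bound there, is sound.
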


\begin{proof}
We construct $\mathcal{G}_S$ by gluing snake graphs of the form $\mathcal{G}_{J_{\mathbf{m}}}$. This process takes an edge of the form $(v,w)$ in two such snake graphs and glues them together to form a new internal edge. There will be no other internal edge which shares vertices with the same set of diagonals as this glued edge. Therefore, any pair of distinct internal edges as in the statement must have come from one of the snake graphs $\mathcal{G}_{J_{\mathbf{m}}}$. One can then use the same proof strategy as in Lemma \ref{lem:OldCond4WeaklyRootedSingleton} to show the result. 
\end{proof}

Given a weakly rooted set $S$,  let $\mathcal{P}_S$ be the set of admissible matchings of $\mathcal{G}_{S}$.  Given an edge $e$ in a snake graph $\mathcal{G}_S$, let $\mathcal{P}_S^e$ denote the set of admissible matchings of $\mathcal{G}_S$ that contain $e$, and  let $\mathcal{P}_S^{ne}:= \mathcal{P}\backslash \mathcal{P}_S^e$ denote the complement. 
For example, we could rephrase Corollary \ref{cor:WhichMatchingsAvoidDominant} as saying that, given a dominant boundary edge $e$ in $\mathcal{G}_v$, there is a unique matching in $\mathcal{P}_v^{ne}$.

\begin{lemma}\label{lem:AdmissibleEdges}
Consider disjoint, weakly rooted sets $S$ and $J_{\mathbf{m}}(v) = J_{\mathbf{m}}$ such that there exists $w \in \overline{S}$ with $(v,w) \in E(\Gamma)$ and $w >_{\mathcal{I}} v$. 
\begin{enumerate}[(1)]
    \item There does not exist an admissible matching of $\mathcal{G}_{S \cup J_{\mathbf{m}}}$ which contains all edges incident to $v$ which come from $\mathcal{G}_S$.
    \item There does not exist an admissible matching of $\mathcal{G}_{S \cup J_{\mathbf{m}}}$ which contains only edges from $\mathcal{G}_S$ incident to $v$ and only edges from $\mathcal{G}_{J_{\mathbf{m}}}$ incident to $w$.
    \item There does not exist an admissible matching of $\mathcal{G}_{S \cup J_{\mathbf{m}}}$ which does not include any edges incident to $v$ which come from $\mathcal{G}_S$ and includes all edges incident to $w$ but not $v$ which come from $\mathcal{G}_S$.
\end{enumerate}

The three statements above are also true if we switch $v$ and $w$ and/or if we switch $\mathcal{G}_S$ and $\mathcal{G}_{J_{\mathbf{m}}}$ in each statement.  (Note that this means (1) and (3) each give four statements and (2) gives two statements.)
\end{lemma}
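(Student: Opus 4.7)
We analyze the local structure at the glued edge $(v,w)$ in $\mathcal{G}_{S \cup J_{\mathbf{m}}}$ and derive each impossibility by combining valence constraints with the admissibility conditions of Definition \ref{def:AdmissibleMatching}. Since $w$ is a neighbor of $v$ with $w >_{\mathcal{I}} v$, the vertex $w$ in the singleton graph $\mathcal{G}_v$ corresponds to one of the ``upper'' neighbors $a_j(v)$ with $j > r(v)$, and by Proposition \ref{prop:ij-edge} the glued edge is incident to $v_\opt$ (which sits on diagonal $I^1_1(v)$). Moreover, the vertex $w$ in $\mathcal{G}_v$ has valence $1 \oplus 0$ and only two incident edges: the glued edge $(v,w)$ and the hyperedge of weight $Y_{I_v}$; these properties survive the trimming to $\mathcal{G}_{J_{\mathbf{m}}}$ because $w \notin J_{\mathbf{m}}$. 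Symmetrically, in $\mathcal{G}_S$ the vertex $v = a_i(w)$ sits at a corner of the tile labeled $I_v = I^i_{c_i}(w)$ of $\mathcal{G}_w$, with $w$ lying on the diagonal $I_v$, and the $\mathcal{G}_S$-edges at $v$ consist of the glued edge together with hyperedges of weights $1$ or $Y_{I_z}$ for $z \in \mathcal{C}_v$, whose weights multiply to $A^i_{c_i}(w)$.

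For statement (1), I would suppose that an admissible matching $P$ contains every $\mathcal{G}_S$-edge at $v$. Via the zigzag simplification of Lemma \ref{lem:r1admissible} (and its generalizations in Lemmas \ref{lem:DecomposeAdimssibleBranch} and \ref{lem:HowDoGvRestrictToHi}), this is precisely the configuration that takes the dominant edge $A^i_{c_i}$ at the top tile of the $i$-th branch of $\tilde{\mathcal{G}}_w^S$. Applying condition (\ref{admissibilityCondition3}) of Definition \ref{def:AdmissibleMatching} to the pair consisting of a boundary $Y_{I_z}$-hyperedge at $v$ (incident to the diagonal $I_v$ on the $\mathcal{G}_S$ side) and the $Y_{I_v}$-hyperedge on the $\mathcal{G}_{J_{\mathbf{m}}}$ side (incident to $I_v$ via its endpoint $w$, and to $I^1_1(v)$ via $v_\opt$) would then force the $Y_{I_v}$-hyperedge into $P$; but $(v,w) \in P$ already saturates $w$'s valence, a contradiction. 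Statements (2) and (3), and their counterparts obtained by swapping $v \leftrightarrow w$ and $\mathcal{G}_S \leftrightarrow \mathcal{G}_{J_{\mathbf{m}}}$, follow from analogous local analyses: each rules out a forced configuration that either overloads the valence at one of $v$ or $w$ or triggers a violation of admissibility conditions (\ref{admissibilityCondition1}) or (\ref{admissibilityCondition3}) across the gluing.

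The main obstacle will be carefully tracking the specific copies of $v$ and $w$ across the gluing and handling the degenerate cases in which the copy of $w$ at the glued edge coincides with $v_\opt$ or $v_\fix$ of $\mathcal{G}_w$ (which occurs, for example, when $c_i(w) = 1$ and $i > 1$, as in Example \ref{ex:weakly-rooted-graph}). In those cases the valences at the identified $w$ in $\mathcal{G}_{S \cup J_{\mathbf{m}}}$ differ from the generic $1 \oplus 0$, so the argument above must be refined using the more delicate admissibility analyses already developed for singleton graphs in Sections \ref{subsec:ris1} and \ref{subsec:risnot1}. A secondary subtlety is verifying, via Definition \ref{def:BoundaryHyperSnakeGraph}, which edges near the glued edge remain boundary rather than becoming internal in $\mathcal{G}_{S \cup J_{\mathbf{m}}}$, since this determines whether condition (\ref{admissibilityCondition3}) or (\ref{admissibilityCondition4}) is the operative constraint in each step.
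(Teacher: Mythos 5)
Your high-level strategy matches the paper's: both proceed by local analysis at the glued edge $(v,w)$, combining valence constraints in $\mathcal{G}_{S\cup J_{\mathbf{m}}}$ with the admissibility conditions, and both recognize that a case split is required according to whether the copy of $w$ on the diagonal $I_v$ in $\mathcal{G}_S$ is $w_\opt$, is $w_\fix$, or is a generic vertex (equivalently, whether $w$ covers $v$, and if so on which branch $v$ sits). You correctly identify that the glued edge is $(v_\opt,w)$ on the $\mathcal{G}_{J_{\mathbf{m}}}$ side (via Proposition~\ref{prop:ij-edge}) and that $w$ there has valence $1\oplus 0$ with only the glued edge and the $Y_{I_v}$-hyperedge incident.

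However, the one statement you attempt concretely --- statement (1) --- contains two errors that the paper's case analysis is precisely designed to avoid. First, Condition~(\ref{admissibilityCondition3}) of Definition~\ref{def:AdmissibleMatching} is an \emph{exclusion} (a forbidden pair), not an implication; it cannot ``force the $Y_{I_v}$-hyperedge into $P$.'' You likely intend Condition~(\ref{admissibilityCondition2}), but then the hypotheses are not clearly satisfied by the pair you name: Condition~(\ref{admissibilityCondition2}) requires a diagonal $d$ with $W(e_1)\subsetneq W(d)\subsetneq W(e_2)$, and it is not evident which diagonal of $\mathcal{G}_{S\cup J_{\mathbf{m}}}$ both the $Y_{I_z}$-edge and the $Y_{I_v}$-edge share vertices with. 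Second, the claim that ``$(v,w)\in P$ already saturates $w$'s valence'' is false in the paper's Cases 1 and 2: there the vertex $w$ is $w_\opt$ (valence $1\oplus(\deg_{\Gamma'}(w)-2)$) or $w_\fix$ (valence $m\oplus 0$ with possibly $m>1$), so a single incident edge need not saturate it. Your argument would only go through when $w$ has valence $1\oplus 0$ at the glued edge, which is roughly the paper's Case 3. The paper's actual argument in Case 1 takes a different route: it uses Condition~(\ref{admissibilityCondition1}) to exclude $Y_{I_w}$, forces all the edges $(w_\opt,u)$ for $u>_{\mathcal{I}}w$, deduces via the valence of $w_\opt$ that some $Y_{I_1^i(w)}$-edge must be omitted, and then shows the vertices in $N_\Gamma(I_1^i(w))\setminus\{w\}$ cannot be covered. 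You also defer statements (2), (3), and all the symmetric variants as ``analogous,'' but these are not uniform consequences of the argument you sketch --- the paper handles each of the three cases separately and notes at least one symmetric variant (the $(w,\mathcal{G}_S)$ version of (3) in Case 2) genuinely degenerates because of the $r\oplus 0$ valence at $w_\fix$. So while your outline is sound, the proof as written has a real gap in its one detailed step and leaves the essential casework undone.
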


\begin{proof}

There are three cases for the local configuration near the edge $(v,w)$ in $\mathcal{G}_{S \cup J_{\mathbf{m}}}$, based on whether $w$ covers $v$ or not, and if it does, whether the diagonal $I_v$ in the graph $\mathcal{G}_{S}$ is incident to a vertex $w_\opt$ or $w_\fix$.  Since we will frequently be referring to the set $N_\Gamma(v) \cap \Gamma^{\mathcal{I}}_{<v}$ in the following text and figures, we will introduce the shorthand $\mathcal{N}_v:= N_\Gamma(v) \cap \Gamma^{\mathcal{I}}_{<v}$.

\textbf{Case 1:} First, suppose $v \lessdot w$ and the diagonal $I_v$ in $\mathcal{G}_S$ is incident to $w_\opt$ so that $a_1(w) = v$. This means that we glue on the edge $(v_\opt,w)$ in $\mathcal{G}_{J_{\mathbf{m}}}$ and on the edge $(v,w_\opt)$ in $\mathcal{G}_S$. The local configuration is shown below.   Note that some of the edges adjacent to $v_{\text{opt}}$ from $\mathcal{G}_{J_{\mathbf{m}}}$ could share vertices with the edge $Y_{I_v}$; we draw our diagram simply to focus on which edges are adjacent to $v_{\text{opt}}$ and $w_{\text{opt}}$. Note also that there could be edges of the form $(v_{\text{opt}},u)$ or $(w_{\text{opt}},u)$ for  for $u>_{\mathcal{I}} v$ and $u>_{\mathcal{I}} w$ respectively which have weight 1.  Since in $\mathcal{G}_{S}$, the valence of $w_{\text{opt}}$ is $(1 \oplus \deg_{\Gamma'}(w)-2)$ and the valence of $v$ is $(1 \oplus \deg_{\Gamma'}(v) - 2)$ and in $\mathcal{G}_{J_{\mathbf{m}}}$, the valence of the vertex $w$ is $(1 \oplus 0)$ and the valence of $v_{\text{opt}}$ is $(1 \oplus \deg_{\Gamma'}(v) - 2)$, in the glued graphs these vertices have valences $(1 \oplus \deg_{\Gamma'}(w)-2)$ and $(1 \oplus 2\deg_{\Gamma'}(v) -4)$ respectively.

\begin{center}
\begin{tikzpicture}[scale = 1.2]
\node[white] at (5,-1){$(\cup_{a_i(v) \in \mathcal{N}_v \backslash \{a_1(v)\}} Y_{I_1^i(v)}) \cup Y_{I_2^1(v)}$};
\node(v) at (0,0) {$v_{\text{opt}}$};
\node(w) at (2,0) {$w_{\text{opt}}$}; 
\draw (v) -- (w);
\draw (v) -- (-.3,1.5);
\draw (v) -- (0,1.5);
\draw (v) -- (.3,1.5);
\node[] at (-1.5,1){$\cup_{a_i(v) \in \mathcal{N}_v} Y_{I_1^i(v)}$};
\draw (w) -- (1.7,1.5);
\draw (w) -- (2,1.5);
\draw (w) -- (2.3,1.5);
\node[] at (4,1){$(\cup_{a_i(w) \in \mathcal{N}_w \backslash \{v\}} Y_{I_1^i(w)})$};
\draw (w) -- (2,-1);
\draw (2,-1) -- (2.3,-1.5);
\draw (2,-1) -- (2,-1.5);
\draw (2,-1) --  (1.7,-1.5);
\node[] at (2.6,-1){$Y_{I_v}$};
\draw (v) -- (-.3,-1.5);
\draw (v) -- (0,-1.5);
\draw (v) -- (0.3,-1.5);
\node[] at (-3,-1){$(\cup_{a_i(v) \in \mathcal{N}_v \backslash \{a_1(v)\}} Y_{I_1^i(v)}) \cup Y_{I_2^1(v)}$};
\draw[dashed] (v) to node[right, yshift = 5pt, xshift = -5pt]{$I_1^1(v)$} (1.7,-1.5);
\draw[dashed] (w) to node[right]{$I_v$} (0.4,1.3);
\draw (-.3,1.7) -- (0,2) to node[above]{$Y_{I_w}$} (2,2) -- (2.3,1.7);
\draw (0,1.7) -- (0,2) -- (.3,1.7);
\draw (2,1.7) -- (2,2) -- (1.7,1.7);
\end{tikzpicture}
\end{center}

First, we will show statement (1) holds for this case.  If $P$ contains $(v,w)$, it cannot contain the edge $Y_{I_v}$ by Condition~(\ref{admissibilityCondition1}) from Definition \ref{def:AdmissibleMatching} so we see immediately it cannot contain all edges coming from $\mathcal{G}_{J_{\mathbf{m}}}$ which are incident to $w$. Since the valence of $w$ in $\mathcal{G}_{J_{\mathbf{m}}}$ is $1 \oplus 0$, the valence of $w$ in $\mathcal{G}_{S \cup J_{\mathbf{m}}}$ is the same as in $\mathcal{G}_S$. This valence does not allow an admissible matching to include every edge incident to $w_{\opt}$ from $\mathcal{G}_{J_{\mathbf{m}}}$, so we do not need to consider such a configuration in $\mathcal{G}_{S\cup J_{\mathbf{m}}}$ either.

Now, assume that $P$ contains all edges incident to $v$ which come from $\mathcal{G}_S$. Since this includes the edge $(v,w)$, we cannot include the edge weighted $Y_{I_w}$ by Condition~(\ref{admissibilityCondition1}) of Definition \ref{def:AdmissibleMatching}. This means the edges $(w_{\opt},u)$ for any $u \in N_{\Gamma'}(w) \cap (\Gamma')^{\mathcal{I}}_{>w}$ must  be in $P$ since these vertices are only incident to the edge $(w_{\opt},u)$ and the edge weighted $Y_{I_w}$. 
Therefore, there is some $a_i(w) \in \mathcal{N}_w$ such that the edge $Y_{I_1^i(w)}$ incident to $w_\opt$ is not in $P$.  Then,  we claim there is no way to find an edge which is incident to the vertices $N_\Gamma(I_1^i(w)) \backslash \{w\}$ and which  can be included to such a set of edges and preserve admissibility. We draw the general situation below, where  the set of vertices which attain the contradiction are marked with *. 

\begin{center}
\begin{tikzpicture}[scale = 2]
\node(5) at (2,1){};
\node(6) at (2,0){};
\node(3) at (0,1){$w$};
\node(4) at (1,1){};
\node(1) at (0,0){};
\node(2) at (1,0){*};
\node(v) at (0,-1){$v$};
\node(w) at (1,-1){$w_\opt$};
\draw (3) to node[above]{$Y_{I_2^i(w)}$} (4);
\draw (3) to node[left]{$Y_{I_v}$} (1);
\draw(4) -- (2);
\draw (1) -- (v);
\draw (1) to node[above]{$Y_{I_w}$} (2);
\draw(v) -- (w);
\draw (2) to node[right]{$Y_{I_1^i(w)}$}(w);
\draw(2) to node[below]{$Y_{I_1^i(w)}$} (6);
\draw (6) -- (5);
\draw (5) -- (4);
\draw[dashed] (1) to node[right]{$I_v$} (w);
\draw[dashed] (3) to node[right, yshift = 8pt, xshift = -8pt]{$I_1^i(w)$} (2);
\draw[dashed] (4) to node[right, yshift = 8pt, xshift = -8pt]{$I_2^i(w)$} (6);
\end{tikzpicture} 
\end{center}

Note that the edge $Y_{I_2^i(w)}$ must be included in $P$ since by valence requirements of the vertices in the set $N_{\Gamma'}(I_v) \backslash \{v\}$ and the assumption that we are including all edges incident to $v$, we cannot include the edge $Y_{I_v}$.  Thus,  by valence requirements and Condition~(\ref{admissibilityCondition3}) of Definition \ref{def:AdmissibleMatching}, none of the edges incident to * can be included in $P$. If $c_i = 1$, then the picture is simpler and the claim follows by similar logic.

Finally, we consider the situation where $P$ contains all edges incident to $v$ which comes from $\mathcal{G}_{J_{\mathbf{m}}}$. First, suppose $r > 2$, so that $\mathcal{G}_{J_{\mathbf{m}}}$ has a vertex $v_\fix$ with valence $(r - 1) \oplus 0$. From the proof of Lemma \ref{lem:HowDoGvRestrictToHi}, we see that if $J_{\mathbf{m}} = \{v\}$, the subset of $P$ consisting of edges incident to $v_\fix$ puts restrictions on which edges incident to $v_\opt$ can be in $P$, and the same arguments hold for a more general set $J_{\mathbf{m}}$. In order for such a configuration to satisfy the admissibilty criteria, $P$ must also include the edges with weights $Y_{I_{1}^2(v)},\ldots,Y_{I_{1}^r(v)}$ incident to $v_\fix$. However, we can reach a similar contradiction to the previous discussion now and show that we cannot add any edges incident to $N_{\Gamma'}(I_1^1(v))$ to this matching and preserve admissibility. 
When $r \leq 2$ or $\{v\} \subsetneq J_{\mathbf{m}}(v)$, we can quickly reach the same contradiction without appealing to any previous result.

We will now  prove statement (2) in this case.  If  the set of edges in $P$ incident to $v$ lie only in $\mathcal{G}_S$ and the set of edges in $P$ incident to $w$ lie only in $\mathcal{G}_{J_{\mathbf{m}}}$, $P$ could not be decomposed into admissible matchings of $\mathcal{G}_S$ and $\mathcal{G}_{J_{\mathbf{m}}}$ since, even if we add the edge $(v,w)$ to one of the decompositions, the other decomposition would leave either $v$ or $w$ uncovered, which would not respect the valences in $\mathcal{G}_S$ and $\mathcal{G}_{J_{\mathbf{m}}}$. 
By Condition~(\ref{admissibilityCondition3}) in Definition \ref{def:AdmissibleMatching}, we cannot use the edge labeled $I_1^1(v)$ incident to $v$ and the edge $Y_{I_v}$ incident to $w$. If we use the edge $Y_{I_v}$ incident to $w$ and a proper subset of the edges incident to $v$ lying in $\mathcal{G}_S$, then we cannot use the edge labeled $I_w$ since the endpoints of the edges incident to $v$ in $P$ will have valence $1 \oplus 0$. This would leave the vertices in $N_\Gamma(I_1^1(v))\backslash \{v\}$ without any incident edge in the matching. 
Therefore, there cannot be an admissible matching with the edges incident to $v$ lying only in $\mathcal{G}_S$ and the edges incident to $w$ lying only in $\mathcal{G}_{J_{\mathbf{m}}}$.

Now, suppose that we have an admissible matching such that the set of edges in $P$ incident to $v$ lie only in $\mathcal{G}_{J_{\mathbf{m}}}$ and the set of edges in $P$ incident to $w$ lie only in $\mathcal{G}_S$. Then, we again cannot include the edge $Y_{I_w}$ due since the endpoints of some of the endpoints of the edge are already covered. Moreover, by assumption we do not include the edge $Y_{I_1^1(v)}$,  so we cannot complete such a configuration to include any edges which cover $N_\Gamma(I_1^1(v)) \backslash \{v\}$. 

Finally, we turn to proving statement (3). First, we show that there does not exist an admissible matching of $\mathcal{G}_{S \cup \Jm}$ that does not include any edges incident to $v$ from $\mathcal{G}_{\Jm}$ but includes all edges from $\mathcal{G}_{\Jm}$ which are incident to $w$ but not to $v$. If we have $Y_{I_v} \in P$,  by Condition~(\ref{admissibilityCondition3}) we cannot have $Y_{I_1^1(v)}$ as well.  However, we must include some of the incident to $v$ which only lie in $\mathcal{G}_{S}$. By Condition (\ref{admissibilityCondition4}) if we include some of these, we must include all of them, forcing us to include $Y_{I_1^1(v)}$.

Now we will show there does not exist an admissible matching of $\mathcal{G}_{S \cup J_{\mathbf{m}}}$ which does not include any edges incident to $v$ which come from $\mathcal{G}_S$ and includes all edges incident to $w$ but not $v$ which come from $\mathcal{G}_S$. If we include all edges incident to $w$ which lie only in $\mathcal{G}_S$, we cannot include $Y_{I_w}$ since all vertices incident to this edge have valence $1 \oplus 0$ and now some are already covered. If $\vert \mathcal{N}_w \vert = 1$, then since we assumed we did not include any edges incident to $v$ which lie only in $\mathcal{G}_S$, there are no other edges incident to the vertices $N_{\Gamma'}(I_v) \backslash \{w\}$,  so we cannot complete this to an admissible matching. If $\vert \mathcal{N}_w \vert > 1$,  then there will be an edge $Y_{I_v}$ incident to these vertices,  but we cannot include it by Condition (\ref{admissibilityCondition3}).

Lastly, the last two statements from (3) follow from the same argument that we used to show we cannot have an admissible matching of $\mathcal{G}_{S \cup \Jm}$ which includes all edges incident to $v$ which come from $\mathcal{G}_{\Jm}$ or which includes all edges incident to $v$ which come from $\mathcal{G}_S$.

\textbf{Case 2:} Now, suppose $v \lessdot w$ and the diagonal $I_v$ in $\mathcal{G}_S$ is incident to $w_\fix$. The local configuration is again drawn below; note that $w_{fix}$ is adjacent to $m-1$ more diagonals for some $m \leq \vert \mathcal{C}_w \vert$ determined by which parts of $\mathcal{G}_w$ are removed in $\mathcal{G}_w^S$.  In $\mathcal{G}_S$, the valence of $w_\fix$ is $m \oplus 0$ and the valence of $v$ is $(1 \oplus \deg_{\Gamma'}(v)  - 2)$. The valences in $\mathcal{G}_{J_{\mathbf{m}}}$ are the same as in case 1. Therefore, in $\mathcal{G}_{S \cup J_{\mathbf{m}}}$, the valence of $w$ is $(m \oplus 0)$ and the valence of $v$ is $(1 \oplus 2\deg_{\Gamma'}(v) -4)$.

\begin{center}
\begin{tikzpicture}
\node(v) at (0,0) {$v_{\text{opt}}$};
\node(w) at (2,0) {$w_{\text{fix}}$}; 
\draw (v) -- (w) ;
\draw (v) -- (-.3,1.5);
\draw (v) -- (0,1.5);
\draw (v) -- (.3,1.5);
\node[] at (-1.5,1){$\cup_{a_i(v) \in \mathcal{N}_v} Y_{I_1^i(v)}$};
\draw (w) -- (1.7,1.5);
\draw (w) -- (2,1.5);
\draw (w) -- (2.3,1.5);
\node[] at (5.2,1){$(\cup_{a_i(w) \in \mathcal{N}_w \backslash \{a_1(w)\}} Y_{I^{i}_2(w)}) \cup Y_{I_1^{1}(w)}$};
\draw (w) -- (2,-1);
\draw (2,-1) -- (2.3,-1.5);
\draw (2,-1) -- (2,-1.5);
\draw (2,-1) --  (1.7,-1.5);
\node[] at (2.6,-1){$Y_{I_v}$};
\draw (v) -- (-.3,-1.5);
\draw (v) -- (0,-1.5);
\draw (v) -- (0.3,-1.5);
\node[] at (-3,-1){$(\cup_{a_i(v) \in \mathcal{N}_v \backslash \{a_1(v)\}} Y_{I_1^i(v)}) \cup Y_{I_2^1(v)}$};
\draw[dashed] (v) to node[right, xshift = -5pt, yshift = 5pt]{$I_1^1(v)$} (1.7,-1.5);
\draw[dashed] (0.4,1.4) to node[right]{$I_v$} (w);
\end{tikzpicture}
\end{center}

We will now prove statement (1) for this case. We can immediately rule out encountering an admissible matching of $\mathcal{G}_{S \cup J_{\mathbf{m}}}$ which includes all edges incident to $w$ coming from $\mathcal{G}_S$ or $\mathcal{G}_{J_{\mathbf{m}}}$ by valence considerations and Condition~(\ref{admissibilityCondition1}) of Definition \ref{def:AdmissibleMatching} respectively.

Suppose for sake of contradiction that there exists an admissible matching $P$ of $\mathcal{G}_{S \cup J_{\mathbf{m}}}$ containing all edges incident to $v$ which come from $\mathcal{G}_S$.  Since $P$ includes $(v_{opt},w)$, by Condition~(\ref{admissibilityCondition1}) of admissibility the edge $Y_{I_v}$ cannot be in $P$, so $P$ must contain all other edges $(v_\opt,u)$ for $u \in N_{\Gamma'}(v) \cap \Gamma^{\mathcal{I}}_{>v}$ which come from $\mathcal{G}_{J_{\mathbf{m}}}$. The only edges we do not need to include are  edges from the set $(\cup_{a_i(v) \in \mathcal{N}_v \backslash \{a_1(v)\}} Y_{I_1^i(v)}) \cup Y_{I_2^1(v)}$ which has size $r = \vert \mathcal{N}_v \vert$. If $r= 1$, then we have included $2\deg_{\Gamma'}(v) - 2$ edges incident to $v$, which violates the valence of $v$ in $\mathcal{G}_{S \cup J_{\mathbf{m}}}$.

 Now, assume $r > 1$.  Then there are at least two edges from the set of edges with weights $(\cup_{a_i(v) \in \mathcal{N}_v \backslash \{a_1(v)\}} Y_{I_1^i(v)}) \cup Y_{I_2^1(v)}$  from $\mathcal{G}_{J_{\mathbf{m}}}$ which cannot be in $P$.  At least one of these edges must have a label $I_1^i(w)$ for $a_i(w) \in \mathcal{N}_v, a_i(w) \neq a_1(w)$.  We first claim this means we cannot use the edge $Y_{I_1^1(v)}$ incident to $v_\fix$. 

The situation is drawn below. Suppose we do include this edge. Then,  $P$ must also include the edge $Y_{I_2^i(v)}$ incident to diagonal $I_1^i(v)$ since the vertices in the set $N_{\Gamma'}(I_1^1(v)) \backslash \{v\}$ have valence $1 \oplus 0$.  If $r > 2$, then by similar logic as in the proof of Lemma \ref{lem:HowDoGvRestrictToHi},  we will see that there is no way to complete this to an admissible matching and contain at least one edge incident to the vertices denoted * below. If $r = 2$, then we can use a simpler version of the logic from this proof.  

\begin{center}
\begin{tikzpicture}[scale=2]
\node(vopt) at (0,0){$v$};
\node(b12) at (1,0){*};
\node(b22) at (2,0){};
\node(v) at (1,1){$v_\opt$};
\node(vfix) at (2,-1){$v$};
\node(b11) at (1,-1){};
\node(vreallyfix) at (3,0){$v_\fix$};
\draw (vopt) to node[above]{$Y_{I_1^i(v)}$} (b12);
\draw(b12) to node[above]{$Y_{I_v}$} (b22);
\draw(b12) to node[left]{$Y_{I_1^i(v)}$} (v);
\draw(b12) to (b11);
\draw(vfix) to node[right]{$Y_{I_1^i(v)}$} (b22);
\draw[dashed] (v) to node[right, xshift = -5pt, yshift = 5pt]{$I_1^1(v)$} (b22);
\draw[dashed] (b12) to node[right, xshift = -5pt, yshift = 5pt]{$I_1^i(v)$} (vfix);
\draw (vfix) to node[below]{$Y_{I_2^i(v)}$} (b11);
\draw[dashed] (vopt) to node[right, xshift = -5pt, yshift = 5pt]{$I_2^i(v)$}(b11);
\draw (b22) to node[above]{$Y_{I_1^1(v)}$} (vreallyfix);
\end{tikzpicture}
\end{center}

Therefore,  $P$ must include all other edges incident to $v_\fix$.  In particular,  $P$ includes $I_2^i(v)$ for the designated $a_i(v) \in \mathcal{N}_v$ where $P$ does not include the edge $Y_{I_1^i(v)}$ incident to $v_\opt$. We claim that there is no way to complete this to an admissible matching such that there is at least one edge incident to any vertex in the set $N_\Gamma(I_1^i(v)) \backslash \{v\}$, in the position denoted by * below.  By assumption,  we do not include the edge $Y_{I_1^i(v)}$ incident to $v_\opt$, and the other edge labeled $Y_{I_1^i(v)}$ cannot be included by Condition~(\ref{admissibilityCondition3}) of Definition \ref{def:AdmissibleMatching}  since we include the edge $Y_{I_2^i(v)}$ incident to $v_\fix$.  We already showed we do not include $Y_{I_v}$.  We can show that the remaining set of internal edges cannot be included by an argument akin to those in the proofs of Lemma \ref{lem:r1admissible} and \ref{lem:HowDoGvRestrictToHi}.

\begin{center}
\begin{tikzpicture}[scale = 2]
\node(vopt) at (0,0){$v_\opt$};
\node(1) at (0,-1){};
\node(2) at (1,-1){};
\node(star) at (1,0){*};
\node(vfix) at (2,-1){$v_\fix$};
\node(3) at (2,0){};
\node(4)at (1,1){};
\node(5) at (2,1){};
\draw(vopt) -- (1);
\draw (vopt) to node[above]{$Y_{I_1^i(v)}$} (star);
\draw (2) to node[right]{$Y_{I_v}$} (star);
\draw(2) to node[below]{$Y_{I_1^1(v)}$} (vfix);
\draw (vfix) to node[right]{$Y_{I_2^i(v)}$} (3);
\draw(3) -- (5);
\draw(star) to node[left]{$Y_{I_1^i(v)}$} (4);
\draw(star) -- (3);
\draw[dashed] (vopt) to node[right, xshift = -5pt, yshift = 5pt]{$I_1^1(v)$} (2);
\draw[dashed] (star) to node[right, xshift = -5pt, yshift = 5pt]{$I_1^i(v)$} (vfix);
\draw[dashed] (4) to node[right, xshift = -5pt, yshift = 5pt]{$I_2^i(v)$} (3);
\end{tikzpicture} 
\end{center}

By the same logic as in case 1,  there cannot be an admissible matching of $\mathcal{G}_{S\cup J_{\mathbf{m}}}$ containing all edges incident to $v$ which come from $\mathcal{G}_{J_{\mathbf{m}}}$.  This finishes the proof of statement (1) for this case.

Statement (2) is true in this case by the same logic as in case 1. Statement (3) is true for cases ($v,\mathcal{G}_{\Jm}$), ($v,\mathcal{G}_{S}$), and ($w,\mathcal{G}_{\Jm}$) for the same logic as in case 1 as well; the case ($w,\mathcal{G}_{S}$) is impossible for some graphs $\mathcal{G}_{S \cup \Jm}$ as it is possible that $w = w_{\fix}$ has valence $r \oplus 0$ for $r > 1$. If this is not the case, then we again can use similar logic.

\textbf{Case 3:} Finally, suppose $v < w$ but $w$ does not cover $v$.  Then there is a unique vertex $u$ of $\Gamma$ such that $v \lessdot u$. The valence of $v_{\text{opt}}$ in the glued graph is again $(1 \oplus 2\deg_{\Gamma'}(v) - 4)$ and the valence of the vertex $w$ on the glued edge is $(1 \oplus 0)$.

\begin{center}
\begin{tikzpicture}[scale = 1.5]
\node[white] at (4,-1){$(\cup_{a_i(v) \in \mathcal{N}_v \backslash \{a_1(v)\}} Y_{I_1^i(v)}) \cup Y_{I_2^1(v)}$};
\node[] at (0,0) {$v_{\text{opt}}$};
\node[] at (2,0) {$w$}; 
\draw (.2,0) -- (1.8,0);
\draw (0,.2) -- (-.3,1.5);
\draw (0,.2) -- (0,1.5);
\draw (0,.2) -- (.3,1.5);
\node[] at (-1.1,1){$\cup_{a_i(v) \in \mathcal{N}_v} Y_{I_1^i(v)}$};
\draw (2,.2) -- (2,1);
\draw (2,1) -- (1.7,1.5);
\draw (2,1) -- (2,1.5);
\draw (2,1) -- (2.3,1.5);
\node[] at (2.6,1){$Y_{I_{u}}$};
\draw[dashed] (1.8,0.2) to node[right]{$I_v$} (.3,1.5);
\draw (2,-.2) -- (2,-1);
\draw (2,-1) -- (2.3,-1.5);
\draw (2,-1) -- (2,-1.5);
\draw (2,-1) --  (1.7,-1.5);
\node[] at (2.6,-1){$Y_{I_v}$};
\draw (0,-.2) -- (-.3,-1.5);
\draw (0,-.2) -- (0,-1.5);
\draw (0,-.2) -- (0.3,-1.5);
\node[] at (-2,-1){$(\cup_{a_i(v) \in \mathcal{N}_v \backslash \{a_1(v)\}} Y_{I_1^i(v)}) \cup Y_{I_2^1(v)}$};
\draw[dashed] (0.2,-0.2) to node[right, xshift = -5pt, yshift = 5pt]{$Y_{I_1^i(v)}$} (1.7,-1.5);
\end{tikzpicture}
\end{center}

To prove statement (1), since the vertex $w$ has valence $1 \oplus 0$, we do not need to consider a matching using all edges incident to $w$ in $\mathcal{G}_S$.  Similarly, we do not need to consider a matching using all edges incident to $w$ in $\mathcal{G}_{J_{\mathbf{m}}}$.

If $P$ includes the edge $(v_\opt,w)$ as well as an edge from $\mathcal{G}_S$ with a weight from the set $\cup_{a_i(v) \in \mathcal{N}_v} Y_{I_1^i(v)}$,  Condition~(\ref{admissibilityCondition2}) of Definition \ref{def:AdmissibleMatching} would force us to include the edge $Y_{I_u}$ incident to $w$. This violates the valence of $w$, so we cannot have a matching which uses every edge incident to $v$ from $\mathcal{G}_S$. Showing that we will not have an admissible matching which uses all edges incident to $v$ coming from $\mathcal{G}_{J_{\mathbf{m}}}$ follows the same logic as in the previous cases as it only relies on properties of $\mathcal{G}_{J_{\mathbf{m}}}$.

Now we turn to  statement (2) and consider matchings such that the edges incident to $v$ come from one snake graph and the edges incident to $w$ come from the other snake graph. If we include the edge $Y_{I_u}$ incident to $w$ in an admissible  matching, by Condition~(\ref{admissibilityCondition2}) of Definition \ref{def:AdmissibleMatching} we must use all the edges from $\mathcal{G}_S$ labeled $I_1^i(v)$ and similarly if we include at least one such edge,  we must include the edge $Y_{I_u}$. Therefore,  it is impossible for there to be an admissible matching of $\mathcal{G}_{S \cup J_{\mathbf{m}}}$ such that the edges incident to $v$ only lie in $\mathcal{G}_S$ and the edges incident to $w$ only lie in $\mathcal{G}_{J_{\mathbf{m}}}$ or vice versa.

We can quickly show statement (3)  by combining admissibility Conditions (\ref{admissibilityCondition2}) and (\ref{admissibilityCondition4}) which imply that we have $Y_{I_u}$ in an admissible matching if and only if we have all edges which lie only in $\mathcal{G}_S$ and which are incident to $v$.
\end{proof}

\begin{lemma}\label{lem:DecomposeToAdmissible}
Consider disjoint, weakly rooted sets $S$ and $J_{\mathbf{m}}(v) = J_{\mathbf{m}}$ such that there exists $w \in \overline{S}$ with $(v,w) \in E(\Gamma)$ and $w >_{\mathcal{I}} v$. 
For any pair of matchings $(P_S,P_{J_{\mathbf{m}}}) \in \mathcal{P}_S \times \mathcal{P}_{J_{\mathbf{m}}}$ such that the edge $(v,w)$ is included in $P_S$ or $P_{J_{\mathbf{m}}}$, there is a matching $P \in \mathcal{P}_{S \cup J_{\mathbf{m}}}$ formed by taking the union of $P_S$ and $P_{J_{\mathbf{m}}}$ and deleting an instance of $(v,w)$.
Moreover, any admissible matching $P \in \mathcal{P}_{S \cup J_{\mathbf{m}}}$ has restrictions $P\vert_{\mathcal{G}_{S}}$ and $P\vert_{\mathcal{G}_{J_{\mathbf{m}}}}$ that are admissible matchings of $\mathcal{G}_{S}$ and $\mathcal{G}_{J_{\mathbf{m}}}$ respectively when $(v,w)\in P$, and can be completed to admissible matchings of $\mathcal{G}_{S}$ and $\mathcal{G}_{J_{\mathbf{m}}}$ respectively by adding the edge $(v,w)$ to exactly one of the restrictions when $(v,w)\not\in P$.

In particular, if $e = (v,w)$ simultaneously denotes this edge in $\mathcal{G}_S$ and $\mathcal{G}_{J_{\mathbf{m}}}$, there is a bijection between $\mathcal{P}_{S \cup J_{\mathbf{m}}}$ and $(\mathcal{P}_S \times \mathcal{P}_{J_{\mathbf{m}}}) \backslash (\mathcal{P}_S^{ne} \times \mathcal{P}_{J_{\mathbf{m}}}^{ne})$. 
\end{lemma}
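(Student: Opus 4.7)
The plan is to prove the bijection in two stages: construct a map in each direction, then show they are mutual inverses. For the forward map, given a compatible pair $(P_S, P_{J_{\mathbf{m}}}) \in \mathcal{P}_S \times \mathcal{P}_{J_{\mathbf{m}}}$, I would form the candidate matching $P$ by taking the union of the edge sets and deleting one copy of $(v,w)$ (which exists in at least one of them by hypothesis). The main verification is that $P$ satisfies both the valence conditions at $v$ and $w$ and the four admissibility conditions of Definition \ref{def:AdmissibleMatching}; for vertices away from $v$ and $w$, admissibility is inherited directly from $P_S$ or $P_{J_{\mathbf{m}}}$.

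For the valence check, I would compare the valences of $v$ and $w$ in $\mathcal{G}_S$, $\mathcal{G}_{J_{\mathbf{m}}}$ and $\mathcal{G}_{S \cup J_{\mathbf{m}}}$ via the gluing rule in Section \ref{sec:Gluing}, and use the fact that the edge $(v,w)$ is counted exactly once in either $P_S$ or $P_{J_{\mathbf{m}}}$ when it occurs only once, or once in each when both contain it (so deleting one copy preserves the correct total multiplicity). The admissibility Conditions (\ref{admissibilityCondition1})--(\ref{admissibilityCondition3}) only involve edges that share vertices with a common diagonal, so the only new interactions to check are between edges contributed by $\mathcal{G}_S$ and edges contributed by $\mathcal{G}_{J_{\mathbf{m}}}$ that both touch $v$ or $w$. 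These fall into the three local configurations (Cases 1--3) analyzed in the proof of Lemma \ref{lem:AdmissibleEdges}, and in each case the configurations that would violate admissibility are exactly those ruled out there.

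For the backward map, given $P \in \mathcal{P}_{S \cup J_{\mathbf{m}}}$, I would restrict $P$ to the edges that originated in $\mathcal{G}_S$ to get $P_S'$, and similarly produce $P_{J_{\mathbf{m}}}'$. If $(v,w) \in P$ then these restrictions are already matchings; otherwise exactly one of the two restrictions must have $(v,w)$ added to it to restore the correct valence at $v$ and $w$ in the un-glued graph. The key point is to argue that in this latter case one, but not both, of $P_S'$ and $P_{J_{\mathbf{m}}}'$ needs the edge added, and that this choice is forced by which of $P_S', P_{J_{\mathbf{m}}}'$ currently undercovers $v$ and $w$. Lemma \ref{lem:AdmissibleEdges}(2) is what rules out the ``crossed'' case (edges at $v$ coming only from one side and edges at $w$ only from the other), while Lemma \ref{lem:AdmissibleEdges}(1) and (3) rule out the configurations where both restrictions would need $(v,w)$ added or where both would already be complete but $(v,w) \notin P$. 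Admissibility of $P_S$ and $P_{J_{\mathbf{m}}}$ is again inherited because adding $(v,w)$ back in only affects Conditions (\ref{admissibilityCondition1})--(\ref{admissibilityCondition4}) at vertices $v$ and $w$, and these can be verified case-by-case as in Lemma \ref{lem:AdmissibleEdges}.

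Finally, to get the bijection statement with the prescribed image, I would observe that the forward map is clearly injective and that its image is exactly $(\mathcal{P}_S \times \mathcal{P}_{J_{\mathbf{m}}}) \setminus (\mathcal{P}_S^{ne} \times \mathcal{P}_{J_{\mathbf{m}}}^{ne})$: we exclude pairs where neither uses $(v,w)$ because there would be no copy to delete, and such pairs cannot arise from restricting any $P \in \mathcal{P}_{S \cup J_{\mathbf{m}}}$ by the valence argument above. The hard part will be the case analysis at the glued edge --- in particular verifying Condition (\ref{admissibilityCondition4}) when combined internal edges on either side of $(v,w)$ interact via the newly shared diagonals $I_v$ or $I_w$ --- but this work is largely already packaged in the Case 1--3 analysis of Lemma \ref{lem:AdmissibleEdges}, so the present proof reduces to a bookkeeping exercise invoking that lemma.
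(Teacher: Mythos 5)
Your proposal follows essentially the same route as the paper's proof: a forward map (union minus one copy of $(v,w)$), a backward map (restriction plus possible re-insertion of $(v,w)$), with the bulk of the admissibility checking offloaded onto the Case 1--3 analysis of Lemma~\ref{lem:AdmissibleEdges}, which rules out exactly the problematic configurations (all edges from one side at $v$ or $w$, the ``crossed'' configuration, the asymmetric configurations). The one place where you are slightly too quick is the claim that the forward map is ``clearly injective'': a priori, two distinct pairs --- one with $(v,w) \in P_S \setminus P_{J_{\mathbf{m}}}$ and one with $(v,w) \in P_{J_{\mathbf{m}}} \setminus P_S$ --- could map to the same $P$ with $(v,w) \notin P$. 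The paper sidesteps this by instead showing that the backward map is a well-defined function (the completion by $(v,w)$ is forced to go to exactly one side, as you indeed note a sentence later), which makes the bijection automatic. Phrasing your argument around well-definedness of the inverse rather than injectivity of the forward map would close this small gap and match the paper's logic exactly.
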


\begin{proof}
Note there are figures in the proof of Lemma \ref{lem:AdmissibleEdges} which display the various possible local configurations of this gluing.

First we will show that for any pair of matchings $(P_S,P_{J_{\mathbf{m}}}) \in \mathcal{P}_S \times \mathcal{P}_{J_{\mathbf{m}}}$ such that the edge $(v,w)$ is included in $P_S$ or $P_{J_{\mathbf{m}}}$, there is a matching $P \in \mathcal{P}_{S \cup J_{\mathbf{m}}}$ formed by taking the union of $P_S$ and $P_{J_{\mathbf{m}}}$ and deleting an instance of $(v,w)$.
If we take a pair of $(P_S,P_{J_{\mathbf{m}}}) \in \mathcal{P}_S \times \mathcal{P}_{J_{\mathbf{m}}}$ such that $(v,w) \in P_S \cup P_{J_{\mathbf{m}}}$, we can see that all valence conditions are respected by the set $P := P_S \cup P_{J_{\mathbf{m}}} \backslash \{(v,w)\}$. We would violate Condition~(\ref{admissibilityCondition1}) of Definition \ref{def:AdmissibleMatching} if $P$ contained $(v,w)$ and the edge $Y_{I_v}$. However, if $(v,w) \in P$, $(v,w) \in P_S$ and $(v,w) \in P_{J_{\mathbf{m}}}$, and since $P_{J_{\mathbf{m}}}$ is admissible we already know that the edge $Y_{I_v}$ cannot also be in $P_{J_{\mathbf{m}}}$. We would break Condition~(\ref{admissibilityCondition3}) if $P_S$ contained the edge weighted $Y_{I_1^1(v)}$ and $P_{J_{\mathbf{m}}}$ contained the edge weighted $Y_{I_v}$. However, by Condition (\ref{admissibilityCondition4}) of Definition \ref{def:AdmissibleMatching}, including the edge $Y_{I_1^1(v)}$ in $P_S$ requires us to include all edges $Y_{I_i^1(v)}$ for $a_i(v) \in \mathcal{N}_v$, so by valence considerations the edge $(v,w_{\text{opt}})$ could not also be in $P_S$. In $\mathcal{G}_{J_{\mathbf{m}}}$, $w$ is valence $1 \oplus 0$ so $P_{J_{\mathbf{m}}}$ could also not include $(v_{\text{opt}},w)$, contradicting our assumption that the edge $(v,w)$ was in $P_S \cup P_{J_{\mathbf{m}}}$. To check that $P$ satisfies Conditions~(\ref{admissibilityCondition2}) and (\ref{admissibilityCondition4}), it suffices to know that $P_S$ and $P_{J_{\mathbf{m}}}$ already satisfy this condition. For example, this gluing does not create any new pair of edges as in the hypothesis of Condition~(\ref{admissibilityCondition4}). 

Now, consider $P \in \mathcal{P}_{S \cup \Jm}$ with $(v,w)\in P$ and take the restrictions $P_S:= P \vert_{\mathcal{G}_S}$ and $P_{\Jm}:= P\vert_{\mathcal{G}_{\Jm}}$.  We wish to show that if $(v,w) \in P$, then $P_S \in \mathcal{P}_S$ and $P_{\Jm} \in \mathcal{P}_{\Jm}$, and if $(v,w) \not\in P$, then adding $(v,w)$ to one of $P_S,P_{\Jm}$ gives an element of $\mathcal{P}_S$ and an element of $\mathcal{P}_{\Jm}$.  These restrictions would not be admissible matchings if either they include too many or two few edges incident to $v$ or $w$ or if they broke one of the conditions of Definition \ref{def:AdmissibleMatching}. 

We first address the admissibility conditions.  First,  suppose that $(v,w) \in P$,  so that $(v,w)$ is in both restrictions.  In $\mathcal{G}_{S \cup \Jm}$,  this edge is an internal edge while in $\mathcal{G}_S$ and $\mathcal{G}_{\Jm}$ this is a boundary edge.  The fact that $P$ satisfied Condition (\ref{admissibilityCondition1}) implies that the restrictions satisfy this condition as Condition (\ref{admissibilityCondition1}) is not about boundary edges.  Conditions (\ref{admissibilityCondition2}) and (\ref{admissibilityCondition3}) concern boundary edges, but since $(v,w)$ comes from an edge in $\Gamma$ so that $W((v,w)) = \emptyset$ and any diagonal $d$ has a nonempty label $W(d)$, there is nothing to check.  To check Condition (\ref{admissibilityCondition4}), notice that the edge $(v,w)$ in $\mathcal{G}_{J_{\mathbf{m}}}$ is always the edge $(v_\opt,w)$.  Therefore, the only other boundary edges that share a vertex with $(v_\opt,w)$ share vertex $v_\opt$.  However, vertex $w$ is not incident to a diagonal so none of these edges could  interact with $(v_\opt,w)$ as in Condition (\ref{admissibilityCondition4}).  Thus, Condition (\ref{admissibilityCondition4}) holds for $P\vert_{\mathcal{G}_{J_{\mathbf{m}}}}$.  The edge $(v,w)$ in $\mathcal{G}_{S}$ is not incident to any internal edges so Condition (\ref{admissibilityCondition4}) holds for this  restriction as well.  So in this case, the fact that $P$ is admissible implies that $P_S$ and $P_{\Jm}$ are both admissible.

Now, suppose that $(v,w) \notin P$.  We wish to show that we could add $(v,w)$ to exactly one of $P \vert_{\mathcal{G}_S}$ or $P\vert_{\mathcal{G}_{\Jm}}$ and have both sets of edges still satisfy Definition \ref{def:AdmissibleMatching}. Since $W((v,w)) = 1$ and all diagonals have nonempty label,  we automatically do not need to check admissibility Conditions (\ref{admissibilityCondition2}) and (\ref{admissibilityCondition3}).  In $\mathcal{G}_S$,  $(v,w)$ is a boundary edge which is either not incident to any internal edges, or is incident to $Y_{I_w}$.  Since $W(Y_{I_w})=I_w$ is not contained in the labels of any incident diagonals, we have nothing to check for admissibility Condition (\ref{admissibilityCondition4}) in $\mathcal{G}_S$. A similar argument holds for $\mathcal{G}_{\Jm}$. We would violate Condition (\ref{admissibilityCondition1}) if we added $(v,w)$ to $\mathcal{G}_{\Jm}$ and $Y_{I_v} \in P$,  or if $v$ is not covered by $w$, we added $(v,w)$ to $\mathcal{G}_S$, and $Y_{I_w} \in P$. If $v$ is not covered by $w$, the  the valence of $w$ in all three graphs is $1 \oplus 0$,  so we could not have both $Y_{I_v}$ and $Y_{I_w}$ in $P$.  Therefore,  in all cases we could add $(v,w)$ to one of the restrictions and preserve admissibility. 

Now, we compare the number of edges incident to $v$ and $w$ in the restrictions with the valence of these vertices in both graphs. In all of our cases,  the valence of $v$ or $w$ in each of the smaller graphs will be of the form $a \oplus b$ where the vertex is incident to $a+b+1$ edges.  Therefore,  we will have too many edges incident to a vertex in the restriction only if the restriction includes every edge incident to this vertex.  This occurs if $P$ contains all edges incident to $v$ or $w$ which comes from $\mathcal{G}_S$ or $\mathcal{G}_{\Jm}$. By part (1) of Lemma \ref{lem:AdmissibleEdges}, there cannot be such a matching $P \in \mathcal{P}_{S \cup \Jm}$.

In most of our cases,  the first component of the valence of $v$ or $w$ in the smaller graphs is 1, so in these cases,  we will not have enough edges incident to this vertex in the restriction if there are no edges incident to it. In order for this to occur,  we certainly have $(v,w) \notin P$ since otherwise $(v,w)$ would be in both restrictions.  Moreover,  if $(v,w) \notin P$, then we add $(v,w)$ to one of the restrictions.  Therefore, in order for this process to not include an edge incident to $v$ or $w$ in one of the restrictions, we must either have a situation as in statement (2) of Lemma \ref{lem:AdmissibleEdges}, where no matter which restriction we add $(v,w)$ to, a vertex is unmatched, or a situation as in statement (3) of this lemma, where we have an uncovered vertex in a restriction, but we cannot add $(v,w)$ without violating another vertex's valence. As the lemma shows, we will never encounter such a matching of $\mathcal{G}_{S \cup \Jm}$.

We have one remaining way in which a restriction can include too few edges incident to a vertex. Suppose that $v \lessdot_{\mathcal{I}} w$ and $v = a_1^i(w)$ with $i > 1$, so that the edge $(v,w)$ in $\mathcal{G}_S$ is incident to $w_{\fix}$. Then, the valence of $w$ on this edge in both $\mathcal{G}_S$ and $\mathcal{G}_{S \cup \Jm}$ is $r \oplus 0$ for some $r$ possibly larger than 1 where  $w$ is adjacent to $r+2$ edges in $\mathcal{G}_{S \cup \Jm}$ and $r+1$ edges in $\mathcal{G}_S$. We would not be able to decompose $P \in \mathcal{P}_{S \cup \Jm}$ into admissible matchings of $\mathcal{G}_{S}$ and $\mathcal{G}_{\Jm}$ if $P$ included $Y_{I_v}$, the one edge incident to $w$ which lies only in $\mathcal{G}_{\Jm}$, did not include $(v,w)$, and included all edges incident to $v$ but not $w$ which lie in $\mathcal{G}_S$. This would mean that $w$ would not have enough incident edges in $P \vert_{\mathcal{G}_S}$ but we could not add $(v,w)$ to this set without violating the valence of $v$. However,  as before,  we would now include the edge $Y_{I_v}$ incident to $w$ and the edge $Y_{I_1^1(v)}$ incident to $v$,  which violates Condition (\ref{admissibilityCondition3}) of Definition \ref{def:AdmissibleMatching}. This completes our proof, as we have shown that any $P \in \mathcal{P}_{S \cup \Jm}$ can be decomposed into matchings of $\mathcal{G}_{S}$ and $\mathcal{G}_{\Jm}$.

The last statement in the lemma follows directly from the first two statements.
\end{proof}

Now that we understand how the admissible matchings of a snake graph $\mathcal{G}_S$ compare with admissible matchings of the snake graphs that are glued together to make $\mathcal{G}_S$, we analyze the weighted sum of matchings which avoid the edges used in gluings. 

In what follows, if $T$ is a disconnected set whose connected components are weakly rooted,  we define the snake graph $\mathcal{G}_T$ to simply be the a disconnected graph with connected components $\mathcal{G}_{T_i}$ for the connected components $T_i$ of $T$.  Given a weakly rooted set $S$ and $v \in \overline{S}$, it follows from the definition that the connected components of $S \backslash \{v\}$ are all weakly rooted. Note that a set $I \in \mathcal{I}$ is trivially weakly rooted as its rooted portion is the empty set. Recall that $Z(v) = \frac{Y_{I_v}}{\prod_{u \in \mathcal{C}_v} Y_{I_u}}$.

\begin{lemma}\label{lemma:term_cancellation}
\begin{enumerate}
    \item Let $S$ be a weakly rooted set. Suppose there exists $w \in \overline{S}$ and $v \in N_\Gamma(w) \cap \Gamma^{\mathcal{I}}_{<w}$, $v \notin S$. Let $e$ be the boundary edge $(v,w)$ of $\mathcal{G}_S$. Then, 
    \[
    \frac{1}{\ell(\mathcal{G}_S)}\sum_{P \in \mathcal{P}_S^{ne}} \wt(P) = \frac{1}{Z(v)\ell(\mathcal{G}_{S \backslash \{w\}})} \sum_{P \in \mathcal{P}_{S \backslash \{w\}}} \wt(P)
    \]
    where, if $S = \{w\}$, we set $\mathcal{P}_{\emptyset} = \{\emptyset\}$ and $\wt(\emptyset) = 1$. 
    \item Let $w \in N_{\Gamma'}(v) \cap \Gamma^{\mathcal{I}}_{>v}$ and let $e$ be the boundary edge $(v,w)$ of $\mathcal{G}_{J_{\mathbf{m}}(v)}$. Then, \[
    \frac{1}{\ell(\mathcal{G}_{J_{\mathbf{m}}(v)})} \sum_{P \in \mathcal{P}_{J_{\mathbf{m}}}^{ne}} \wt(P) = Z(v) \cdot Y_{I^1_{m_1}} \cdots Y_{I^r_{m_r}}.
    \]
\end{enumerate}
\end{lemma}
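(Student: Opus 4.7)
The plan is to prove Part~(2) by a direct valence argument, and Part~(1) by induction on $|\overline{S}|$ carried out in parallel with the proof of Theorem~\ref{thm:main} for general weakly rooted sets (whose inductive step requires Part~(1)).

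For Part~(2), the vertex $w \in N_{\Gamma'}(v) \cap \Gamma^{\mathcal{I}}_{>v}$ has valence $1 \oplus 0$ in $\mathcal{G}_v$ by Lemma~\ref{lem:r1valence} (and its extension to $r>2$ discussed in Section~\ref{subsec:risnot1}), and this valence is preserved under the trimming procedure that produces $\mathcal{G}_{J_{\mathbf{m}}(v)}$, since the trimming only removes edges incident to diagonals nested inside some $I^i_{m_i}(v)$ on the sub-$v$ branches. Since $w$ is incident only to the boundary edge $(v,w)$ and the hyperedge $Y_{I_v}$, an admissible matching avoids $(v,w)$ if and only if it uses $Y_{I_v}$. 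By Theorem~\ref{thm:main} applied to $J_{\mathbf{m}}(v)$ (Section~\ref{subsec:SingeltonProofWeakSingleton}) together with Proposition~\ref{prop:SingletonAdjoinSets}, the normalized sum over such matchings equals the leading term $Z(v) \cdot Y_{J_{\mathbf{m}}(v) \setminus \{v\}}$ of the expansion, which equals $Z(v) \cdot Y_{I^1_{m_1}} \cdots Y_{I^r_{m_r}}$ since $Y_{J_{\mathbf{m}}(v) \setminus \{v\}} = \prod_i Y_{I^i_{m_i}}$.

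For the base case of Part~(1) with $|\overline{S}|=1$, the set $S = J_{\mathbf{n}}(w)$ has $n_{i_v} = c_{i_v}(w)+1$ (since $v = a_{i_v}(w) \notin S$), so the $v$-branch of $\mathcal{G}_w$ persists intact in $\mathcal{G}_{J_{\mathbf{n}}(w)}$. Matchings avoiding $(v,w)$ are exactly those that use all the edges $(v,b)$ with $b \in B^{i_v}_{c_{i_v}}(w)$, by the valence of $v$ together with Condition~(\ref{admissibilityCondition4}) of Definition~\ref{def:AdmissibleMatching}; these correspond to the term $(A^{i_v}_{c_{i_v}}/Y_{I_v}) \cdot Y_{J_{\mathbf{n}}(w) \setminus \{w\}}$ of Proposition~\ref{prop:SingletonAdjoinSets} (using $Y_{I^{i_v}_{c_{i_v}+1}}=1$). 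Since $A^{i_v}_{c_{i_v}}(w) = \prod_{u \in \mathcal{C}_v} Y_{I_u} = Y_{I_v}/Z(v)$ and $\chi(\mathcal{G}_{J_{\mathbf{n}}(w) \setminus \{w\}}) = Y_{J_{\mathbf{n}}(w) \setminus \{w\}}$ (the latter being a disjoint union of single hyperedges), the identity follows.

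For the inductive step with $|\overline{S}| \geq 2$, choose a leaf $u \neq w$ of the tree $\overline{S}$ (any tree has at least two leaves), let $u'$ be its unique neighbor in $\overline{S}$, and write $S = S' \sqcup J_{\mathbf{n}}(u)$ with $S' = S \setminus J_{\mathbf{n}}(u)$. Applying the bijection of Lemma~\ref{lem:DecomposeToAdmissible} to matchings avoiding $(v,w)$, and sorting by whether the gluing edge $(u,u')$ is used, produces an expression of the form $\alpha \cdot \chi(\mathcal{G}_{J_{\mathbf{n}}(u)}) - \beta \cdot \gamma$, where $\alpha, \beta$ are the normalized sums over matchings of $\mathcal{G}_{S'}$ avoiding respectively just $(v,w)$ and both $(v,w)$ and $(u,u')$, and $\gamma$ is the normalized sum over matchings of $\mathcal{G}_{J_{\mathbf{n}}(u)}$ avoiding $(u,u')$. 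The inductive hypothesis --- both parts of this lemma and Theorem~\ref{thm:main} for smaller sets --- gives $\alpha = Y_{S' \setminus \{w\}}/Z(v)$, $\gamma = Z(u) \cdot Y_{J_{\mathbf{n}}(u) \setminus \{u\}}$, and (by iterated application of Part~(1)) $\beta = Y_{S' \setminus \{u',w\}}/(Z(v)Z(u))$. Substituting and applying Lemma~\ref{lem:YRecursionRootedSet} to $A = S' \setminus \{w\}$ and $B = J_{\mathbf{n}}(u)$ collapses the resulting expression to $Y_{S \setminus \{w\}}/Z(v) = \chi(\mathcal{G}_{S \setminus \{w\}})/Z(v)$. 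The principal obstacle is the evaluation of $\beta$ via iteration of Part~(1): I expect this to hold because the boundary edges $(v,w)$ and $(u,u')$ lie in structurally disjoint branches of $\mathcal{G}_{S'}$, so the two avoidance operations commute and can be performed in sequence. The degenerate subcase where every choice of leaf $u \neq w$ forces $u' = w$ (i.e.\ $\overline{S} = \{w,u\}$ is a single edge) reduces to the base case applied to $S' = J_{\mathbf{n}'}(w)$.
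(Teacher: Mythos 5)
Your Part~(2) and your base case for Part~(1) are sound and in the same spirit as the paper's argument: there is a unique admissible matching of $\mathcal{G}_{J_{\mathbf{m}}(v)}$ (resp.\ of the subgraph $\mathcal{G}_w^S=\mathcal{G}_{J_{\mathbf{m}}(w)}$) avoiding the given dominant boundary edge, and its normalized weight is identified with the corresponding term of Proposition~\ref{prop:SingletonAdjoinSets}.

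The inductive step of Part~(1), however, is where you diverge from the paper, and this is where your argument has a genuine gap. You peel off a leaf $u\neq w$ of $\overline S$ and apply the inclusion--exclusion of Lemma~\ref{lem:DecomposeToAdmissible}, which forces you to evaluate $\beta$, a sum over matchings of $\mathcal{G}_{S'}$ avoiding \emph{two} boundary edges $(v,w)$ and $(u,u')$ simultaneously. Part~(1) as stated only handles a single avoided edge, and your justification --- ``iterated application of Part~(1)'' together with the heuristic that $(v,w)$ and $(u,u')$ lie in ``structurally disjoint branches'' --- does not close this. In fact the heuristic, and the claimed formula $\beta = Y_{S'\setminus\{u',w\}}/(Z(v)Z(u))$, is \emph{false} in the degenerate subcase $u'=w$: there the two avoided edges share the vertex $w$, and since (by the uniqueness established via Corollary~\ref{cor:WhichMatchingsAvoidDominant} and the disjointness of the sets $\mathcal{P}_v^i$) no admissible matching avoids two distinct dominant edges at the same vertex, one actually has $\beta=0$, not your expression. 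You flag this subcase and say it ``reduces to the base case,'' but that phrase does not explain how the computation closes; the correct observation is precisely that $\beta=0$, and it must be argued. Even outside the degenerate case, the claim that the two avoidance operations commute requires knowing that the unique matching of $\mathcal{G}_{u'}^{S'}$ avoiding $(u,u')$ uses all the dominant gluing edges --- which is the key structural fact, but you never invoke it.

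The cleaner route, and the one the paper takes, is to peel off $w$ \emph{itself} rather than a different leaf: $\mathcal{G}_S$ is glued from $\mathcal{G}_w^S=\mathcal{G}_{J_{\mathbf{m}}(w)}$ and the graphs $\mathcal{G}_{S_i}$ for the components $S_i$ of $S\setminus\{w\}$, along dominant edges at $w$. Corollary~\ref{cor:WhichMatchingsAvoidDominant} gives a \emph{unique} admissible matching $M_0$ of $\mathcal{G}_w^S$ avoiding $e$; $M_0$ uses every other dominant edge at $w$ (which are exactly the gluing edges), so by Lemma~\ref{lem:DecomposeToAdmissible} the pairs $(M_0,P_1,\dots,P_k)$ with $P_i\in\mathcal{P}_{S_i}$ biject with $\mathcal{P}_S^{ne}$ with no inclusion--exclusion correction needed. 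The normalized weight of $M_0$ supplies the factor $1/Z(v)$, and the sums over $\mathcal{P}_{S_i}$ give $Y_{S_i}$ by the theorem for smaller sets. This avoids induction on $|\overline S|$ within the lemma entirely, and sidesteps the two-avoided-edge problem that your proof leaves unresolved.
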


\begin{proof}

\textbf{(1)} First, suppose that $\mathcal{G}_w$ is a subgraph of $\mathcal{G}_S$. We note in Corollary \ref{cor:WhichMatchingsAvoidDominant} that, for the snake graph $\mathcal{G}_w$, there is a unique admissible matching which does not include $e$. By following the set of edges this matching includes, as in the proof of Lemma \ref{lem:HowDoGvRestrictToHi}, we see that the weight of this matching divided by $\ell(\mathcal{G}_S)$ is $\frac{1}{Z(v)}$ and that this matching contains all edges of the form $(w,u)$ for $u \in N_\Gamma(w)$. Therefore, by Lemma \ref{lem:DecomposeToAdmissible}, this matching can be combined with any matchings of the graphs $\mathcal{G}_{S_i}$ for the connected components $S_i$ of $S \backslash \{w\}$ and the claim follows.

Now, more generally we could have that $\mathcal{G}_w$ is not a subgraph of $\mathcal{G}_S$, but for some $J_{\mathbf{m}}(w) = \{w\} \cup I^1_{m_1}(w) \cup \cdots \cup I^r_{m_r}(w)$, $1 \leq m_i \leq c_i(w)+1$ for all $1 \leq i \leq \vert N_{\Gamma'}(w)\cap \Gamma^{\mathcal{I}}_{<w}\vert$, $\mathcal{G}_{J_{\mathbf{m}}(w)}$ is a subgraph of $\mathcal{G}_S$. Since $I_w \nsubseteq S$, there is at least one $i$ such that $m_i > 1$.  Here, the same logic as in the simpler case holds, but the matching of $\mathcal{G}_{J_{\mathbf{m}}(w)}$ which does not include $e$ includes the boundary edges which have weight $Y_{I_{m_j}^j(w)}$. When $m_i < c_i+1$, so that this weight is not 1, we do not glue onto these boundary edges, so we can use Lemma \ref{lem:DecomposeToAdmissible} to again extend this admissible matching to one on the whole graph $\mathcal{G}_S$.

\textbf{(2)} The same logic for Corollary \ref{cor:WhichMatchingsAvoidDominant} holds for a snake graph of the form $\mathcal{G}_{J_{\mathbf{m}}}$. The claim follows from evaluating the weights of the unique matching which does not use the edge $(v,w)$. 

\end{proof}

\begin{proof}[Proof of Theorem \ref{thm:main} for general weakly rooted sets]

We will use strong induction on the size of $\vert \overline{S} \vert$.  The $\vert \overline{S} \vert = 1$ case was shown in Section \ref{subsec:SingeltonProofWeakSingleton},  so we assume $\vert \overline{S} \vert > 1$.  

Let $v$ be a leaf in the subgraph $\overline{S}$ such that $v \neq m_{\mathcal{I}}(S)$.
Let $S'$ be the connected component of $S\setminus\{v\}$ containing $m_{\mathcal{I}}(S)$. It follows that the set $S \setminus S'$ is of the form $J_{\mathbf{m}}(v)$.  Since $v \in \bar{S}$, we know that, in $\mathbf{m} = (m_1,\ldots,m_{r)}$, at least one $m_i > 1$.  

Notice that there exists $w \in \overline{S'}$ such that $(v,w) \in E(\Gamma)$. Since we assumed $v \neq m_{\mathcal{I}}(S)$, by the weakly rooted condition we know that $v<_{\mathcal{I}} w$.  Let $e$ simultaneously denote the edge $(v,w)$ in $\mathcal{G}_{S'}$ and $\mathcal{G}_{J_{\mathbf{m}}}$.  Then, by Lemma \ref{lem:DecomposeToAdmissible},  we can rewrite $\chi(\mathcal{G}_S)$ in the following way
\begin{align*}
\chi(\mathcal{G}_{S}) &= \chi(\mathcal{G}_{S' \cup J_{\mathbf{m}}}) \\&= \frac{1}{\ell(\mathcal{G}_{S' \cup J_{\mathbf{m}}})} \sum_{P \in \mathcal{P}_{S' \cup J_{\mathbf{m}}}} \wt(P) \\&= \frac{1}{\ell(\mathcal{G}_{S'})\ell(\mathcal{G}_{J_{\mathbf{m}}})} \bigg(\sum_{P \in \mathcal{P}_{S'}} \wt(P) \sum_{P \in \mathcal{P}_{J_{\mathbf{m}}}} \wt(P) - \sum_{P \in \mathcal{P}^{ne}_{S'}} \wt(P) \sum_{P \in \mathcal{P}^{ne}_{J_{\mathbf{m}}}} \wt(P) \bigg).
\end{align*}
By the inductive hypothesis, $Y_{S'} = \chi(\mathcal{G}_{S'})$ and $Y_{J_{\mathbf{m}}} = \chi(\mathcal{G}_{J_{\mathbf{m}}})$.  We can use Lemma \ref{lemma:term_cancellation} to reduce the second term, to obtain the following, 
\begin{align*}
\chi(\mathcal{G}_{S}) = Y_{S'} Y_{J_{\mathbf{m}}} - Y_{I_{m_1+1}^1(v)} \cdots Y_{I_{m_r+1}^r(v)}  \frac{1}{\ell(\mathcal{G}_{S' \backslash \{w\}})}\sum_{P \in \mathcal{P}_{S' \backslash \{w\}}} \wt(P) = Y_{S'} Y_{J_{\mathbf{m}}} -  Y_{J_{\mathbf{m}} \backslash \{v\}}Y_{S' \backslash \{w\}}
\end{align*}

where the last equality follows from invoking the inductive hypothesis on each connected component of $S' \backslash \{w\}$.  
Finally,  we can now use Lemma \ref{lem:YRecursionRootedSet} to conclude $\chi(\mathcal{G}_S) = Y_S$.
\end{proof}

\section{Counting Admissible Matchings}\label{sec:Counting}

We give a determinantal description of the number of admissible matchings of $\mathcal{G}_S$ for a weakly rooted set $S$.  For convenience, relabel $V(\Gamma)$ so that the vertices in $\overline{S}$ are numbered $1,\ldots,s$. For any $v \in \overline{S}$ with $\vert N_\Gamma(v)\cap\Gamma_{<v} \vert = r$, let $m_1,\ldots,m_r$ be minimal such that $\{v\} \cup I_{m_1}^1(v) \cup \cdots \cup I_{m_r}^r(v) \subseteq S$. Then, define $f(v) = 1 + (m_1 - 1) + \cdots + (m_r-1)$.
Let $N_S = (n_{i,j})$ be the $s \times s$ matrix defined by \[
n_{i,j} = \begin{cases} f(i) & i = j \\ -1 & (i,j) \in E(\Gamma) \\ 0 & \text{ otherwise } \end{cases}
\]

This matrix is related to the $\mathfrak{N}$ matrix from Proposition 4.5 in \cite{lam2016linear}.

\begin{thm}\label{thm:Counting}
The number of admissible matchings of $\mathcal{G}_S$ for a weakly rooted set $S$ is equal to $\det(N_S)$.
\end{thm}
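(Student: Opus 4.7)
The plan is to prove Theorem~\ref{thm:Counting} by strong induction on $|\overline{S}|$, exploiting the same gluing recursion that drove the proof of Theorem~\ref{thm:main} in Section~\ref{sec:GlueProof} and matching it with cofactor expansion of $N_S$.

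For the base case $|\overline{S}| = 1$ we have $S = J_{\mathbf{m}}(v)$ for some vertex $v$, so $N_S$ is the $1\times 1$ matrix $(f(v))$ and $\det(N_S) = f(v) = 1 + \sum_{i=1}^{r}(m_i - 1)$. On the other hand, Proposition~\ref{prop:SingletonAdjoinSets} expresses $Y_{J_{\mathbf{m}}(v)}$ as a sum of exactly $1 + \sum_{i=1}^{r}(m_i - 1)$ monomials, and by Theorem~\ref{thm:main} (proved for such sets in Sections \ref{sec:SingeltonProof}--\ref{subsec:SingeltonProofWeakSingleton}) each monomial corresponds to a distinct admissible matching of $\mathcal{G}_{J_{\mathbf{m}}(v)}$; this gives $|\mathcal{P}_{J_{\mathbf{m}}(v)}| = f(v)$ as required.

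For the inductive step, choose a leaf $v$ of $\overline{S}$ with $v \neq m_{\mathcal{I}}(S)$, let $w$ be its unique neighbor in $\overline{S}$, and write $S = S' \cup J_{\mathbf{m}}(v)$ as in the proof of Theorem~\ref{thm:main}. Lemma~\ref{lem:DecomposeToAdmissible} gives a bijection between $\mathcal{P}_S$ and $(\mathcal{P}_{S'} \times \mathcal{P}_{J_{\mathbf{m}}(v)}) \setminus (\mathcal{P}_{S'}^{ne} \times \mathcal{P}_{J_{\mathbf{m}}(v)}^{ne})$ along the glued edge $e = (v,w)$. By Lemma~\ref{lemma:term_cancellation}(2) (in its unweighted form, or equivalently by Corollary~\ref{cor:WhichMatchingsAvoidDominant}) we have $|\mathcal{P}_{J_{\mathbf{m}}(v)}^{ne}| = 1$, and by Lemma~\ref{lemma:term_cancellation}(1) we have $|\mathcal{P}_{S'}^{ne}| = |\mathcal{P}_{S'\setminus\{w\}}|$, where $\mathcal{P}_{S'\setminus\{w\}}$ denotes the product of $\mathcal{P}_{C}$ over connected components $C$ of $S'\setminus\{w\}$. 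Combining these gives the count recursion
\begin{equation*}
|\mathcal{P}_S| = f(v)\, |\mathcal{P}_{S'}| - |\mathcal{P}_{S'\setminus\{w\}}|.
\end{equation*}
On the matrix side, ordering $\overline{S}$ so that $v$ is last, $N_S$ has the block form $\left(\begin{smallmatrix} N_{S'} & -e_w \\ -e_w^T & f(v) \end{smallmatrix}\right)$, and cofactor expansion along the last row gives $\det(N_S) = f(v)\det(N_{S'}) - \det(N_{S'}^{(w)})$, where $N_{S'}^{(w)}$ is $N_{S'}$ with row and column $w$ removed. To conclude, it suffices to show $\det(N_{S'}^{(w)}) = |\mathcal{P}_{S'\setminus\{w\}}|$, which by induction reduces to identifying $N_{S'}^{(w)}$ with the block-diagonal matrix $\bigoplus_C N_C$ as $C$ ranges over the connected components of $S'\setminus\{w\}$.

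The main obstacle, and the heart of the argument, is verifying this last identification: one must check that removing $w$ preserves the diagonal values and the off-diagonal structure. The graph $\overline{S'}\setminus\{w\}$ breaks into subgraphs agreeing with the connected components of the components $C$, so the off-diagonal $-1$'s match up correctly once we show $\overline{C} = \overline{S'} \cap C$ and $f_{C}(v') = f_{S'}(v')$ for every $v' \in \overline{C}$. Both equalities hinge on the nested-collection fact that if $u \in S'\setminus\{w\}$ lies in a different component of $S'\setminus\{w\}$ than $v'$, or equivalently if a connected subset $I \subseteq S'$ containing $v'$ would need to pass through $w$ or through a vertex outside $S'$ to reach $u$, then $I_w \subseteq I$, which since $w \in \overline{S'}$ contradicts $I \subseteq S'$. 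This rules out the only way that $\overline{C}$ could gain a vertex not in $\overline{S'}$ or that some $I^i_{m_i}(v')$ could lie in $S'$ but not in $C$, and completes the matching of the two recursions.
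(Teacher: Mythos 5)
Your proof follows essentially the same route as the paper's: induction on $|\overline{S}|$, using Lemma~\ref{lem:DecomposeToAdmissible} and Lemma~\ref{lemma:term_cancellation} to get the count recursion $|\mathcal{P}_S| = f(v)|\mathcal{P}_{S'}| - |\mathcal{P}_{S'\setminus\{w\}}|$, and matching it against cofactor expansion of $N_S$ along the row/column of a leaf $v$ of $\overline{S}$. Two small remarks on points of difference. Your base case counts monomials in the formula of Proposition~\ref{prop:SingletonAdjoinSets} and invokes Theorem~\ref{thm:main}; note that the statement of Theorem~\ref{thm:main} alone gives only equality of generating functions, so to conclude that the number of matchings equals the number of monomials you implicitly also use that distinct matchings contribute distinct weights and that the $f(v)$ terms of the formula are distinct Laurent monomials --- both of which are established in the proof of Theorem~\ref{thm:main} for $\overline{S}=\{v\}$ (the paper instead cites the explicit enumeration of matchings from that proof). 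On the plus side, you explicitly flag and sketch the verification that the minors of $N_S$ agree with $N_{S'}$ and with $\bigoplus_C N_C$ over the components $C$ of $S'\setminus\{w\}$ --- i.e.\ that $\overline{C} = \overline{S'}\cap C$ and that the diagonal entries $f(\cdot)$ are unchanged when passing to $S'$ and then to the components $C$ --- a point the paper's proof asserts without comment, and your argument via ``$w \in I$ forces $I_w \subseteq I \subseteq S'$'' is the right one.
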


\begin{proof}
We induct on $\vert \bar{S} \vert$, the size of the weakly rooted portion of $S$. First, suppose that $\bar{S} = \{v\}$, so that $S = J_{\mathbf{m}}(v)$. In our proof for Theorem \ref{thm:main} for this case, following the reasoning for $S = \{v\}$ in Lemma \ref{lem:DecomposeAdimssibleBranch}, we see that, for each $1 \leq i \leq r$, there are $m_i-2$ admissible matchings which use an internal edge which shares vertices with diagonals $I_j^i$ and $I_{j+1}^i$ and one matching which uses the edge $Y_{I_{m_i-2}^i}$ incident to the diagonal $I_{m_i-1}^i$, and these sets of matchings are all distinct. There is one other matching besides these families which uses the edge weighted $Y_{I_v}$. Therefore, there are $(m_1-1) + \cdots + (m_r-1) + 1$ admissible matchings of $\mathcal{G}_{J_{\mathbf{m}}}$, showing the claim in this case.

Now, we consider a set $S$ such that $\vert \bar{S} \vert > 1$. We assume we have shown the claim for all sets with a strictly smaller weakly rooted portion. Let $v$ be a leaf in  $\bar{S}$, and let $w$ be the unique vertex in $\bar{S}$ adjacent to $v$. As in the proof of Theorem \ref{thm:main} for the general case, let $S'$ be the connected component of $S \backslash \{v\}$ containing $m_{\mathcal{I}}(S)$ so that $S \backslash S' = J_{\mathbf{m}}(v)$ with at least one $m_i > 1$ in $\mathbf{m}$.  Then, since the $v$-th row of $N_S$ only has a $-1$ in column $w$ and $f(v)$ in column $v$ and similarly for the $v$-th column, we can compute
\begin{align*}
\det(N_S) &= f(v) \det(N_{S \backslash J_{\mathbf{m}}(v)}) - \det(N_{S \backslash (J_{\mathbf{m}}(v) \cup \{w\})})\\
&= \vert \mathcal{P}_{J_{\mathbf{m}}(v)} \vert \vert \mathcal{P}_{S \backslash J_{\mathbf{m}}(v)}\vert  - \vert \mathcal{P}_{S \backslash (J_{\mathbf{m}}(v) \cup \{w\})}\vert\\
\end{align*}
where the second equality follows from our inductive hypothesis. 

From Lemma \ref{lem:DecomposeToAdmissible}, we know that every admissible matching of $\mathcal{G}_S$ can be decomposed into an admissible matching of $\mathcal{G}_{S \backslash J_{\mathbf{m}(v)}}$ and of $\mathcal{G}_{J_{\mathbf{m}(v)}}$ where at least one matching uses the edge $e = (v,w)$. It follows from Lemma \ref{lemma:term_cancellation} that $\vert \mathcal{P}_{J_{\mathbf{m}}(v)}^{ne} \vert = 1$ and  $\vert \mathcal{P}^{ne}_{S \backslash J_{\mathbf{m}}(v)} \vert = \vert \mathcal{P}_{S \backslash (J_{\mathbf{m}}(v) \cup w)} \vert$. 
Therefore, we conclude that $\det(N_S) = \vert \mathcal{P}_S \vert$. 
\end{proof}

When $\Gamma$ is a path graph on $n$ vertices, the subalgebra of $\mathcal{A}_\Gamma$ spanned by variables of the form $Y_S, S \subseteq V(\Gamma)$ is a type $A_n$ cluster algebra by Corollary 6.2 in \cite{lam2016linear}. Then, up to reordering, the matrix $N_S$ has values $f(i)$ on the main diagonal and $-1$'s on the super and sub diagonal. Such a matrix is known as a \emph{continuant}.  The determinant of such a continuant with entries $f(1),\ldots,f(s)$ on the main diagonal is given by the \emph{negative continued fraction} $[[f(1),\ldots,f(s)]]$, defined by \[
f(1) - \frac{1}{f(2) - \frac{1}{\ddots - \frac{1}{f(s)}}}. 
\]

In this case, Theorem \ref{thm:Counting} recovers a previously known formula for the number of perfect matchings of a surface snake graph $G_{\gamma}$.  To illustrate this, we briefly discuss the case of a cluster algebra from a polygon; for more general cluster algebras, one can take a suitable lift of the surface. Suppose $\gamma$ is a diagonal in a polygon with vertices $v_0,\ldots,v_{n-1}$ and triangulation $T$. Let $\gamma = (v_i,v_j)$ where, without loss of generality $i < j$. For all $i < k < j$, let $f_k$ denote the number of arcs from $T$ incident to $v_k$ which cross $\gamma$. Then, if $\mathcal{P}_\gamma$ denotes the set of perfect matchings of $\mathcal{G}_\gamma$, \[
\vert \mathcal{P}_\gamma \vert = [[f_{i+1},\ldots,f_{j-1}]].
\]

This result can be seen as combining the theory of Conway-Coxeter's frieze patterns \cite{CCFrieze} with the connection between frieze patterns and cluster algebras of type $A$ \cite{caldero2006cluster}. More details on the correspondence are given in \cite{morier2019farey}.

\section*{Acknowledgements}

The authors would like to thank Pavlo  Pylyavskyy for suggesting the original idea for this project and MRWAC 2020 for providing the setting for our initial collaboration.  We would also like to thank the Summer Research in Mathematics (SRiM) program hosted by SLMath (formerly MSRI) for their financial support and hospitality.

Esther Banaian was supported by Research Project 2 from the Independent Research Fund Denmark (grant no. 1026-00050B).  Elizabeth Kelley was supported by NSF Grant No. DMS-1937241.  Sylvester Zhang was partially supported by NSF Grants No. DMS-1949896 and No. DMS-1745638.

\newpage
\thispagestyle{fancy}
\fancyhf{}
\rhead{\footnotesize \thepage}
\renewcommand{\headrulewidth}{0pt}

\printbibliography

\end{document}